\newtheorem{thm}{Theorem}[section]
\newtheorem*{theorem*}{Theorem}
\newtheorem*{acknowledgement*}{Acknowledgement}
\newtheorem{cor}[thm]{Corollary}
\newtheorem{lem}[thm]{Lemma}
\newtheorem{prop}[thm]{Proposition}
\theoremstyle{definition}
\newtheorem{defn}[thm]{Definition}
\theoremstyle{remark}
\newtheorem{rem}[thm]{Remark}
\numberwithin{equation}{section}
\newcommand{\set}[1]{\left\{#1\right\}}
\newcommand{\Real}{\mathbb R}
\newcommand{\func}[1]{\ensuremath{\operatorname{#1}} }
\newcommand{\Div}[0]{\func{div}}
\newcommand{\dist}[0]{\mathrm{dist}}
\newcommand{\spt}[0]{\func{spt}}
\newcommand{\cC}[0]{\mathcal{C}}
\title{A Mountain-Pass Theorem for Asymptotically Conical Self-Expanders}
\author{Jacob Bernstein}
\address{Department of Mathematics, Johns Hopkins University, 3400 N. Charles Street, Baltimore, MD 21218}
\email{bernstein@math.jhu.edu}
\author{Lu Wang}
\address{Department of Mathematics, California Institute of Technology, 1200 E. California Boulevard, Pasadena, CA 91125}
\email{drluwang@caltech.edu}
\begin{document}

\begin{abstract}
We develop a min-max theory for asymptotically conical self-expanders of mean curvature flow. In particular, we show that given two distinct strictly stable self-expanders that are asymptotic to the same cone and bound a domain, there exists a new asymptotically conical self-expander trapped between the two.
\end{abstract}

\maketitle

\section{Introduction} \label{IntroSec}

\subsection{Main results}
A \emph{hypersurface}, i.e., a properly embedded codimension-one submanifold, $\Sigma\subset\mathbb{R}^{n+1}$, is a \emph{self-expander} if
\begin{equation} \label{ExpanderEqn}
\mathbf{H}_\Sigma=\frac{\mathbf{x}^\perp}{2}.
\end{equation}
Here 
$$
\mathbf{H}_\Sigma=\Delta_\Sigma\mathbf{x}=-H_\Sigma\mathbf{n}_\Sigma=-\mathrm{div}_\Sigma(\mathbf{n}_\Sigma)\mathbf{n}_\Sigma
$$
is the mean curvature vector, $\mathbf{n}_\Sigma$ is the unit normal and $\mathbf{x}^\perp$ is the normal component of the position vector. Self-expanders arise naturally in the study of mean curvature flow. Indeed, $\Sigma$ is a self-expander if and only if the family of homothetic hypersurfaces
$$
\set{\Sigma_t}_{t>0}=\set{\sqrt{t}\Sigma}_{t>0}
$$
is a \emph{mean curvature flow} (MCF), that is, a solution to the flow
\begin{equation}
\left(\frac{\partial \mathbf{x}}{\partial t}\right)^\perp=\mathbf{H}_{\Sigma_t}.
\end{equation}
Self-expanders model the behavior of a MCF as it emerges from a conical singularity \cite{AIC} and also model possible long time behavior of the flow \cite{EHAnn}. Self-expanders arise variationally as stationary points, with respect to compactly supported variations, of the functional
$$
E[\Sigma]=\int_{\Sigma} e^{\frac{|\mathbf{x}|^2}{4}}\, d\mathcal{H}^n
$$
where $\mathcal{H}^n$ is the $n$-dimensional Hausdorff measure.

There are no closed self-expanders, instead the natural class to consider are those that are asymptotically conical. More precisely, given an integer $l\geq 2$, a hypersurface $\Sigma\subset\mathbb{R}^{n+1}$ is \emph{$C^{l}$-asymptotically conical} if there is a $C^l$-regular cone $\cC$ -- i.e., a dilation invariant $C^l$-hypersurface in $\mathbb{R}^{n+1}\setminus\set{\mathbf{0}}$ -- so that $\lim_{\rho\to0^+}\rho\Sigma=\cC$ in $C^l_{loc}(\mathbb{R}^{n+1}\setminus\set{\mathbf{0}})$. When this occurs write $\mathcal{C}(\Sigma)=\cC$. Fix a choice of unit normal to $\cC$. For any $C^2$-asymptotically conical hypersurface $\Sigma$ with $\cC(\Sigma)=\cC$, we fix a choice of unit normal to $\Sigma$ that is compatible with that of $\cC$. Using this normal, let $\Omega_-(\Sigma)$ be the open set whose boundary is $\Sigma$ and whose outward normal agrees with that of $\Sigma$. For $C^2$-asymptotically conical hypersurfaces $\Sigma$ and $\Sigma^\prime$ with $\cC(\Sigma)=\cC(\Sigma^\prime)=\cC$, define $\Sigma\preceq\Sigma^\prime$ provided $\Omega_-(\Sigma)\subseteq\Omega_-(\Sigma^\prime)$.

The main result of this paper is the following:

\begin{thm}\label{MainThm}
For $n\geq 2$, let $\cC$ be a $C^3$-regular cone in $\mathbb{R}^{n+1}$. Suppose $\Gamma_-$ and $\Gamma_+$ (not necessarily connected) are distinct strictly stable $C^2$-asymptotically conical self-expanders with $\mathcal{C}(\Gamma_-)=\mathcal{C}(\Gamma_+)=\cC$ and $\Gamma_-\preceq\Gamma_+$. Then there exists a $C^2$-asymptotically conical (possibly singular) self-expander $\Gamma_0\neq\Gamma_\pm$ with $\cC(\Gamma_0)=\cC$ and $\Gamma_-\preceq\Gamma_0\preceq\Gamma_+$ and that has codimension-$7$ singular set.
\end{thm}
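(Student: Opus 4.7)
The plan is to apply the classical mountain-pass / min-max scheme to the $E$-functional on the space of $C^2$-asymptotically conical hypersurfaces that are trapped between $\Gamma_-$ and $\Gamma_+$ and asymptotic to $\cC$. Writing $\mathcal{S}_\pm(\cC)$ for this space, strict stability of $\Gamma_\pm$ means the Jacobi operator adapted to the weight $e^{|\mathbf{x}|^2/4}$ is uniformly positive at each, so $\Gamma_\pm$ are strict local minima of $E$ on $\mathcal{S}_\pm(\cC)$ with a uniform energy gap on the boundary of a suitable neighborhood of each. Defining the admissible sweepouts
$$
\mathcal{P} = \left\{\gamma \in C^0\big([-1,1], \mathcal{S}_\pm(\cC)\big) : \gamma(\pm 1) = \Gamma_\pm\right\}
$$
and the min-max value
$$
\omega = \inf_{\gamma \in \mathcal{P}} \sup_{s \in [-1,1]} E[\gamma(s)],
$$
one first shows $\mathcal{P} \neq \emptyset$ by interpolating between $\Gamma_-$ and $\Gamma_+$ via a normal-graph foliation of $\overline{\Omega_-(\Gamma_+) \setminus \Omega_-(\Gamma_-)}$; the strict local minimum property, together with the ordering constraint $\Gamma_-\preceq \cdot \preceq \Gamma_+$, then forces $\omega > \max(E[\Gamma_-], E[\Gamma_+])$, which will ultimately guarantee the critical point produced is distinct from $\Gamma_\pm$.

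Next I would build a Palais-Smale sequence using the expander mean curvature flow
$$
\left(\frac{\partial \mathbf{x}}{\partial \tau}\right)^\perp = \mathbf{H}_{\Sigma_\tau} - \frac{\mathbf{x}^\perp}{2},
$$
the natural negative gradient flow of $E$. This flow decreases $E$ and, by an avoidance principle adapted to the expander equation, preserves the ordering $\preceq$; hence starting in $\mathcal{S}_\pm(\cC)$ keeps the flow in $\mathcal{S}_\pm(\cC)$. Deforming any near-optimal sweepout in $\mathcal{P}$ along the flow, a standard Struwe-type argument produces a sequence $\Sigma_j \in \mathcal{S}_\pm(\cC)$ with $E[\Sigma_j] \to \omega$ and with the expander mean curvature $\mathbf{H}_{\Sigma_j} - \mathbf{x}^\perp/2$ tending to zero in an appropriate weighted $L^2$ sense; the sequence can moreover be taken almost-minimizing in annuli.

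Then I would extract a subsequential varifold limit $\Gamma_0$: it is an integer rectifiable varifold, stationary for $E$ (hence a weak self-expander), and satisfies $\Gamma_-\preceq \Gamma_0 \preceq \Gamma_+$ because the ordering is preserved under the deformation and passes to the varifold limit. Schoen-Simon / Almgren-Pitts regularity, adapted to the uniformly elliptic parametric integrand $E$ (the exponential weight is smooth and away from the origin does not affect the regularity theory), then yields a codimension-$7$ singular set. Since $E[\Gamma_0] = \omega > \max(E[\Gamma_-], E[\Gamma_+])$, we conclude $\Gamma_0 \neq \Gamma_\pm$.

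The principal obstacle I expect is ensuring that $\Gamma_0$ is itself $C^2$-asymptotically conical with $\mathcal{C}(\Gamma_0) = \cC$. The space $\mathcal{S}_\pm(\cC)$ is noncompact due to its asymptotic end, so a priori mass in a min-max sequence could escape to infinity, or the varifold limit could acquire a different asymptotic cone. Controlling this requires combining the exponential weight in $E$ (which penalizes mass at infinity) with precise decay estimates for $C^2$-asymptotically conical self-expanders relative to $\cC$, and likely uses the $C^3$-regularity of $\cC$ in an essential way to provide the barriers and \emph{a priori} estimates needed to force uniform $C^2$ convergence of $\Sigma_j$ to a common asymptotic model near infinity. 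A secondary technical hurdle is setting up the pseudo-gradient deformation in a class wide enough to contain (weakly) singular competitors, so that the Struwe machinery is legal despite the possible failure of smooth long-time existence for the expander flow.
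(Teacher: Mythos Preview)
The central gap is that the $E$-functional is identically $+\infty$ on $\mathcal{S}_\pm(\cC)$: the weight $e^{|\mathbf{x}|^2/4}$ \emph{grows} at infinity, it does not ``penalize mass at infinity'' as you write. For any $C^2$-asymptotically conical hypersurface $\Sigma$ one has $\mathcal{H}^n(\Sigma\cap B_R)\sim R^n$ while the weight is $e^{R^2/4}$, so $E[\Sigma]=\int_\Sigma e^{|\mathbf{x}|^2/4}\,d\mathcal{H}^n=+\infty$. Consequently your min-max value $\omega$, the inequalities $\omega>\max(E[\Gamma_-],E[\Gamma_+])$, and the energy-level statement $E[\Gamma_0]=\omega$ are all formally $+\infty=+\infty$ and carry no information. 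The paper's entire technical apparatus is built to address exactly this: one must replace $E$ by a \emph{relative} expander entropy $E_{rel}[\Sigma,\Gamma_-]$, roughly a renormalized difference of the two divergent integrals, and verify that this is well defined, continuous under the relevant deformations, and has the right lower-semicontinuity and first-variation properties. This is not a mere bookkeeping change: establishing that $E_{rel}$ exists and behaves well on a suitable closure of the space of competitors, and that no relative entropy leaks to infinity along a min-max sequence, requires a specific choice of function space and a non-compact test vector field, and is where most of the work lies.

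Once this is understood, several of the steps you sketch need different arguments. The strict lower bound $m_{rel}(X)>\max(E_{rel}[\Gamma_+,\Gamma_-],0)$ is not an immediate consequence of strict stability in this non-compact, infinite-energy setting; the paper proves it by a calibration argument using the expander-mean-convex foliations near $\Gamma_\pm$. Your proposed pseudo-gradient, the expander mean curvature flow, is problematic both because long-time existence can fail and because it does not obviously interact well with the relative functional; the paper instead uses a pull-tight procedure generated by carefully chosen vector fields (including a non-compactly-supported one modeled on the radial direction) inside a dual function space. Finally, the conclusion that $\cC(\Gamma_0)=\cC$ with multiplicity one is obtained precisely from the finiteness of $E_{rel}$, which forces the tangent cone at infinity of the min-max varifold to be the multiplicity-one cone $\cC$; without the relative-entropy framework there is no mechanism to rule out higher multiplicity or a different asymptotic cone.
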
	

Repeatedly applying Theorem \ref{MainThm} we obtain some refined properties for $\Gamma_0$ up to dimension six.

\begin{cor} \label{RefinedMainCor}
For $2\leq n\leq 6$, let $\cC$ and  $\Gamma_\pm$ be given as in Theorem \ref{MainThm}. Let $\Omega=\Omega_-(\Gamma_+)\setminus\overline{\Omega_-(\Gamma_-)}$ be the open region between $\Gamma_-$ and $\Gamma_+$. Then there exists a smooth self-expander $\Gamma_0$ that is $C^2$-asymptotic to $\cC$ and so $\Gamma_-\preceq\Gamma_0\preceq\Gamma_+$ and $\Gamma_0\cap\Omega\neq \emptyset$. Moreover, if $\cC$ is also generic, that is, there is {\bf no} self-expander $C^2$-asymptotic to $\cC$ with nontrivial Jacobi fields that fix the infinity, then $\Gamma_0$ may be taken to be an unstable self-expander.
\end{cor}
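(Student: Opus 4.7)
The plan is to apply Theorem \ref{MainThm} and then extract the three enhancements it offers in low dimensions: smoothness, genuine penetration into $\Omega$, and, under genericity, instability. Smoothness is automatic: Theorem \ref{MainThm} delivers $\Gamma_0\neq\Gamma_\pm$ with codimension-$7$ singular set, and for $n\le 6$ such a set is empty in an $n$-dimensional hypersurface, so $\Gamma_0$ is already smooth.

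To upgrade the conclusion to $\Gamma_0\cap\Omega\neq\emptyset$ I would argue by contradiction. From $\Omega_-(\Gamma_-)\subseteq\Omega_-(\Gamma_0)\subseteq\Omega_-(\Gamma_+)$ and $\Gamma_0=\partial\Omega_-(\Gamma_0)$ one has $\Gamma_0\subseteq\overline{\Omega_-(\Gamma_+)}\setminus\Omega_-(\Gamma_-)=\overline{\Omega}$, so $\Gamma_0\cap\Omega=\emptyset$ would place $\Gamma_0\subseteq\Gamma_-\cup\Gamma_+$. At any such touching point, two smooth self-expanders with nested $\Omega_-$ regions and compatibly oriented normals fall under the strong maximum principle for the quasilinear elliptic equation \eqref{ExpanderEqn}, yielding component-by-component coincidence; counting components asymptotic to each component of $\cC$ (reapplying Theorem \ref{MainThm} on subregions where necessary to rule out hybrid configurations) then forces $\Gamma_0=\Gamma_-$ or $\Gamma_0=\Gamma_+$, contradicting $\Gamma_0\neq\Gamma_\pm$.

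For the unstable assertion under genericity I would iterate Theorem \ref{MainThm}. If the $\Gamma_0$ just produced is unstable we are done; otherwise $\Gamma_0$ is stable, and by the generic hypothesis it has no nontrivial Jacobi field fixing infinity and hence is strictly stable. Reapplying Theorem \ref{MainThm} to $(\Gamma_-,\Gamma_0)$ and continuing either turns up an unstable self-expander at some finite stage or yields an infinite nested sequence $\Gamma^{(k)}$ of distinct strictly stable $C^2$-asymptotically conical self-expanders, each trapped between $\Gamma_-$ and $\Gamma_+$. Uniform curvature estimates for stable self-expanders combined with uniform asymptotic control by the fixed cone $\cC$ give smooth subsequential convergence to some self-expander $\Gamma^{\infty}$; writing the $\Gamma^{(k)}$ as normal graphs over $\Gamma^{\infty}$ and linearising \eqref{ExpanderEqn} then extracts a nontrivial bounded Jacobi field on $\Gamma^{\infty}$ fixing infinity, contradicting genericity.

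The main obstacle is this last compactness-and-linearisation step: converting the accumulation of distinct strictly stable asymptotically conical self-expanders into an honest Jacobi field on the limit requires a careful Fredholm framework for the self-expander operator in function spaces that detect Jacobi fields that truly fix the cone, as opposed to those that merely rigid-motion the asymptotics. The remaining steps are direct consequences of Theorem \ref{MainThm}, the strong maximum principle, and the regularity theory in dimensions $n\le 6$.
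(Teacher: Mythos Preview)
Your overall strategy matches the paper's, but the contradiction argument for $\Gamma_0\cap\Omega\neq\emptyset$ has a real gap. From $\Gamma_0\subseteq\Gamma_-\cup\Gamma_+$ and the strong maximum principle you correctly deduce that each component of $\Gamma_0$ coincides with a component of $\Gamma_-$ or of $\Gamma_+$. But this does \emph{not} force $\Gamma_0=\Gamma_-$ or $\Gamma_0=\Gamma_+$: a hybrid configuration, with some components borrowed from $\Gamma_-$ and others from $\Gamma_+$, is a perfectly valid smooth strictly stable self-expander distinct from both $\Gamma_\pm$, so Theorem~\ref{MainThm}'s conclusion $\Gamma_0\neq\Gamma_\pm$ is not contradicted. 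Your parenthetical ``reapplying Theorem~\ref{MainThm} on subregions where necessary'' is the right idea, but it is not a contradiction argument: what you actually do is \emph{replace} $\Gamma_0$ by a new self-expander, and you must explain why the iteration terminates. The paper makes this explicit: if $\Gamma_0^1$ is a hybrid it is strictly stable, so Theorem~\ref{MainThm} applies to the pair $(\Gamma_-,\Gamma_0^1)$; since $\Gamma_0^1\neq\Gamma_+$, the number of components $\Gamma_-$ shares with $\Gamma_0^1$ strictly exceeds the number it shares with $\Gamma_+$, and this count strictly increases at each step, so after finitely many iterations one obtains a $\Gamma_0$ that genuinely meets $\Omega$.

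For the instability claim your iteration is again the paper's, and your ``main obstacle'' dissolves once you cite the right compactness statement rather than redo it by hand: by Schoen--Simon stable regularity together with the uniform asymptotic estimates (\cite[Proposition~3.3]{BWProper}), the set of stable self-expanders $C^2$-asymptotic to $\cC$ is compact, and under the generic hypothesis every element is strictly stable and hence isolated, so this set is \emph{finite}. Thus the iteration cannot produce infinitely many distinct strictly stable self-expanders and must output an unstable one after finitely many steps; no explicit Jacobi-field extraction or Fredholm analysis is needed.
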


In \cite{BWBanach, BWProper, BWDegree} we adapt ideas of White \cite{WhiteDegree} and develop a degree-theoretic method to produce asymptotically conical self-expanders of prescribed topological type. In particular, we show that there is an open set of cones in $\mathbb{R}^3$ so that for each cone in the set there exist three distinct self-expanders asymptotic to the cone, two of which are topological annuli and the third is the union of two disks. Another application of Theorem \ref{MainThm} is a generalization of this fact to higher dimensions.

\begin{cor} \label{TopologyCor}
For $2\leq n \leq 6$, there exists an open set $\mathcal{U}$ of $C^{3}$-regular cones so that for every $\cC\in \mathcal{U}$ there exist at least two distinct connected smooth self-expanders $C^2$-asymptotic to $\cC$ and at least one disconnected smooth self-expander $C^2$-asymptotic to $\cC$.
\end{cor}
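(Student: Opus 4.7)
The plan is to apply the min-max Theorem \ref{MainThm}, in the smooth-output form given by Corollary \ref{RefinedMainCor} (which is available because $n\leq 6$), to a carefully chosen pair $\Gamma_-\preceq\Gamma_+$ of strictly stable $C^2$-asymptotically conical self-expanders in which $\Gamma_-$ is disconnected and $\Gamma_+$ is connected, both asymptotic to a cone $\cC$ ranging over a suitable open family $\mathcal{U}$ of $C^3$-regular cones in $\mathbb{R}^{n+1}$. Provided the resulting $\Gamma_0$ can be arranged to be connected, the three self-expanders $\Gamma_-$, $\Gamma_+$, $\Gamma_0$ supply the two connected and one disconnected smooth self-expanders $C^2$-asymptotic to $\cC$ required by the statement.

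To build an initial cone $\cC_0$ and pair $\Gamma_\pm^0$, I would work with a rotationally symmetric double cone $\cC_0\subset\mathbb{R}^{n+1}$ whose two halves are geometrically well-separated, so that the cross-section of $\cC_0$ consists of two small round spheres. A standard shooting analysis of the rotationally symmetric self-expander ODE then produces a disconnected pair of convex caps $\Gamma_-^0$ (one in each half of $\cC_0$) and a catenoid-like connected neck $\Gamma_+^0$ bridging the two halves, with $\Gamma_-^0\preceq\Gamma_+^0$. For an appropriate choice of parameters each element is strictly stable, as can be checked from the Jacobi equation in the rotationally symmetric setting. Invoking the weighted Banach-space framework of \cite{BWBanach, BWProper}, strict stability of $\Gamma_\pm^0$ translates into invertibility of the linearized self-expander operator on an appropriate weighted Sobolev space, and the implicit function theorem then produces an open $C^3$-neighborhood $\mathcal{U}$ of $\cC_0$ on which the pair $\Gamma_\pm(\cC)$ persists with the same topological type, the same ordering, and strict stability.

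The main obstacle is verifying that the self-expander $\Gamma_0$ produced by Theorem \ref{MainThm} applied to $\Gamma_\pm(\cC)$ is connected rather than a second disconnected pair. The idea is to exploit the mountain-pass nature of the min-max construction together with the strict stability of $\Gamma_-$: any hypothetical disconnected candidate between $\Gamma_-$ and $\Gamma_+$ would, by the topology of the trapping region, be forced to consist of two graphs of bounded functions over the components of $\Gamma_-$, and a maximum-principle/foliation argument leveraging strict stability of $\Gamma_-$ would then force such a candidate to coincide with $\Gamma_-$, contradicting $\Gamma_0\neq\Gamma_-$. Hence $\Gamma_0$ must be topologically a neck, producing two distinct connected smooth self-expanders ($\Gamma_+$ and $\Gamma_0$) together with the disconnected $\Gamma_-$, all asymptotic to every $\cC\in\mathcal{U}$.
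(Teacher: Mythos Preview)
Your overall strategy---produce a strictly stable disconnected $\Gamma_-$ and a strictly stable connected $\Gamma_+$ asymptotic to the same cone, then run Theorem~\ref{MainThm}---is the right one and matches the paper. There are, however, two genuine problems.

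\textbf{The geometric regime is inverted.} You take a double cone ``whose two halves are geometrically well-separated, so that the cross-section \ldots\ consists of two small round spheres.'' In that narrow-aperture regime there is no catenoid-type connected self-expander to serve as $\Gamma_+^0$; the shooting analysis you invoke produces only the pair of graphical caps. The connected neck exists precisely in the \emph{large}-aperture regime, where the two link spheres are near the equator of $\mathbb{S}^n$. The paper works there, and moreover obtains the connected $\Gamma_+$ not from an ODE shooting argument but as an $E$-minimizer below a rotationally symmetric barrier (following Ilmanen and Ding; see \cite[Lemma~8.2]{BWDegree}). Minimization gives stability for free, whereas your claim that the ODE neck is strictly stable ``for an appropriate choice of parameters'' would require a separate argument.

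\textbf{The connectedness of $\Gamma_0$ is the real gap.} Your proposed mechanism---``any hypothetical disconnected candidate \ldots\ would \ldots\ be forced to consist of two graphs \ldots\ over the components of $\Gamma_-$, and a maximum-principle/foliation argument leveraging strict stability of $\Gamma_-$ would then force such a candidate to coincide with $\Gamma_-$''---does not work. Strict stability of $\Gamma_-$ yields only a \emph{local} mean-convex foliation near $\Gamma_-$ (Lemma~\ref{MeanConvexLem}); it gives no control on a disconnected competitor sitting well inside $\Omega$, and there is no a priori reason such a competitor must be graphical over $\Gamma_-$. The paper instead uses a global input: by Ecker--Huisken \cite{EHAnn}, each half of the double cone bounds a half-space and the graphical self-expander over it is \emph{unique}. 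Hence the disconnected self-expander asymptotic to $\cC$ is unique, namely $\Gamma_-$. Since Theorem~\ref{MainThm} guarantees $\Gamma_0\neq\Gamma_-$, $\Gamma_0$ cannot be disconnected, and for $2\le n\le 6$ it is smooth. This uniqueness statement is what you are missing; without it your argument for the connectedness of $\Gamma_0$ is incomplete.
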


\subsection{Overview of the proof of Theorem \ref{MainThm}}
To prove Theorem \ref{MainThm}, we establish a mountain-pass theorem, i.e., Theorem \ref{MinMaxThm}, for asymptotically conical self-expanders. Min-max theory for minimal hypersurfaces has seen a lot of recent development -- see, e.g., \cite{AMN}, \cite{CaD}, \cite{ChaL}, \cite{ChoM}, \cite{CGK}, \cite{DeLellisRamic}, \cite{DeLellisTasnady}, \cite{GaG}, \cite{GuLWZ}, \cite{IMN}, \cite{KLS}, \cite{LioMN}, \cite{MNWillmore},  \cite{MNMorse}, \cite{MNYau}, \cite{MNS}, \cite{Montezuma}, \cite{MontezumaNoncompact}, \cite{Riviere}, \cite{Song}, \cite{ZWang}, \cite{Zhou}, \cite{ZhouZhu} and references therein.  We in particular note \cite{KZ} wherein the authors develop the min-max theory for self-shrinking solutions to mean curvature flow in $\Real^3$.  Our approach is inspired by De Lellis-Tasnady's \cite{DeLellisTasnady} (see also \cite{ColdingDeLellis}) reformulation of work of Almgren-Pitts \cite{Pitts} and Simon-Smith \cite{Smith}. One may think of Theorem \ref{MinMaxThm} as an analog, in the non-compact setting, of work of De Lellis-Ramic \cite{DeLellisRamic} on min-max theory for compact minimal surfaces with fixed boundary (see also \cite{Montezuma} for an approach more closely using Almgren-Pitts' work). However, we emphasize two differences. The first is that in \cite{DeLellisRamic} the critical point produced by the min-max procedure does not need to be trapped between the two strictly stable critical points. As this kind of property plays a crucial role in our later application \cite{BWUniqueExpander}, we take care to establish it. The second, and more essential, point of difference is that, as the expander functional is infinite valued, we work directly with a certain relative functional (whose existence and properties were established in \cite{BWExpanderRelEnt}) to produce a critical point. While it may be possible to produce a min-max critical point by taking a limit of compact critical points (as is commonly done to produce local minimizers; see also \cite{CaD} for a min-max construction of geodesic lines) it seems difficult to guarantee that will produce a geometrically distinct asymptotically conical self-expander. 

A key point of our arguments is an appropriate choice of function space $\mathfrak{Y}$ on Grassmann $n$-plane bundles -- See Section \ref{FuncSpaceSec} for the precise definition. The space $\mathfrak{Y}$ is motivated by a notion of relative expander entropy $E_{rel}$ which is introduced in our earlier work \cite{BWExpanderRelEnt} (See also Section \ref{ExpanderRelEntSec}) and satisfies the following properties: First, to each hypersurface $\Sigma$ with finite relative expander entropy one can associate a unique element $V_{\Gamma}$ of the dual space $\mathfrak{Y}^*$ -- See Proposition \ref{YEstProp}. Second, the relative expander entropy is well defined on the subspace $\overline{\mathfrak{Y}^*_\cC}\subseteq\mathfrak{Y}^*$ consisting of elements that are obtained by taking limits of sequences $V_{\Gamma_i}$  in the weak-* topology, and every element of $\overline{\mathfrak{Y}^*_\cC}$ has a (weighted) varifolds decomposition -- See Lemma \ref{RelEntropyBndLem}. Third, any element of $\overline{\mathfrak{Y}^*_\cC}$ that are $E_{rel}$-minimizing to first order (which is defined in Section \ref{RelMinimizingSec}) has no concentration of relative expander entropy at infinity -- See Item (1) of Proposition \ref{StationaryProp} -- this uses a non-compact vector field in a similar manner as is done by Ketover-Zhou \cite{KZ} to deal with the same issue in their setting. These properties ensure that a pull-tight procedure (cf. \cite{ColdingDeLellis} and \cite{DeLellisRamic}) can be carried out for elements of $\overline{\mathfrak{Y}^*_\cC}$ to produce a minimizing sequence so that any min-max sequence converges to an element of $\overline{\mathfrak{Y}^*_\cC}$ that is $E_{rel}$-minimizing to first order -- See Proposition \ref{PullTightProp}. 

Another key point is that, to address the local regularity of the min-max limit, it is convenient to consider an open domain $\Omega^\prime$ slightly thicker than $\tilde{\Omega}=\overline{\Omega_+(\Gamma_-)}\cap\overline{\Omega_-(\Gamma_+)}$, the closed region between $\Gamma_-$ and $\Gamma_+$, so that $\overline{\Omega^\prime}\cap\Omega_\pm(\Gamma_\pm)$ is foliated in a certain manner by asymptotically conical hypersurfaces with expander mean curvature pointing towards $\Gamma_\pm$ -- See Proposition \ref{ThickeningProp}. By the strong maximum principle (see \cite{SolomonWhite} and \cite{WhiteMax}), the varifold associated to the min-max limit is supported in $\tilde{\Omega}\subset\Omega^\prime$. Thus, with slight modifications, the arguments of \cite{DeLellisRamic} (see also \cite{ColdingDeLellis} and \cite{DeLellisTasnady}) for the regularity of min-max minimal surfaces can be adapted to show the varifold is supported on some $E$-stationary hypersurface trapped between $\Gamma_-$ and $\Gamma_+$ that has codimension-$7$ singular set -- See Propositions \ref{AMProp} and \ref{InteriorRegProp}.

Moreover, we prove the varifold associated to the min-max limit has multiplicity one. This is a stronger statement than what is shown for more general min-max results. Indeed, it is an simple consequence of the finiteness of relative expander entropy that the tangent cone of the varifold at infinity is the multiplicity-one cone $\cC$ -- See Item (4) of Proposition \ref{StationaryProp}. As there are no compact self-expanders, the constancy theorem immediately implies that the varifold has multiplicity one. 

Lastly, to ensure the min-max method produces a new self-expander, we show a uniform lower bound on relative expander entropy max-value of any sweepout of $\tilde{\Omega}$, i.e., a path of hypersurfaces connecting $\Gamma_-$ and $\Gamma_+$. Namely, the maximum of relative entropy of slices in a sweepout is strictly larger than that of $\Gamma_-$ and $\Gamma_+$ -- See Proposition \ref{StrictLowBndRelEntProp}. Compare with \cite[Lemma 11.1]{DeLellisRamic} for sweepouts of compact surfaces, however, our approach is different from there and uses calibration type arguments and observations on properties of $E$-minimizers. 

\subsection{Organization}
In Section \ref{PrelimSec} we fix the notation for the remainder of the paper and recall the main results from \cite{BWExpanderRelEnt} that will be used in this paper. In Section \ref{FoliationThickenSec} we construct the open domain $\Omega^\prime$ that is slightly thicker than the region $\tilde{\Omega}$ between $\Gamma_-$ and $\Gamma_+$ and satisfies good properties. In Section \ref{FuncSpaceSec} we introduce the space $\mathfrak{Y}(\overline{\Omega^\prime})$ of functions on the Grassman $n$-plane bundle over $\overline{\Omega^\prime}$. We generalize estimates in \cite{BWExpanderRelEnt} to elements of $\mathfrak{Y}(\overline{\Omega^\prime})$ and show the relative expander entropy is well defined for elements of the subset $\overline{\mathfrak{Y}^*_\cC}(\overline{\Omega^\prime};\Lambda)\subset\mathfrak{Y}^*(\overline{\Omega^\prime})$ consisting of those that are obtained by taking limits of hypersurfaces with relative expander entropy bounded by $\Lambda$. In Section \ref{FlowSec} we study the action of flows generated by a suitable class of vector fields on elements of $\mathfrak{Y}^*(\overline{\Omega^\prime})$. In particular, we derive the first variation formula and prove the continuous dependence of the flow action on vector fields. In Section \ref{StationarySec} we introduce the modified action of flows on elements of $\overline{\mathfrak{Y}^*_\cC}(\overline{\Omega^\prime};\Lambda)$ and collect properties for elements of $\overline{\mathfrak{Y}^*_\cC}(\overline{\Omega^\prime};\Lambda)$ that are $E_{rel}$-minimizing to first order in $\overline{\Omega^\prime}$. In Section \ref{MountainPassSec} we define parametrized families of asymptotically conical hypersurfaces in $\overline{\Omega^\prime}$, in particular, sweepouts of $\tilde{\Omega}$,  and the relative entropy min-max value for parametrized families. We then establish a mountain-pass theorem for a certain homotopically closed set of parametrized families. In Section \ref{LowerBndSec} we use calibration type arguments to obtain a uniform lower bound on relative entropy max-value of any sweepout of $\tilde{\Omega}$. In Section \ref{ExistSweepoutSec} we show the existence of at least one sweepout of $\tilde{\Omega}$ and finish the proof of the results.

\subsection{Acknowledgements}
The first author was partially supported by the NSF Grants DMS-1609340 and DMS-1904674 and the Institute for Advanced Study with funding provided by the Charles Simonyi Endowment. The second author was partially supported by the NSF Grants DMS-2018221(formerly DMS-1811144) and DMS-2018220 (formerly DMS-1834824), the funding from the Wisconsin Alumni Research Foundation and a Vilas Early Career Investigator Award by the University of Wisconsin-Madison, and a von Neumann Fellowship by the Institute for Advanced Study with funding from the Z\"{u}rich Insurance Company and the NSF Grant DMS-1638352. The second author would like to thank Brian White and Jonathan Zhu for helpful discussions.

\section{Preliminaries} \label{PrelimSec}
We fix notation and certain conventions we will use throughout the remainder of the paper. We also recall certain background and facts we will need from \cite{BWExpanderRelEnt}.

\subsection{Basic notions} \label{NotionSec}
Here is the list of notations that we use throughout the paper:

\begin{tabularx}{0.95\textwidth}{lX}
$B_R(p)$ & the open ball in $\mathbb{R}^{n+1}$ centered at $p$ with radius $R$; \\
$\bar{B}_R(p)$ & the closed ball in $\mathbb{R}^{n+1}$ centered at $p$ with radius $R$; \\
$B_R^{\mathcal{X}}(p)$ & the open ball in the Banach space $\mathcal{X}$ centered at $p$ with radius $R$; \\
$\bar{B}_R^{\mathcal{X}}(p)$ & the closed ball in the Banach space $\mathcal{X}$ centered at $p$ with radius $R$; \\
$\mathcal{T}_\delta(U)$ & the $\delta$-tubular (open) neighborhood of a set $U$; \\
$\overline{U}$ or $\mathrm{cl}(U)$ & the closure of a set $U$; \\
$\partial U$ & the topological boundary of a set $U$; \\
$\partial^* U$ & the reduced boundary of a Caccioppoli set $U$; \\
$\nabla_{\Sigma}$ & the gradient on a Riemannian manifold $\Sigma$; \\
$\Div_{\Sigma}$ & the divergence on a Riemannian manifold $\Sigma$; \\
$\Delta_{\Sigma}$ & the Laplacian on a Riemannian manifold $\Sigma$;\\
$\Gamma_-, \Gamma_+$ & two asymptotically conical self-expanders with $\Gamma_+$ lying ``above" $\Gamma_-$;\\
$\Omega, \tilde{\Omega}$ & the open and closed regions bounded by $\Gamma_-$ and $\Gamma_+$;\\
$\Omega', \Omega''$ & certain open regions ``thickening" $\tilde{\Omega}$.
\end{tabularx}

We omit the center of a ball when it is the origin. We also omit the subscript, $\Sigma$, in the gradient, divergence and Laplacian when it is Euclidean space.  See Section \ref{ConventionSec} for precise definitions of $\Gamma_\pm$, $\Omega$, and $\tilde{\Omega}$.  See Proposition \ref{ThickeningProp} for properties of $\Omega'$ and $\Omega''$.

\subsection{Partial ordering of asymptotically conical hypersurfaces} \label{PartialOrderSec}
Let $\mathcal{C}$ be a $C^2$-regular cone in $\mathbb{R}^{n+1}$ so the link $\mathcal{L}(\mathcal{C})$ is an embedded codimension-one $C^2$ submanifold of $\mathbb{S}^n$. Clearly, $\mathcal{L}(\mathcal{C})$ separates $\mathbb{S}^n$ and we fix a closed set $\omega\subset\mathbb{S}^n$ so that $\partial\omega=\mathcal{L}(\mathcal{C})$. Given a Caccioppoli set $U$, the boundary $\Sigma=\partial^* U$ is \emph{asymptotic to $\mathcal{C}$} if 
$$
\lim_{\rho\to 0^+} \mathcal{H}^n\lfloor(\rho\Sigma)=\mathcal{H}^n\lfloor\mathcal{C}.
$$
When this happens set $\mathcal{C}(\Sigma)=\mathcal{C}$. For such $\Sigma$, let $\Omega_-(\Sigma)$ be the subset of $\mathbb{R}^{n+1}\setminus\overline{\Sigma}$ so that $\partial\Omega_-(\Sigma)=\Sigma$ and
$$
\lim_{\rho\to 0^+} \mathrm{cl}(\rho \Omega_-(\Sigma))\cap\mathbb{S}^n=\omega \mbox{ as closed sets}.
$$
Let $\Omega_+(\Sigma)=\mathbb{R}^{n+1}\setminus\overline{\Omega_-(\Sigma)}$. 

For boundaries $\Sigma_0$ and $\Sigma_1$ both asymptotic to $\mathcal{C}$, write
$$
\Sigma_0\preceq\Sigma_1 \mbox{ provided $\Omega_-(\Sigma_0)\subseteq\Omega_-(\Sigma_1)$}.
$$
If $\Sigma_0\preceq\Sigma_1$, then define 
$$
\mathcal{C}(\Sigma_0,\Sigma_1)=\set{U\colon\mbox{$U$ is a Caccioppoli set with $\Omega_-(\Sigma_0)\subseteq U\subseteq\Omega_-(\Sigma_1)$}}.
$$

\subsection{Relative expander entropy} \label{ExpanderRelEntSec}
Let $U\in\mathcal{C}(\Sigma_0,\Sigma_1)$ with $\Sigma=\partial^* U$, and let $Z=\Omega_+(\Sigma_0)\cap\Omega_-(\Sigma_1)$. For a function $\psi\in C^0_c (\overline{Z}\times \mathbb{S}^n)$ define
$$
E[\Sigma, \Sigma_0; \psi]=\int_{\Sigma} \psi(p, \mathbf{n}_\Sigma(p)) e^{\frac{|\mathbf{x}(p)|^2}{4}} \, d\mathcal{H}^n-\int_{\Sigma_0} \psi(p,\mathbf{n}_{\Sigma_0}(p)) e^{\frac{|\mathbf{x}(p)|^2}{4}} \, d\mathcal{H}^n.
$$
We remark that $E[\Sigma,\Sigma_0; \psi]$ is linear in $\psi$ and that, when $\psi$ is {even} (see \eqref{eveneqn}), $E[\Sigma, \Sigma_0; \psi]$ is independent of the choice of $\mathbf{n}_{\Sigma}$ or $\mathbf{n}_{\Sigma_0}$.  

For a function $\psi\in C^0_{loc}(\overline{Z}\times\mathbb{S}^n)$ let
$$
E_{rel}[\Sigma, \Sigma_0; \psi; \bar{B}_R]=\int_{\Sigma\cap \bar{B}_R} \psi(p, \mathbf{n}_\Sigma(p)) e^{\frac{|\mathbf{x}|^2}{4}} \, d\mathcal{H}^n -\int_{\Sigma_0\cap \bar{B}_R} \psi(p, \mathbf{n}_{\Sigma_0}(p) )e^{\frac{|\mathbf{x}|^2}{4}} \, d\mathcal{H}^n
$$
and
$$
E_{rel}[\Sigma, \Sigma_0; \psi]=\lim_{R\to \infty} E_{rel}[\Sigma, \Sigma_0; \psi; \bar{B}_R]
$$
when this limit exists. Observe that if $\psi$ has compact support, then the limit is defined.  

\subsection{The space $\mathfrak{X}$} \label{XSpaceSec}
Let $Y\subseteq\mathbb{R}^{n+1}$. For a function $\psi\in Lip(Y\times \mathbb{S}^n)$ and any $p\in Y$, define $\hat{\psi}_p(\mathbf{v})=\psi(p,\mathbf{v})$ and
$$
\nabla_{\mathbb{S}^n}\psi(p,\mathbf{v})=\nabla_{\mathbb{S}^n}\hat{\psi}_p(\mathbf{v}).
$$
Consider the Banach space
$$
\mathfrak{X}(Y)=\set{\psi\in Lip(Y\times\mathbb{S}^n) \colon \Vert \psi\Vert_{\mathfrak{X}}<\infty}
$$
where
$$
\Vert \psi \Vert_{\mathfrak{X}}=\Vert\psi\Vert_{Lip}+\Vert\nabla_{\mathbb{S}^n}\psi\Vert_{Lip}+\sup_{(p,\mathbf{v})\in Y\times\mathbb{S}^n} (1+|\mathbf{x}(p)|)|\nabla_{\mathbb{S}^n}\psi(p,\mathbf{v})|.
$$

\subsection{Conventions}\label{ConventionSec}
We now set conventions we will use in the remainder of the paper. Fix a $C^3$-regular cone $\mathcal{C}$ in $\mathbb{R}^{n+1}$ and pick a closed set $\omega\subset\mathbb{S}^n$ so $\partial\omega=\mathcal{L}(\mathcal{C})$. Using $\omega$, if $\Gamma$ is a hypersurface that is $C^2$-asymptotic to $\mathcal{C}$, then we will always choose the unit normal $\mathbf{n}_\Gamma$ so it points into $\Omega_+(\Gamma)$ and out of $\Omega_-(\Gamma)$. Let $\Gamma_-$ and $\Gamma_+$ be two distinct strictly stable self-expanders that are both $C^2$-asymptotic to $\mathcal{C}$ and $\Gamma_-\preceq\Gamma_+$.  Denote by $\Omega=\Omega_+(\Gamma_-)\cap\Omega_-(\Gamma_+)$ the open region between $\Gamma_-$ and $\Gamma_+$ and by $\tilde{\Omega}=\overline{\Omega_+(\Gamma_-)}\cap\overline{\Omega_-(\Gamma_+)}$ the closed region. In general $\overline{\Omega}$ may not equal $\tilde{\Omega}$ as $\Gamma_-$ and $\Gamma_+$ may have common components. Let $\Gamma_-^\prime$ and $\Gamma_+^\prime$ be two hypersurfaces, not necessarily self-expanders, both asymptotic to $\mathcal{C}$ and assume $\Gamma_-^\prime\preceq\Gamma_-\preceq\Gamma_+\preceq\Gamma_+^\prime$. If $\Omega^\prime=\Omega_+(\Gamma_-^\prime)\cap\Omega_-(\Gamma_+^\prime)$, then $\Omega\subseteq\Omega^\prime$. We further assume $\Omega^\prime$ is thin at infinity relative to $\Gamma_-$ in the sense that there are constants $C_0=C_0(\Omega^\prime,\Gamma_-)>0$ and $\mathcal{R}_0=\mathcal{R}_0 (\Omega^\prime,\Gamma_-)>1$ so that, for all $R>\mathcal{R}_0$,
$$
\Omega^\prime\backslash B_R \subset \mathcal{T}_{C_0 R^{-n-1}e^{-\frac{R^2}{4}}} (\Gamma_-).
$$
We will fix a choice of such $\Omega^\prime$ in Section \ref{FoliationThickenSec}.

If $\Gamma$ is a $C^2$-asymptotically conical self-expander, then it follows from the interior estimates for MCF (see, e.g., Theorem 3.4 and Remark 3.6 (ii) of \cite{EHInterior}) that 
\begin{equation} \label{LinearDecayEqn}
C_{\Gamma,l}=\sup_{p\in\Gamma} \left((1+|\mathbf{x}(p)|\sum_{i=1}^l |\nabla_{\Gamma}^i \mathbf{n}_\Gamma (p)|)\right)<\infty.
\end{equation}

We also introduce the following test functions. Let
$$
\phi_{R,\delta}(p)=\left\{\begin{array}{ll} 1 & \mbox{if $p\in B_R$} \\ 1-\frac{|\mathbf{x}(p)|-R}{\delta} & \mbox{if $p\in \bar{B}_{R+\delta}\setminus B_R$} \\
0 & \mbox{if $p\in\mathbb{R}^{n+1}\setminus\bar{B}_{R+\delta}$} \end{array} \right.
$$
be a cutoff. Let
$$
\alpha_{R_1, R_2, \delta}(p)= \phi_{R_2,\delta}(p)-\phi_{R_1-\delta, \delta}(p)
$$
be the cutoff adapted to the closed annulus $\bar{B}_{R_2}\setminus B_{R_1}$.

Next, a set $Y\subseteq\mathbb{R}^{n+1}$ is \emph{quasi-convex} if there is a constant $C>0$ so that any pair of points $p,q\in Y$ can be joined by a curve $\beta$ in $Y$ with 
$$
\mathrm{Length}(\beta)\leq C|\mathbf{x}(p)-\mathbf{x}(q)|.
$$ 
We will always assume any set $Y$ under consideration to be quasi-convex. By \cite[Theorem 4.1]{Heinonen}, the space of Lipschitz functions on $Y$ is the same as the $W^{1,\infty}$ space and the norms are equivalent. 

Finally, a function $\psi\colon Y\times\mathbb{S}^n\to\mathbb{R}$ is \emph{even} if 
\begin{equation}\label{eveneqn}
\psi(p,\mathbf{v})=\psi(p,-\mathbf{v}) \mbox{ for all }(p,\mathbf{v})\in Y\times\mathbb{S}^n.
\end{equation}
Observe that an even function is naturally identified with a function on the Grassman $n$-plane bundle of $Y$. We will always assume functions on $Y\times\mathbb{S}^n$ to be even. 

\subsection{Main results of \cite{BWExpanderRelEnt}} \label{ExpanderRelEntSec}
Follow the conventions of Section \ref{ConventionSec}. We will need several results from \cite{BWExpanderRelEnt} in this paper. The first of theses is the existence of the relative expander entropy in the obstacle setting for domains that are thin at infinity; see Proposition 3.4 and Theorem 3.1 of \cite{BWExpanderRelEnt}.

\begin{prop} \label{RelEntropyProp}
There are constants $\bar{R}_0=\bar{R}_0(\Omega^\prime,\Gamma_-)>1$ and $\bar{C}_0=\bar{C}_0(\Omega^\prime, \Gamma_-)>0$ so that if $\Gamma=\partial^* U$ for some $U\in\mathcal{C}(\Gamma_-^\prime,\Gamma_+^\prime)$ and $\psi\in Lip(\overline{\Omega^\prime})$ satisfies $\psi\geq 0$, then, for any $R_2>R_1+\delta>R_1>\bar{R}_0$, 
$$
E[\Gamma,\Gamma_-;\phi_{R_2,\delta}\psi] \geq E[\Gamma,\Gamma_-; \phi_{R_1,\delta}\psi]-\bar{C}_0 R_1^{-1} \Vert \psi \Vert_{Lip}.
$$
As consequences, one has
\begin{enumerate}
\item For any $R_2>R_1>\bar{R}_0$,
$$
E_{rel}[\Gamma, \Gamma_-; \bar{B}_{R_2}]\geq E_{rel}[\Gamma, \Gamma_-; \bar{B}_{R_1}]-\bar{C}_0 R_1^{-1}.
$$
In particular, $E_{rel}[\Gamma, \Gamma_-]=\lim_{R\to\infty} E_{rel}[\Gamma,\Gamma_-; \bar{B}_R]$ exists (possibly positive infinite) and, for any $R> \bar{R}_0$, satisfies the estimate
$$
E_{rel}[\Gamma, \Gamma_-]\geq E_{rel}[\Gamma, \Gamma_-; \bar{B}_R]-\bar{C}_0 R^{-1}.
$$
\item For any $\delta>0$, $\lim_{R\to\infty} E[\Gamma,\Gamma_-; \phi_{R,\delta}]=E_{rel}[\Gamma,\Gamma_-]$.
\end{enumerate}
\end{prop}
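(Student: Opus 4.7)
The core of the proposition is the main inequality, which can be rewritten as
$$E[\Gamma, \Gamma_-; (\phi_{R_2,\delta} - \phi_{R_1,\delta})\psi] \geq -\bar{C}_0 R_1^{-1}\Vert\psi\Vert_{Lip}.$$
The cutoff $\phi_{R_2,\delta} - \phi_{R_1,\delta}$ is supported in the annular region $A \subseteq \bar{B}_{R_2+\delta} \setminus B_{R_1-\delta}$, and I plan to prove in fact the two-sided bound $|E[\Gamma, \Gamma_-; \eta]| \leq \bar C_0 R_1^{-1}\Vert\psi\Vert_{Lip}$ with $\eta = (\phi_{R_2,\delta} - \phi_{R_1,\delta})\psi$ via a calibration/divergence argument. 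Extend $\mathbf{n}_{\Gamma_-}$ to a smooth unit vector field $\mathbf{X} = \nabla d_{\Gamma_-}$ on a one-sided tubular neighborhood of $\Gamma_-$; by the thin-at-infinity hypothesis on $\Omega^\prime$, the set $\overline{\Omega^\prime}\setminus B_{R_1}$ is contained in such a neighborhood of thickness at most $C_0 R_1^{-n-1} e^{-R_1^2/4}$. Since $\Gamma_-$ is a $C^3$ self-expander, $\mathrm{div}(e^{|\mathbf{x}|^2/4}\mathbf{X})|_{\Gamma_-} = e^{|\mathbf{x}|^2/4}(\tfrac{1}{2}\mathbf{x}\cdot\mathbf{n}_{\Gamma_-} - H_{\Gamma_-}) = 0$, and Taylor expansion in $d_{\Gamma_-}$ yields $\mathrm{div}(e^{|\mathbf{x}|^2/4}\mathbf{X}) = O(|d_{\Gamma_-}|)\cdot e^{|\mathbf{x}|^2/4}$ throughout the tube.

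Applying the divergence theorem for Caccioppoli sets to $\mathbf{V} = \eta\, e^{|\mathbf{x}|^2/4}\mathbf{X}$ on the symmetric difference $U\triangle \Omega_-(\Gamma_-)$ produces, up to signs, an identity of the form
$$E[\Gamma, \Gamma_-; \eta] = \int_{\partial^* U} \eta e^{|\mathbf{x}|^2/4}(\mathbf{X}\cdot\mathbf{n}_{\partial^* U} - 1)\, d\mathcal{H}^n \;+\; \int_{U\triangle\Omega_-(\Gamma_-)} \mathrm{div}(\mathbf{V})\, dx.$$
The volume term is bounded by $C\int_A |d_{\Gamma_-}|\cdot e^{|\mathbf{x}|^2/4}\, dx$, and the boundary error is handled by sidestepping the lack of $C^1$ regularity of $\Gamma$ via slicing the Caccioppoli-set formula through level sets of $d_{\Gamma_-}$, reducing the estimate to an integral over $\Gamma_-$ itself. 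In both cases, the $e^{-R^2/4}$ factor from the thickness exactly cancels the weight $e^{R^2/4}$, leaving a polynomial integrand of order $R^{-n-1}$. Against the $O(R^{n-1})$ area growth of $\Gamma_-$, this is of order $R^{-2}$, whose integral over $\{R > R_1\}$ produces the desired $O(R_1^{-1})\Vert\psi\Vert_{Lip}$ bound; the constants depend only on the cone, $\Gamma_-$, and the thin-at-infinity constants $C_0$, $\mathcal{R}_0$.

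Consequence (1) follows by taking $\psi\equiv 1$: the function $a(R) := E[\Gamma, \Gamma_-; \phi_{R,\delta}]$ satisfies $a(R_2) \geq a(R_1) - \bar C_0 R_1^{-1}$, and a standard $\liminf$-versus-$\limsup$ comparison using this estimate forces $\lim_{R\to\infty} a(R)$ to exist in $\mathbb{R}\cup\{+\infty\}$. Letting $\delta \to 0^+$ and applying the same thin-region calculation to the shell $\bar B_{R+\delta}\setminus B_R$ transfers the bound from $\phi_{R,\delta}$ to the indicator $\mathbf{1}_{\bar B_R}$, giving $E_{rel}[\Gamma, \Gamma_-]\geq E_{rel}[\Gamma, \Gamma_-;\bar B_R] - \bar C_0 R^{-1}$. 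Consequence (2) is an immediate instance of the same estimate applied to $\phi_{R,\delta} - \mathbf{1}_{\bar B_R}$, which is supported in that shell. The main technical obstacle I anticipate is precisely the boundary error: because $\Gamma$ is only a reduced boundary, there is no pointwise control on $|\mathbf{X}\cdot\mathbf{n}_{\partial^* U} - 1|$, and the cleanest route seems to be a slicing/coarea argument with respect to $d_{\Gamma_-}$ that pushes everything onto the smooth $\Gamma_-$, where the $C^3$ regularity provides the quantitative Taylor estimates driving the calculation.
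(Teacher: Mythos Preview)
The paper does not prove this proposition; it is quoted from \cite{BWExpanderRelEnt}, with Lemma~\ref{VectorFieldLem} (the vector field $\mathbf{N}$ extending $\mathbf{n}_{\Gamma_-}$) singled out as the key ingredient. Your choice $\mathbf{X}=\nabla d_{\Gamma_-}$ plays exactly the same role, and the divergence-theorem/calibration scheme you describe is indeed the correct mechanism.

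There is, however, a genuine gap. You assert the two-sided bound
\[
\bigl|E[\Gamma,\Gamma_-;(\phi_{R_2,\delta}-\phi_{R_1,\delta})\psi]\bigr|\le \bar C_0 R_1^{-1}\Vert\psi\Vert_{Lip},
\]
but this is false: the proposition explicitly allows $E_{rel}[\Gamma,\Gamma_-]=+\infty$, in which case the annular differences are unbounded above. The argument can only produce the \emph{one-sided} lower bound, and that is all the statement claims.

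The error is tied to your identification of the ``main technical obstacle.'' In your identity (with the sign corrected),
\[
E[\Gamma,\Gamma_-;\eta]=\int_{\partial^* U}\eta\,e^{|\mathbf{x}|^2/4}\bigl(1-\mathbf{X}\cdot\mathbf{n}_{\partial^* U}\bigr)\,d\mathcal{H}^n+\int_{U\triangle\Omega_-(\Gamma_-)}\mathrm{div}(\mathbf{V})\,dx,
\]
the boundary integrand is \emph{nonnegative} because $|\mathbf{X}|=1$ and $\eta\ge 0$. This is the entire point of the calibration: the term is not an error to be controlled but a nonnegative contribution to be \emph{discarded}, yielding $E[\Gamma,\Gamma_-;\eta]\ge\int\mathrm{div}(\mathbf{V})$ directly. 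No slicing against level sets of $d_{\Gamma_-}$ is needed, and the low regularity of $\partial^* U$ is irrelevant---the Gauss--Green theorem for Caccioppoli sets and the a.e.\ inequality $\mathbf{X}\cdot\mathbf{n}_{\partial^* U}\le 1$ suffice. Once you drop this term, the remaining volume integral is estimated exactly as you outline (Taylor-expanding $\mathrm{div}\mathbf{X}+\tfrac12\mathbf{x}\cdot\mathbf{X}$ in $d_{\Gamma_-}$ and using the thin-at-infinity hypothesis), though you should also track the $\nabla\eta\cdot\mathbf{X}$ contribution, where the smallness of $\mathbf{x}\cdot\mathbf{X}$ (cf.\ Lemma~\ref{VectorFieldLem}(3)) handles the radial cutoff derivatives. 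Your treatment of the two consequences is fine.
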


We also need the following weighted estimate; see Propositions 4.6 and 4.8 as well as Theorem 4.1 of \cite{BWExpanderRelEnt}.

\begin{prop} \label{WeightRelProp}
There are constants $\bar{R}_1=\bar{R}_1(\Omega^\prime,\Gamma_-)>1$ and $\bar{C}_1=\bar{C}_1(\Omega^\prime,\Gamma_-)>0$ so that if $\Gamma=\partial^* U$ for some $U\in\mathcal{C}(\Gamma_-^\prime,\Gamma_+^\prime)$ and $\psi\in\mathfrak{X}(\overline{\Omega^\prime})$, then the following is true:
\begin{enumerate}
\item \label{WeightAnnulusItem} For any $R_2>R_1>R_1-\delta>\frac{1}{2}R_1>\bar{R}_1$, 
$$
\left|E[\Gamma,\Gamma_-; \alpha_{R_1,R_2,\delta}\psi] \right| \leq \bar{C}_1 \left(R_1^{-1}+\left|E[\Gamma,\Gamma_-; \alpha_{R_1,R_2,\delta}]\right|\right)\Vert \psi \Vert_{\mathfrak{X}};
$$
\item \label{WeightBallItem} For any $0<\delta<1$ and $R>\bar{R}_1$,
$$
\left|E[\Gamma,\Gamma_-; \phi_{R,\delta}\psi]\right| \leq \bar{C}_1 \left(1+\left|E[\Gamma,\Gamma_-; \phi_{R,\delta}]\right| \right)\Vert \psi \Vert_{\mathfrak{X}};
$$
\item \label{WeightGlobalItem} If, in addition, $E_{rel}[\Gamma,\Gamma_-]<\infty$, then $E_{rel}[\Gamma,\Gamma_-; \psi]$ exists and 
$$
\left| E_{rel}[\Gamma, \Gamma_-; \psi] \right| \leq  \bar{C}_1 \left(1+ \left|E_{rel}[\Gamma, \Gamma_-]\right|\right)\Vert \psi \Vert_{\mathfrak{X}}.
$$
\end{enumerate}
\end{prop}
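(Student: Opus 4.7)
The plan is to prove item~(\ref{WeightAnnulusItem}) first using the thinness of $\Omega^\prime$ at infinity, and then deduce items~(\ref{WeightBallItem}) and (\ref{WeightGlobalItem}) by combining item~(\ref{WeightAnnulusItem}) with an elementary bound on compact sets together with Proposition~\ref{RelEntropyProp}. For item~(\ref{WeightAnnulusItem}), the goal is to show that the $\mathbf{n}$-dependence of $\psi$ contributes only a small error relative to the unweighted quantity $E[\Gamma,\Gamma_-;\alpha_{R_1,R_2,\delta}]$. Since $\Omega^\prime\setminus B_R\subset\mathcal{T}_{C_0 R^{-n-1}e^{-R^2/4}}(\Gamma_-)$ for $R>\mathcal{R}_0$, any $\Gamma=\partial^*U$ with $U\in\mathcal{C}(\Gamma_-^\prime,\Gamma_+^\prime)$ is extremely close to $\Gamma_-$ on the annulus $\bar{B}_{R_2}\setminus B_{R_1/2}$. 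For $p\in\Gamma$ in this thin region with nearest point projection $q$ to $\Gamma_-$, the splitting
\[
\psi(p,\mathbf{n}_\Gamma(p))-\psi(q,\mathbf{n}_{\Gamma_-}(q))=\bigl[\psi(p,\mathbf{n}_\Gamma(p))-\psi(p,\mathbf{n}_{\Gamma_-}(q))\bigr]+\bigl[\psi(p,\mathbf{n}_{\Gamma_-}(q))-\psi(q,\mathbf{n}_{\Gamma_-}(q))\bigr]
\]
controls the first bracket by $(1+|\mathbf{x}(p)|)^{-1}\|\psi\|_\mathfrak{X}\cdot|\mathbf{n}_\Gamma(p)-\mathbf{n}_{\Gamma_-}(q)|$, which is exactly what the third term in the $\mathfrak{X}$-norm is designed to estimate, and the second by $\|\psi\|_{Lip}\cdot|p-q|$. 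The exponential smallness of $|p-q|$ absorbs the weight $e^{|\mathbf{x}|^2/4}$, and integrating $(1+|\mathbf{x}|)^{-1}$ against the conical mass density across the annulus produces the factor $R_1^{-1}$. The leading term $\psi(q,\mathbf{n}_{\Gamma_-}(q))$ depends only on position, and pulling it back via the projection bounds its contribution by $\|\psi\|_\infty\cdot|E[\Gamma,\Gamma_-;\alpha_{R_1,R_2,\delta}]|$.

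For item~(\ref{WeightBallItem}), decompose $\phi_{R,\delta}=\phi_{\bar{R}_1+1,\delta_0}+(\phi_{R,\delta}-\phi_{\bar{R}_1+1,\delta_0})$, where the second summand is supported in the thin regime. The compactly supported piece contributes an $\|\psi\|_\infty$ bound times the mass of $\Gamma\cup\Gamma_-$ on $\bar{B}_{\bar{R}_1+1+\delta_0}$, and this mass is uniformly bounded on $\mathcal{C}(\Gamma_-^\prime,\Gamma_+^\prime)$ since any such $\Gamma$ is sandwiched by the smooth $\Gamma_\pm^\prime$; the annular piece is handled by item~(\ref{WeightAnnulusItem}), with $|E[\Gamma,\Gamma_-;\alpha_{\bar{R}_1+1,R,\delta}]|$ bounded by $|E[\Gamma,\Gamma_-;\phi_{R,\delta}]|$ plus controlled constants via Proposition~\ref{RelEntropyProp}. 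For item~(\ref{WeightGlobalItem}), fix $\delta\in(0,1)$ and send $R\to\infty$. Item~(\ref{WeightAnnulusItem}) combined with the finiteness of $E_{rel}[\Gamma,\Gamma_-]$, which by Proposition~\ref{RelEntropyProp}(1) forces $|E[\Gamma,\Gamma_-;\alpha_{R,R^\prime,\delta}]|\to 0$ as $R,R^\prime\to\infty$, implies that $\{E[\Gamma,\Gamma_-;\phi_{R,\delta}\psi]\}_R$ is Cauchy, so the limit defining $E_{rel}[\Gamma,\Gamma_-;\psi]$ exists; passing to the limit in item~(\ref{WeightBallItem}) and applying item~(2) of Proposition~\ref{RelEntropyProp} on the right-hand side gives the global estimate.

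The main obstacle is supplying the quantitative $C^1$ control needed to bound $|\mathbf{n}_\Gamma-\mathbf{n}_{\Gamma_-}|$ pointwise on the annulus. The thinness hypothesis on $\Omega^\prime$ is a priori a $C^0$ statement, and the reduced boundary of a general Caccioppoli set can oscillate wildly inside even a very thin slab, so $C^0$-closeness does not automatically upgrade to $C^1$-closeness of normals. The way around this, following \cite{BWExpanderRelEnt}, is to bypass normals entirely: using that $\psi$ is even, one rewrites $E[\Gamma,\Gamma_-;\alpha\psi]$ as the pairing of $\alpha\psi$ with the difference current $\partial^*U-\partial^*\Omega_-(\Gamma_-)$, converts this to a weighted volume integral over the symmetric difference $U\triangle\Omega_-(\Gamma_-)$ via the divergence theorem applied to a carefully chosen vector field that encodes the $\mathbf{n}$-dependence of $\psi$, and then estimates using the exponential smallness of this symmetric difference that is guaranteed by the thinness of $\Omega^\prime$. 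It is this integral-form reformulation, rather than a naive pointwise graphical comparison, that makes the estimate robust enough to hold for arbitrary Caccioppoli-boundary competitors.
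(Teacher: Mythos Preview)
The paper does not prove this proposition; it is quoted from Propositions~4.6, 4.8 and Theorem~4.1 of \cite{BWExpanderRelEnt}, with the only internal hint being that the vector field $\mathbf{N}$ of Lemma~\ref{VectorFieldLem} is ``a key ingredient.'' Your third paragraph is therefore the operative part of the proposal, and it is on target: the divergence-theorem reformulation using an extension of $\mathbf{n}_{\Gamma_-}$ is exactly the mechanism, and your diagnosis that the naive pointwise comparison of normals fails for general Caccioppoli boundaries is correct and important. The reduction of (\ref{WeightGlobalItem}) to (\ref{WeightBallItem}) via the Cauchy criterion, using (\ref{WeightAnnulusItem}) together with Proposition~\ref{RelEntropyProp}, is also right.

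There is one concrete error in your sketch for item~(\ref{WeightBallItem}). You claim the weighted mass of $\Gamma$ on the fixed ball $\bar{B}_{\bar{R}_1+1+\delta_0}$ is uniformly bounded over $U\in\mathcal{C}(\Gamma_-^\prime,\Gamma_+^\prime)$ ``since any such $\Gamma$ is sandwiched by the smooth $\Gamma_\pm^\prime$.'' This is false: being trapped between two hypersurfaces is a $C^0$ constraint and places no bound on the perimeter of $U$ in a compact region (think of a highly wrinkled graph inside a thin slab). The repair is to note that the compact piece contributes at most
\[
\|\psi\|_\infty\Bigl( \bigl|E[\Gamma,\Gamma_-;\phi_{\bar{R}_1+1,\delta}]\bigr| + 2\int_{\Gamma_-\cap\bar{B}_{\bar{R}_1+2}}e^{|\mathbf{x}|^2/4}\,d\mathcal{H}^n\Bigr),
\]
where the second term is a fixed constant and the first is bounded, via the monotonicity of Proposition~\ref{RelEntropyProp}, by $\bigl|E[\Gamma,\Gamma_-;\phi_{R,\delta}]\bigr|+\bar{C}_0\bar{R}_1^{-1}$ plus the same $\Gamma_-$-constant. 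Thus the possibly large mass is absorbed into the $\bigl|E[\Gamma,\Gamma_-;\phi_{R,\delta}]\bigr|$ term already present on the right-hand side of (\ref{WeightBallItem}), and the estimate closes. With this correction your outline is sound.
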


A key ingredient in the proofs of Propositions \ref{RelEntropyProp} and \ref{WeightRelProp} is the existence of a ``good" vector field near infinity compatible with the self-expander $\Gamma_-$; see \cite[Proposition 3.3]{BWExpanderRelEnt}.

\begin{lem} \label{VectorFieldLem}
There are constants $\bar{R}_2=\bar{R}_2(\Omega^\prime,\Gamma_-)>1$ and $\bar{C}_2=\bar{C}_2(\Omega^\prime,\Gamma_-)>0$ and a smooth vector field $\mathbf{N}\colon \overline{\Omega^\prime}\backslash \bar{B}_{\bar{R}_2} \to \mathbb{R}^{n+1}$ that satisfies:
\begin{enumerate}
\item \label{UnitLengthItem} $|\mathbf{N}|=1$;
\item \label{InitialItem} $\mathbf{N}|_{\Gamma_-}=\mathbf{n}_{\Gamma_-}$;
\item \label{C3EstimateItem} $|\mathbf{x}\cdot \mathbf{N}|+\sum_{i=1}^3 |\nabla^i\mathbf{N}|\leq  \bar{C}_2 (1+|\mathbf{x}|)^{-1}$.
\end{enumerate}
\end{lem}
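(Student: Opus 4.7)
The plan is to define $\mathbf{N}$ by precomposing $\mathbf{n}_{\Gamma_-}$ with the nearest-point projection $\pi$ onto $\Gamma_-$. The hypothesis that $\Omega'$ is exponentially thin at infinity relative to $\Gamma_-$ makes this possible on the whole region $\overline{\Omega'}\setminus \bar{B}_{\bar{R}_2}$ for $\bar{R}_2$ large enough: the interior estimates recorded in \eqref{LinearDecayEqn} give $|A_{\Gamma_-}(q)| = O(|\mathbf{x}(q)|^{-1})$, so the local reach of $\Gamma_-$ at $q$ is at least $c_0|\mathbf{x}(q)|$, while the thinness assumption says $\dist(p,\Gamma_-) \leq C_0 R^{-n-1}e^{-R^2/4}$ for $p\in\Omega'\setminus B_R$, which is exponentially smaller than $c_0|\mathbf{x}(p)|$. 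Hence for such $p$ there is a unique closest point $\pi(p) \in \Gamma_-$, and $p \mapsto \pi(p)$ is $C^3$-smooth with uniformly bounded derivatives of orders one through three.

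Having set
$$
\mathbf{N}(p) := \mathbf{n}_{\Gamma_-}(\pi(p)),
$$
items (1) and (2) are immediate. For the derivative bounds in item (3), the chain rule combined with the uniform $C^3$-control on $\pi$ and with \eqref{LinearDecayEqn} (applied to each factor $\nabla_{\Gamma_-}^a\mathbf{n}_{\Gamma_-}(\pi(p))$ with $a \geq 1$) yields
$$
|\nabla^i \mathbf{N}(p)| \leq C\sum_{j=1}^i |\nabla_{\Gamma_-}^j \mathbf{n}_{\Gamma_-}(\pi(p))| \leq C' |\mathbf{x}(\pi(p))|^{-1} \leq C''(1+|\mathbf{x}(p)|)^{-1}
$$
for $i=1,2,3$, where the comparison of $|\mathbf{x}(\pi(p))|$ with $|\mathbf{x}(p)|$ uses $|\mathbf{x}(p)-\mathbf{x}(\pi(p))|=\dist(p,\Gamma_-)\ll |\mathbf{x}(p)|$.

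The crux of the argument is the bound on the radial component $\mathbf{x}\cdot\mathbf{N}$. Decompose
$$
\mathbf{x}(p)\cdot\mathbf{N}(p) = \bigl(\mathbf{x}(p)-\mathbf{x}(\pi(p))\bigr)\cdot\mathbf{n}_{\Gamma_-}(\pi(p)) + \mathbf{x}(\pi(p))\cdot\mathbf{n}_{\Gamma_-}(\pi(p)).
$$
The first summand has modulus exactly $\dist(p,\Gamma_-)$ since $p-\pi(p)$ is parallel to $\mathbf{n}_{\Gamma_-}(\pi(p))$, and by thinness this decays faster than any polynomial in $|\mathbf{x}(p)|$. For the second summand, the self-expander equation \eqref{ExpanderEqn}, rewritten as $\mathbf{x}^\perp = 2\mathbf{H}_{\Gamma_-} = -2H_{\Gamma_-}\mathbf{n}_{\Gamma_-}$, yields the pointwise identity $\mathbf{x}\cdot\mathbf{n}_{\Gamma_-} = -2H_{\Gamma_-}$ on $\Gamma_-$; combined with $|H_{\Gamma_-}(q)|\leq n|A_{\Gamma_-}(q)| = O(|\mathbf{x}(q)|^{-1})$ from \eqref{LinearDecayEqn}, this produces the desired $O((1+|\mathbf{x}(p)|)^{-1})$ bound.

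The main obstacle, such as it is, is the self-expander identity $\mathbf{x}\cdot\mathbf{n}_{\Gamma_-} = -2H_{\Gamma_-}$: on a general asymptotically conical hypersurface one has only $\mathbf{x}\cdot\mathbf{n}_{\Gamma} = o(1)$, and the extra factor of $|\mathbf{x}|^{-1}$ that holds for self-expanders is what forces the sharp radial decay of $\mathbf{N}$ required in item (3). Everything else — the smoothness and uniform control of $\pi$, and the chain-rule decay of $\nabla^i\mathbf{N}$ — is a standard tubular-neighborhood computation made quantitative by \eqref{LinearDecayEqn} and the exponential thinness of $\Omega'$.
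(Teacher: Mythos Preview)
The paper does not give a proof of this lemma; it is quoted verbatim from the companion paper \cite[Proposition 3.3]{BWExpanderRelEnt}. Your construction---extend $\mathbf{n}_{\Gamma_-}$ by precomposition with the nearest-point projection $\pi$, use the self-expander identity $\mathbf{x}\cdot\mathbf{n}_{\Gamma_-}=-2H_{\Gamma_-}$ together with \eqref{LinearDecayEqn} for the radial part, and the chain rule plus \eqref{LinearDecayEqn} for the derivative bounds---is correct and is the natural argument; it is almost certainly what the cited reference does as well. One remark you might add for completeness: the uniform $C^3$-control on $\pi$ that you invoke follows cleanly from the formula $\pi(p)=p-d(p)\nabla d(p)$ (signed distance $d$) once one knows $|\nabla^k d|=O((1+|\mathbf{x}|)^{-1})$ for $k\geq 2$, which in turn is a Riccati-equation consequence of \eqref{LinearDecayEqn}; this makes the ``standard tubular-neighborhood computation'' fully quantitative.
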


Finally, we need the following result -- see \cite[Proposition 6.3]{BWExpanderRelEnt} -- which implies that for (possibly singular) hypersurfaces that are ``reasonable" near infinity the relative entropy is finite.

\begin{prop} \label{RelEntropyAnnuliProp}
Fix $\bar{K}_0>0$ and $\bar{\mathcal{R}}_0>1$. There is a radius $\bar{\mathcal{R}}_1=\bar{\mathcal{R}}_1(\Omega^\prime,\Gamma_-,\bar{K}_0,\bar{\mathcal{R}}_0)>\bar{\mathcal{R}}_0$ and a constant $\bar{K}_1=\bar{K}_1(\Omega^\prime,\Gamma_-,\bar{K}_0)>0$ so that if $\Gamma$ is a hypersurface in $\mathbb{R}^{n+1}\setminus\bar{B}_{\bar{\mathcal{R}}_0}$ trapped between $\Gamma_-^\prime\setminus\bar{B}_{\bar{\mathcal{R}}_0}$ and $\Gamma_+^\prime\setminus\bar{B}_{\bar{\mathcal{R}}_0}$ that is asymptotic to $\cC$ and satisfies 
$$
\sup_{p\in\Gamma} (1+|\mathbf{x}(p)|) |A_\Gamma(p)| \leq \bar{K}_0,
$$
then, for any $R_2>R_1>\bar{\mathcal{R}}_1$ and $0<\delta<1$,
$$
\left| E[\Gamma,\Gamma_-; \alpha_{R_1,R_2,\delta}] \right| \leq \bar{K}_1 R_1^{-2}.
$$
\end{prop}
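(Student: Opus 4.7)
My plan is to exploit the thinness of $\Omega^\prime$ to write $\Gamma$ as a normal graph of a function $u$ over $\Gamma_-$ with very small derivatives, and then split the quantity $E[\Gamma,\Gamma_-;\alpha_{R_1,R_2,\delta}]$ into a ``normal-deviation'' term and a ``divergence'' term. The key point is that the decay $|u|\lesssim |\mathbf{x}|^{-n-1}e^{-|\mathbf{x}|^2/4}$ supplied by the thinness assumption exactly cancels the weight $e^{|\mathbf{x}|^2/4}$ with enough polynomial factors to spare.

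\emph{Step 1 (Graphical representation).} First I would establish that for $R_1$ sufficiently large (depending on $\Omega^\prime,\Gamma_-,\bar{K}_0,\bar{\mathcal{R}}_0$) every point of $\Gamma\cap(B_{R_2+\delta+1}\setminus B_{R_1/2})$ lies on the normal graph of a function $u$ over $\Gamma_-$. Indeed, $\Gamma_-$ satisfies \eqref{LinearDecayEqn}, so its tubular neighborhood at scale $\rho$ has width comparable to $\rho$, while the thinness of $\Omega^\prime$ gives $\mathrm{dist}(p,\Gamma_-)\leq C_0\rho^{-n-1}e^{-\rho^2/4}$ for $p\in\Gamma$ with $|\mathbf{x}(p)|=\rho$. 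A standard injectivity-of-projection argument, using $(1+|\mathbf{x}|)|A_\Gamma|\leq\bar{K}_0$, then makes $\Gamma$ locally the graph of a function $u$ with $|u|\leq C_0\rho^{-n-1}e^{-\rho^2/4}$. The graph equation together with the curvature bounds yields $|\nabla_{\Gamma_-}^2 u|\lesssim |A_\Gamma|+|A_{\Gamma_-}|\lesssim\rho^{-1}$, and interpolating the $C^0$ and $C^2$ bounds on balls of the right scale $s^2\sim |u|/|\nabla^2 u|\ll\rho^2$ gives
$$
|\nabla_{\Gamma_-}u(q)|\leq C\rho^{-(n+2)/2}e^{-\rho^2/8}\quad\text{and hence}\quad 1-\mathbf{n}_\Gamma\cdot\mathbf{n}_{\Gamma_-}\circ\pi\lesssim \rho^{-n-2}e^{-\rho^2/4}.
$$

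\emph{Step 2 (Decomposition via $\mathbf{N}$ and the divergence theorem).} Using the vector field $\mathbf{N}$ from Lemma \ref{VectorFieldLem}, which equals $\mathbf{n}_{\Gamma_-}$ on $\Gamma_-$ and satisfies $|\mathbf{x}\cdot\mathbf{N}|+|\mathrm{div}\,\mathbf{N}|\lesssim (1+|\mathbf{x}|)^{-1}$, I would write $E[\Gamma,\Gamma_-;\alpha_{R_1,R_2,\delta}]=\mathrm{I}+\mathrm{II}$, where
$$
\mathrm{I}=\int_\Gamma \alpha_{R_1,R_2,\delta}\,e^{|\mathbf{x}|^2/4}\bigl(1-\mathbf{n}_\Gamma\cdot\mathbf{N}\bigr)d\mathcal{H}^n
$$
and
$$
\mathrm{II}=\int_\Gamma \alpha_{R_1,R_2,\delta}\,e^{|\mathbf{x}|^2/4}\mathbf{n}_\Gamma\cdot\mathbf{N}\,d\mathcal{H}^n-\int_{\Gamma_-}\alpha_{R_1,R_2,\delta}\,e^{|\mathbf{x}|^2/4}\mathbf{n}_{\Gamma_-}\cdot\mathbf{N}\,d\mathcal{H}^n.
$$
Applying the divergence theorem (piecewise, after splitting the symmetric difference of $\Omega_-(\Gamma)$ and $\Omega_-(\Gamma_-)$ into sign-definite components) to the vector field $\alpha_{R_1,R_2,\delta}\,e^{|\mathbf{x}|^2/4}\mathbf{N}$ on the thin strip $W$ between $\Gamma$ and $\Gamma_-$ bounds
$$
|\mathrm{II}|\leq \int_W e^{|\mathbf{x}|^2/4}\Bigl(|\nabla\alpha_{R_1,R_2,\delta}|+C\alpha_{R_1,R_2,\delta}(1+|\mathbf{x}|)^{-1}\Bigr)dV.
$$
Parametrizing $W$ by the normal exponential map of $\Gamma_-$, the cross-sectional width at radius $\rho$ is at most $2|u|\lesssim\rho^{-n-1}e^{-\rho^2/4}$, which combined with the polynomial area growth $\mathcal{H}^n(\Gamma_-\cap B_\rho)\lesssim\rho^n$ converts the weight $e^{|\mathbf{x}|^2/4}$ into a radial integrand of order $\rho^{-3}$, giving $\int_{R_1}^{R_2}\rho^{-3}d\rho\lesssim R_1^{-2}$; the $|\nabla\alpha_{R_1,R_2,\delta}|\leq\delta^{-1}$ piece is supported on two shells of $\Gamma_-$-area $\lesssim R_i^{n-1}\delta$ and contributes $\lesssim R_1^{-2}$ as well. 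For $\mathrm{I}$, the Step 1 estimates together with $|\mathbf{N}(p)-\mathbf{N}(\pi(p))|\leq\bar{C}_2(1+|\mathbf{x}|)^{-1}|u|$ give $(1-\mathbf{n}_\Gamma\cdot\mathbf{N})e^{|\mathbf{x}|^2/4}\lesssim|\mathbf{x}|^{-n-2}$ on $\Gamma$, so integrating against polynomial area growth again yields $|\mathrm{I}|\lesssim R_1^{-2}$, completing the bound.

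\emph{Main obstacle.} The most delicate ingredient is the graphical representation in Step 1, together with the sharp $C^1$ decay of $u$: the strong $C^0$-closeness from thinness must be upgraded to actual graphicality using only the a priori curvature bound on $\Gamma$, and the interpolation must be carried out at the optimal scale $s^2\sim |u|/|\nabla^2 u|$ to produce the required exponential $C^1$ decay. Once this is in place, the remaining estimates are divergence-theorem computations similar in spirit to, but simpler than, those behind Proposition \ref{WeightRelProp}, because here the Gaussian weight is completely absorbed by the thinness of $\Omega^\prime$ and only polynomial decay remains.
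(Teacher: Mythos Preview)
Your proposal is sound, but note that the paper does not actually prove this proposition: it is quoted verbatim from the companion paper \cite[Proposition~6.3]{BWExpanderRelEnt} and stated here without proof. So there is no ``paper's own proof'' to compare against in this document.

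That said, your argument is very much in the spirit of the machinery the paper imports from \cite{BWExpanderRelEnt}. The decomposition $E[\Gamma,\Gamma_-;\alpha_{R_1,R_2,\delta}]=\mathrm{I}+\mathrm{II}$ via the calibrating vector field $\mathbf{N}$ of Lemma~\ref{VectorFieldLem}, followed by the divergence theorem on the thin strip between $\Gamma$ and $\Gamma_-$, is exactly the mechanism behind Propositions~\ref{RelEntropyProp} and~\ref{WeightRelProp} (which are also quoted from \cite{BWExpanderRelEnt}). Your estimates check out: the thinness bound $|u|\lesssim\rho^{-n-1}e^{-\rho^2/4}$ cancels the Gaussian weight in the volume term $\mathrm{II}$, leaving the integrable radial density $\rho^{-3}$; and the interpolation $|\nabla_{\Gamma_-}u|\lesssim(|u|\,|\nabla^2_{\Gamma_-}u|)^{1/2}\lesssim\rho^{-(n+2)/2}e^{-\rho^2/8}$ (at scale $s\sim\rho^{-n/2}e^{-\rho^2/8}\ll\rho$, well within the injectivity radius of the asymptotically conical $\Gamma_-$) gives $1-\mathbf{n}_\Gamma\cdot\mathbf{N}=\tfrac12|\mathbf{n}_\Gamma-\mathbf{N}|^2\lesssim\rho^{-n-2}e^{-\rho^2/4}$, which handles $\mathrm{I}$ the same way. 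The graphicality in Step~1, which you correctly flag as the main technical point, follows from the curvature bound on $\Gamma$ together with the exponential smallness of $\mathrm{dist}(\Gamma,\Gamma_-)$ relative to the linear tubular-neighborhood scale of $\Gamma_-$; this is a routine application of the nearest-point projection (compare \cite[Proposition~2.1]{BWExpanderRelEnt}, invoked elsewhere in the paper for the same purpose).
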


\section{Expander mean convex foliations and a thickening of $\Omega$} \label{FoliationThickenSec}
Continue to use the conventions of Section \ref{ConventionSec}. In Theorem \ref{MainThm} we are asked to find a new self-expander lying in the closed region $\tilde{\Omega}$ between the two strictly stable self-expanders $\Gamma_-$ and $\Gamma_+$. For technical reasons it is more convenient to work with a slightly ``thicker" region $\Omega^\prime$ that has good properties: Namely, it is thin at infinity relative to $\Gamma_-$,  a certain modification of the radial vector field $\mathbf{x}$ points out of the region near infinity, and $\Omega^\prime\cap\Omega_-(\Gamma_-)$ and $\Omega^\prime\cap\Omega_+(\Gamma_+)$ can be foliated in a certain way by expander mean convex hypersurfaces. The purpose of this section is to establish the existence of such a region $\Omega^\prime$.

We use the vector field $\mathbf{N}$ given by Lemma \ref{VectorFieldLem} to define the following modification of the radial vector field near infinity of $\Gamma_-$
\begin{equation} \label{X0VectorFieldEqn}
\mathbf{X}_0= \mathbf{x}-\left(\mathbf{x}\cdot \mathbf{N}\right) \mathbf{N}.
\end{equation}
Observe $\mathbf{X}_0$ is tangent to $\Gamma_-$.  The main result of this section is the following:

\begin{prop} \label{ThickeningProp} 
There exist open subsets $\Omega^\prime$ and $\Omega^{\prime\prime}$ of $\mathbb{R}^{n+1}$ so that $\tilde{\Omega}\subset\Omega^\prime$ and $\overline{\Omega^\prime}\subset\Omega^{\prime\prime}$ and with the following properties:
\begin{enumerate}
\item $\overline{\Omega^{\prime\prime}}\cap \overline{\Omega_\pm(\Gamma_\pm)}$ is foliated by $C^2$-asymptotically conical hypersurfaces, $ \set{\Gamma_s^\pm}_{s\in [0,1]}$ with $\Gamma_0^\pm=\Gamma_\pm$ and $\mathcal{C}(\Gamma_s^\pm)=\mathcal{C}(\Gamma_\pm)=\mathcal{C}$ and, for each $s>0$, $\Gamma_s^\pm$ has expander mean curvature pointing toward $\Gamma_\pm$;
\item $\Omega^\prime$ is thin at infinity relative to $\Gamma_-$ so there are constants $\mathcal{R}_0>1$ and $C_0>0$ so that, for all $R>\mathcal{R}_0$, $\Omega^\prime\setminus B_R\subset\mathcal{T}_{C_0 R^{-n-1} e^{-\frac{R^2}{4}}}(\Gamma_-)$;
\item For each $s\in (0,1]$ there is a radius $\mathcal{R}(s)>0$ so that $\Gamma_s^\pm \cap \Omega^\prime\subset B_{\mathcal{R}(s)}$;
\item There is a radius $\mathcal{R}_1>1$ so that, outside of $B_{\mathcal{R}_1}$,  $\mathbf{X}_0$ points out of $\Omega^\prime$. 
\end{enumerate}
\end{prop}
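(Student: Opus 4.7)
The plan is to produce the two foliations as normal graphs over $\Gamma_\pm$ built from positive principal Jacobi eigenfunctions guaranteed by the strict stability of $\Gamma_\pm$, to take $\Omega''$ as the region bounded by the outermost leaves, and then to shrink to $\Omega'$ by an auxiliary radial cutoff that simultaneously forces $\Omega'$ to be thin at infinity at the rate $R^{-n-1}e^{-R^2/4}$ and makes $\mathbf{X}_0$ transverse to $\partial \Omega'$ there.

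First I would invoke strict stability of $\Gamma_\pm$ to produce smooth positive functions $\phi_\pm$ on $\Gamma_\pm$ satisfying $L_{\Gamma_\pm}\phi_\pm = -\lambda_\pm \phi_\pm$ with $\lambda_\pm > 0$, where $L_{\Gamma_\pm}$ is the drift Jacobi operator of the expander functional, and decaying rapidly enough at infinity that the normal graphs
\[
\Gamma_s^\pm = \{p\pm s\phi_\pm(p)\mathbf{n}_{\Gamma_\pm}(p) : p\in\Gamma_\pm\}
\]
remain $C^2$-asymptotic to $\cC$ for all small $s$; this comes out of weighted spectral theory for $L_{\Gamma_\pm}$ on asymptotically conical ends. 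With the linearization $\mathbf{H}_{\Gamma_s^\pm}-\tfrac{\mathbf{x}^\perp}{2} = \mp s\lambda_\pm \phi_\pm \mathbf{n}_{\Gamma_\pm}+O(s^2)$ in hand, the expander mean curvature vector of $\Gamma_s^\pm$ points toward $\Gamma_\pm$ once $s$ is small. I would then rescale $s\mapsto s/s_*$ so that $s\in[0,1]$, and take $\Omega''$ to be the open region bounded by $\Gamma_1^-$ and $\Gamma_1^+$, yielding (1).

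To construct $\Omega'$ I would fix a radial smooth cutoff $\chi(p) = \rho(|\mathbf{x}(p)|)$ with $\rho\colon [0, \infty)\to(0,1)$ constant equal to some $\sigma_0\in(0,1)$ on $[0, R_*]$, strictly decreasing on $[R_*, \infty)$ with $\rho(R)\to 0$ as $R\to \infty$, chosen to decay fast enough that $\chi\phi_\pm\le CR^{-n-1}e^{-R^2/4}$ on the graph, and set $\Omega'$ to be the open region bounded by the two graphs $p\mapsto p\pm\chi(p)\phi_\pm(p)\mathbf{n}_{\Gamma_\pm}(p)$. Since $\chi<1$ pointwise, $\overline{\Omega'}\subset\Omega''$; and since $\chi\phi_\pm=o(\phi_\pm)$ at infinity, each fixed $\Gamma_s^\pm$ with $s>0$ lies outside $\Omega'$ beyond some radius $\mathcal{R}(s)$, giving (3). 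Combining the decay of $\chi\phi_\pm$ with the corresponding decay of $\dist(\Gamma_-,\Gamma_+)$ at infinity (both hypersurfaces being $C^2$-asymptotic to $\cC$, so the difference of their graph functions over $\cC$ solves a linear elliptic equation whose decaying solutions decay at the required rate) yields the thickness bound (2). For (4), since $\mathbf{X}_0\cdot\mathbf{N}\equiv 0$ the vector $\mathbf{X}_0$ has no $\mathbf{N}$-component at $\partial\Omega'$, while the outer normal $\nu$ to $\partial\Omega'$, in coordinates adapted to the $\mathbf{N}$-flow, has tangential component along $-\nabla_{\Gamma_\pm}(\chi\phi_\pm)$; since $\chi\phi_\pm$ is strictly radially decreasing outside $B_{R_*}$ and $\mathbf{X}_0|_{\Gamma_\pm}$ agrees with the outward-radial tangential field to $\Gamma_\pm$ up to lower order (controlled via Lemma \ref{VectorFieldLem}), I would conclude that $\mathbf{X}_0\cdot\nu>0$ on $\partial\Omega'\setminus B_{\mathcal{R}_1}$, proving (4).

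The hardest part will be producing the positive eigenfunctions $\phi_\pm$ with precise enough decay: one needs strict stability interpreted as strict positivity of the first eigenvalue of $-L_{\Gamma_\pm}$ on a suitable weighted function space (so that a positive principal eigenfunction exists), together with sharp decay estimates for this eigenfunction coming from the asymptotic form of the drift equation near infinity of $\Gamma_\pm$. Once these inputs are in place, the remaining steps are an elementary linearization of the expander equation and careful bookkeeping of graph thicknesses.
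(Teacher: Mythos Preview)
Your construction of the foliations $\Gamma_s^\pm$ via principal Jacobi eigenfunctions and your choice of $\Omega''$ is exactly what the paper does (Proposition \ref{EigenfuncProp} and Lemma \ref{MeanConvexLem}); items (1)--(3) go through along the lines you indicate, since the leaf $\Gamma_s^\pm$ sits at height $s\phi_\pm$ while $\partial\Omega'$ sits at height $\chi\phi_\pm$, so $\chi<s$ outside a compact set gives (3), and the known decay $\dist(\Gamma_+,\Gamma_-)\le K_0 r^{-n-1}e^{-r^2/4}$ plus your choice of $\chi$ gives (2).

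There is, however, a real gap in your argument for (4) on the upper boundary. You split $\nu=(\nu\cdot\mathbf{N})\mathbf{N}+\nu^{\mathrm{tang}}$ and claim $\nu^{\mathrm{tang}}\approx-\nabla_{\Gamma_+}(\chi\phi_+)$. But the upper boundary is a graph over $\Gamma_+$, while $\mathbf{N}$ extends $\mathbf{n}_{\Gamma_-}$, not $\mathbf{n}_{\Gamma_+}$. Writing $\Gamma_+$ as the graph of $u$ over $\Gamma_-$, one has $\mathbf{n}_{\Gamma_+}-\mathbf{N}\approx-\nabla_{\Gamma_-}u$ up to lower order, so in fact
\[
\nu^{\mathrm{tang}}\approx -\nabla_{\Gamma_-}u-\nabla_{\Gamma_+}(\chi\phi_+).
\]
Since $|\nabla_{\Gamma_-}u|\sim r|u|\lesssim r\cdot r^{-n-1}e^{-r^2/4}$ is of the \emph{same} order as $|\nabla(\chi\phi_+)|$, this extra term cannot be absorbed as lower order, and your computation of $\mathbf{X}_0\cdot\nu$ does not close as written. (The missing term does have a favorable sign once one knows $u$ is asymptotically radially decreasing, but that is an additional input you have not supplied.)

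The paper sidesteps this by not building $\partial\Omega'$ from $\chi\phi_\pm$ over $\Gamma_\pm$ at all. Instead, near infinity it writes \emph{both} boundary components $\Sigma^\pm$ as graphs of the explicit function $\pm\varphi$, $\varphi=\kappa r^{-n-1}e^{-r^2/4}$, over $\Gamma_-$ (Lemma \ref{RadialLem}), choosing $\kappa=2K_0$ so that $\Sigma^+$ lies above $\Gamma_+$. Lemma \ref{RadialLem} then gives the precise normal formula $\mathbf{n}_{\Sigma^\pm}=\mathbf{n}_{\Gamma_-}\circ\Pi_{\Gamma_-}\pm\tfrac{\mathbf{x}}{2}\varphi+O(r^{-1}\varphi)$, from which $\mathbf{X}_0\cdot\mathbf{n}_{\Sigma^\pm}=\pm\tfrac{1}{2}|\mathbf{x}|^2\varphi+O(\varphi)$ follows directly using only $\mathbf{X}_0\cdot\mathbf{N}=0$ and the bounds in Lemma \ref{VectorFieldLem}. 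Finally $\Sigma^\pm$ are patched with the foliation leaves inside a large ball to produce $\Gamma_\pm'=\partial\Omega'$. The point is that by graphing over $\Gamma_-$ with an explicit $\varphi$, the only normal direction in play is $\mathbf{N}$, and the transversality in (4) becomes a one--line estimate rather than depending on unproven asymptotics of $u$.
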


We start with some basic estimates for the lowest eigenfunction of the stability operator on a connected self-expander. For a self-expander $\Gamma$, let 
$$
L_\Gamma=\Delta_\Gamma+\frac{\mathbf{x}}{2}\cdot\nabla_\Gamma+|A_{\Gamma}|^2-\frac{1}{2}
$$
be the stability operator on $\Gamma$. For integer $l\geq 0$ define the weighted Sobolev space on $Y\subseteq\Gamma$ by
$$
W^l(Y)=\set{f\in H^{l}_{loc}(Y) \colon \Vert f \Vert_{W^l}<\infty}
$$
where 
$$
\Vert f \Vert_{W^l} =\left( \int_{Y} \sum_{0\leq i\leq l} |\nabla_{\Gamma}^i f|^2 e^{\frac{|\mathbf{x}|^2}{4}} \, d\mathcal{H}^n \right)^{\frac{1}{2}}.
$$
Observe $W^l$ is the same as the Banach space $W^l_{\frac{1}{4}}$ introduced in our earlier work \cite{BWDegree}.

\begin{prop} \label{EigenfuncProp}
If $\Gamma$ is a $C^2$-asymptotically conical connected self-expander in $\mathbb{R}^{n+1}$, then there is a unique $\mu>-\infty$ and a unique function $f>0$ on $\Gamma$ so that
$$
\left(L_\Gamma+\mu\right)f=0 \mbox{ with $\Vert f\Vert_{W^0}=1$}.
$$
Moreover, there is a constant $C^\prime_0=C^\prime_0(\Gamma,\mu)>0$ so that 
\begin{equation} \label{C0BndEigenfuncEqn}
\frac{1}{C^\prime_0} \left(1+r^2\right)^{-\frac{1}{2}(n+1-2\mu)} e^{-\frac{r^2}{4}} \leq f \leq C^\prime_0 \left(1+r^2\right)^{-\frac{1}{2}(n+1-2\mu)} e^{-\frac{r^2}{4}}
\end{equation}
and, for any $\delta>0$,  there are constants $C^\prime_m=C^\prime_m(\Gamma,\mu,\delta)>0$ for $m\geq 1$ so that 
\begin{equation} \label{DerBndEigenfuncEqn}
\Vert e^{\frac{r^2}{4+\delta}}\nabla_\Gamma^m f\Vert_{C^0}\leq C^\prime_m
\end{equation}
where $r(p)=|\mathbf{x}(p)|$ for $p\in\Gamma$. We call $\mu$ and $f$ the first eigenvalue and eigenfunction, respectively, of $L_\Gamma$.
\end{prop}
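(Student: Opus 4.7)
The approach is to realize $L_\Gamma$ as a self-adjoint elliptic operator on the weighted Hilbert space $W^0(\Gamma)$, with inner product $\langle u,v\rangle = \int_\Gamma uv\, e^{|\mathbf{x}|^2/4}\,d\mathcal{H}^n$, and then apply standard spectral theory. A short integration by parts (using $\nabla_\Gamma e^{|\mathbf{x}|^2/4} = (\mathbf{x}^\top/2) e^{|\mathbf{x}|^2/4}$) shows that $L_\Gamma$ is symmetric and that the associated quadratic form on $W^1(\Gamma)$ is
\[
Q(u) = \int_\Gamma \left(|\nabla_\Gamma u|^2 + \left(\tfrac{1}{2} - |A_\Gamma|^2\right) u^2\right) e^{\frac{|\mathbf{x}|^2}{4}}\, d\mathcal{H}^n.
\]
The estimate \eqref{LinearDecayEqn} gives $|A_\Gamma|^2 \leq C$ on $\Gamma$, so $Q(u) \geq -C\|u\|_{W^0}^2$ on $W^1(\Gamma)$. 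In particular $\mu := \inf\{Q(u) : u\in W^1(\Gamma),\ \|u\|_{W^0}=1\} > -\infty$.

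To realize the infimum I would take a minimizing sequence, which is bounded in $W^1(\Gamma)$, extract a weak $W^1$-limit $f$, and invoke the compact embedding $W^1(\Gamma)\hookrightarrow W^0(\Gamma)$ for $C^2$-asymptotically conical self-expanders (a consequence of the weighted Sobolev/Fredholm framework for $L_\Gamma$ developed in \cite{BWDegree}; concretely, the superpolynomial growth of $e^{|\mathbf{x}|^2/4}$ together with a weighted Hardy-type inequality on the ends precludes concentration of mass at infinity). This yields $\|f\|_{W^0}=1$ and $Q(f)=-\mu$, and elliptic regularity upgrades $f$ to a smooth solution of $(L_\Gamma+\mu)f=0$. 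Since $Q(|u|)=Q(u)$, one may take $f\geq 0$, and the strong maximum principle then forces $f>0$ on the connected surface $\Gamma$. For uniqueness, if the eigenspace for $-\mu$ had dimension at least two, a suitable nontrivial linear combination of two independent eigenfunctions would vanish at a prescribed point; but by the preceding Harnack argument every minimizer must be sign-definite, contradicting the vanishing.

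For the pointwise bounds \eqref{C0BndEigenfuncEqn}, I would analyze the end of $\Gamma$ using its $C^2$-asymptotic conicality. In asymptotic polar coordinates $(r,\theta)$ on $\mathcal{C}$, the operator $L_\Gamma$ is a small perturbation of
\[
\partial_r^2 + \tfrac{n-1}{r}\partial_r + \tfrac{1}{r^2}\Delta_{\mathcal{L}(\mathcal{C})} + \tfrac{r}{2}\partial_r - \tfrac{1}{2},
\]
and a direct computation shows that $w_\alpha(r)=r^\alpha e^{-r^2/4}$ satisfies $(L_\Gamma+\mu)w_\alpha = O(r^{\alpha-2} e^{-r^2/4})$ precisely when $\alpha=2\mu-n-1$. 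I would then construct super- and subsolutions of the form $C_\pm(1+r^2)^{-(n+1-2\mu)/2} e^{-r^2/4}$, perturbed if necessary by positive factors coming from eigenfunctions of $\Delta_{\mathcal{L}(\mathcal{C})}$ to absorb angular dependence, and apply the maximum principle on $\Gamma\setminus B_R$ with the values of $f$ on $\partial B_R\cap\Gamma$ fixing $C_\pm$. The derivative bounds \eqref{DerBndEigenfuncEqn} then follow from interior Schauder estimates applied to rescalings of the eigenvalue equation on balls of radius $O(1)$; the small loss $1/4\to 1/(4+\delta)$ in the exponential absorbs the extra growth produced by differentiating the Gaussian factor. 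The main obstacle I expect is this barrier construction: one must verify that the $O(r^{-1})$ errors arising from $\Gamma$ being only $C^2$-close to $\mathcal{C}$, together with the coupling of the $r^{2\mu-n-1}$ radial decay to angular modes on $\mathcal{L}(\mathcal{C})$, do not destroy the sign of $(L_\Gamma+\mu)(C_\pm w_\alpha)$ needed to close the comparison argument.
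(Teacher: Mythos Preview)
Your treatment of existence and uniqueness of $\mu$ and $f$ via the weighted quadratic form and compact embedding $W^1\hookrightarrow W^0$ is essentially what the paper does (it cites the Fredholm framework of \cite{BWDegree} rather than spelling out the minimization, but the content is the same). The derivative bounds via Schauder estimates also match.

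For the pointwise bounds, your barrier strategy is on the right track, and the paper indeed uses purely radial comparison functions---specifically $\overline{g}=(r^{-n-1+2\mu}-r^{-n-2+2\mu})e^{-r^2/4}$ and $\underline{g}=(r^{-n-1+2\mu}+r^{-n-2+2\mu})e^{-r^2/4}$, where the $\pm r^{-1}$ correction is what makes them strict super/subsolutions for large $r$. No angular corrections from eigenfunctions of $\Delta_{\mathcal{L}(\mathcal{C})}$ are needed, so that part of your plan is an unnecessary complication.

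The real gap in your proposal is the phrase ``apply the maximum principle on $\Gamma\setminus B_R$.'' This is a noncompact domain, and the zeroth-order coefficient $|A_\Gamma|^2-\tfrac12+\mu$ of $L_\Gamma+\mu$ need not have a sign, so neither a direct maximum principle nor a Phragm\'en--Lindel\"of argument is available without already knowing that $f/\overline{g}$ stays bounded at infinity---which is essentially the upper bound you want to prove. The paper circumvents this circularity: it solves Dirichlet problems for $L_\Gamma+\mu$ on compact annuli $\Gamma\cap(B_{R_i}\setminus\bar B_{R_0})$ with inner data $f$ and outer data $C\overline{g}$, transforms by $v=g_i/\overline{g}$ (resp.\ $w=g_i/\underline{g}$) to obtain an operator with zeroth-order term $(L_\Gamma+\mu)\overline{g}/\overline{g}<0$ (resp.\ $>0$), applies the standard maximum principle on each compact annulus to get $C^{-1}\underline{g}\le g_i\le C\overline{g}$, and passes to a limit $g$ on $\Gamma\setminus\bar B_{R_0}$. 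The identification $g=f$ is then done not by the maximum principle but by an energy argument: $h=g-f$ lies in $W^1(\Gamma\setminus B_{R_0})$, vanishes on the inner boundary, and a weighted Poincar\'e-type inequality on the exterior forces $h\equiv 0$. This compact-exhaustion-plus-energy-uniqueness step is the missing ingredient in your outline.
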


\begin{proof}
As $\Gamma$ is $C^2$-asymptotically conical 
\begin{equation} \label{QuadCurvEqn}
\sup_{p\in\Gamma} (1+|\mathbf{x}(p)|^2)|A_\Gamma(p)|^2<\infty.
\end{equation}
Thus, by standard spectral theory (e.g., \cite[Proposition 4.1]{BWDegree}), $L_\Gamma$ is formally self-adjoint in $W^0(\Gamma)$ and has a discrete spectrum with a finite lower bound. Hence, there is a unique $\mu>-\infty$ and, as $\Gamma$ is connected, a unique positive function $f\in W^1(\Gamma)$ so that
$$
\left(L_\Gamma+\mu\right)f=0 \mbox{ distributionally and $\Vert f\Vert_{W^0}=1$}.
$$
As $\Gamma$ is a self-expander, it is smooth and properly embedded and so standard elliptic regularity theory implies $f\in C^\infty_{loc}(\Gamma)$.
	
Let
$$
\underline{g}=\left(r^{-n-1+2\mu}+r^{-n-2+2\mu}\right) e^{-\frac{r^2}{4}} \mbox{ and } \overline{g}=\left(r^{-n-1+2\mu} -r^{-n-2+2\mu}\right)e^{-\frac{r^2}{4}}.
$$
By the curvature decay \eqref{QuadCurvEqn}, a simple computation in \cite[Lemma A.1]{BWExpanderRelEnt} and a slight modification of the proof of \cite[Proposition A.1]{BWDegree}, there are constants $R_0=R_0(\Gamma, \mu)>1$ and $C=C(\Gamma,f, \mu)>1$ so that:
\begin{itemize}
\item In $\Gamma\setminus B_{R_0}$, $-\frac{1}{2}+\mu+|A_\Gamma|^2<\mu$;
\item In $\Gamma\setminus B_{R_0}$, 
$$
\frac{1}{2} (1+r^2)^{-\frac{1}{2}(n+1-2\mu)}e^{-\frac{r^2}{4}}\leq \underline{g}<\overline{g}\leq 2 (1+r^2)^{-\frac{1}{2}(n+1-2\mu)} e^{-\frac{r^2}{4}};
$$
\item In $\Gamma\setminus B_{R_0}$, $\left(L_\Gamma+\mu\right)\overline{g}<0<\left(L_\Gamma+\mu\right)\underline{g}$;
\item On $\Gamma\cap\partial B_{R_0}$, $C^{-1}\underline{g}\leq f \leq C\overline{g}$;
\item If $h\in C^2(\Gamma\setminus B_{R_0})\cap W^1(\Gamma\setminus B_{R_0})$ satisfies $h= 0$ on $\Gamma\cap \partial B_{R_0}$, then
$$
\int_{\Gamma\setminus B_{R_0}} h^2 e^{\frac{r^2}{4}} \, d\mathcal{H}^n \leq \frac{1}{|\mu|+1} \int_{\Gamma\setminus B_{R_0}} |\nabla_\Gamma h|^2 e^{\frac{r^2}{4}} \, d\mathcal{H}^n.
$$ 
\end{itemize}
Choose a sequence of numbers, $R_i>R_0$, so that $R_i\to\infty$. As $f\in C^\infty_{loc}(\Sigma)$, the Dirichlet problem
$$
\left\{
\begin{array}{cc}
\left(L_\Gamma+\mu\right)g_i=0 & \mbox{in $\Gamma\cap (B_{R_i}\setminus \bar{B}_{R_0})$}\\
g_i=f & \mbox{on $\Gamma\cap\partial B_{R_0}$} \\
g_i=C\overline{g} & \mbox{on $\Gamma\cap\partial B_{R_i}$}
\end{array}
\right.
$$
has a unique smooth solution $g_i$. To see this observe that if $v$ satisfies
$$
0=\mathscr{L}_\Gamma^0 v +2\nabla_\Gamma \log \overline{g}\cdot \nabla_\Gamma v +\frac{(L_\Gamma+\mu)\overline{g}}{\overline{g}} v= \mathscr{L}_\Gamma^0 v+\overline{\mathbf{b}}\cdot  \nabla_\Gamma v+\overline{c} v,
$$
where $\mathscr{L}_{\Gamma}^0=\Delta_{\Gamma}+\frac{\mathbf{x}}{2}\cdot\nabla_\Gamma$, then $(L_\Gamma+\mu) (\overline{g} v) =0$. As $\overline{c}=\overline{g}^{-1}(L_\Gamma+\mu)\overline{g}<0$, it follows from standard elliptic PDE theory that
$$
\left\{
\begin{array}{cc}
\mathscr{L}_\Gamma^0 v_i +\overline{\mathbf{b}}\cdot \nabla_\Gamma v_i +\overline{c} v_i=0 & \mbox{in $\Gamma\cap (B_{R_i}\setminus \bar{B}_{R_0})$}\\
v_i=\overline{g}^{-1} f & \mbox{on $\Gamma\cap\partial B_{R_0}$} \\
v_i=C & \mbox{on $\Gamma\cap\partial B_{R_i}$}
\end{array}
\right.
$$
has a unique solution $v_i$. Hence, $g_i=\overline{g} v_i$ are the claimed solutions. Moreover, by the maximum principle, as $\overline{c}\leq 0$,  $0<v_i\leq C$ and so $0<g_i\leq C \overline{g}$.
	
In a similar fashion, $w_i=\underline{g}^{-1} g_i>0$ satisfies
$$
0=\mathscr{L}_\Gamma^0 w_i +2\nabla_\Gamma\log \underline{g} \cdot \nabla_\Gamma w_i +\frac{(L_{\Gamma} +\mu)\underline{g}}{\underline{g}} w_i.
$$
As $\underline{g}^{-1}((L_{\Gamma} +\mu)\underline{g})>0$, there are no interior minima for $w_i$ and so $w_i\geq C^{-1}$. Hence, 
$$
C^{-1}\underline{g}\leq g_i \leq C \overline{g}.
$$
	
It follows from the Schauder estimates \cite[Theorem 6.2]{GT} and the Arzel\`{a}-Ascoli theorem that, up to passing to a subsequence, the $g_i$ converges in $C^{\infty}_{loc}(\Gamma)$ to a function $g$ which satisfies
$$
\left\{
\begin{array}{cc}
\left(L_\Gamma+\mu\right)g=0 & \mbox{in $\Gamma\setminus\bar{B}_{R_0}$} \\
g=f & \mbox{on $\Gamma\cap\partial B_{R_0}$}
\end{array}
\right.
$$
and $C^{-1}\underline{g} \leq g \leq C\overline{g}$ in $\Gamma\setminus B_{R_0}$.
	
Next we show $g=f$ in $\Gamma\setminus\bar{B}_{R_0}$, from which the $C^0$ estimate of $f$ follows easily. Observe that, by the Schauder estimates, one has $g\in W^1(\Gamma\setminus B_{R_0})\cap C^2(\Gamma\setminus B_{R_0})$. Set $h=g-f$. Thus, $h\in W^1(\Gamma\setminus B_{R_0})\cap C^2(\Gamma\setminus B_{R_0})$ with $h=0$ on $\Gamma\cap\partial B_{R_0}$, and 
$$
\left(L_\Gamma+\mu\right)h=e^{-\frac{r^2}{4}}\mathrm{div}_\Gamma\left(e^{\frac{r^2}{4}}\nabla_\Gamma h\right)+\left(|A_\Gamma|^2-\frac{1}{2}+\mu\right)h=0.
$$
Hence, multiplying the above equation by $h e^{\frac{r^2}{4}}$ and integrating by parts (which is justified by our hypotheses on $h$) give
$$
\int_{\Gamma\setminus B_{R_0}} \left(|\nabla_\Gamma h|^2+\left(\frac{1}{2}-\mu-|A_\Gamma|^2\right)h^2\right) e^{\frac{r^2}{4}} \, d\mathcal{H}^n=0.
$$
The choice of $R_0$ ensures $\frac{1}{2}-\mu-|A_\Gamma|^2>-|\mu|$ in $\Gamma\setminus B_{R_0}$ and
$$
\int_{\Gamma\setminus B_{R_0}} h^2 e^{\frac{r^2}{4}}\, d\mathcal{H}^n\leq \frac{1}{|\mu|+1}\int_{\Gamma\setminus B_{R_0}} |\nabla_\Gamma h|^2 e^{\frac{r^2}{4}}\, d\mathcal{H}^n.
$$
It follows that
$$
0\geq \int_{\Gamma\setminus B_{R_0}} |\nabla_\Gamma h|^2 e^{\frac{r^2}{4}} \, d\mathcal{H}^n-|\mu| \int_{\Gamma\setminus B_{R_0}} h^2 e^{\frac{r^2}{4}} \, d\mathcal{H}^n\geq \int_{\Gamma\setminus B_{R_0}} h^2 e^{\frac{r^2}{4}} \, d\mathcal{H}^n
$$
and so $h=0$. Hence, setting
$$
\overline{C}=\sup_{B_{R_0}\cap \Gamma} \left((1+r^2)^{\frac{1}{2}(n+1-2\mu)}e^{\frac{r^2}{4}} f \right) \; \mbox{ and }\; \underline{C}=\inf_{B_{R_0}\cap \Gamma} \left( (1+r^2)^{\frac{1}{2}(n+1-2\mu)}e^{\frac{r^2}{4}} f\right),
$$
the first estimate holds with
$$
C_0^\prime=\max\set{2C, \overline{C}, \underline{C}^{-1}}.
 $$	
  
Finally, in view of \eqref{LinearDecayEqn}, the claimed estimates on derivatives of $f$ follow from the $C^0$ estimate of $f$ and the Schauder estimates. 
\end{proof}

We then use the first eigenfunction of the stability operator and its estimates to produce good foliations on either side of a strictly stable self-expander.

\begin{lem}\label{MeanConvexLem}
Let $\Gamma\subset \Real^{n+1}$ be a strictly stable self-expander $C^2$-asymptotic to $\cC$. There are positive constants $\epsilon_0=\epsilon_0(\Gamma)$ and $c_0=c_0(\Gamma)$ and a family of hypersurfaces $\set{\Gamma_s}_{s\in [-\epsilon_0,\epsilon_0]}$ so that:
\begin{enumerate}
\item $\Gamma_0=\Gamma$;
\item $\set{\Gamma_s}_{s\in [-\epsilon_0,\epsilon_0]}$ is a foliation;
\item Each $\Gamma_s$ is $C^2$-asymptotically conical with $\cC(\Gamma_s)=\cC$; 
\item For $s\neq 0$, $\Gamma_s$ has expander mean curvature pointing toward $\Gamma_0$;
\item For $p\in \Gamma_s$, 
$$
\dist(p, \Gamma)\geq c_0|s| (1+|\mathbf{x}(p)|^2)^{-\frac{1}{2}(n+1)+2c_0} e^{-\frac{|\mathbf{x}(p)|^2}{4}}.
$$
\end{enumerate}
\end{lem}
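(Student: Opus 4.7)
My approach is to build the foliation from normal graphs of scalar multiples of the first eigenfunction of the stability operator $L_\Gamma$. Applying Proposition \ref{EigenfuncProp} to each connected component of $\Gamma$ and taking the minimum eigenvalue, strict stability produces a positive function $f \in C^\infty(\Gamma)$ and a constant $\mu > 0$ with $(L_\Gamma + \mu) f = 0$, together with the Gaussian two-sided bound \eqref{C0BndEigenfuncEqn} and the derivative estimates \eqref{DerBndEigenfuncEqn}. I would define
$$\Phi \colon \Gamma \times [-\epsilon_0, \epsilon_0] \to \R^{n+1}, \qquad \Phi(p, s) = p + s f(p)\, \mathbf{n}_\Gamma(p),$$
and set $\Gamma_s = \Phi(\Gamma \times \{s\})$. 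Since $f$ and $\nabla_\Gamma f$ are bounded and the principal curvatures of $\Gamma$ decay linearly by \eqref{LinearDecayEqn}, $\Phi$ is a $C^2$-diffeomorphism onto a tubular neighborhood of $\Gamma$ for $\epsilon_0$ small, yielding (1) and (2). Because $f$ and its first two derivatives decay exponentially, $\rho \Gamma_s \to \cC$ in $C^2_{loc}(\R^{n+1} \setminus \{\mathbf{0}\})$ as $\rho \to 0^+$, giving (3).

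For (4), I would use the standard expansion of the expander mean curvature operator $\mathcal{E}$ on normal graphs over a self-expander:
$$\mathcal{E}(u) = L_\Gamma u + Q(p, u, \nabla_\Gamma u, \nabla_\Gamma^2 u),$$
where the nonlinear remainder $Q$ is smooth in its arguments, vanishes quadratically at $(0, 0, 0)$, and has coefficients bounded by terms involving $|A_\Gamma|$, $|\nabla_\Gamma A_\Gamma|$, and polynomial growth in $|\mathbf{x}|$. Substituting $u = sf$ gives
$$\mathcal{E}(sf) = -s\mu f + Q(p, sf, s\nabla_\Gamma f, s\nabla_\Gamma^2 f).$$
The lower bound $f \gtrsim (1+r^2)^{-(n+1-2\mu)/2} e^{-r^2/4}$ beats any polynomial, and the derivative decay $|\nabla_\Gamma^j f| \lesssim e^{-r^2/(4+\delta)}$ (any $\delta > 0$) ensures that $|Q|$ is of order $|s|^2 e^{-2r^2/(4+\delta)}$ up to polynomially growing coefficients, so the ratio $|Q| / |s\mu f|$ is bounded by $|s|(1 + r)^N e^{-cr^2}$ for some $c > 0$ and $N$ depending on $n$, $\mu$. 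Shrinking $\epsilon_0$ makes this ratio $< 1/2$ uniformly, so $\mathcal{E}(sf)$ has the opposite sign to $s$ throughout $\Gamma$, meaning the expander mean curvature vector of $\Gamma_s$ points in the $-\mathrm{sign}(s)\,\mathbf{n}_\Gamma$ direction, which is toward $\Gamma_0$.

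For the distance estimate (5), since $\Phi$ is a $C^2$-diffeomorphism, for $\epsilon_0$ sufficiently small one has $\dist(\Phi(q, s), \Gamma) \geq \tfrac{1}{2} |s| f(q)$ for every $q \in \Gamma$. Writing $p = \Phi(q, s)$, the displacement $|\mathbf{x}(p) - \mathbf{x}(q)| = |s| f(q)$ is uniformly bounded, so that the polynomial and exponential factors in \eqref{C0BndEigenfuncEqn} evaluated at $|\mathbf{x}(q)|$ differ from those evaluated at $|\mathbf{x}(p)|$ by a uniform multiplicative constant. Choosing $c_0$ positive and at most $\min\{\mu/2,\ (4C_0')^{-1}\}$ and absorbing the resulting constants into the definition of $c_0$ then produces the claimed estimate.

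The main obstacle is controlling the nonlinear remainder in step (4) uniformly on all of $\Gamma$, especially near infinity where the expander equation involves the unbounded quantity $\mathbf{x}$ and the coefficients of $Q$ can grow polynomially. The resolution is that the eigenfunction and its derivatives decay faster than any such polynomial growth, so uniform smallness of $|Q|$ compared with the linear term $-s\mu f$ follows from making $|s|$ small, and everything else in the lemma reduces to direct computation with the bounds from Proposition \ref{EigenfuncProp}.
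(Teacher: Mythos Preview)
Your proposal is correct and follows essentially the same route as the paper: both build the foliation as normal graphs $\Gamma_s=\{p+sf(p)\mathbf{n}_\Gamma(p)\}$ using the positive first eigenfunction(s) from Proposition~\ref{EigenfuncProp}, verify expander mean convexity by comparing the linear term $-s\mu f$ against a quadratic remainder controlled via the derivative bounds \eqref{DerBndEigenfuncEqn}, and read off the distance lower bound from \eqref{C0BndEigenfuncEqn}. The only cosmetic differences are that the paper keeps the component-wise eigenvalues $\mu_j$ rather than passing to a single minimum (so that $(L_\Gamma+\mu)f=0$ holds literally on each piece), and it quotes a specific lemma from \cite{BWExpanderRelEnt} for the precise form of the remainder $Q$; neither affects the substance of your argument.
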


\begin{proof}
Suppose $\Gamma=\bigcup_{1\leq j\leq M} \Gamma^j$ where the $\Gamma^j$ are disjoint connected components of $\Gamma$. Let $\mu_j$ and $f_j$ be the first eigenvalue and eigenfunction, respectively, of $L_{\Gamma^j}$ that are given by Proposition \ref{EigenfuncProp}. As $\Gamma$ is strictly stable, so is each $\Gamma^j$ and so $\mu_j>0$. Define positive functions $\mu$ and $f$ on $\Gamma$ by, for $p\in\Gamma^j$, $\mu(p)=\mu_j$ and $f(p)=f_j(p)$. There is an $\epsilon_0\in (0,1)$ so that, for all $s\in [-\epsilon_0,\epsilon_0]$, 
$$
\Gamma_s=\set{\mathbf{f}_s(p)=\mathbf{x}(p)+s f(p)\mathbf{n}_\Gamma(p)\colon p\in\Gamma}
$$
is a hypersurface in $\mathbb{R}^{n+1}$. As $f>0$ one has a foliation $\set{\Gamma_s}_{s\in [-\epsilon_0,\epsilon_0]}$ and so Items (1) and (2) hold. Using estimates of $f$ one readily checks Item (3) holds. Moreover, choosing $c_0=\min\set{1/C_0^{\prime},\mu,1}$, one immediately has that Item (5) holds.

Finally, shrinking $\epsilon_0$, if needed, and appealing to \cite[Lemma A.2]{BWExpanderRelEnt}, one has that the expander mean curvature of $\Gamma_s$ is given by, at $p\in\Gamma$,
$$
\left(H_{\Gamma_s}+\frac{\mathbf{x}}{2}\cdot\mathbf{n}_{\Gamma_s}\right)\circ\mathbf{f}_s=-s L_{\Gamma} f+s^2Q(f,\mathbf{x}\cdot\nabla_{\Gamma} f, \nabla_\Gamma f, \nabla^2_\Gamma f)
$$
where, for some $C=C(\Gamma)>0$, 
$$
|Q| \leq C \left(|f|+|\mathbf{x}\cdot\nabla_\Gamma f|+|\nabla_\Gamma f|+|\nabla_\Gamma^2 f|\right) \left( |f|+|\nabla_\Gamma f|\right).
$$
By the properties of $f$, there is a constant $C^{\prime}=C^{\prime}(\Gamma,\mu)>0$ so that 
$$
|Q| \leq C^{\prime} e^{-\frac{|\mathbf{x}|^2}{3}} \mbox{ and } f \geq \frac{1}{C^\prime} e^{-\frac{|\mathbf{x}|^2}{3}}.
$$
Thus, as $\mu>0$, if $s\in (0,\epsilon_0]$, then
$$
\left(H_{\Gamma_s}+\frac{\mathbf{x}}{2}\cdot\mathbf{n}_{\Gamma_s}\right)\circ\mathbf{f}_s(p) \geq \frac{s \mu}{C^\prime}  e^{-\frac{|\mathbf{x}(p)|^2}{3}}-s^2C^{\prime}  e^{-\frac{|\mathbf{x}(p)|^2}{3}};
$$
if $s\in [-\epsilon_0,0)$, then
$$
\left(H_{\Gamma_s}+\frac{\mathbf{x}}{2}\cdot\mathbf{n}_{\Gamma_s}\right)\circ\mathbf{f}_s(p) \leq \frac{s \mu}{C^\prime}  e^{-\frac{|\mathbf{x}(p)|^2}{3}}+s^2C^{\prime}  e^{-\frac{|\mathbf{x}(p)|^2}{3}}.
$$
Hence, up to further shrinking $\epsilon_0$ so that $\frac{\mu}{C^\prime}\geq 2\epsilon_0 C^{\prime}$, for all $s\in (0,\epsilon_0]$,
$$
\left(H_{\Gamma_s}+\frac{\mathbf{x}}{2}\cdot\mathbf{n}_{\Gamma_s}\right)\circ\mathbf{f}_s(p) \geq \frac{s \mu}{2C^\prime}  e^{-\frac{|\mathbf{x}(p)|^2}{3}}>0
$$
and, for all $s\in [-\epsilon_0,0)$, 
$$
\left(H_{\Gamma_s}+\frac{\mathbf{x}}{2}\cdot\mathbf{n}_{\Gamma_s}\right)\circ\mathbf{f}_s(p) \leq \frac{s \mu}{2C^\prime}  e^{-\frac{|\mathbf{x}(p)|^2}{3}} <0.
$$
That is, Item (4) holds.
\end{proof}

We next show we can perturb an asymptotically conical self-expander, $\Gamma$, on both sides to produce a region enclosed by the two perturbations that is thin at infinity relative to $\Gamma$ and so the outward unit normal of the region points asymptotically more in the radial direction.

\begin{lem} \label{RadialLem}
Let $\Gamma\subset \Real^{n+1}$ be a $C^2$-asymptotically conical self-expander. Given a constant $\kappa>0$, there is a radius $\mathcal{R}_2=\mathcal{R}_2(\Gamma,\kappa)>1$ and a constant $C_1=C_1(\Gamma)>0$ so that
if
$$
\varphi(p)= \kappa |\mathbf{x}(p)|^{-n-1} e^{-\frac{|\mathbf{x}(p)|^2}{4}},
$$ 
then
$$
\Sigma^\pm=\set{\mathbf{x}(p)\pm \varphi(p) \mathbf{n}_{\Gamma}(p)\colon p\in \Gamma\backslash \bar{B}_{\mathcal{R}_2}}
$$
are hypersurfaces and, for $p\in \Sigma^\pm$,
$$
\left| \mathbf{n}_{\Sigma^\pm}(p) -\mathbf{n}_{\Gamma}(\Pi_{\Gamma}(p))\mp\frac{\mathbf{x}(p)}{2} \varphi(p)\right|\leq C_1 |\mathbf{x}(p)|^{-1} \varphi(p)
$$
where $\Pi_\Gamma$ is the nearest point projection to $\Gamma$.
\end{lem}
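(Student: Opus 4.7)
The plan is to parametrize $\Sigma^\pm$ as normal graphs over $\Gamma$, compute the unit normal explicitly via the pushforward of tangent vectors, and then Taylor-expand in the small quantity $\varphi$, keeping careful track of error terms.

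First, I would set up the parametrization $\mathbf{f}^\pm\colon\Gamma\setminus\bar B_{\mathcal R_2}\to\R^{n+1}$ by $\mathbf{f}^\pm(q)=\mathbf{x}(q)\pm\varphi(q)\mathbf{n}_\Gamma(q)$. Choose $\mathcal R_2=\mathcal R_2(\Gamma,\kappa)$ large enough that $\varphi$ is smaller than the local normal injectivity radius of $\Gamma$ (which, by \eqref{LinearDecayEqn}, is comparable to $|\mathbf{x}|$ at infinity); the exponential decay of $\varphi$ makes this easy for $r$ large depending on $\kappa$. Then $\mathbf{f}^\pm$ is a diffeomorphism onto $\Sigma^\pm$ and $\Pi_\Gamma(\mathbf{f}^\pm(q))=q$.

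Next, I would differentiate $\mathbf{f}^\pm$. Using that the shape operator $A_\Gamma$ (as a $(1,1)$-tensor) satisfies $D_v\mathbf{n}_\Gamma=-A_\Gamma(v)$ and is tangent to $\Gamma$, a tangent vector $v\in T_q\Gamma$ pushes forward to
\[
D\mathbf{f}^\pm(v)=(I\mp\varphi A_\Gamma)(v)\pm\bigl(v\cdot\nabla_\Gamma\varphi\bigr)\mathbf{n}_\Gamma.
\]
Solving for the unit normal orthogonal to this span yields (with the appropriate orientation)
\[
\mathbf{n}_{\Sigma^\pm}\circ\mathbf{f}^\pm(q)=\frac{\mathbf{n}_\Gamma(q)\mp(I\mp\varphi A_\Gamma)^{-1}\nabla_\Gamma\varphi(q)}{\sqrt{1+\bigl|(I\mp\varphi A_\Gamma)^{-1}\nabla_\Gamma\varphi(q)\bigr|^2}}.
\]

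Now I would extract the leading order. Since $r=|\mathbf{x}|$ satisfies $\nabla_\Gamma r=\mathbf{x}^T/r$, a direct computation gives
\[
\nabla_\Gamma\varphi=-\varphi\!\left(\tfrac12+\tfrac{n+1}{r^2}\right)\mathbf{x}^T.
\]
From the self-expander equation, $\mathbf{x}^\perp=2\mathbf{H}_\Gamma=-2H_\Gamma\mathbf{n}_\Gamma$, so $|\mathbf{x}\cdot\mathbf{n}_\Gamma|\leq 2|H_\Gamma|\leq 2\sqrt{n}\,|A_\Gamma|=O(r^{-1})$ by \eqref{LinearDecayEqn}; hence $\mathbf{x}^T=\mathbf{x}+O(r^{-1})$ and therefore
\[
\nabla_\Gamma\varphi=-\tfrac{\varphi}{2}\mathbf{x}+O(\varphi/r).
\]
Using $|A_\Gamma|=O(r^{-1})$ and the exponential smallness of $\varphi$, the inverse Neumann series gives $(I\mp\varphi A_\Gamma)^{-1}\nabla_\Gamma\varphi=\nabla_\Gamma\varphi+O(\varphi|A_\Gamma||\nabla_\Gamma\varphi|)=\nabla_\Gamma\varphi+O(\varphi^2)$. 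The normalization denominator contributes a further perturbation of size $|(I\mp\varphi A_\Gamma)^{-1}\nabla_\Gamma\varphi|^2=O(\varphi^2 r^2)$ to the unit vector.

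Combining, $\mathbf{n}_{\Sigma^\pm}\circ\mathbf{f}^\pm(q)=\mathbf{n}_\Gamma(q)\mp\nabla_\Gamma\varphi(q)+O(\varphi^2)+O(\varphi^2r^2)=\mathbf{n}_\Gamma(q)\pm\tfrac12\varphi(q)\mathbf{x}(q)+O(\varphi/r)$. Finally, because $\mathbf{x}(\mathbf{f}^\pm(q))-\mathbf{x}(q)=\pm\varphi(q)\mathbf{n}_\Gamma(q)$ has magnitude $\varphi=o(1/r)$, one may replace $\mathbf{x}(q)$ by $\mathbf{x}(p)$ at the cost of an additional $O(\varphi^2)\subset O(\varphi/r)$ error, and this yields the claimed estimate with $C_1=C_1(\Gamma)$. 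The main bookkeeping point--and the only real obstacle--is the error hierarchy: one must verify that each of $\varphi^2$, $\varphi^2 r^2$, and $\varphi r^3$ is bounded by a constant multiple of $\varphi/r$ for $r\geq\mathcal R_2$, which follows immediately from the Gaussian factor in $\varphi$ once $\mathcal R_2$ is chosen large depending on $\Gamma$ and $\kappa$.
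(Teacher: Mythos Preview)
Your approach is essentially the same as the paper's: parametrize $\Sigma^\pm$ as a normal graph over $\Gamma$, expand the unit normal to leading order $\mathbf{n}_\Gamma\mp\nabla_\Gamma\varphi$, compute $\nabla_\Gamma\varphi=-\tfrac12\varphi\,\mathbf{x}^\top+O(\varphi/r)$, replace $\mathbf{x}^\top$ by $\mathbf{x}$ using the self-expander equation, and then pass from base-point quantities at $q=\Pi_\Gamma(p)$ to quantities at $p$. The only cosmetic difference is that the paper quotes the quadratic remainder bound $|\mathbf{n}_{\Sigma^\pm}-(\mathbf{n}_\Gamma\mp\nabla_\Gamma\varphi)|\le K_0(|\varphi|^2+|\nabla_\Gamma\varphi|^2)$ from \cite{WangUniqueness}, whereas you derive the exact normal formula via $(I\mp\varphi A_\Gamma)^{-1}$ and the normalization; both routes yield the same $O(\varphi^2 r^2)$ worst error. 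One small slip in your closing sentence: the quantity $\varphi r^3$ is not itself an error term to be bounded by $\varphi/r$ (that would force $r^4\le C$); rather, the \emph{condition} $\varphi r^3\le C$ is what makes the genuine error $\varphi^2 r^2$ absorbable into $\varphi/r$, and that condition holds by the Gaussian decay.
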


\begin{proof}
As $\Gamma$ is $C^2$-asymptotically conical, there is an $\epsilon=\epsilon(\Gamma)\in (0,1)$ so that $\mathcal{T}_{\epsilon}(\Gamma)$ is a regular neighborhood of $\Gamma$. Pick an $\mathcal{R}_2=\mathcal{R}_2(\Gamma, \kappa)>1$ large enough so that
$$
\sup_{p\in \Gamma\backslash \bar{B}_{\mathcal{R}_2}}\left( \varphi +|\nabla_{\Gamma} \varphi|\right)< \epsilon.
$$
It follows that
$$
\Sigma^\pm =\set{\mathbf{x}(p)\pm \varphi(p) \mathbf{n}_{\Gamma}\colon p\in \Sigma\backslash \bar{B}_{\mathcal{R}_2}}
$$
are smooth hypersurfaces.  There is a constant $K_0=K_0(\Gamma)$ so that if $p\in \Sigma^\pm$ and $q=\Pi_{\Gamma}(p)$, then 
\begin{equation} \label{DiffNormalEqn}
\begin{split}
\left|	\mathbf{n}_{\Sigma^\pm}(p) - \left(\mathbf{n}_{\Gamma}(q) \mp \nabla_{\Gamma}\varphi(q)\right) \right|&=\left| \mathbf{n}_{\Sigma^\pm}(p) - \mathbf{n}_{\Gamma}(q) \pm \nabla_{\Gamma}\varphi(q) \right| \\
&\leq K_0 \left( |\varphi(q)|^2+ |\nabla_{\Gamma} \varphi(q)|^2 \right).
\end{split}
\end{equation}
See \cite[(2.27)-(2.28)]{WangUniqueness} for a derivation of this estimate. 
   
Up to increasing $\mathcal{R}_2$, one may ensure that, for $p\in \Sigma^\pm$,
\begin{equation} \label{DiffDistVarphiEqn}
|\Pi_{\Gamma}(p)-\mathbf{x}(p)|+|\varphi(\Pi_\Gamma(p))|+|\nabla_\Gamma\varphi(\Pi_\Gamma(p))| \leq |\mathbf{x}(p)|^{-3}<1<\frac{1}{2}|\mathbf{x}(p)|.
\end{equation}
It follows that there is a constant $K_1=K_1(n)>0$ so that for such $p$ 
\begin{equation} \label{DiffVarphiEqn}
|\varphi(p)-\varphi(\Pi_{\Gamma}(p))| \leq K_1 |\mathbf{x}(p)|^{-2} \varphi(p)
\end{equation}
and thus
\begin{equation} \label{SquareVarphiEqn}
|\varphi(\Pi_\Gamma(p))|^2 \leq |\mathbf{x}(p)|^{-3} \left(1+K_1 |\mathbf{x}(p)|^{-2} \right) \varphi(p).
\end{equation}
Moreover, as $\Gamma$ is a $C^2$-asymptotically conical self-expander, there is a constant $K_2=K_2(\Gamma)$ so that, for all $q\in \Gamma\backslash \bar{B}_{\mathcal{R}_2}$,
$$
|\mathbf{x}(q)\cdot \mathbf{n}_{\Gamma}(q)|=2|H_{\Gamma}(q)| \leq K_2 |\mathbf{x}(q)|^{-1}.
$$
Hence, as
$$
\nabla_{\Gamma} \varphi =-\frac{\mathbf{x}^\top}{2} \varphi -(n+1) \mathbf{x}^\top |\mathbf{x}|^{-2} \varphi 
$$
it follows that there is a $K_3=K_3(\Gamma)$ so that, for $q\in \Gamma\backslash \bar{B}_{\mathcal{R}_2}$,
$$
\left|  \nabla_{\Gamma} \varphi(q) +\frac{\mathbf{x}(q)}{2} \varphi(q) \right|\leq K_3 |\mathbf{x}(q)|^{-1}\varphi(q).
$$
This together with \eqref{DiffDistVarphiEqn}-\eqref{DiffVarphiEqn} implies that, for $p\in\Sigma^\pm$,
\begin{equation} \label{GradientVarphiEqn}
\left| \nabla_{\Gamma} \varphi(\Pi_\Gamma(p))+\frac{\mathbf{x}(p)}{2} \varphi(p) \right| \leq K_4 |\mathbf{x}(p)|^{-1} \varphi(p)
\end{equation}
for some $K_4=K_4(\Gamma)$.

Therefore, combining \eqref{DiffNormalEqn}-\eqref{GradientVarphiEqn} implies that there is a $K_5=K_5( \Gamma)$ so that if $p\in \Sigma^\pm$, then
$$
\left| \mathbf{n}_{\Sigma^\pm}(p)-\mathbf{n}_{\Gamma}(\Pi_{\Gamma}(p))\mp\frac{\mathbf{x}(p)}{2} \varphi(p)\right|\leq K_5 |\mathbf{x}(p)|^{-1} \varphi(p).
$$
The result follows by setting $C_1=K_5$.
\end{proof}

\begin{proof}[Proof of Proposition \ref{ThickeningProp}]
Let $\set{\Upsilon^\pm_s}_{s\in [-\epsilon^\pm_0,\epsilon^\pm_0]}$ be the foliation given by Lemma \ref{MeanConvexLem} using $\Gamma=\Gamma_\pm$ with the corresponding constants $\epsilon_0^\pm$ and $c_0^\pm$. Set $\epsilon_0=\min\set{\epsilon^+_0, \epsilon_0^-}$ and $c_0=\min\set{c_0^+, c_0^-}$. Set
$$
\Gamma_s^\pm=\Upsilon^\pm_{\pm\epsilon_0 s}.
$$
Let
$$
\Omega^{\prime\prime}=\Omega_-(\Gamma_1^+)\cap \Omega_+(\Gamma_1^-),
$$
and  this region satisfies Item (1) by construction.
    
By \cite[Proposition 2.1]{BWExpanderRelEnt} there is a radius $R_0=R_0(\Gamma_-,\Gamma_+)>1$ and a constant $K_0=K_0(\Gamma_-,\Gamma_+)>0$ and a function $u \colon \Gamma_-\backslash \bar{B}_{R_0}\to \Real$ so that
$$
\Gamma_+\setminus\bar{B}_{2 R_0}\subset\set{\mathbf{x}(p)+u(p)\mathbf{n}_{\Gamma_-}(p)\colon p\in\Gamma_-\setminus\bar{B}_{R_0}}\subset\Gamma_+
$$
and $u$ satisfies 
$$
|u|\leq K_0 |\mathbf{x}|^{-n-1} e^{-\frac{|\mathbf{x}|^2}{4}}.
$$

With $\Gamma=\Gamma_-$ and $\kappa=2K_0$ apply Lemma \ref{RadialLem} to produce a function $\varphi$, a radius $\mathcal{R}_2>1$ and hypersurfaces $\Sigma^-$ and $\Sigma^+$.  Observe that outside $\bar{B}_{\mathcal{R}}$ with $\mathcal{R}=2\max\set{R_0, \mathcal{R}_2}$,  the choice of $\varphi$ ensures that $\Sigma^-$ is the graph of $-\varphi$ over $\Gamma_-$ lying entirely inside $\Omega_-(\Gamma_-)$, while $\Sigma^+$ is the graph of $\varphi$ over $\Gamma_-$ and $\Sigma^+$ lies, by construction, entirely within $\Omega_+(\Gamma_+)$. Observe that the growth rate of $\varphi$ and Item (5) of Lemma \ref{MeanConvexLem} ensure that, up to increasing $\mathcal{R}$, one may take $\Sigma^\pm \subset \Omega^{\prime\prime}$. For the same reason, $\Omega_+(\Gamma_s^+)\cap \Sigma^+$ is contained in a compact set for all $s\neq 0$ and the same is true of $\Omega_-(\Gamma_s^-)\cap \Sigma^-$.
    
Pick $\mathcal{R}^\prime\geq \mathcal{R}$, so there is a function $v_+ \colon \Gamma_+\backslash \bar{B}_{\mathcal{R}^\prime}\to \Real$ so that
$$
\Sigma^+ \setminus \bar{B}_{2\mathcal{R}^\prime} \subset \set{ \mathbf{x}(p)+v_+(p)\mathbf{n}_{\Gamma_+}(p)\colon p\in \Gamma_+ \setminus \bar{B}_{\mathcal{R}^\prime}}.
$$
Set $v_-=-\varphi$ so the graph of $v_-$ over $\Gamma_-$ is a subset of $\Sigma^-$. 
    
By using cutoffs appropriately, one may extend $v_\pm$ to functions $\hat{v}_\pm\colon \Gamma_\pm \to \Real$ so that $\hat{v}_\pm=v_\pm$ outside $\bar{B}_{2\mathcal{R}^\prime}$, $\hat{v}_+>0>\hat{v}_-$ and 
$$
\Gamma_\pm^\prime=\set{\mathbf{x}(p) +\hat{v}_\pm \mathbf{n}_{\Gamma_\pm}(p) \colon p\in \Gamma_\pm}
$$
are smooth hypersurfaces contained in $\Omega^{\prime\prime}$. In particular, up to increasing $\mathcal{R}^\prime$, one has $\Gamma_-^\prime \setminus B_{\mathcal{R}^\prime}=\Sigma^-\setminus B_{\mathcal{R}^\prime}$ and $\Gamma_+^\prime\setminus B_{\mathcal{R}^\prime}=\Sigma^+\setminus B_{\mathcal{R}^\prime}$. Clearly, $\Gamma_\pm^\prime$ are asymptotically conical with asymptotic cone $\cC$. Let 
$$
\Omega^\prime=\Omega_+(\Gamma_-^\prime)\cap \Omega_-(\Gamma_+^\prime).
$$
By construction one has $\tilde{\Omega} \subset \Omega^\prime$ and $\overline{\Omega^\prime}\subset\Omega^{\prime\prime}$ and $\Omega^\prime$ is thin at infinity relative to $\Gamma_-$. 
    
Finally, as $\mathbf{X}_0\cdot \mathbf{N}=0$,  outside of $B_{\mathcal{R}_1}$ where $\mathcal{R}_1\geq 2\mathcal{R}^\prime>2$ one has, by the construction of $\Sigma^\pm$, 
\begin{align*}
\left|\mathbf{X}_0\cdot \mathbf{n}_{\Sigma^\pm } \mp \frac{1}{2}|\mathbf{x}|^2 \varphi \right| &=	\left|\mathbf{X}_0\cdot (\mathbf{n}_{\Sigma^\pm }-\mathbf{N}) \mp \frac{1}{2}|\mathbf{x}|^2 \varphi \right|\\
&=\left|\mathbf{X}_0\cdot (\mathbf{n}_{\Sigma^\pm }-\mathbf{N}) \mp \frac{1}{2}\left(\mathbf{X}_0\cdot \mathbf{x} +(\mathbf{x}\cdot \mathbf{N})^2\right)\varphi \right|\\
&\leq |\mathbf{X}_0| \left|\mathbf{n}_{\Sigma^\pm }- \mathbf{N}\mp\frac{1}{2}\mathbf{x} \varphi\right|+\frac{1}{2}|\mathbf{X}_0| |\mathbf{x}\cdot\mathbf{N}|^2 \varphi \\
&\leq 2|\mathbf{x}| \left|\mathbf{n}_{\Sigma^\pm}-\mathbf{N}\mp\frac{1}{2}\mathbf{x} \varphi\right|+\bar{C}_2 |\mathbf{x}|^{-2} \varphi\\
&\leq  (2 C_1+\bar{C}_2) \varphi
\end{align*}
where $\bar{C}_2$ is the constant given by Lemma \ref{VectorFieldLem}. It follows that, up to increasing $\mathcal{R}_1$, one has
$$
\mathbf{X}_0\cdot \mathbf{n}_{\Sigma^+ }\geq \frac{1}{4} |\mathbf{x}|^2 \varphi>0 \mbox{ while } \mathbf{X}_0\cdot \mathbf{n}_{\Sigma^- }\leq - \frac{1}{4} |\mathbf{x}|^2 \varphi<0.
$$
Hence, outside of $B_{\mathcal{R}_1}$, $\mathbf{X}_0$ points out of $\Omega^\prime$.
\end{proof}

\section{Function spaces} \label{FuncSpaceSec}
We introduce several function spaces that extend the space $\mathfrak{X}$. In the next sections we will study deformation properties in these spaces. In what follows we use the conventions of Section \ref{ConventionSec}.

\subsection{The space $\mathfrak{Y}$} \label{YSpaceSec}
First of all we introduce the following norm on $C^0(Y\times \mathbb{S}^n)$ where $Y$ is a quasi-convex unbounded domain in $\mathbb{R}^{n+1}$. For $\psi\in C^0(Y\times \mathbb{S}^n)$ let
$$
\Vert \psi\Vert_{\mathfrak{W}}= \sup_{(p, \mathbf{v})\in Y\times \mathbb{S}^n} (|\mathbf{x}(p)|+1)^{n+1} e^{\frac{|\mathbf{x}(p)|^2}{4}} \left| \psi (p, \mathbf{v}) \right|.
$$
Let 
$$
\mathfrak{W}(Y)=\set{\psi\in C^0(Y\times \mathbb{S}^n) \colon \Vert \psi\Vert_{\mathfrak{W}}<\infty}
$$ 
be the space of rapidly decaying continuous functions. It is readily checked that $\mathfrak{W}(Y)$ is a Banach space.

Notice that $\mathfrak{X}(Y)\cap \mathfrak{W}(Y)$ is non-empty, and $\mathfrak{X}(Y)$ and $\mathfrak{W}(Y)$ are both continuously embedded in $C^0(Y\times\mathbb{S}^n)$. We introduce the following natural norm on the vector space $\mathfrak{Y}^\prime(Y)=\mathfrak{X}(Y)+ \mathfrak{W}(Y)$:
$$
\Vert \psi \Vert_\mathfrak{Y}=\inf\set{ \Vert \zeta\Vert_{\mathfrak{X}}+\Vert \xi\Vert_{\mathfrak{W}} \colon \psi=\zeta+\xi}.
$$
It follows from the interpolation theory, \cite[Chapter 3, Theorem 1.3]{BennettSharpley}, that $\mathfrak{Y}^\prime (Y)$ with this norm is a Banach space. Although in general $\mathfrak{Y}^\prime(Y)$ is not separable, we have the following result:

\begin{prop} \label{YSpaceProp}
There is a closed subspace $\mathfrak{Y}(Y)\subseteq \mathfrak{Y}^\prime (Y)$ defined by
$$
\mathfrak{Y}(Y)=\mathrm{span}_{\mathbb{R}}\set{\mathbf{1}}\oplus\mathfrak{Y}_0(Y)=\set{c \mathbf{1}+\psi\colon c\in \Real, \psi\in\mathfrak{Y}_0(Y)}
$$
where $\mathbf{1}$ is the constant function equal to $1$ and $\mathfrak{Y}_0(Y)$ is the closure of $C_c^0(Y\times\mathbb{S}^n)$ in $\mathfrak{Y}^\prime(Y)$. The space $\mathfrak{Y}(Y)$ satisfies
\begin{enumerate}
\item $C^0_c(Y\times \mathbb{S}^n)\subseteq \mathfrak{Y}(Y)$;
\item  $\mathbf{1}\in \mathfrak{Y}(Y)$ and $\Vert \mathbf{1}\Vert_{\mathfrak{Y}} =1$;
\item For $\psi \in \mathfrak{Y}_0(Y)$,  $\frac{1}{4}\left( |c|+\Vert \psi\Vert_{\mathfrak{Y}}\right)\leq \Vert c \mathbf{1}+\psi\Vert_{\mathfrak{Y}}\leq |c|+\Vert \psi\Vert_{\mathfrak{Y}}$;
\item $\mathfrak{Y}(Y)$ is an algebra and $\Vert \psi_1 \psi_2 \Vert_{\mathfrak{Y}} \leq \Vert \psi_1 \Vert_{\mathfrak{Y}}\Vert \psi_2 \Vert_{\mathfrak{Y}}$;
\item  $\mathfrak{Y}(Y)$ is separable;
\item $C^\infty_c(Y\times \mathbb{S}^n)$ is dense in $\mathfrak{Y}_0(Y)$.
\end{enumerate}
\end{prop}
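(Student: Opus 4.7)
The entire proposition rests on the observation that $\|\cdot\|_{\mathfrak{Y}'}$ dominates the $C^0$ norm. Indeed, $\|\zeta\|_\infty\leq\|\zeta\|_{Lip}\leq\|\zeta\|_{\mathfrak{X}}$ and, since $(1+|\mathbf{x}(p)|)^{n+1}e^{|\mathbf{x}(p)|^2/4}\geq1$, also $\|\xi\|_\infty\leq\|\xi\|_{\mathfrak{W}}$, so $\|\cdot\|_\infty\leq\|\cdot\|_{\mathfrak{Y}}$ on $\mathfrak{Y}'$. Consequently, every element of $\mathfrak{Y}_0(Y)$ (a limit in $\mathfrak{Y}'$ of a sequence in $C_c^0$) vanishes at infinity. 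Item (1) is by construction. For Item (2), the upper bound is $\|\mathbf{1}\|_{\mathfrak{Y}}\leq\|\mathbf{1}\|_{\mathfrak{X}}=1$; for the lower bound, any decomposition $\mathbf{1}=\zeta+\xi$ with $\xi\in\mathfrak{W}$ forces $\zeta\to 1$ uniformly at infinity, giving $\|\zeta\|_{\mathfrak{X}}\geq\|\zeta\|_\infty\geq 1$. In particular $\mathbf{1}\notin\mathfrak{Y}_0$, so the algebraic sum defining $\mathfrak{Y}$ is genuinely direct. For Item (3), the right inequality is the triangle inequality together with Item (2); for the left inequality, since $\psi\in\mathfrak{Y}_0$ vanishes at infinity, $|c|\leq\|c\mathbf{1}+\psi\|_\infty\leq\|c\mathbf{1}+\psi\|_{\mathfrak{Y}}$, and then $\|\psi\|_{\mathfrak{Y}}\leq\|c\mathbf{1}+\psi\|_{\mathfrak{Y}}+|c|\|\mathbf{1}\|_{\mathfrak{Y}}\leq 2\|c\mathbf{1}+\psi\|_{\mathfrak{Y}}$ completes the estimate. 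The same argument shows a Cauchy sequence $c_n\mathbf{1}+\psi_n$ in $\mathfrak{Y}'$ splits into a Cauchy scalar sequence and a Cauchy sequence in the closed space $\mathfrak{Y}_0$, so $\mathfrak{Y}$ is closed in $\mathfrak{Y}'$.

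For Item (4), the plan is to establish three submultiplicative estimates and then expand. Standard product rules (Leibniz for Lipschitz and for $\nabla_{\mathbb{S}^n}$, plus the $(1+|\mathbf{x}|)$ weight estimate) show $\mathfrak{X}(Y)$ is a Banach algebra. For the rapidly decaying factor, $\|\xi_1\xi_2\|_{\mathfrak{W}}\leq\|\xi_1\|_{\mathfrak{W}}\|\xi_2\|_{\mathfrak{W}}$ follows because one factor absorbs the weight and the other is bounded by its own $\mathfrak{W}$ norm; similarly $\|\zeta\xi\|_{\mathfrak{W}}\leq\|\zeta\|_{\mathfrak{X}}\|\xi\|_{\mathfrak{W}}$. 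Expanding $(\zeta_1+\xi_1)(\zeta_2+\xi_2)$, the $\mathfrak{X}\cdot\mathfrak{X}$ term stays in $\mathfrak{X}$ while the three cross terms are absorbed into $\mathfrak{W}$, giving $\|\psi_1\psi_2\|_{\mathfrak{Y}}\leq\|\psi_1\|_{\mathfrak{Y}'}\|\psi_2\|_{\mathfrak{Y}'}$ after taking the infimum over decompositions. To verify closure of $\mathfrak{Y}$ under multiplication, expand $(c_1\mathbf{1}+\psi_1)(c_2\mathbf{1}+\psi_2)=c_1c_2\mathbf{1}+(c_1\psi_2+c_2\psi_1+\psi_1\psi_2)$; the parenthesized term lies in $\mathfrak{Y}_0$ because products of $C_c^0$ functions are in $C_c^0$ and continuity of multiplication on $\mathfrak{Y}'$ (just proved) preserves the closure.

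For Item (6), given $\psi\in C_c^0(Y\times\mathbb{S}^n)$ supported in a compact $K$, mollification produces a sequence $\psi_n\in C_c^\infty$ supported in a fixed compact neighborhood $K'$ of $K$ with $\psi_n\to\psi$ uniformly. Since $\psi_n-\psi$ is supported in $K'$, the weight $(1+|\mathbf{x}|)^{n+1}e^{|\mathbf{x}|^2/4}$ is bounded by some $C_{K'}$ on its support, so $\|\psi_n-\psi\|_{\mathfrak{Y}}\leq\|\psi_n-\psi\|_{\mathfrak{W}}\leq C_{K'}\|\psi_n-\psi\|_\infty\to 0$. Combined with the density of $C_c^0$ in $\mathfrak{Y}_0$ by definition, this gives (6). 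Item (5) is then formal: exhausting $Y\times\mathbb{S}^n$ by a countable increasing family of compact sets $\{K_j\}$, on each $K_j$ the $\mathfrak{Y}$ norm is dominated by a suitable $C^k$ norm, which is separable; taking a countable union of countable dense subsets of $C_c^\infty$ supported in each $K_j$ gives, via Item (6), a countable dense subset of $\mathfrak{Y}_0$, and adjoining rational multiples of $\mathbf{1}$ yields separability of $\mathfrak{Y}$. The only step that requires real care is the algebra bookkeeping in Item (4), where one must ensure each of the four product terms is placed in the correct component and absorbed by the correct submultiplicative estimate; the remaining items are structural once $\|\cdot\|_\infty\leq\|\cdot\|_{\mathfrak{Y}}$ is known.
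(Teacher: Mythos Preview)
Your proof is correct and follows essentially the same approach as the paper: the key observation that $\|\cdot\|_\infty\leq\|\cdot\|_{\mathfrak{Y}}$ drives Items~(2)--(3), the algebra property is established by placing the four product terms in the correct component ($\mathfrak{X}\cdot\mathfrak{X}\subset\mathfrak{X}$, the rest absorbed into $\mathfrak{W}$), and density/separability follow from the equivalence of the $\mathfrak{W}$ and uniform norms on functions with fixed compact support. Two minor differences worth noting: for Item~(3) your argument is actually cleaner than the paper's case split and in fact yields the sharper constant $\tfrac{1}{3}$ in place of $\tfrac{1}{4}$ (since $|c|\leq\|c\mathbf{1}+\psi\|_{\mathfrak{Y}}$ and $\|\psi\|_{\mathfrak{Y}}\leq 2\|c\mathbf{1}+\psi\|_{\mathfrak{Y}}$ sum to $3$); and for Item~(6) you use mollification where the paper invokes Stone--Weierstrass, but both route through the same equivalence of norms on compactly supported functions.
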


\begin{proof}
The first property is immediate from the definition. The second is a consequence of the fact that $\Vert \mathbf{1}\Vert_{\mathfrak{X}}=1$, $Y$ is an unbounded domain and elements of $\mathfrak{W}$ decay rapidly. In particular, if $\mathbf{1}=\xi+\zeta$ for $\xi\in \mathfrak{W}$ and $\zeta\in \mathfrak{X}$, then $\lim_{p\to \infty} \zeta(p,\mathbf{v})=1$ and so $\Vert \zeta \Vert_{\mathfrak{X}}\geq 1$. For the third item, observe that, by the triangle inequality and the second item,
$$
\Vert c\mathbf{1}+\psi\Vert_{\mathfrak{Y}}\leq |c|+\Vert \psi \Vert_{\mathfrak{Y}}.
$$
This verifies the second inequality. As elements of $\mathfrak{Y}_0(Y)$ must decay as one approaches infinity, arguing as in the second item gives $\Vert c\mathbf{1}+\psi\Vert_{\mathfrak{Y}}\geq |c|$. If $\Vert \psi \Vert_{\mathfrak{Y}}\leq 2|c|$, then the first inequality follows. Suppose $\Vert \psi \Vert_{\mathfrak{Y}}> 2|c|$. By definition, for every $\epsilon>0$ there are $\xi\in \mathfrak{W}(Y)$ and $\zeta \in \mathfrak{X}(Y)$ so that $c\mathbf{1}+\psi=\xi +\zeta$ and
$$
\Vert \xi\Vert_{\mathfrak{W}}+ \Vert \zeta \Vert_{\mathfrak{X}}\leq \Vert c\mathbf{1}+\psi\Vert_{\mathfrak{Y}}+\epsilon.
$$
As $\psi = \xi +(\zeta-c\mathbf{1})$, one has
$$
\Vert \psi \Vert_{\mathfrak{Y}}\leq \Vert \xi \Vert_{\mathfrak{W}}+ \Vert \zeta-c\mathbf{1}\Vert_{\mathfrak{X}}\leq \Vert \xi \Vert_{\mathfrak{W}}+ \Vert \zeta\Vert_{\mathfrak{X}}+|c|\leq |c|+ \Vert c\mathbf{1}+\psi\Vert_{\mathfrak{Y}}+\epsilon.
$$
Hence, 
$$
\frac{1}{4}\left (|c|+ \Vert \psi \Vert_{\mathfrak{Y}}\right)\leq \frac{1}{2}\Vert \psi \Vert_{\mathfrak{Y}} \leq 
 \Vert c\mathbf{1}+\psi\Vert_{\mathfrak{Y}}+\epsilon.
$$
Sending $\epsilon\to 0$ verifies the first inequality and proves the third item.

The fourth follows from the definition of the $\mathfrak{Y}$ norm and $\mathfrak{Y}(Y)$ and the fact that both $\mathfrak{W}(Y)$ and $\mathfrak{X}(Y)$ are algebras while $\mathbf{1}$ is the multiplicative identity. Indeed, one readily checks that if $\xi\in \mathfrak{X}(Y)$ and $\zeta \in \mathfrak{W}(Y)$, then $\xi \zeta\in \mathfrak{W}(Y)$ and with estimate $\Vert \xi \zeta \Vert_{\mathfrak{W}} \leq \Vert \xi \Vert_{\mathfrak{X}} \Vert \zeta \Vert_{\mathfrak{W}}$. By definition, for $\psi_i\in \mathfrak{Y}(Y)$ and $\epsilon>0$, there are $\xi_i\in \mathfrak{X}(Y)$ and $\zeta_i\in \mathfrak{W}(Y)$ so that $\psi_i=\xi_i+\zeta_i$ and $ \Vert \xi_i \Vert_{\mathfrak{X}}+\Vert \zeta_i\Vert_{\mathfrak{W}}\leq \Vert \psi_i \Vert_{\mathfrak{Y}}+\epsilon$, for $i=1,2$.  Hence, 
\begin{align*}
\Vert \psi_1 \psi_2 \Vert_{\mathfrak{Y}} & \leq \Vert \xi_1 \xi_2 \Vert_{\mathfrak{Y}}+\Vert \xi_1 \zeta_2 \Vert_{\mathfrak{Y}}+ \Vert \xi_2 \zeta_1 \Vert_{\mathfrak{Y}}+ \Vert \zeta_1 \zeta_2 \Vert_{\mathfrak{Y}}\\
& \leq \Vert \xi_1 \xi_2 \Vert_{\mathfrak{X}}+\Vert \xi_1 \zeta_2 \Vert_{\mathfrak{W}}+ \Vert \xi_2 \zeta_1 \Vert_{\mathfrak{W}}+ \Vert \zeta_1 \zeta_2 \Vert_{\mathfrak{W}}\\
&\leq \Vert \xi_1 \Vert_{\mathfrak{X}} \Vert \xi_2 \Vert_{\mathfrak{X}}+\Vert \xi_1 \Vert_{\mathfrak{X}} \Vert \zeta_2 \Vert_{\mathfrak{W}}+ \Vert \xi_2 \Vert_{\mathfrak{X}} \Vert \zeta_1 \Vert_{\mathfrak{W}}+ \Vert \zeta_1 \Vert_{\mathfrak{W}} \Vert \zeta_2 \Vert_{\mathfrak{W}}\\
&\leq (\Vert \psi_1\Vert_{\mathfrak{Y}}+\epsilon)(\Vert \psi_2\Vert_{\mathfrak{Y}}+\epsilon).
\end{align*}
Sending $\epsilon$ to zero gives desired estimate.  This immediately implies $\mathfrak{Y}_0(Y)$ is an algebra.  Finally,  if $\psi_i=c_i \mathbf{1}+\zeta_i $ where $c_i\in\mathbb{R}$ and $\zeta_i\in\mathfrak{Y}_0(Y)$, then $\psi_1\psi_2=c_1 c_2 \mathbf{1}+c_1 \zeta_2 +c_2\zeta_1 + \zeta_1 \zeta_2\in \mathfrak{Y}(Y)$ as claimed.  

For the fifth and sixth items observe that
$$
C_c^0(Y\times \mathbb{S}^n)=\bigcup_{i=1}^\infty C_c^0((B_{i}\cap Y)\times\mathbb{S}^n).
$$
By the Stone-Weierstrass theorem, each $X_i=C_c^0((B_{i}\cap Y)\times \mathbb{S}^n )$ is separable with respect to the uniform topology and $X_i^\prime=C^\infty_c((B_{i}\cap Y)\times \mathbb{S}^n)$ is dense with respect to the uniform topology. As the $\mathfrak{W}$ norm is equivalent to the uniform norm on $X_i$, each $X_i$ is separable when equipped with the $\mathfrak{W}$ norm and $X_i^\prime$ is dense in $X_i$ with respect to this norm. As $\Vert \phi \Vert_{\mathfrak{Y}}\leq \Vert \phi \Vert_{\mathfrak{W}}$ for any $\phi \in X_i$ by the definition of the norms, one has immediately that $X_i$ with the $\mathfrak{Y}$ norm is separable and $X_i^\prime$ is dense with respect to this norm. As $C_c^0(Y\times \mathbb{S}^n)$ is the union of countably many $X_i$, it follows that $C_c^0(Y\times \mathbb{S}^n)$ with the $\mathfrak{Y}$ norm is separable. In a similar fashion as $C^\infty_c(Y\times \mathbb{S}^n)=\bigcup_{i=1}^\infty X_i^\prime$, this space is dense in $C_c^0(Y\times \mathbb{S}^n)$ with the $\mathfrak{Y}$ norm and hence also in $\mathfrak{Y}_0(Y)$. As $\mathfrak{Y}_0(Y)$ is the completion of a separable space it is separable. The separability of $\mathfrak{Y}(Y)$ is immediate.
\end{proof}

\subsection{Weighted estimates for elements of $\mathfrak{Y}$} \label{YEstSec}
Given $\mathfrak{Y}(Y)$ as in Proposition \ref{YSpaceProp}, let $\mathfrak{Y}^*(Y)$ denote its continuous dual space with the dual norm
$$
\Vert V \Vert_{\mathfrak{Y}^*} =\sup \set{ V[\psi]\colon \psi \in \mathfrak{Y}(Y), \Vert \psi \Vert_{\mathfrak{Y}}\leq 1}.
$$
We extend Proposition \ref{WeightRelProp} to elements of $\mathfrak{Y}$ to obtain the following:

\begin{prop} \label{YEstProp}
There is a constant $C_2=C_2(\Omega^\prime,\Gamma_-)>0$ so that if $\Gamma=\partial^* U$ for some $U\in\mathcal{C}(\Gamma_-^\prime,\Gamma_+^\prime)$, then, for any $\psi \in \mathfrak{Y}(\overline{\Omega^\prime})$, 
\begin{enumerate}
\item For any $R_2>R_1>R_1-\delta>\frac{1}{2}R_1>\bar{R}_1$,
$$
|E[\Gamma,\Gamma_-; \alpha_{R_1,R_2,\delta}\psi]| \leq C_2 \left(R_1^{-1}+|E[\Gamma,\Gamma_-;\alpha_{R_1,R_2,\delta}]|\right) \Vert \psi \Vert_{\mathfrak{Y}};
$$
\item For any $0<\delta<1$ and $R>\bar{R}_1$,
$$
|E[\Gamma,\Gamma_-; \phi_{R,\delta}\psi]| \leq C_2 \left(1+|E[\Gamma,\Gamma_-; \phi_{R,\delta}]|\right) \Vert \psi \Vert_{\mathfrak{Y}};
$$
\item If, in addition, $E_{rel}[\Gamma,\Gamma_-]<\infty$, then $E_{rel}[\Gamma, \Gamma_-; \psi]$ exists and 
$$
\left| E_{rel}[\Gamma, \Gamma_-; \psi] \right| \leq  C_2 \left(1+ \left|E_{rel}[\Gamma, \Gamma_-]\right|\right)\Vert \psi \Vert_{\mathfrak{Y}}.
$$
In particular, for such $\Gamma$ there is a well defined element $V_\Gamma\in \mathfrak{Y}^*(\overline{\Omega^\prime})$ given by
$$
V_{\Gamma}[\psi]=E_{rel}[\Gamma, \Gamma_-; \psi]
$$
that satisfies
$$
\Vert V_{\Gamma}\Vert_{\mathfrak{Y}^*}\leq C_2\left(1+ \left|E_{rel}[\Gamma, \Gamma_-]\right|\right).
$$
\end{enumerate}
Here $\bar{R}_1=\bar{R}_1(\Omega^\prime,\Gamma_-)$ is the constant given by Proposition \ref{WeightRelProp}.
\end{prop}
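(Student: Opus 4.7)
The plan is to reduce each of the three items to the corresponding statements in Proposition~\ref{WeightRelProp} by exploiting the decomposition $\mathfrak{Y}(\overline{\Omega^\prime}) = \mathbb{R}\mathbf{1}\oplus\mathfrak{Y}_0(\overline{\Omega^\prime})$ together with the inclusion $\mathfrak{Y}_0\subseteq \mathfrak{X}+\mathfrak{W}$ furnished by Proposition~\ref{YSpaceProp}. Given $\psi\in \mathfrak{Y}(\overline{\Omega^\prime})$ and $\epsilon>0$, combining Item~(3) of Proposition~\ref{YSpaceProp} with the definition of the $\mathfrak{Y}^\prime$-norm produces a splitting $\psi = c\mathbf{1} + \xi + \zeta$ with $c\in\mathbb{R}$, $\xi\in\mathfrak{X}(\overline{\Omega^\prime})$, $\zeta\in\mathfrak{W}(\overline{\Omega^\prime})$ and $|c|+\Vert\xi\Vert_{\mathfrak{X}}+\Vert\zeta\Vert_{\mathfrak{W}}\leq 4\Vert\psi\Vert_{\mathfrak{Y}}+\epsilon$. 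By linearity of $E[\Gamma,\Gamma_-;\,\cdot\,]$ in the test function, the three contributions are estimated separately. The $c\mathbf{1}$-term is immediate since $E[\Gamma,\Gamma_-;\alpha_{R_1,R_2,\delta}(c\mathbf{1})] = c\,E[\Gamma,\Gamma_-;\alpha_{R_1,R_2,\delta}]$, and the $\xi$-term is controlled directly by Proposition~\ref{WeightRelProp}(1).

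The principal step is the estimate for the $\zeta$-term, where the main obstacle is the absence of any \emph{a priori} area bound on the general reduced boundary $\Gamma=\partial^* U$. The key observation is that the rapid decay built into $\mathfrak{W}$ converts the Gaussian-weighted integrand into one with polynomial weight: the pointwise bound $|\zeta(p,\mathbf{v})|\,e^{|\mathbf{x}(p)|^2/4}\leq \Vert\zeta\Vert_{\mathfrak{W}}(1+|\mathbf{x}(p)|)^{-n-1}$ yields
\[
\left|E[\Gamma,\Gamma_-;\alpha_{R_1,R_2,\delta}\zeta]\right| \leq \Vert\zeta\Vert_{\mathfrak{W}}\bigl(I_\Gamma + I_{\Gamma_-}\bigr),
\]
where $I_S := \int_S \alpha_{R_1,R_2,\delta}(1+|\mathbf{x}|)^{-n-1}\,d\mathcal{H}^n$. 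The integral $I_{\Gamma_-}$ is bounded by $CR_1^{-1}$ via a dyadic decomposition using the quadratic area growth of $\Gamma_-$ inherited from its $C^2$-asymptotic-conical structure. To control $I_\Gamma$, which is where the lack of an area bound on $\Gamma$ would naively obstruct a direct estimate, I apply Proposition~\ref{WeightRelProp}(1) to the $\mathbf{v}$-independent comparison function
\[
\eta(p,\mathbf{v}) := (1+|\mathbf{x}(p)|)^{-n-1}\,e^{-|\mathbf{x}(p)|^2/4},
\]
which a direct derivative computation shows lies in $\mathfrak{X}(\overline{\Omega^\prime})$ with $\mathfrak{X}$-norm bounded by a universal constant. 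Since $\eta\,e^{|\mathbf{x}|^2/4} = (1+|\mathbf{x}|)^{-n-1}$, the identity $I_\Gamma - I_{\Gamma_-} = E[\Gamma,\Gamma_-;\alpha_{R_1,R_2,\delta}\eta]$ combined with the $\mathfrak{X}$-estimate yields $I_\Gamma \leq C(R_1^{-1}+|E[\Gamma,\Gamma_-;\alpha_{R_1,R_2,\delta}]|)$. Assembling the three bounds and then sending $\epsilon\to 0$ proves Item~(1); Item~(2) follows by the same argument using Proposition~\ref{WeightRelProp}(2) in place of~(1).

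For Item~(3), under the assumption $E_{rel}[\Gamma,\Gamma_-]<\infty$, Proposition~\ref{RelEntropyProp} ensures $E[\Gamma,\Gamma_-;\phi_{R,\delta}]\to E_{rel}[\Gamma,\Gamma_-]$, and hence $|E[\Gamma,\Gamma_-;\alpha_{R_1,R_2,\delta}]|\to 0$ as $R_1\to\infty$ uniformly in $R_2>R_1$. Applying Item~(1) to the difference $\alpha_{R_1,R_2,\delta}\psi$ then shows that $\{E[\Gamma,\Gamma_-;\phi_{R,\delta}\psi]\}_R$ is a Cauchy net, so $E_{rel}[\Gamma,\Gamma_-;\psi] = \lim_{R\to\infty}E[\Gamma,\Gamma_-;\phi_{R,\delta}\psi]$ exists. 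The claimed dual bound is obtained by passing to the limit $R\to\infty$ in the estimate of Item~(2). The map $V_\Gamma[\psi]:=E_{rel}[\Gamma,\Gamma_-;\psi]$ is linear by construction and continuous by the bound just established, hence defines an element of $\mathfrak{Y}^*(\overline{\Omega^\prime})$ with the stated operator-norm estimate.
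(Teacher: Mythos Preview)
Your proof is correct and follows essentially the same route as the paper. In particular, your comparison function $\eta(p)=(1+|\mathbf{x}(p)|)^{-n-1}e^{-|\mathbf{x}(p)|^2/4}$ is exactly the function the paper calls $\gamma$, and your identity $I_\Gamma - I_{\Gamma_-} = E[\Gamma,\Gamma_-;\alpha_{R_1,R_2,\delta}\eta]$ together with the bound on $I_{\Gamma_-}$ reproduces the paper's computation \eqref{ComputationEqn}; the only cosmetic difference is that you carry the decomposition $\psi=c\mathbf{1}+\xi+\zeta$ with an explicit $\epsilon$, whereas the paper simply asserts that by linearity it suffices to treat the $\mathfrak{W}$-component.
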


\begin{proof}
By linearity and Proposition \ref{WeightRelProp}, it suffices to show the claims for elements $\xi\in\mathfrak{W}(\overline{\Omega^\prime})$ in the $\mathfrak{W}$ norm. It is convenient to set
$$
\gamma(p)=(1+|\mathbf{x}(p)|)^{-n-1} e^{-\frac{|\mathbf{x}(p)|^2}{4}}>0.
$$
Observe that 
$$
\Vert \xi \Vert_{\mathfrak{W}}=\Vert\xi\gamma^{-1}\Vert_{C^0}.
$$
We compute, for a continuous compactly supported function $\phi\geq 0$,
\begin{equation} \label{ComputationEqn}
\begin{split}
\left| E[\Gamma, \Gamma_-; \phi\xi] \right| & =\left| \int_{\Gamma} \phi \xi e^{\frac{|\mathbf{x}|^2}{4}} \, d\mathcal{H}^n-\int_{\Gamma_-} \phi\xi e^{\frac{|\mathbf{x}|^2}{4}} \, d\mathcal{H}^n \right| \\
& \leq \Vert \xi \Vert_{\mathfrak{W}} \int_{\Gamma} \phi\gamma e^{\frac{|\mathbf{x}|^2}{4}} \, d\mathcal{H}^n+\Vert\xi\Vert_{\mathfrak{W}}\int_{\Gamma_-} \phi\gamma e^{\frac{|\mathbf{x}|^2}{4}} \, d\mathcal{H}^n\\
& =\Vert \xi \Vert_{\mathfrak{W}} E[\Gamma, \Gamma_-; \phi\gamma]+ 2\Vert \xi \Vert_{\mathfrak{W}} \int_{\Gamma_-} \phi\gamma e^{\frac{|\mathbf{x}|^2}{4}} \, d\mathcal{H}^n.
\end{split}
\end{equation}
One readily checks that there is a constant $K=K(\Gamma_-)>1$ so that
$$
\Vert\gamma\Vert_{\mathfrak{X}}+\sup_{R>0} (1+R)\int_{\Gamma_-\setminus B_R} \gamma e^{\frac{|\mathbf{x}|^2}{4}} \, d\mathcal{H}^n \leq K.
$$

Plugging $\phi=\alpha_{R_1,R_2,\delta}$ into \eqref{ComputationEqn}, it follows from Item (1) of Proposition \ref{WeightRelProp} that, for any $R_2>R_1>R_1-\delta>\frac{1}{2} R_1>\bar{R}_1$,
\begin{equation} \label{WeightAnnuliEqn}
|E[\Gamma,\Gamma_-; \alpha_{R_1,R_2,\delta}\xi]| \leq  C_2 \left(R_1^{-1}+|E[\Gamma,\Gamma_-;\alpha_{R_1,R_2,\delta}]|\right)\Vert\xi\Vert_{\mathfrak{W}}
\end{equation}
where $C_2=\bar{C}_1K+4K$. That is, the first item holds.

Similarly, plugging $\phi=\phi_{R,\delta}$ into \eqref{ComputationEqn} and appealing to Item (2) of Proposition \ref{WeightRelProp} give that, for any $\delta\in (0,1)$ and $R>\bar{R}_1$,
\begin{equation} \label{WeightBallEqn}
\left| E[\Gamma,\Gamma_-; \phi_{R,\delta}\xi]\right| \leq C_2 \left(1+|E[\Gamma,\Gamma_-; \phi_{R,\delta}]|\right)\Vert\xi \Vert_{\mathfrak{W}}.
\end{equation}
This proves the second item.

To see the last item, sending $\delta\to 0$ in \eqref{WeightAnnuliEqn}, it follows from the dominated convergence theorem that
$$
\left| E_{rel}[\Gamma,\Gamma_-; \xi; \bar{B}_{R_2}\setminus\bar{B}_{R_1}]\right| \leq C_2 \left(R_1^{-1}+\left|E_{rel}[\Gamma,\Gamma_-; \bar{B}_{R_2}\setminus\bar{B}_{R_1}]\right|\right)\Vert \xi \Vert_{\mathfrak{W}} .
$$
As $E_{rel}[\Gamma,\Gamma_-]<\infty$, this implies that $E_{rel}[\Gamma,\Gamma_-;\xi]$ exists. Finally, sending $\delta\to 0$ and $R\to\infty$ in \eqref{WeightBallEqn} gives that
$$
\left| E_{rel}[\Gamma,\Gamma_-; \xi] \right| \leq C_2 \left(1+\left|E_{rel}[\Gamma,\Gamma_-]\right|\right)\Vert\xi\Vert_{\mathfrak{W}}.
$$
This completes the proof.
\end{proof}

\subsection{Relative expander entropy of elements of $\mathfrak{Y}^*$} \label{RelEntropySec}
We extend the notion of relative expander entropy to elements of $\mathfrak{Y}^*(\overline{\Omega^\prime})$. Notice that $C^0_c((B_R\cap\overline{\Omega^\prime})\times \mathbb{S}^n)$ with the $C^0$ norm is continuously embedded in $\mathfrak{Y}(\overline{\Omega^\prime})$. Thinking of continuous functions on the Grassmanian bundle of $n$-planes $G_n(B_R)$ as even elements of $C^0_c((B_R\cap\overline{\Omega^\prime})\times\mathbb{S}^n)$, it follows from the Riesz representation theorem that, for any $V\in\mathfrak{Y}^*(\overline{\Omega^\prime})$, 
$$
V[\bar{B}_R]=V[\bar{B}_R\times\mathbb{S}^n]=\lim_{\delta\to 0} V[\phi_{R,\delta}].
$$
Furthermore, one can define
$$
\overline{E}_{rel}[V]=\limsup_{R\to \infty} V[\bar{B}_R]
$$
and
$$
\underline{E}_{rel}[V]=\liminf_{R\to \infty} V[\bar{B}_R].
$$
If $\overline{E}_{rel}[V]=\underline{E}_{rel}[V]$, then we set $E_{rel}[V]=\lim_{R\to \infty}V[\bar{B}_R]$.

Using Proposition \ref{YEstProp}, one may define, for $\Lambda\geq 0$,
$$\mathfrak{Y}_{\mathcal{C}}^*(\overline{\Omega^\prime}; \Lambda)=\set{V_\Gamma\colon \Gamma=\partial^* U, U\in \mathcal{C}(\Gamma_-^\prime, \Gamma_+^\prime), \left| E_{rel}[\Gamma, \Gamma_-] \right|\leq \Lambda}\subseteq \mathfrak{Y}^*(\overline{\Omega^\prime}).
$$
Let $\overline{\mathfrak{Y}_{\mathcal{C}}^*}(\overline{\Omega^\prime};\Lambda)$ be the closure of $\mathfrak{Y}_{\mathcal{C}}^*(\overline{\Omega^\prime};\Lambda)$ in the weak-* topology of $\mathfrak{Y}^*(\overline{\Omega^\prime})$. Observe that if $V_{\Gamma_i}\in \mathfrak{Y}_{\mathcal{C}}^*(\overline{\Omega^\prime};\Lambda)$ satisfy $V_{\Gamma_i}\to V $ in the weak-* topology, then
$$
\lim_{i\to \infty} E_{rel}[V_{\Gamma_i}]=\lim_{i\to \infty} V_{\Gamma_i}[\mathbf{1}]=V[\mathbf{1}].
$$
However, for $V\in \overline{\mathfrak{Y}_{\mathcal{C}}^*}(\overline{\Omega^\prime}; \Lambda)$ one, in principle, may have
$$
\underline{E}_{rel}[V]< \bar{E}_{rel}[V]\neq  V[\mathbf{1}].
$$
In fact, one has that

\begin{lem} \label{RelEntropyBndLem}
Given $V\in  \overline{\mathfrak{Y}_{\mathcal{C}}^*}(\overline{\Omega^\prime};\Lambda)$, the following holds:
\begin{enumerate}
\item There is a (weighted) varifold $V^E_+$ so that, for any (even) $\psi\in C^0_c(\overline{\Omega^\prime}\times\mathbb{S}^n)$,
$$
V[\psi]=V^E_+[\psi]-V_{\Gamma_-}^E[\psi]=V^E_+[\psi]-\int_{\Gamma_-} \psi (p, \mathbf{n}_{\Gamma_-}(p)) e^{\frac{|\mathbf{x}|^2}{4}}\, d\mathcal{H}^n;
$$
\item There is a constant $E_-=E_-(\Omega^\prime,\Gamma_-)\leq 0$ so that
$$
-\infty<E_-\leq \underline{E}_{rel}[V]= \overline{E}_{rel}[V]\leq V[\mathbf{1}]\leq \Lambda.
$$
In particular, $E_{rel}[V]$ exists and is finite. Moreover, for any $R_2>R_1>\bar{R}_0$,
$$
V[\bar{B}_{R_2}] \geq V[\bar{B}_{R_1}]-\bar{C}_0 R_1^{-1},
$$
where $\bar{R}_0=\bar{R}_0(\Omega^\prime, \Gamma_-)$ and $\bar{C}_0=\bar{C}_0(\Omega^\prime,\Gamma_-)$ are the constants given by Proposition \ref{RelEntropyProp}.
\end{enumerate}
\end{lem}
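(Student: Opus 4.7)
The plan is to pick a realizing sequence $V_{\Gamma_i}\to V$ in weak-$\ast$ with $|E_{rel}[\Gamma_i,\Gamma_-]|\leq \Lambda$ and marry the approximate monotonicity of Proposition \ref{RelEntropyProp} with standard Radon-measure compactness to simultaneously produce $V^E_+$ and the claimed two-sided control on $E_{rel}[V]$.

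For item (1), I would introduce the positive weighted varifolds $V_{\Gamma_i}^E[\psi]=\int_{\Gamma_i}\psi(p,\mathbf{n}_{\Gamma_i}(p))\,e^{|\mathbf{x}|^2/4}\,d\mathcal{H}^n$, so that $V_{\Gamma_i}[\psi]=V_{\Gamma_i}^E[\psi]-V_{\Gamma_-}^E[\psi]$ for all $\psi\in C^0_c(\overline{\Omega^\prime}\times\mathbb{S}^n)$. Since the cutoffs $\phi_{R,\delta}$ lie in $\mathfrak{Y}(\overline{\Omega^\prime})$, Proposition \ref{RelEntropyProp} together with the bound $|E_{rel}[\Gamma_i,\Gamma_-]|\leq\Lambda$ yields $V_{\Gamma_i}[\phi_{R,\delta}]\leq \Lambda+\bar{C}_0 R^{-1}$ for $R>\bar{R}_0$, and sending $\delta\to 0$ by dominated convergence gives the uniform (in $i$) local mass bound $V_{\Gamma_i}^E[\bar{B}_R]\leq \Lambda+\bar{C}_0 R^{-1}+V_{\Gamma_-}^E[\bar{B}_R]$. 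Banach–Alaoglu then extracts a subsequence converging to a Radon measure $V^E_+$ on $\overline{\Omega^\prime}\times\mathbb{S}^n/{\pm}$, and the decomposition in (1) follows by passing to the limit in the identity against any $\psi\in C^0_c$.

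For item (2), the monotonicity $V_{\Gamma_i}[\phi_{R_2,\delta}]\geq V_{\Gamma_i}[\phi_{R_1,\delta}]-\bar{C}_0 R_1^{-1}$ (for $R_2>R_1+\delta>R_1>\bar{R}_0$) survives the weak-$\ast$ limit (the cutoffs lie in $\mathfrak{Y}$) and the subsequent $\delta\to 0$, giving $V[\bar{B}_{R_2}]\geq V[\bar{B}_{R_1}]-\bar{C}_0 R_1^{-1}$. Taking $\liminf_{R_2\to\infty}$ and then $R_1\to\infty$ sandwiches $\underline{E}_{rel}[V]\geq \overline{E}_{rel}[V]$, so $E_{rel}[V]$ exists. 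The upper bound $E_{rel}[V]\leq V[\mathbf{1}]$ comes from first sending $R_2\to\infty$ in Proposition \ref{RelEntropyProp} (using its consequence $\lim_{R_2\to\infty}E[\Gamma_i,\Gamma_-;\phi_{R_2,\delta}]=E_{rel}[\Gamma_i,\Gamma_-]$) to obtain $V_{\Gamma_i}[\mathbf{1}]\geq V_{\Gamma_i}[\phi_{R_1,\delta}]-\bar{C}_0 R_1^{-1}$, then passing to the weak-$\ast$ limit, then $\delta\to 0$, then $R_1\to\infty$; combined with $V[\mathbf{1}]=\lim V_{\Gamma_i}[\mathbf{1}]\leq \Lambda$, which uses $\mathbf{1}\in\mathfrak{Y}$. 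For the lower bound, fixing any $R_0>\bar{R}_0$, positivity of $V^E_+$ gives $V[\bar{B}_{R_0}]\geq -V_{\Gamma_-}^E[\bar{B}_{R_0}]$, and the monotonicity promotes this to $V[\bar{B}_R]\geq -V_{\Gamma_-}^E[\bar{B}_{R_0}]-\bar{C}_0 R_0^{-1}=:E_-$, a nonpositive constant depending only on $\Omega^\prime$ and $\Gamma_-$.

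The main subtlety will be juggling two distinct weak limits in tandem — the weak-$\ast$ limit in $\mathfrak{Y}^*$ defining $V$, and the Radon-measure limit producing $V^E_+$ — and verifying the identity of (1) on the full class of compactly supported continuous test functions even though $V$ is a priori only a functional on $\mathfrak{Y}$. Fortunately, the three families of test functions we actually need ($\phi_{R,\delta}$, $\mathbf{1}$, and $\psi\in C^0_c$) all sit inside $\mathfrak{Y}(\overline{\Omega^\prime})$, so $V_{\Gamma_i}\to V$ yields $V_{\Gamma_i}[\psi]\to V[\psi]$ on exactly the functions at stake, and the $\delta\to 0$ passages are routine by dominated convergence once the local mass bound is in hand.
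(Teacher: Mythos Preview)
Your proposal is correct and follows essentially the same route as the paper's proof: both extract $V_+^E$ from a uniform local mass bound obtained via Proposition~\ref{RelEntropyProp}, pass the approximate monotonicity $V_{\Gamma_i}[\phi_{R_2,\delta}]\geq V_{\Gamma_i}[\phi_{R_1,\delta}]-\bar C_0 R_1^{-1}$ through the weak-$*$ limit and then $\delta\to 0$, and derive the lower bound $E_-$ from positivity of $V_+^E$ at a fixed scale together with this monotonicity. The only cosmetic difference is that the paper fixes the specific radius $R_1=2\bar R_0$ for the lower bound, whereas you leave the choice of $R_0>\bar R_0$ free.
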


\begin{proof}
Let $V_{\Gamma_i}\in \mathfrak{Y}_{\mathcal{C}}^*(\overline{\Omega^\prime};\Lambda)$ satisfy $V_{\Gamma_i}\to V$ in the weak-* topology of $\mathfrak{Y}^*(\overline{\Omega^\prime})$. By Proposition \ref{RelEntropyProp}, for any $R>\bar{R}_0$,
$$
\Lambda\geq E_{rel}[\Gamma_i,\Gamma_-]\geq E_{rel}[\Gamma_i,\Gamma_-;\bar{B}_R]-\bar{C}_0 R^{-1}
$$
and so
$$
V_{\Gamma_i}^E[\bar{B}_R] =\int_{\Gamma_i\cap\bar{B}_R} e^{\frac{|\mathbf{x}|^2}{4}} \, d\mathcal{H}^n \leq C
$$
where $C$ depends on $\Omega^\prime,\Gamma_-,\Lambda$ and $R$. Thus, up to passing to a subsequence, $V^E_{\Gamma_i}\to V^E_+$ in the sense of varifolds so, for any (even) $\psi\in C^0_c(\overline{\Omega^\prime}\times\mathbb{S}^n)\subseteq\mathfrak{Y}(\overline{\Omega^\prime})$,
$$
V[\psi]=\lim_{i\to\infty} V_{\Gamma_i}[\psi]=\lim_{i\to\infty} V_{\Gamma_i}^E[\psi]-V_{\Gamma_-}^E[\psi]=V^E_+[\psi]-V_{\Gamma_-}^E[\psi]
$$ 
proving the first item.

To prove the inequalities in the second item, observe that the upper bound of $V[\mathbf{1}]$ is immediate from the weak-* convergence. To see the lower bound, appealing to Proposition \ref{RelEntropyProp} gives that, for any $R_2>R_1+\delta>R_1>\bar{R}_0$ and all $i$,
\begin{equation} \label{RelEntAnnEqn}
V_{\Gamma_i}[\phi_{R_2,\delta}] \geq V_{\Gamma_i}[\phi_{R_1,\delta}]-\bar{C}_0 R_1^{-1}.
\end{equation}
Sending $R_2\to\infty$ and invoking Proposition \ref{RelEntropyProp}, one sees 
$$
V_{\Gamma_i}[\mathbf{1}] \geq V_{\Gamma_i}[\phi_{R_1,\delta}]-\bar{C}_0 R_1^{-1}.
$$
It then follows from the weak-* convergence that 
$$
V[\mathbf{1}] \geq V[\phi_{R_1,\delta}]-\bar{C}_0 R_1^{-1}.
$$
As $\phi_{R,\delta}\leq \phi_{R, \delta^\prime}$ for $\delta^\prime<\delta$ and $\lim_{\delta\to 0} \phi_{R,\delta}=\mathbf{1}_{\bar{B}_R}$, the dominated convergence theorem combined with the previous inequality yields
$$
V[\mathbf{1}] \geq V[\bar{B}_{R_1}]-\bar{C}_0R_1^{-1}.
$$
Hence, taking the limsup of both sides as $R_1\to\infty$, gives $V[\mathbf{1}] \geq \overline{E}_{rel}[V]$ proving the claimed lower bound.

To prove the middle equality, it follows from \eqref{RelEntAnnEqn} and the weak-* convergence that
$$
V[\phi_{R_2,\delta}] \geq V[\phi_{R_1,\delta}]-\bar{C}_0 R_1^{-1}.
$$
Thus, sending $\delta\to 0$, by the dominated convergence theorem,
\begin{equation} \label{RelEntAnn2Eqn}
V[\bar{B}_{R_2}] \geq V[\bar{B}_{R_1}]-\bar{C}_0R_1^{-1}.
\end{equation}
This implies that
$$
\liminf_{R\to\infty} V[\bar{B}_{R}] \geq \limsup_{R\to\infty} V[\bar{B}_{R}]
$$
and so $\underline{E}_{rel}[V]=\overline{E}_{rel}[V]$.
 
It remains only to show the uniform lower bound for $E_{rel}[V]$. To achieve this, by the existence of $E_{rel}[V]$ and the first item, fixing $R_1=2\bar{R}_0$ and sending $R_2\to\infty$ in \eqref{RelEntAnn2Eqn} imply that
$$
E_{rel}[V]\geq V[\bar{B}_{2\bar{R}_0}]-\frac{1}{2}\bar{C}_0\bar{R}_0^{-1} \geq -V_{\Gamma_-}^E[\bar{B}_{2\bar{R}_0}]-\frac{1}{2} \bar{C}_0\bar{R}_0^{-1}.
$$
This immediately gives the uniform lower bound. 
\end{proof}

For convenience we will use the following notation: For $\Gamma=\partial^*U$ where $U\in \mathcal{C}(\Gamma_-^\prime, \Gamma_+^\prime)$ we consider the pair $(U, V_{\Gamma})\in\mathcal{C}(\Gamma_-^\prime, \Gamma_+^\prime)\times \mathfrak{Y}^*_{\mathcal{C}}(\overline{\Omega^\prime};\Lambda)$. For a sequence $(U_i, V_{\Gamma_i})$ we say $(U_i, V_{\Gamma_i})\to (U_\infty, V_\infty)$ provided $\mathbf{1}_{U_i}\to \mathbf{1}_{U_\infty}$ in the weak-* topology of $BV_{loc}$ and $V_{\Gamma_i}\to V_\infty$ in the weak-* topology of $\mathfrak{Y}^*(\overline{\Omega^\prime})$. By Lemma \ref{RelEntropyBndLem} and Proposition \ref{YEstProp} together with the Banach-Alaoglu theorem, we have

\begin{cor} \label{PairConvergeCor}
If $(U_i, V_{\Gamma_i})\in  \mathcal{C}(\Gamma_-^\prime, \Gamma_+^\prime)\times \mathfrak{Y}^*_{\mathcal{C}}(\overline{\Omega^\prime}; \Lambda)$, then up to passing to a subsequence there is a $(U_\infty, V_\infty)\in  \mathcal{C}(\Gamma_-^\prime, \Gamma_+^\prime)\times \overline{\mathfrak{Y}^*_{\mathcal{C}}}(\overline{\Omega^\prime};\Lambda)$ so that $(U_i, V_{\Gamma_i})\to (U_\infty, V_{\infty})$. For $\Gamma_\infty =\partial^* U_\infty$,  $V_{\Gamma_\infty}\leq  V_{\infty}$ in the sense of measures.
\end{cor}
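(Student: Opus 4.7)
The plan is to extract convergent subsequences for the two components separately—using Banach-Alaoglu for $V_{\Gamma_i}$ and standard $BV$ compactness for $U_i$—and then obtain the measure inequality by combining the varifold convergence underlying Lemma \ref{RelEntropyBndLem} with lower semicontinuity of weighted perimeter.

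For the functional component, Proposition \ref{YEstProp}(3) applied to the entropy bound $|E_{rel}[\Gamma_i,\Gamma_-]|\leq\Lambda$ gives $\Vert V_{\Gamma_i}\Vert_{\mathfrak{Y}^*}\leq C_2(1+\Lambda)$. Since $\mathfrak{Y}(\overline{\Omega^\prime})$ is separable by Proposition \ref{YSpaceProp}(5), the Banach-Alaoglu theorem applied to the closed ball of radius $C_2(1+\Lambda)$ produces a weak-* convergent subsequence $V_{\Gamma_i}\to V_\infty$, and $V_\infty\in \overline{\mathfrak{Y}^*_{\mathcal{C}}}(\overline{\Omega^\prime};\Lambda)$ by the very definition of this closure.

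For the Caccioppoli component, the $U_i$ are trapped between the fixed sets $\Omega_-(\Gamma_-^\prime)$ and $\Omega_-(\Gamma_+^\prime)$, so the $\mathbf{1}_{U_i}$ are uniformly bounded in $L^\infty$ and agree with $\mathbf{1}_{\Omega_-(\Gamma_-^\prime)}$ outside $\overline{\Omega^\prime}$. Proposition \ref{RelEntropyProp}(1) together with the entropy bound yields a uniform upper bound on $\mathcal{H}^n(\Gamma_i\cap \bar{B}_R)$ for each fixed $R$, giving uniform $BV_{loc}$ bounds on $\mathbf{1}_{U_i}$. Standard BV compactness then produces a further subsequence along which $\mathbf{1}_{U_i}\to \mathbf{1}_{U_\infty}$ in $L^1_{loc}$ and weak-* in $BV_{loc}$, with the sandwich condition passing to the limit so $U_\infty\in\mathcal{C}(\Gamma_-^\prime,\Gamma_+^\prime)$. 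A diagonal extraction produces a common subsequence along which $(U_i,V_{\Gamma_i})\to (U_\infty,V_\infty)$.

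Finally, appealing to Lemma \ref{RelEntropyBndLem}(1) along a further subsequence coordinated with the one above, there is a weighted varifold $V^E_+$, realized as the varifold limit of $V^E_{\Gamma_i}$, such that $V_\infty[\psi]=V^E_+[\psi]-V^E_{\Gamma_-}[\psi]$ for all even $\psi\in C^0_c(\overline{\Omega^\prime}\times\mathbb{S}^n)$. Since by definition $V_{\Gamma_\infty}[\psi]=V^E_{\Gamma_\infty}[\psi]-V^E_{\Gamma_-}[\psi]$, the asserted inequality $V_{\Gamma_\infty}\leq V_\infty$ reduces to $V^E_{\Gamma_\infty}\leq V^E_+$ as Radon measures on $\overline{\Omega^\prime}$, which follows from the classical lower semicontinuity of weighted perimeter under $L^1_{loc}$ convergence of indicator functions:
\[
\int_{\Gamma_\infty}\psi\, e^{\frac{|\mathbf{x}|^2}{4}}\, d\mathcal{H}^n\leq \liminf_{i\to\infty}\int_{\Gamma_i}\psi\, e^{\frac{|\mathbf{x}|^2}{4}}\, d\mathcal{H}^n=V^E_+[\psi]
\]
for non-negative continuous $\psi$. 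The main technical point is coordinating the three compactness extractions—weak-* convergence in $\mathfrak{Y}^*$, $BV_{loc}$ convergence of the Caccioppoli sets, and varifold convergence of the $V^E_{\Gamma_i}$—along a single common subsequence, so that the weighted varifold $V^E_+$ of Lemma \ref{RelEntropyBndLem} corresponds to the specific limit $V_\infty$ obtained from Banach-Alaoglu.
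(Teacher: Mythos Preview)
Your proof is correct and follows essentially the same approach as the paper, which simply cites Lemma \ref{RelEntropyBndLem}, Proposition \ref{YEstProp}, and the Banach--Alaoglu theorem without further detail; you have filled in precisely the steps those citations point to. Your reading of ``$V_{\Gamma_\infty}\leq V_\infty$ in the sense of measures'' as an inequality of the associated weighted mass measures on $\overline{\Omega^\prime}$ (rather than as a full varifold inequality on the Grassmannian bundle) is the correct one and is all that is needed downstream, and your derivation via lower semicontinuity of weighted perimeter is the standard route.
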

\section{Action of flows of vector fields} \label{FlowSec}
In this section we study the action of flows of a suitable class of vector fields on elements of $\mathfrak{Y}^*$ and, in particular, prove the first variation formula. We continue to use the conventions of Section \ref{ConventionSec}. 

If $\Phi\colon \overline{\Omega^\prime}\to\overline{\Omega^\prime}$ is a local $C^1$ diffeomorphism, then the \emph{Jacobian of $\Phi$ with respect to the expander metric $g^E_{ij}=e^{\frac{|\mathbf{x}|^2}{2n}}\delta_{ij}$} is given by
$$
J^E\Phi(p,\mathbf{v})=J\Phi(p,\mathbf{v})e^{\frac{1}{4}(|\Phi(p)|^2-|\mathbf{x}(p)|^2)}
$$
where $J\Phi$ is the Jacobian of $\Phi$ with respect to the Euclidean metric. For a function $\psi$ on $\overline{\Omega^\prime}\times\mathbb{S}^n$, the \emph{pullback of $\psi$ under $\Phi$} is given by
$$
\Phi^\#\psi(p,\mathbf{v})=\psi(\Phi(p),\nabla_{\mathbf{v}}\Phi(p)).
$$

Suppose $\Phi\colon\overline{\Omega^\prime}\to \overline{\Omega^\prime}$ has the property that for all $\psi\in \mathfrak{Y}(\overline{\Omega^\prime})$, $\Phi^\# \psi J^E \Phi\in \mathfrak{Y}(\overline{\Omega^\prime})$. For such $\Phi$ and any $V\in\mathfrak{Y}^*(\overline{\Omega^\prime})$, we may define the \emph{pushforward of $V$ under $\Phi$} as follows: for all $\psi\in\mathfrak{Y}(\overline{\Omega^\prime})$,
$$
\Phi_\#V[\psi]=V\left[\Phi^\#\psi J^E\Phi\right].
$$
One readily checks that if $\Phi$ is $C^1$ and is fixed outside a compact set, i.e.,  equals the identity outside a compact set, then $\Phi_\#V$ is well defined. 

We now introduce the class of vector fields whose flow will not be fixed outside a compact set and that give suitable pushforwards of elements of $\mathfrak{Y}^*(\overline{\Omega^\prime})$. To that end let $\mathbf{X}_0$ be the vector field of \eqref{X0VectorFieldEqn} and $\mathcal{R}_1$ the radius given by Proposition \ref{ThickeningProp}. Let $\chi\colon\mathbb{R}^{n+1}\to [0,1]$ be a smooth cut-off so that $\spt(\chi)\subset\mathbb{R}^{n+1}\setminus\bar{B}_{\mathcal{R}_1}$ and $\chi=1$ in $\mathbb{R}^{n+1}\setminus \bar{B}_{\mathcal{R}_1+1}$. We then let
$$
\mathbf{Y}_0=\chi |\mathbf{x}|^{-2} \mathbf{X}_0= \chi|\mathbf{x}|^{-2}\left(\mathbf{x}-(\mathbf{x}\cdot\mathbf{N})\mathbf{N}\right).
$$
Define
$$
\mathcal{Y}_t(\overline{\Omega^\prime})=\set{\mathbf{Y}\in C^\infty_{loc}(\overline{\Omega^\prime}; \mathbb{R}^{n+1})\colon \Vert \mathbf{Y}\Vert_{\mathcal{Y}_t}<\infty}
$$
where
$$
\Vert\mathbf{Y}\Vert_{\mathcal{Y}_t}=\Vert (1+|\mathbf{x}|)^3 \mathbf{Y}\Vert_{C^0}+\sum_{l=1}^3 \Vert (1+|\mathbf{x}|)^2\nabla^l\mathbf{Y}\Vert_{C^0}.
$$
Let 
$$
\mathcal{Y}(\overline{\Omega^\prime})=\mathrm{span}_{\mathbb{R}}\set{\mathbf{Y}_0}\oplus\mathcal{Y}_t(\overline{\Omega^\prime})=\set{\mathbf{Y}=\alpha\mathbf{Y}_0+\mathbf{Y}_1\colon \alpha\in\mathbb{R}, \mathbf{Y}_1\in\mathcal{Y}_t(\overline{\Omega^\prime})}
$$
with the norm
$$
\Vert\mathbf{Y}\Vert_{\mathcal{Y}}=|\alpha|+\Vert\mathbf{Y}_1\Vert_{\mathcal{Y}_t}.
$$
Consider the convex cone
$$
\mathcal{Y}^-(\overline{\Omega^\prime})=\set{\mathbf{Y}\in\mathcal{Y}(\overline{\Omega^\prime})\colon \mathbf{Y}\cdot\mathbf{n}_{\partial\Omega^\prime}\leq 0}\subseteq\mathcal{Y}(\overline{\Omega^\prime})
$$
where $\mathbf{n}_{\partial\Omega^\prime}$ points out of $\Omega^\prime$. 

The main result of this section is the following:

\begin{prop} \label{FirstVarProp}
Suppose $\mathbf{Y}=\alpha\mathbf{Y}_0+\mathbf{Y}_1\in\mathcal{Y}^-(\overline{\Omega^\prime})$ for $\alpha\in\mathbb{R}$ and $\mathbf{Y}_1\in\mathcal{Y}_t(\overline{\Omega^\prime})$ that satisfies, for some constant $M_0>0$,
$$
\Vert\mathbf{Y}\Vert_{\mathcal{Y}}=|\alpha|+\Vert\mathbf{Y}_1\Vert_{\mathcal{Y}_t}\leq M_0.
$$ 
Let $\set{\Phi(t)}_{t\geq 0}$ be the family of diffeomorphisms in $\overline{\Omega^\prime}$ generated by $\mathbf{Y}$. Then $\Phi(t)(\overline{\Omega^\prime})\subseteq\overline{\Omega^\prime}$ and, for any $V\in\mathfrak{Y}^*(\overline{\Omega^\prime})$, the following is true:
\begin{enumerate}
\item The map $t\mapsto \Phi(t)_\# V$ is continuous in the weak-* topology of $\mathfrak{Y}^*(\overline{\Omega^\prime})$. Moreover, given $T\geq 0$ there is a constant $C_3=C_3(\Omega^\prime,\Gamma_-, M_0, T)>0$ so that, for all $0\leq t\leq T$,
$$
\Vert\Phi(t)_\# V\Vert_{\mathfrak{Y}^*} \leq C_3\Vert V\Vert_{\mathfrak{Y}^*}.
$$
\item The function $t\mapsto \Phi(t)_\# V[\mathbf{1}]$ is differentiable with
$$
\delta V[\mathbf{Y}]={\frac{d}{dt}\vline}_{t=0} \Phi(t)_\# V[\mathbf{1}]=V\left[\Div\mathbf{Y}-Q_{\nabla\mathbf{Y}}+\frac{\mathbf{x}}{2}\cdot\mathbf{Y}\right]
$$
where $Q_{\nabla\mathbf{Y}}(p,\mathbf{v})=\nabla_\mathbf{v}\mathbf{Y}(p)\cdot\mathbf{v}$.
\end{enumerate}
\end{prop}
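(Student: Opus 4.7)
The plan is to first establish the invariance of $\overline{\Omega^\prime}$ under the flow and obtain asymptotic estimates on $\Phi(t)$, then show that the expander Jacobian $J^E\Phi(t)$ decomposes inside $\mathfrak{Y}(\overline{\Omega^\prime})$ as ``a constant at infinity plus a term in $\mathfrak{Y}_0$,'' and finally differentiate at $t=0$ using the classical Jacobian first-variation identity. The condition $\mathbf{Y}\cdot\mathbf{n}_{\partial\Omega^\prime}\leq 0$ together with a standard ODE barrier argument (Nagumo's invariance criterion) forces $\Phi(t)(\overline{\Omega^\prime})\subseteq\overline{\Omega^\prime}$, and the bound $\Vert\mathbf{Y}\Vert_{\mathcal{Y}}\leq M_0$ gives uniform $C^2$ control on $\mathbf{Y}$, so $\Phi(t)$ exists globally as a $C^2$ diffeomorphism. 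The crucial asymptotic is that $\mathbf{x}\cdot\mathbf{Y}_0=\chi(1-(\mathbf{x}\cdot\mathbf{N}/|\mathbf{x}|)^2)\to 1$ at infinity (by Lemma \ref{VectorFieldLem}), while $\mathbf{x}\cdot\mathbf{Y}_1$, $\nabla\mathbf{Y}$, and $\Div\mathbf{Y}$ are all $O((1+|\mathbf{x}|)^{-2})$. Integrating the flow ODE then gives, uniformly for $t\in[0,T]$ and $|p|\to\infty$, $|\Phi(t)(p)|^2-|p|^2=2\alpha t+O(|p|^{-2})$ and $\nabla\Phi(t)(p)=\mathrm{Id}+O(|p|^{-2})$.

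For the norm bound on $\Phi(t)_\#V$, I would show the structural decomposition
\[
J^E\Phi(t)=e^{\alpha t/2}\cdot\mathbf{1}+\rho_t,
\]
where $\rho_t$ is Lipschitz and $|\rho_t|+|\nabla\rho_t|+|\nabla_{\mathbb{S}^n}\rho_t|=O((1+|\mathbf{x}|)^{-2})$ uniformly in $t\in[0,T]$. Truncating $\rho_t$ by a smooth cutoff $\eta_R$ produces compactly supported approximants that converge to $\rho_t$ in the $\mathfrak{X}$ norm, so $\rho_t\in\mathfrak{Y}_0(\overline{\Omega^\prime})$ and hence $J^E\Phi(t)\in\mathfrak{Y}(\overline{\Omega^\prime})$ with norm controlled by $C(M_0,T)$. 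For general $\psi=c\mathbf{1}+\psi_0\in\mathfrak{Y}$, the pullback $\Phi(t)^\#$ maps $\mathfrak{Y}_0$ boundedly into itself, since $\Phi(t)$ has bounded displacement and Lipschitz constant, preserves both $\mathfrak{X}$ and $\mathfrak{W}$ norms up to constants, and sends compactly supported functions to compactly supported ones. Combining this with the algebra property in Proposition \ref{YSpaceProp}(4) gives $\Vert\Phi(t)^\#\psi\cdot J^E\Phi(t)\Vert_{\mathfrak{Y}}\leq C_3\Vert\psi\Vert_{\mathfrak{Y}}$, whence the dual estimate. Continuity of $t\mapsto\Phi(t)_\#V$ in the weak-* topology then follows because, for each fixed $\psi$, the map $t\mapsto\Phi(t)^\#\psi\cdot J^E\Phi(t)$ is norm-continuous in $\mathfrak{Y}$ by the same asymptotic estimates.

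For the first variation, since $\Phi^\#\mathbf{1}=\mathbf{1}$ one has $\Phi(t)_\#V[\mathbf{1}]=V[J^E\Phi(t)]$. The classical identity $\frac{d}{dt}\big|_{t=0}J\Phi(t)(p,\mathbf{v})=\Div\mathbf{Y}(p)-Q_{\nabla\mathbf{Y}}(p,\mathbf{v})$ (the Euclidean divergence minus the normal component) together with $\frac{d}{dt}\big|_{t=0}\tfrac{1}{4}(|\Phi(t)|^2-|\mathbf{x}|^2)=\tfrac{1}{2}\mathbf{x}\cdot\mathbf{Y}$ yields, by the product rule, the pointwise derivative
\[
\frac{d}{dt}\bigg|_{t=0}J^E\Phi(t)=\Div\mathbf{Y}-Q_{\nabla\mathbf{Y}}+\tfrac{\mathbf{x}}{2}\cdot\mathbf{Y}.
\]
The main technical obstacle is upgrading this pointwise convergence to $\mathfrak{Y}$-norm convergence of the difference quotient $\tfrac{1}{t}(J^E\Phi(t)-\mathbf{1})$, so that $V$ can be interchanged with the limit. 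The limit decomposes as $\tfrac{\alpha}{2}\mathbf{1}$ plus a remainder in $\mathfrak{Y}_0$, since $\tfrac{\mathbf{x}}{2}\cdot\mathbf{Y}-\tfrac{\alpha}{2}$ is a sum of the compactly supported $\tfrac{\alpha}{2}(\chi-1)$ and an $O((1+|\mathbf{x}|)^{-4})$ correction coming from $(\mathbf{x}\cdot\mathbf{N}/|\mathbf{x}|)^2$, while $\Div\mathbf{Y}$ and $Q_{\nabla\mathbf{Y}}$ decay like $(1+|\mathbf{x}|)^{-2}$. Controlling the error term in $\mathfrak{Y}$ uniformly for small $t$ via the $C^2$ flow estimates then allows passing to the limit under $V$, giving the formula $\delta V[\mathbf{Y}]=V\bigl[\Div\mathbf{Y}-Q_{\nabla\mathbf{Y}}+\tfrac{\mathbf{x}}{2}\cdot\mathbf{Y}\bigr]$.
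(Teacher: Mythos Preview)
Your proposal is correct and follows essentially the same route as the paper: the appendix lemmas there establish exactly the asymptotics you quote ($|\Phi(t)|^2-|\mathbf{x}|^2=2\alpha t+O(|\mathbf{x}|^{-2})$, $\nabla\Phi(t)=\mathrm{Id}+O(|\mathbf{x}|^{-2})$), the decomposition $J^E\Phi(t)=e^{\alpha t/2}\mathbf{1}+\rho_t$ with $\rho_t\in\mathfrak{Y}_0$, and the pullback bound $\Vert\Phi(t)^\#\psi\Vert_{\mathfrak{Y}}\lesssim\Vert\psi\Vert_{\mathfrak{Y}}$; these are combined via the algebra property exactly as you describe.

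One point of caution: your claim that $t\mapsto\Phi(t)^\#\psi\cdot J^E\Phi(t)$ is \emph{norm}-continuous in $\mathfrak{Y}$ for every $\psi\in\mathfrak{Y}$ is not immediate, because the $\mathfrak{X}$ part of the norm involves Lipschitz constants and composition with a flow need not be strongly continuous at that regularity. The paper sidesteps this by proving weak-$*$ continuity directly: first for $\psi\in C^\infty_c$ (where it is elementary), then extending to $\mathfrak{Y}_0$ by density plus the uniform operator bound you already have, and handling the $\mathbf{1}$ component via the differentiability of $t\mapsto V[J^E\Phi(t)]$. Your norm-continuity claim can in fact be salvaged by the same density argument (approximate $\psi_0\in\mathfrak{Y}_0$ by $C^\infty_c$ functions, use uniform boundedness of the pullbacks, and pass to the limit), but you should make that step explicit rather than appeal to ``the same asymptotic estimates.''
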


To prove this proposition, we will need several auxiliary lemmas which are of a rather technical nature and are included in Appendix \ref{FlowAppendix}. 

\begin{proof}[Proof of Proposition \ref{FirstVarProp}]
By Lemma \ref{DecayVectorLem} and Corollaries \ref{WeightCor} and \ref{JacobiCor} with $a=0$ and $0\leq t\leq T$, one has
$$
J^E\Phi(t,p,\mathbf{v})=1+t\left(\Div\mathbf{Y}(p)-Q_{\nabla\mathbf{Y}}(p,\mathbf{v})+\frac{1}{2}\mathbf{x}(p)\cdot\mathbf{Y}(p)\right)+t^2Q(t,p,\mathbf{v})
$$
is an element of $\mathfrak{Y}(\overline{\Omega^\prime})$ and, for some constant $C=C(\Omega^\prime,\Gamma_-,M_0,T)>0$,
$$
\sup_{0\leq t\leq T} \left(\Vert J^E\Phi(t,\cdot,\cdot)\Vert_{\mathfrak{Y}}+\Vert Q(t, \cdot,\cdot)\Vert_{\mathfrak{Y}}\right) \leq C.
$$
Next, appealing to Lemmas \ref{FlowEstLem} and \ref{ComposeLem} gives that, for any $0\leq t\leq T$, if $\psi\in\mathfrak{Y}(\overline{\Omega^\prime})$, then so is $\Phi(t)^\#\psi$ and 
$$
\Vert\Phi(t)^\#\psi\Vert_{\mathfrak{Y}} \leq \tilde{C}_4 \Vert\psi\Vert_{\mathfrak{Y}}
$$
where $\tilde{C}_4$ depends on $\Omega^\prime,\Gamma_-,M_0$ and $T$. Hence, combining these estimates and appealing to Item (4) of Proposition \ref{YSpaceProp}, one has, for any $0\leq t \leq T$ and $\psi\in\mathfrak{Y}(\overline{\Omega^\prime})$,
$$
\Vert\Phi(t)^\#\psi J^E\Phi(t)\Vert_{\mathfrak{Y}} \leq 16 C\tilde{C}_4 \Vert\psi\Vert_{\mathfrak{Y}}
$$
and so
$$
\left| \Phi(t)_\# V[\psi] \right| \leq \Vert V\Vert_{\mathfrak{Y}^*} \Vert \Phi(t)^\# \psi J^E\Phi(t)\Vert_{\mathfrak{Y}}\leq 16 C \tilde{C}_4 \Vert V\Vert_{\mathfrak{Y}^*} \Vert\psi\Vert_{\mathfrak{Y}}.
$$
That is, 
\begin{equation} \label{NormPushforwardEqn}
\Vert\Phi(t)_\# V\Vert_{\mathfrak{Y}^*} \leq 16 C \tilde{C}_4 \Vert V\Vert_{\mathfrak{Y}^*},
\end{equation}
proving the desired estimate with $C_3=16C\tilde{C}_4$. In particular, as $T$ is arbitrary, $\Phi(t)_\# V\in\mathfrak{Y}(\overline{\Omega^\prime})$ for all $t\geq 0$.

Next, it is a standard exercise that for any (even) $\psi\in C^\infty_c(\overline{\Omega^\prime}\times\mathbb{S}^n)$ the map $t\mapsto \Phi(t)_\# V[\psi]$ is continuous. We further show the continuity can be extended to $\psi\in\mathfrak{Y}_0(\overline{\Omega^\prime})$. To see this, by Item (6) of Proposition \ref{YSpaceProp}, there is a sequence $\psi_j\in C^\infty_c(\overline{\Omega^\prime}\times\mathbb{S}^n)$ so that $\psi_j\to \psi$ in the $\mathfrak{Y}$ norm. Fix any $t_0\geq 0$. By \eqref{NormPushforwardEqn}, given $\epsilon>0$ there is a $j_0$ so that, for any $t\leq t_0+1$,
$$
\left| \Phi(t)_\#V[\psi_{j_0}-\psi] \right|<\frac{\epsilon}{2}.
$$
By what we have shown, there is a $\delta>0$ so that if $|t-t_0|<\delta$, then 
$$
\left| \Phi(t)_\# V[\psi_{j_0}]-\Phi(t_0)_\# V[\psi_{j_0}]\right|<\frac{\epsilon}{2}.
$$
Thus, by the triangle inequality, for any $|t-t_0|<\delta$,
\begin{align*}
\left| \Phi(t)_\# V[\psi]-\Phi(t_0)_\# V[\psi]\right| & \leq \left| \Phi(t)_\#V[\psi_{j_0}-\psi] \right|+\left| \Phi(t)_\# V[\psi_{j_0}]-\Phi(t_0)_\# V[\psi_{j_0}] \right| \\
&<\frac{\epsilon}{2}+\frac{\epsilon}{2}=\epsilon.
\end{align*}
This shows the map $t\mapsto \Phi(t)_\# V[\psi]$ is continuous at $t_0$. As $t_0$ is arbitrary, the claim follows immediately. It remains only to prove the map $t\mapsto \Phi(t)_\# V[\mathbf{1}]$ is differentiable. This readily follows from combining Corollaries \ref{WeightCor} and \ref{JacobiCor} with $a=0$, and the algebra property of space $\mathfrak{Y}(\overline{\Omega^\prime})$. In particular, 
$$
{\frac{d}{dt}\vline}_{t=0} \Phi(t)_\# V[\mathbf{1}]=V \left[\Div\mathbf{Y}-Q_{\nabla\mathbf{Y}}+\frac{\mathbf{x}}{2}\cdot\mathbf{Y}\right].
$$
This completes the proof.
\end{proof}

To conclude this section we record some properties about pushforwards of elements of $\mathfrak{Y}^*_\cC(\overline{\Omega^\prime};\Lambda)$ and continuous dependence of pushforwards on vector fields. 

\begin{lem} \label{RegPushforwardLem}
Fix a $\mathbf{Y}\in\mathcal{Y}^-(\overline{\Omega^\prime})$ and let $\set{\Phi(t)}_{t\geq 0}$ be the family of diffeomorphisms in $\overline{\Omega^\prime}$ generated by $\mathbf{Y}$. If $U\in\mathcal{C}(\Gamma_-^\prime,\Gamma_+^\prime)$ and $\Gamma=\partial^* U$ with $E_{rel}[\Gamma,\Gamma_-]<\infty$, then 
$$
\Phi(t)_\# V_{\Gamma}=V_{\Phi(t)(\Gamma)}-V_{\Phi(t)(\Gamma_-)}.
$$
\end{lem}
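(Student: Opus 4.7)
My plan is to verify the claimed identity of elements of $\mathfrak{Y}^*(\overline{\Omega^\prime})$ by testing both sides against $\psi\in\mathfrak{Y}(\overline{\Omega^\prime})$, using the splitting $\mathfrak{Y}(\overline{\Omega^\prime})=\mathbb{R}\mathbf{1}\oplus\mathfrak{Y}_0(\overline{\Omega^\prime})$ from Item (3) of Proposition \ref{YSpaceProp} together with the density of $C_c^\infty(\overline{\Omega^\prime}\times\mathbb{S}^n)$ in $\mathfrak{Y}_0(\overline{\Omega^\prime})$ from Item (6). Before testing one must check the RHS is well-defined: since $\mathbf{Y}\cdot\mathbf{n}_{\partial\Omega^\prime}\leq 0$, extending $\Phi(t)$ by the identity outside $\overline{\Omega^\prime}$ gives $\Phi(t)(U)\in\mathcal{C}(\Gamma_-^\prime,\Gamma_+^\prime)$ with $\partial^*\Phi(t)(U)=\Phi(t)(\Gamma)$, and similarly $\Phi(t)(\Gamma_-)$ is the reduced boundary of $\Phi(t)(\Omega_-(\Gamma_-))$. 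Finiteness of $E_{rel}[\Phi(t)(\Gamma),\Gamma_-]$ and $E_{rel}[\Phi(t)(\Gamma_-),\Gamma_-]$ will be obtained by combining the change-of-variables identity derived below (applied to cutoffs $\phi_{R,\delta}$) with the $\mathfrak{Y}$-regularity of $J^E\Phi(t)$ supplied by Corollaries \ref{WeightCor}--\ref{JacobiCor} and the bounds of Proposition \ref{YEstProp}.

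The heart of the proof is the weighted area formula: for any smooth hypersurface $\Sigma\subset\overline{\Omega^\prime}$ and (even) $\psi\in C_c^\infty(\overline{\Omega^\prime}\times\mathbb{S}^n)$,
\[
\int_\Sigma \Phi(t)^\#\psi\cdot J^E\Phi(t)\,e^{|\mathbf{x}|^2/4}\,d\mathcal{H}^n=\int_{\Phi(t)(\Sigma)}\psi\,e^{|\mathbf{x}|^2/4}\,d\mathcal{H}^n.
\]
This is the standard Euclidean area formula with Jacobian $J\Phi(t)$, combined with the decomposition $J^E\Phi(t)=J\Phi(t)\,e^{(|\Phi(t)|^2-|\mathbf{x}|^2)/4}$, whose exponential factor moves the Gaussian weight from $p$ to $\Phi(t)(p)$, together with the convention $\Phi(t)^\#\psi(p,\mathbf{v})=\psi(\Phi(t)(p),\nabla_\mathbf{v}\Phi(t)(p))$, which, when $\psi$ is interpreted as a function on the Grassman bundle, evaluates $\psi$ on the pushed-forward $n$-plane. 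Subtracting the formula for $\Sigma=\Gamma$ and $\Sigma=\Gamma_-$ and noting $\Phi(t)^\#\psi\cdot J^E\Phi(t)$ remains compactly supported,
\[
\Phi(t)_\# V_\Gamma[\psi]=V_\Gamma[\Phi(t)^\#\psi\cdot J^E\Phi(t)]=V_{\Phi(t)(\Gamma)}[\psi]-V_{\Phi(t)(\Gamma_-)}[\psi],
\]
where the last equality uses that for compactly supported $\psi$ the common $V^E_{\Gamma_-}[\psi]$ terms built into $V_{\Phi(t)(\Gamma)}$ and $V_{\Phi(t)(\Gamma_-)}$ cancel. Continuity of both sides in the $\mathfrak{Y}$ norm (via Proposition \ref{YEstProp}) together with the density of $C_c^\infty$ in $\mathfrak{Y}_0$ extends this identity to all of $\mathfrak{Y}_0$. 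For $\psi=\mathbf{1}$, take $\psi=\phi_{R,\delta}$ with $\delta>0$ fixed and send $R\to\infty$; Item (2) of Proposition \ref{RelEntropyProp} applied to each of the four relevant hypersurfaces yields convergence of the tested values to the corresponding $V_\bullet[\mathbf{1}]$, completing the verification on $\mathfrak{Y}$ by linearity.

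The main obstacle I anticipate is the well-definedness step, specifically the finiteness of $E_{rel}[\Phi(t)(\Gamma_-),\Gamma_-]$. Since the $\mathbf{Y}_0$ component of $\mathbf{Y}$ decays only like $|\mathbf{x}|^{-1}$, the image $\Phi(t)(\Gamma_-)$ differs substantially from $\Gamma_-$ at infinity and naive graph estimates fail to give finite weighted area defect. The key point is that by Item (2) of Lemma \ref{VectorFieldLem} one has $\mathbf{N}|_{\Gamma_-}=\mathbf{n}_{\Gamma_-}$, so $\mathbf{X}_0|_{\Gamma_-}=\mathbf{x}^\top$ is tangent to $\Gamma_-$ and the flow of $\mathbf{Y}_0$ leaves $\Gamma_-$ approximately invariant to leading order. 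Combining this tangency with the estimates of Proposition \ref{YEstProp} and the algebraic manipulation of the cutoff identity $\Phi(t)_\# V_\Sigma[\phi_{R,\delta}]=E[\Phi(t)(\Sigma),\Phi(t)(\Gamma_-);\phi_{R,\delta}]$ bootstraps to a uniform bound in $R$ on $|E_{rel}[\Phi(t)(\Gamma_-),\Gamma_-;\phi_{R,\delta}]|$. Once this finiteness is secured, the remainder of the proof reduces to the routine area-formula and density arguments above.
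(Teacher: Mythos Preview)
Your overall scheme --- change of variables for $\psi\in C_c^\infty$, then density in $\mathfrak{Y}_0(\overline{\Omega^\prime})$, then $\psi=\phi_{R,\delta}\to\mathbf{1}$ via Proposition \ref{RelEntropyProp} --- is exactly the route the paper takes. The gap is in your treatment of the finiteness of $E_{rel}[\Phi(t)(\Gamma_-),\Gamma_-]$, which you correctly flag as the only nontrivial point.

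Your ``bootstrap'' is circular. The cutoff identity you write, combined with the $\mathfrak{Y}^*$--bound from Proposition \ref{FirstVarProp}, gives only that
\[
E[\Phi(t)(\Gamma),\Gamma_-;\phi_{R,\delta}]-E[\Phi(t)(\Gamma_-),\Gamma_-;\phi_{R,\delta}]=\Phi(t)_\# V_\Gamma[\phi_{R,\delta}]
\]
is uniformly bounded in $R$; it does not bound either summand separately. The tangency observation is correct (indeed $\mathbf{Y}_0|_{\Gamma_-}$ is exactly tangent, so the flow of $\alpha\mathbf{Y}_0$ preserves $\Gamma_-$ as a set), but it only yields that $\Phi(t)(\Gamma_-)$ is a graph over $\Gamma_-$ whose height decays like $(1+|\mathbf{x}|)^{-3}$, coming from the $\mathbf{Y}_1$--component. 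Against the weight $e^{|\mathbf{x}|^2/4}$ this polynomial closeness is far from enough to make the naive area--defect integral converge, and Proposition \ref{YEstProp} applied to $\Gamma_-$ gives nothing since $V_{\Gamma_-}=0$.

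The paper closes this gap with a different tool: Lemma \ref{FlowEstLem} shows $\nabla\Phi(t)-\mathbf{I}_{n+1}$ and $\nabla^2\Phi(t)$ decay like $(1+|\mathbf{x}|)^{-2}$, so $\Phi(t)(\Gamma_-)$ inherits the curvature decay $(1+|\mathbf{x}|)|A_{\Phi(t)(\Gamma_-)}|\leq \bar K_0$ from $\Gamma_-$. One then invokes Proposition \ref{RelEntropyAnnuliProp}, which converts exactly this curvature decay (for hypersurfaces trapped in $\overline{\Omega^\prime}$) into $|E[\Phi(t)(\Gamma_-),\Gamma_-;\alpha_{R_1,R_2,\delta}]|\leq \bar K_1 R_1^{-2}$, hence $E_{rel}[\Phi(t)(\Gamma_-),\Gamma_-]<\infty$. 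With that in hand, finiteness of $E_{rel}[\Phi(t)(\Gamma),\Gamma_-]$ then does follow from your bounded--difference identity, and the rest of your argument goes through verbatim.
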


\begin{proof}
By Lemma \ref{FlowEstLem} and Corollaries \ref{WeightCor} and \ref{JacobiCor} 
$$
\Vert\Phi(t)^\# \phi_{R,\delta} J^E\Phi(t)\Vert_{\mathfrak{Y}} \leq C
$$
for some $C$ independent of $R$, and $\Phi(t)^\#\phi_{R,\delta}\to 1$ pointwise as $R\to \infty$. Thus, appealing to Proposition \ref{YEstProp}, the dominated convergence theorem and a change of variables, one readily computes 
\begin{align*}
\Phi(t)_\# V_{\Gamma}[\mathbf{1}] & =\lim_{R\to\infty} V_{\Gamma}[\Phi(t)^\#\phi_{R,\delta} J^E\Phi(t)] \\
& =\lim_{R\to\infty} \left(E[\Phi(t)(\Gamma),\Gamma_-;\phi_{R,\delta}]-E[\Phi(t)(\Gamma_-),\Gamma_-; \phi_{R,\delta}]\right).
\end{align*}
By Proposition \ref{RelEntropyProp}
$$
\lim_{R\to\infty} E[\Phi(t)(\Gamma),\Gamma_-;\phi_{R,\delta}]=E_{rel}[\Phi(t)(\Gamma),\Gamma_-]
$$ 
and
$$
\lim_{R\to \infty}E[\Phi(t)(\Gamma_-),\Gamma_-; \phi_{R,\delta}]=E_{rel}[\Phi(t)(\Gamma_-),\Gamma_-].
$$
Moreover, by Lemma \ref{FlowEstLem} and Proposition \ref{RelEntropyAnnuliProp} one has $E_{rel}[\Phi(t)(\Gamma_-),\Gamma_-]<\infty$ and so is $E_{rel}[\Phi(t)(\Gamma),\Gamma_-]$. Hence, combining these equalities gives 
$$
\Phi(t)_\# V_{\Gamma}[\mathbf{1}]=V_{\Phi(t)(\Gamma)}[\mathbf{1}]-V_{\Phi(t)(\Gamma_-)}[\mathbf{1}].
$$

If $\psi\in C^\infty_c(\overline{\Omega^\prime}\times\mathbb{S}^n)$, then the usual change of variable gives
$$
\Phi(t)_\# V_{\Gamma}[\psi]=V_{\Phi(t)(\Gamma)}[\psi]-V_{\Phi(t)(\Gamma_-)}[\psi].
$$
As, by Item (6) of Proposition \ref{YSpaceProp}, $C^\infty_c(\overline{\Omega^\prime}\times\mathbb{S}^n)$ is dense in $\mathfrak{Y}_0(\overline{\Omega^\prime})$, one has the above equality holds for all $\psi\in\mathfrak{Y}_0(\overline{\Omega^\prime})$. Hence, by the definition of $\mathfrak{Y}(\overline{\Omega^\prime})$ and linearity, the claim follows immediately.
\end{proof}

\begin{lem} \label{ContPushforwardLem}
Fix any radius $R_0>0$. The map
$$
\mathcal{Y}^-(\overline{\Omega^\prime})\times [0,\infty)\times\bar{B}_{R_0}^{\mathfrak{Y}^*}\ni (\mathbf{Y},t,V)\mapsto \Phi_{\mathbf{Y}}(t)_\# V\in\mathfrak{Y}^*(\overline{\Omega^\prime})
$$
is continuous. Here $\set{\Phi_{\mathbf{Y}}(t)}$ is the family of diffeomorphisms in $\overline{\Omega^\prime}$ generated by $\mathbf{Y}$, and $\mathfrak{Y}^*(\overline{\Omega^\prime})$ is endowed with the weak-* topology and $\bar{B}_{R_0}^{\mathfrak{Y}^*}$ is a subspace of $\mathfrak{Y}^*(\overline{\Omega^\prime})$.
\end{lem}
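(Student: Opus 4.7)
The plan is to exploit the separability of $\mathfrak{Y}(\overline{\Omega^\prime})$ (Proposition \ref{YSpaceProp}, Item (5)) so that the weak-$*$ topology on the bounded ball $\bar{B}_{R_0}^{\mathfrak{Y}^*}$ is metrizable; it then suffices to work with sequences. Suppose $(\mathbf{Y}_i,t_i,V_i)\to (\mathbf{Y}_\infty,t_\infty,V_\infty)$ in the product topology, with uniformly bounded $\mathcal{Y}$-norm and $t_i$ lying in a compact interval $[0,T]$. Fix $\psi\in\mathfrak{Y}(\overline{\Omega^\prime})$ and set $\psi_j:=\Phi_{\mathbf{Y}_j}(t_j)^\#\psi\, J^E\Phi_{\mathbf{Y}_j}(t_j)$ for $j\in\mathbb{N}\cup\{\infty\}$. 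The natural splitting is
$$
\Phi_{\mathbf{Y}_i}(t_i)_\# V_i[\psi]-\Phi_{\mathbf{Y}_\infty}(t_\infty)_\# V_\infty[\psi]=V_i\!\left[\psi_i-\psi_\infty\right]+\bigl(V_i-V_\infty\bigr)[\psi_\infty].
$$
The second term tends to $0$ from the weak-$*$ convergence $V_i\to V_\infty$, since $\psi_\infty\in\mathfrak{Y}(\overline{\Omega^\prime})$ by the estimates underlying Proposition \ref{FirstVarProp}. For the first term, the uniform bound $\Vert V_i\Vert_{\mathfrak{Y}^*}\leq R_0$ reduces matters to proving $\Vert\psi_i-\psi_\infty\Vert_{\mathfrak{Y}}\to 0$.

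Next I would establish the required $\mathfrak{Y}$-norm convergence of the integrands. By standard ODE theory with parameters, applied to the smooth vector fields $\mathbf{Y}_i$ which converge to $\mathbf{Y}_\infty$ in the $\mathcal{Y}$-norm (so in particular in $C^3_{loc}$ with the prescribed weighted decay), the family of flows $\Phi_{\mathbf{Y}_i}(t)$ converges to $\Phi_{\mathbf{Y}_\infty}(t)$ in $C^3_{loc}$, uniformly for $t\in [0,T]$; moreover, the weighted estimates from the appendix (Lemmas \ref{DecayVectorLem}, \ref{FlowEstLem}, \ref{ComposeLem} and Corollaries \ref{WeightCor}, \ref{JacobiCor}) transfer this convergence to pointwise convergence of $\Phi_{\mathbf{Y}_i}(t_i)^\#\psi$ and $J^E\Phi_{\mathbf{Y}_i}(t_i)$ together with the weights defining the $\mathfrak{X}$- and $\mathfrak{W}$-seminorms. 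Decomposing $\psi=c\mathbf{1}+\psi_0$ with $\psi_0\in\mathfrak{Y}_0(\overline{\Omega^\prime})$ (Proposition \ref{YSpaceProp}, Item (3)), one handles the two pieces separately: for $\psi_0\in C^\infty_c$, compact support plus $C^1$-convergence of the flows and Jacobians yields $\mathfrak{W}$-norm (hence $\mathfrak{Y}$-norm) convergence directly; for a general $\psi_0\in\mathfrak{Y}_0$, use density of $C^\infty_c(\overline{\Omega^\prime}\times\mathbb{S}^n)$ in $\mathfrak{Y}_0$ (Proposition \ref{YSpaceProp}, Item (6)) together with the uniform $\mathfrak{Y}$-bound $\Vert\Phi_{\mathbf{Y}_i}(t_i)^\#\psi_0\,J^E\Phi_{\mathbf{Y}_i}(t_i)\Vert_{\mathfrak{Y}}\leq C\Vert\psi_0\Vert_{\mathfrak{Y}}$ supplied by Proposition \ref{FirstVarProp} and its proof; a standard $3\varepsilon$-argument closes the case.

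The main obstacle is the constant piece $c\mathbf{1}$, for which the integrand reduces to $cJ^E\Phi_{\mathbf{Y}_j}(t_j)$. Here compactly supported approximations are unavailable because $\mathbf{1}\notin\mathfrak{Y}_0$, so one must directly show $J^E\Phi_{\mathbf{Y}_i}(t_i)-J^E\Phi_{\mathbf{Y}_\infty}(t_\infty)\to 0$ in the $\mathfrak{Y}$-norm. This is exactly where the weighted decay built into $\mathcal{Y}(\overline{\Omega^\prime})$ is used: the bound $\Vert(1+|\mathbf{x}|)^3\mathbf{Y}\Vert_{C^0}+\sum_{l=1}^3\Vert(1+|\mathbf{x}|)^2\nabla^l\mathbf{Y}\Vert_{C^0}\leq M_0$ combines with the expansion
$$
J^E\Phi_{\mathbf{Y}}(t)=1+t\!\left(\Div\mathbf{Y}-Q_{\nabla\mathbf{Y}}+\tfrac{\mathbf{x}}{2}\cdot\mathbf{Y}\right)+t^2Q(t,\cdot,\cdot)
$$
of Corollary \ref{JacobiCor} to place $J^E\Phi_{\mathbf{Y}_j}(t_j)-\mathbf{1}$ in $\mathfrak{Y}_0$ with $\mathfrak{Y}$-norm depending continuously on $(\mathbf{Y},t)\in\mathcal{Y}^-\times[0,T]$. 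The fact that $\mathbf{Y}_0$ contributes a term of order $|\mathbf{x}|^{-2}\mathbf{x}\cdot\mathbf{Y}_0=O(|\mathbf{x}|^{-2})$ to $\frac{\mathbf{x}}{2}\cdot\mathbf{Y}$, compensated by the $\mathbf{W}$-weight $(1+|\mathbf{x}|)^{n+1}e^{|\mathbf{x}|^2/4}$ only through the $\mathfrak{X}$-summand, is precisely what the definition of $\mathfrak{Y}^\prime=\mathfrak{X}+\mathfrak{W}$ is designed to accommodate. Combining these observations then gives $\Vert\psi_i-\psi_\infty\Vert_{\mathfrak{Y}}\to 0$ and completes the proof.
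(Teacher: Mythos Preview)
Your overall plan---splitting $\psi=c\mathbf{1}+\psi_0$ and, for $\psi_0\in\mathfrak{Y}_0$, running a $3\varepsilon$ density argument using the uniform bound $\Vert\Phi_{\mathbf{Y}}(t)^\#\psi_0\,J^E\Phi_{\mathbf{Y}}(t)\Vert_{\mathfrak{Y}}\leq C\Vert\psi_0\Vert_{\mathfrak{Y}}$---is sound and is essentially what the paper does (though the paper phrases it at the level of pairings rather than as $\mathfrak{Y}$-norm convergence of the integrands).

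The gap is in your treatment of $\psi=\mathbf{1}$. The claim that $J^E\Phi_{\mathbf{Y}}(t)-\mathbf{1}\in\mathfrak{Y}_0$ is false when $\alpha\neq 0$, and your supporting computation is incorrect: for $p$ outside the support of $1-\chi$,
\[
\mathbf{x}\cdot\mathbf{Y}_0=\chi|\mathbf{x}|^{-2}\mathbf{x}\cdot\mathbf{X}_0=\chi\bigl(1-|\mathbf{x}|^{-2}(\mathbf{x}\cdot\mathbf{N})^2\bigr)\longrightarrow 1,
\]
not $O(|\mathbf{x}|^{-2})$; consequently $\tfrac{\mathbf{x}}{2}\cdot(\alpha\mathbf{Y}_0)\to\tfrac{\alpha}{2}$ at infinity. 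Feeding this through the expansion of Corollaries~\ref{WeightCor} and~\ref{JacobiCor} one finds that $J^E\Phi_{\mathbf{Y}}(t)$ tends to the constant $c(\alpha,t)=e^{\alpha t/2}$ at infinity, so $J^E\Phi_{\mathbf{Y}}(t)-\mathbf{1}$ has a nonzero $\mathbf{1}$-component whenever $\alpha t\neq 0$ and cannot lie in $\mathfrak{Y}_0$.

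The paper's fix is to decompose $J^E\Phi_{\mathbf{Y}'}(t')=c(\alpha_{\mathbf{Y}'},t')\,\mathbf{1}+P_{\mathbf{Y}'}(t')$ with $\Vert P_{\mathbf{Y}'}(t')\Vert_{\mathfrak{Y}(\overline{\Omega'}\setminus\bar{B}_R)}\leq C(R+1)^{-1}$ uniformly. The constant piece is handled by continuity of $(\mathbf{Y},t)\mapsto c(\alpha_{\mathbf{Y}},t)$ together with $V'[\mathbf{1}]\to V[\mathbf{1}]$. For the remainder $P$, rather than attempting $\mathfrak{Y}$-norm convergence (which would require weighted Lipschitz convergence of the flows in the parameter $\mathbf{Y}$), the paper works directly with the pairing: the uniform tail decay of $P_{\mathbf{Y}'}(t')$ plus $\Vert V'\Vert_{\mathfrak{Y}^*}\leq R_0$ controls $V'[(1-\phi_{R,1})P_{\mathbf{Y}'}(t')]$, while $V'[\phi_{R,1}P_{\mathbf{Y}'}(t')]\to V[\phi_{R,1}P_{\mathbf{Y}}(t)]$ follows from local $C^1$-convergence of the flows. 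Your argument can be repaired along exactly these lines once the correct asymptotic constant is identified.
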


\begin{proof}
Fix $\mathbf{Y}\in\mathcal{Y}^-(\overline{\Omega^\prime}), t \geq 0$ and $V\in\bar{B}_{R_0}^{\mathfrak{Y}^*}$. First, given $\psi\in C^\infty_c(\overline{\Omega^\prime}\times\mathbb{S}^n)$, by Lemma \ref{FlowEstLem}, $\Phi_{\mathbf{Y}^\prime}(t^\prime)^\# \psi$ is supported in a fixed compact set as long as $(\mathbf{Y}^\prime,t^\prime)$ is sufficiently close to $(\mathbf{Y},t)$. Thus, it is a standard exercise to check that
$$
\lim_{(\mathbf{Y}^\prime,t^\prime,V^\prime)\to (\mathbf{Y},t,V)} \Phi_{\mathbf{Y}^\prime}(t^\prime)_\# V^\prime [\psi]=\Phi_{\mathbf{Y}}(t)_\# V[\psi].
$$

Next we show that the above limit still holds true for functions in $\mathfrak{Y}_0(\overline{\Omega^\prime})$. Endow $\mathcal{Y}^-(\overline{\Omega^\prime})\subset\mathcal{Y}(\overline{\Omega^\prime})$ with the subspace topology. It follows from Proposition \ref{FirstVarProp} that, for any $\mathbf{Y}^\prime\in B^{\mathcal{Y}^-}_1(\mathbf{Y}), 0\leq t^\prime \leq t+1$ and $V^\prime\in\bar{B}_{R_0}^{\mathfrak{Y}^*}$, 
\begin{equation} \label{PushforwardBndEqn}
\Vert\Phi_{\mathbf{Y}^\prime}(t^\prime)_\# V^\prime \Vert_{\mathfrak{Y}^*} \leq C_3 R_0.
\end{equation}
Now take any $\psi\in\mathfrak{Y}_0(\overline{\Omega^\prime})$. By Item (6) of Proposition \ref{YSpaceProp}, there is a sequence $\psi_j\in C^\infty_c(\overline{\Omega^\prime}\times\mathbb{S}^n)$ so that $\psi_j\to\psi$ in the $\mathfrak{Y}$ norm. Thus given $\epsilon>0$ there is a $j_0\in\mathbb{N}$ so that, for any $\mathbf{Y}^\prime\in B^{\mathcal{Y}^-}_1(\mathbf{Y}), 0\leq t^\prime \leq t+1$ and $V^\prime \in\bar{B}_{R_0}^{\mathfrak{Y}^*}$,
$$
\left| \Phi_{\mathbf{Y}^\prime}(t^\prime)_\# V^\prime [\psi_{j_0}-\psi]\right| \leq C_3 R_0 \Vert\psi_{j_0}-\psi\Vert_{\mathfrak{Y}}<\frac{\epsilon}{3}
$$
and, in particular, so
$$
\left| \Phi_{\mathbf{Y}}(t)_\# V[\psi_{j_0}-\psi]\right| <\frac{\epsilon}{3}.
$$
By general topology $\bar{B}^{\mathfrak{Y}^*}_{R_0}$ with the weak-* topology is metrizable and denote by $\mathcal{D}$ a choice of such metric. By the previous discussion, there is a $\rho_0>0$ so that if 
$$
\Vert\mathbf{Y}^\prime-\mathbf{Y}\Vert_{\mathcal{Y}}+|t^\prime-t|+\mathcal{D}(V^\prime,V)<\rho_0,
$$
then 
$$
\left| \Phi_{\mathbf{Y}^\prime}(t^\prime)_\# V^\prime [\psi_{j_0}]-\Phi_{\mathbf{Y}}(t)_\# V[\psi_{j_0}]\right|<\frac{\epsilon}{3}.
$$
Thus, combining above estimates and applying the triangle inequality give that
\begin{align*}
\left| \Phi_{\mathbf{Y}^\prime}(t^\prime)_\# V^\prime [\psi]-\Phi_{\mathbf{Y}}(t)_\# V[\psi] \right| & \leq \left| \Phi_{\mathbf{Y}^\prime}(t^\prime)_\# V^\prime [\psi_{j_0}]-\Phi_{\mathbf{Y}}(t)_\# V[\psi_{j_0}]\right| \\
&+\left| \Phi_{\mathbf{Y}^\prime}(t^\prime)_\# V^\prime [\psi_{j_0}-\psi]\right| +\left| \Phi_{\mathbf{Y}}(t)_\# V[\psi_{j_0}-\psi]\right| \\
& \leq \frac{\epsilon}{3}+\frac{\epsilon}{3}+\frac{\epsilon}{3}=\epsilon.
\end{align*}
Hence we have shown the claim.

To conclude the first item, it remains only to prove that $\Phi_{\mathbf{Y}^\prime}(t^\prime)_\# V^\prime [\mathbf{1}]\to \Phi_{\mathbf{Y}}(t)_\# V[\mathbf{1}]$ as $(\mathbf{Y}^\prime,t^\prime,V^\prime)\to (\mathbf{Y},t,V)$. To see this, first appealing to Lemma \ref{DecayVectorLem} and Corollaries \ref{WeightCor} and \ref{JacobiCor}, one gets that, for any $\mathbf{Y}^\prime\in B^{\mathcal{Y}^-}_1(\mathbf{Y})$ and $0\leq t^\prime \leq t+1$, 
$$
J^E\Phi_{\mathbf{Y}^\prime}(t^\prime,p,\mathbf{v})=c(\alpha_{\mathbf{Y}^\prime},t^\prime)+P_{\mathbf{Y}^\prime}(t^\prime,p,\mathbf{v})
$$
where 
$$
c(\alpha_{\mathbf{Y}^\prime},t^\prime)=1+\frac{1}{2}t^\prime\alpha_{\mathbf{Y}^\prime}+\frac{1}{4}(t^\prime \alpha_{\mathbf{Y}^\prime})^2\int_0^1 e^{\frac{1}{2}\alpha_{\mathbf{Y}^\prime} t^\prime\rho} (1-\rho) \, d\rho
$$
and there is a constant $C=C(\Omega^\prime,\Gamma_-,t)>0$ so that, for any $R>0$,
\begin{equation} \label{QuadraticEqn}
\sup_{0\leq t^\prime \leq t+1}\Vert P_{\mathbf{Y}^\prime}(t^\prime,\cdot,\cdot)\Vert_{\mathfrak{Y}(\overline{\Omega^\prime}\setminus\bar{B}_R)} \leq \frac{C}{R+1}.
\end{equation}
As $\alpha_{\mathbf{Y}^\prime}$ continuously depends on $\mathbf{Y}^\prime$,
\begin{equation} \label{ContConstEqn}
\lim_{(\mathbf{Y}^\prime,t^\prime,V^\prime)\to (\mathbf{Y},t,V)} V^\prime [c(\alpha_{\mathbf{Y}^\prime},t^\prime)\mathbf{1}]= V[c(\alpha_{\mathbf{Y}},t)\mathbf{1}].
\end{equation}
Next, invoking \eqref{QuadraticEqn}, given $\epsilon>0$ there is a radius $R_\epsilon>1$ so that if $\mathbf{Y}^\prime \in B^{\mathcal{Y}^-}_1(\mathbf{Y}), 0\leq t^\prime \leq t+1$ and $V^\prime \in\bar{B}_{R_0}^{\mathfrak{Y}^*}$, then, for all $R>R_\epsilon$,
$$
V^\prime [(1-\phi_{R,1}) P_{\mathbf{Y}^\prime}(t^\prime,\cdot,\cdot)]<\epsilon.
$$
As one readily checks that
$$
\lim_{(\mathbf{Y}^\prime,t^\prime,V^\prime)\to (\mathbf{Y},t,V)} V^\prime [\phi_{R,1} P_{\mathbf{Y}^\prime}(t^\prime,\cdot,\cdot)] = V[\phi_{R,1} P_{\mathbf{Y}}(t,\cdot,\cdot)]
$$
combining this with the previous estimate gives
$$
\lim_{(\mathbf{Y}^\prime,t^\prime,V^\prime)\to (\mathbf{Y},t,V)} V^\prime [P_{\mathbf{Y}^\prime}(t^\prime,\cdot,\cdot)]=V[P_{\mathbf{Y}}(t,\cdot,\cdot)].
$$
Hence the claim follows by combining this and \eqref{ContConstEqn}.
\end{proof}

\section{Stationarity of relative expander entropy and its properties} \label{StationarySec}
In this section we introduce an appropriate notion of stationarity for the relative expander entropy. In particular, this notion admits some of the large scale deformations from the previous section as valid variations and not just those that are fixed outside a compact set.  Continue to use the conventions of Section \ref{ConventionSec}. 

\subsection{Modified pushforwards} \label{PlusPushforwardSec}
Let $\Phi$ be a $C^1$ diffeomorphism of $\overline{\Omega^\prime}$ that is fixed outside a compact set. For $V\in\mathfrak{Y}^*(\overline{\Omega^\prime})$ define a modified pushforward of $V$ under $\Phi$ by 
$$
\Phi_\#^+ V=\Phi_\# V+V_{\Phi(\Gamma_-)}\in\mathfrak{Y}^*(\overline{\Omega^\prime}).
$$
As in the previous section we will extend this to an appropriate class of diffeomorphisms that do not fix things outside a compact set. 

If $\mathbf{Y}\in\mathcal{Y}^-(\overline{\Omega^\prime})$ and $\set{\Phi(t)}_{t\geq 0}$ is the family of diffeomorphisms in $\overline{\Omega^\prime}$ generated by $\mathbf{Y}$, then Lemma \ref{FlowEstLem} and Proposition \ref{RelEntropyAnnuliProp} imply $E_{rel}[\Phi(t)(\Gamma_-),\Gamma_-]<\infty$. Thus, by Propositions \ref{YEstProp} and  \ref{FirstVarProp}, $\Phi(t)_\#^+ V$ is well defined. The advantage of this notion is that, by Lemma \ref{RegPushforwardLem}, if $V\in\overline{\mathfrak{Y}^*_\cC}(\overline{\Omega^\prime};\Lambda)$, then $\Phi(t)_\#^+ V\in\overline{\mathfrak{Y}^*_\cC}(\overline{\Omega^\prime};\Lambda^\prime)$ for some $\Lambda^\prime>0$.

We record the following property about continuous dependence of modified pushforwards on vector fields.

\begin{prop} \label{ContPlusPushforwardProp}
Fix any radius $R_0>0$. We have
\begin{enumerate}
\item The map
$$
\mathcal{Y}^-(\overline{\Omega^\prime})\times [0,\infty)\times\bar{B}_{R_0}^{\mathfrak{Y}^*} \ni (\mathbf{Y},t,V)\mapsto \Phi_{\mathbf{Y}}(t)_\#^+ V \in\mathfrak{Y}^*(\overline{\Omega^\prime})
$$
is continuous;
\item The map
$$
\mathcal{Y}^-(\overline{\Omega^\prime})\times\mathcal{Y}^-(\overline{\Omega^\prime})\times [0,\infty)\times\bar{B}_{R_0}^{\mathfrak{Y}^*}\ni (\mathbf{Z},\mathbf{Y},t,V)\mapsto \delta (\Phi^+_{\mathbf{Y}}(t)_\# V)[\mathbf{Z}]
$$
is continuous. 
\end{enumerate}
Here $\set{\Phi_{\mathbf{Y}}(t)}_{t\geq 0}$ is the family of diffeomorphisms in $\overline{\Omega^\prime}$ generated by $\mathbf{Y}$, and $\mathfrak{Y}^*(\overline{\Omega^\prime})$ is endowed with the weak-* topology and $\bar{B}_{R_0}^{\mathfrak{Y}^*}$ is a subspace of $\mathfrak{Y}^*(\overline{\Omega^\prime})$.
\end{prop}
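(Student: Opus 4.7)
\emph{Plan for Part (1).} Using the identity $\Phi^+_{\mathbf{Y}}(t)_\# V = \Phi_{\mathbf{Y}}(t)_\# V + V_{\Phi_{\mathbf{Y}}(t)(\Gamma_-)}$, Part (1) follows from Lemma \ref{ContPushforwardLem} applied to the first term together with weak-* continuity of the map $(\mathbf{Y}, t) \mapsto V_{\Phi_{\mathbf{Y}}(t)(\Gamma_-)}$ (which does not depend on $V$). To establish the latter, I would first obtain a uniform $\mathfrak{Y}^*$-bound on $V_{\Phi_{\mathbf{Y}}(t)(\Gamma_-)}$ for $(\mathbf{Y},t)$ in a bounded region of $\mathcal{Y}^- \times [0,\infty)$: by Proposition \ref{YEstProp} this reduces to a uniform bound on $|E_{rel}[\Phi_{\mathbf{Y}}(t)(\Gamma_-),\Gamma_-]|$, and the flow estimates of Lemma \ref{FlowEstLem} preserve the $C^2$-asymptotic structure of $\Gamma_-$ with a uniform curvature estimate $(1+|\mathbf{x}|)|A_{\Phi_{\mathbf{Y}}(t)(\Gamma_-)}| \leq K$, so Proposition \ref{RelEntropyAnnuliProp} controls the annular tails while a standard area estimate handles any fixed compact ball. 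Second, for any $\psi \in C_c^\infty(\overline{\Omega^\prime} \times \mathbb{S}^n)$, smooth dependence of ODE flows on initial data and parameters gives smooth convergence $\Phi_{\mathbf{Y}_i}(t_i)(\Gamma_-) \cap \spt(\psi) \to \Phi_{\mathbf{Y}}(t)(\Gamma_-) \cap \spt(\psi)$, and a change of variables yields $V_{\Phi_{\mathbf{Y}_i}(t_i)(\Gamma_-)}[\psi] \to V_{\Phi_{\mathbf{Y}}(t)(\Gamma_-)}[\psi]$. Finally, the density of $C_c^\infty$ in $\mathfrak{Y}_0$ (Item (6) of Proposition \ref{YSpaceProp}) combined with the uniform $\mathfrak{Y}^*$-bound extends this convergence to all $\psi \in \mathfrak{Y}_0$; and convergence against the constant $\mathbf{1}$ amounts to continuity of the scalar $E_{rel}[\Phi_{\mathbf{Y}}(t)(\Gamma_-), \Gamma_-]$, which follows from Proposition \ref{RelEntropyProp} together with smooth convergence on compact sets.

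\emph{Plan for Part (2).} Applying the first variation formula of Proposition \ref{FirstVarProp} at the base point $\Phi^+_{\mathbf{Y}}(t)_\# V$ (itself an element of $\mathfrak{Y}^*$ by Part (1)) gives
\begin{equation*}
\delta(\Phi^+_{\mathbf{Y}}(t)_\# V)[\mathbf{Z}] = \bigl(\Phi^+_{\mathbf{Y}}(t)_\# V\bigr)\!\left[\Div \mathbf{Z} - Q_{\nabla \mathbf{Z}} + \tfrac{\mathbf{x}}{2} \cdot \mathbf{Z}\right]\!.
\end{equation*}
A direct inspection of the weight structure of $\mathcal{Y}$ — using Lemma \ref{VectorFieldLem} to compute $\mathbf{x} \cdot \mathbf{Y}_0 = \chi(1 + O(|\mathbf{x}|^{-4}))$ and $\Div \mathbf{Y}_0 = O(|\mathbf{x}|^{-2})$, and the built-in $|\mathbf{x}|^{-3}$ decay of the $\mathcal{Y}_t$ component together with the one-order-better decay on derivatives — shows that the assignment $\mathbf{Z} \mapsto \psi_{\mathbf{Z}} := \Div \mathbf{Z} - Q_{\nabla \mathbf{Z}} + \tfrac{\mathbf{x}}{2} \cdot \mathbf{Z}$ is a bounded linear map from $\mathcal{Y}$ into $\mathfrak{X}(\overline{\Omega^\prime}) \subseteq \mathfrak{Y}(\overline{\Omega^\prime})$. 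Combining this with the weak-* continuity and uniform norm bound of $(\mathbf{Y}, t, V) \mapsto \Phi^+_{\mathbf{Y}}(t)_\# V$ from Part (1), together with the elementary observation that if $V_n \to V$ weak-* in $\mathfrak{Y}^*$ with $\|V_n\|_{\mathfrak{Y}^*}$ uniformly bounded and $\psi_n \to \psi$ in $\mathfrak{Y}$, then $V_n[\psi_n] \to V[\psi]$, yields the desired joint continuity.

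\emph{Main obstacle.} The crux is the uniform $\mathfrak{Y}^*$-bound in step one of Part (1). Because $\mathbf{Y}_0$ decays only like $|\mathbf{x}|^{-1}$ at infinity, its flow does displace the ends of $\Gamma_-$ along the asymptotic cone $\cC$ by nontrivial amounts, so one must confirm that the image $\Phi_{\mathbf{Y}}(t)(\Gamma_-)$ remains $C^2$-asymptotic to $\cC$ with the uniform curvature decay required by Proposition \ref{RelEntropyAnnuliProp}. The structural choices built into $\mathbf{Y}_0$ from Proposition \ref{ThickeningProp} and Lemma \ref{VectorFieldLem} — in particular that $\mathbf{X}_0$ is tangent to $\Gamma_-$ along $\Gamma_-$, so the flow preserves $\Gamma_-$ to leading order at infinity, and $\mathcal{Y}^-$ preserves $\overline{\Omega^\prime}$ — are what make this feasible and motivate the appendix lemmas on which the argument rests.
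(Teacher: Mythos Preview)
Your proposal is correct and follows essentially the same route as the paper. The only noteworthy variation is in Part~(1): you split $\mathfrak{Y}=\mathrm{span}\{\mathbf{1}\}\oplus\mathfrak{Y}_0$, handling $\mathfrak{Y}_0$ via density plus a uniform $\mathfrak{Y}^*$-bound and $\mathbf{1}$ via continuity of $E_{rel}[\Phi_{\mathbf{Y}}(t)(\Gamma_-),\Gamma_-]$, whereas the paper treats all $\psi\in\mathfrak{Y}$ at once by combining the uniform tail estimate from Proposition~\ref{RelEntropyAnnuliProp} with Item~(1) of Proposition~\ref{YEstProp} to make $V_{\Phi_{\mathbf{Y}'}(t')(\Gamma_-)}[(1-\phi_{2R,1})\psi]$ small uniformly in $(\mathbf{Y}',t')$ and $\psi$. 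Both arguments are sound; the paper's is slightly more economical. One small point: in your last step for $\mathbf{1}$ the reference you need is Proposition~\ref{RelEntropyAnnuliProp} (already cited just before) rather than Proposition~\ref{RelEntropyProp}, since the argument requires a uniform tail bound, not merely existence of the limit. Part~(2) is exactly the paper's argument, phrased via the bilinear pairing $(W,\psi)\mapsto W[\psi]$.
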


\begin{proof}
To see the first item, by Lemma \ref{ContPushforwardLem} it suffices to show the map
$$
\mathcal{Y}^-(\overline{\Omega^\prime})\times [0,\infty)\ni (\mathbf{Y},t) \mapsto V_{\Gamma^{\mathbf{Y}}_t}\in\mathfrak{Y}^*(\overline{\Omega^\prime})
$$
is continuous, where $\Gamma^{\mathbf{Y}}_t=\Phi_{\mathbf{Y}}(t)(\Gamma_-)$. To see this, fix any $\mathbf{Y}\in\mathcal{Y}^-(\overline{\Omega^\prime})$ and $t\geq 0$. Appealing to Lemma \ref{FlowEstLem} and Proposition \ref{RelEntropyAnnuliProp}, one has that if $\mathbf{Y}^\prime\in B_1^{\mathcal{Y}^-}(\mathbf{Y})$ and $0\leq t^\prime \leq t+1$, then for all $R>\bar{\mathcal{R}}_1$
\begin{equation} \label{ExteriorRelEntEqn}
\left| E_{rel}[\Gamma^{\mathbf{Y}^\prime}_{t^\prime},\Gamma_-; \mathbb{R}^{n+1}\setminus\bar{B}_R] \right| \leq \bar{K}_1 R^{-2}
\end{equation}
where $\bar{\mathcal{R}}_1$ and $\bar{K}_1$ both depends on $\Omega^\prime,\Gamma_-,\mathbf{Y}$ and $t$. Thus, by  Proposition \ref{YEstProp}, one sees that if $\psi\in\mathfrak{Y}(\overline{\Omega^\prime})$, then for all $R>\max\set{2\bar{R}_1,\bar{\mathcal{R}}_1}>1$
\begin{align*}
\left|V_{\Gamma^{\mathbf{Y}^\prime}_{t^\prime}}[(1-\phi_{2R,1})\psi]\right|
 & \leq C_2 \left(R^{-1}+\left|E_{rel}[\Gamma^{\mathbf{Y}^\prime}_{t^\prime},\Gamma_-; \mathbb{R}^{n+1}\setminus\bar{B}_{R}]\right|\right) \Vert (1-\phi_{2R,1}) \psi\Vert_{\mathfrak{Y}} \\
 & \leq 2C_2(1+\bar{K}_1) R^{-1} \Vert \psi \Vert_{\mathfrak{Y}}.
\end{align*}
Hence, as one readily checks
$$
\lim_{(\mathbf{Y}^\prime,t^\prime)\to (\mathbf{Y},t)} V_{\Gamma^{\mathbf{Y}^\prime}_{t^\prime}}[\phi_{2R,1}\psi]=V_{\Gamma^{\mathbf{Y}}_t}[\phi_{2R,1}\psi],
$$
combining these gives
$$
\lim_{(\mathbf{Y}^\prime,t^\prime)\to (\mathbf{Y},t)} V_{\Gamma^{\mathbf{Y}^\prime}_{t^\prime}}[\psi]= V_{\Gamma^{\mathbf{Y}}_t}[\psi].
$$
As $\psi$ is arbitrary, the claim follows immediately.

To prove the second, write
\begin{multline*}
\delta (\Phi^+_{\mathbf{Y}^\prime}(t^\prime)_\# V^\prime)[\mathbf{Z}^\prime]-\delta (\Phi^+_{\mathbf{Y}}(t)_\# V)[\mathbf{Z}] 
= \delta (\Phi^+_{\mathbf{Y}^\prime}(t^\prime)_\# V^\prime)[\mathbf{Z}^\prime-\mathbf{Z}] \\
+\delta (\Phi^+_{\mathbf{Y}^\prime}(t^\prime)_\# V^\prime-\Phi^+_{\mathbf{Y}}(t)_\# V)[\mathbf{Z}].
\end{multline*}
As $\mathbf{Z}^\prime\to\mathbf{Z}$ in the $\mathcal{Y}$ norm, it is readily checked that
$$
\Div (\mathbf{Z}^\prime-\mathbf{Z})-Q_{\nabla (\mathbf{Z}^\prime-\mathbf{Z})}+\frac{\mathbf{x}}{2}\cdot (\mathbf{Z}^\prime-\mathbf{Z})\to 0
$$
in the $\mathfrak{Y}$ norm. By \eqref{ExteriorRelEntEqn} and Propositions \ref{YEstProp} and \ref{FirstVarProp} one has, for any $\mathbf{Y}^\prime\in B^{\mathcal{Y}^-}_1(\mathbf{Y}),0\leq t^\prime \leq t+1$ and $V^\prime\in\bar{B}^{\mathfrak{Y}^*}_{R_0}$, 
$$
\Vert\Phi_{\mathbf{Y}^\prime}(t)_\#^+ V^\prime \Vert_{\mathfrak{Y}^*} \leq C
$$
where $C=C(\Omega^\prime, \Gamma_-,\mathbf{Y},R_0, t)$. Thus it follows that 
$$
\lim_{(\mathbf{Z}^\prime,\mathbf{Y}^\prime,t^\prime,V^\prime)\to (\mathbf{Z},\mathbf{Y},t,V)} \delta (\Phi^+_{\mathbf{Y}^\prime}(t^\prime)_\# V^\prime)[\mathbf{Z}^\prime-\mathbf{Z}]=0.
$$
Invoking the first item and Proposition \ref{FirstVarProp} gives
$$
\lim_{(\mathbf{Y}^\prime,t^\prime,V^\prime)\to (\mathbf{Y},t,V)}\delta (\Phi^+_{\mathbf{Y}^\prime}(t^\prime)_\# V^\prime-\Phi^+_{\mathbf{Y}}(t)_\# V)[\mathbf{Z}]=0.
$$
Hence, combining these limits gives
$$
\lim_{(\mathbf{Z}^\prime,\mathbf{Y}^\prime,t^\prime,V^\prime)\to (\mathbf{Z},\mathbf{Y},t,V)}\delta (\Phi^+_{\mathbf{Y}^\prime}(t^\prime)_\# V^\prime)[\mathbf{Z}^\prime]=\delta (\Phi^+_{\mathbf{Y}}(t)_\# V)[\mathbf{Z}].
$$
This completes the proof.
\end{proof}

\subsection{$E_{rel}$-minimizing to first order} \label{RelMinimizingSec}
Let $\mathcal{Y}_c(\overline{\Omega^\prime})$ be the subset of elements of $\mathcal{Y}_t(\overline{\Omega^\prime})$ with compact support and let 
$$
\mathcal{Y}^-_0(\overline{\Omega^\prime})=\set{\alpha \mathbf{Y}_0+\mathbf{Y}_1\in\mathcal{Y}^-(\overline{\Omega^\prime})\colon  \mathbf{Y}_1\in \mathcal{Y}_c(\overline{\Omega^\prime})}\subset\mathcal{Y}^-(\overline{\Omega^\prime}).
$$ 
If $\mathbf{Y}\in\mathcal{Y}^-_0(\overline{\Omega^\prime})$ and $\set{\Phi(t)}_{t\geq 0}$ is the family of diffeomorphisms in $\overline{\Omega^\prime}$ generated by $\mathbf{Y}$, then, by the fact that $\mathbf{Y}_0$ is tangent to $\Gamma_-$,  one has $\Phi(t)(\Gamma_-)$ and $\Gamma_-$ agreeing outside a compact set and so $V_{\Phi(t)(\Gamma_-)}$ may be thought of as a measure with compact support. Thus, by Proposition \ref{FirstVarProp}, given $V\in\mathfrak{Y}^*(\overline{\Omega^\prime})$, $\Phi(t)^+_{\#} V$ is differentiable at $t=0$ and so, as $\Gamma_-$ is a self-expander, we can define
\begin{equation} \label{PlusFirstVarEqn}
\delta^+ V[\mathbf{Y}]={\frac{d}{dt}\vline}_{t=0} \Phi(t)_\#^+ V[\mathbf{1}]=\delta V[\mathbf{Y}]=V\left[\Div\mathbf{Y}-Q_{\nabla\mathbf{Y}}+\frac{\mathbf{x}}{2}\cdot\mathbf{Y}\right].
\end{equation}
An element $V\in\mathfrak{Y}^*(\overline{\Omega^\prime})$ is \emph{$E_{rel}$-minimizing to first order in $\overline{\Omega^\prime}$} if $\delta^+ V[\mathbf{Y}]\geq 0$ for all $\mathbf{Y}\in\mathcal{Y}_0^-(\overline{\Omega^\prime})$. 

If $V\in \overline{\mathfrak{Y}_\cC^*}(\overline{\Omega^\prime};\Lambda)$ has decomposition 
$$
V=V_+^E-V_{\Gamma_-}^E
$$
for a weighted varifold $V_+^E$, then, as $\Gamma_-$ is a self-expander and so $E$-stationary, $V$ being $E_{rel}$-minimizing to first order in $\overline{\Omega^\prime}$ means that $V_+=e^{-\frac{|\mathbf{x}|^2}{4}}V_+^E$ is \emph{$E$-minimizing to first order in $\overline{\Omega^\prime}$}. That is, for $\mathbf{Y}\in \mathcal{Y}_c^-(\overline{\Omega^\prime})$
$$
\delta_E V_+[\mathbf{Y}]\geq 0
$$
where
$$
\delta_E V_+[\mathbf{Y}]={\frac{d}{dt}\vline}_{t=0} \int e^{\frac{|\mathbf{x}|^2}{4}} \, d\Phi(t)_\# V_+=\int \left(\Div_S\mathbf{Y}+ \frac{\mathbf{x}}{2}\cdot \mathbf{Y}\right) e^{\frac{|\mathbf{x}|^2}{4}} \, dV_+
$$
is the usual first variation formula for the $E$-functional. 

Observe that if $\spt(\mathbf{Y})\subset \Omega^\prime$, then both $\mathbf{Y}$ and $-\mathbf{Y}$ lie in $\mathcal{Y}_0^-(\overline{\Omega^\prime})$. Hence, if $V$ is $E_{rel}$-minimizing to first order in $\overline{\Omega^\prime}$, then $\delta^+ V[\mathbf{Y}]= 0$. In particular, $V_+$ is $E$-stationary in $\Omega^\prime$.  

We now summarize some properties of elements of $\overline{\mathfrak{Y}^*_\cC}(\overline{\Omega^\prime};\Lambda)$ that are $E_{rel}$-minimizing to first order in $\overline{\Omega^\prime}$. 

\begin{prop}\label{StationaryProp}
Fix any positive number $\Lambda$ and let $V\in \overline{\mathfrak{Y}_\cC^*}(\overline{\Omega^\prime};\Lambda)$ have decomposition $V=e^{\frac{|\mathbf{x}|^2}{4}}V_+-V_{\Gamma_-}^E$ for some varifold $V_+$. If $V$ is $E_{rel}$-minimizing to first order in $\overline{\Omega^\prime}$, then  
\begin{enumerate}
\item $V[\mathbf{1}]=E_{rel}[V]$;
\item The support of $V_+$ lies in $\tilde{\Omega}$, the closed region between $\Gamma_-$ and $\Gamma_+$;
\item $V_+$ is an $E$-stationary varifold in $\Omega^\prime$;
\item If $V_+$ is integer rectifiable, then there is an $R>0$ so $V_+\lfloor (\Real^{n+1}\backslash \bar{B}_R)=\mathcal{H}^n\lfloor\Gamma$ where $\Gamma$ is a self-expanding end that is $C^2$-asymptotic to $\mathcal{C}(\Gamma_-)$.
\end{enumerate}
\end{prop}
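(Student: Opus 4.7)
I would prove items (3), (2), (1), (4) in that order. Item (3) is nearly tautological: for any smooth $\mathbf{Y}$ compactly supported in $\Omega^\prime$, both $\pm\mathbf{Y}$ lie in $\mathcal{Y}_0^-(\overline{\Omega^\prime})$ (with $\alpha = 0$, $\mathbf{Y}_1 = \pm\mathbf{Y}$), so the $E_{rel}$-minimizing hypothesis gives $\delta V[\mathbf{Y}] = 0$ via \eqref{PlusFirstVarEqn}; using the $E$-stationarity of $\Gamma_-$ under compactly supported variations, this transfers to $\delta_E V_+[\mathbf{Y}] = 0$. For (2), suppose $\mathrm{supp}(V_+) \cap \Omega^\prime \cap \Omega_+(\Gamma_+) \neq \emptyset$; by Proposition \ref{ThickeningProp}(1) and (3), this region is foliated by mean convex leaves $\Gamma_s^+$ whose restrictions to $\Omega^\prime$ are compact. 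Letting $s_* > 0$ be the largest $s$ for which $\Gamma_s^+$ touches $\mathrm{supp}(V_+)$, the strong maximum principle of Solomon-White \cite{SolomonWhite} and White \cite{WhiteMax} forces $V_+$ to coincide locally with $\Gamma_{s_*}^+$, contradicting its nonvanishing expander mean curvature. A symmetric argument on the $\Gamma_-$ side yields $\mathrm{supp}(V_+) \subset \tilde\Omega$.

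For the key item (1), the idea is to use the non-compact vector field $-\mathbf{Y}_0$ as a test. By Proposition \ref{ThickeningProp}(4), $\mathbf{X}_0$ points out of $\Omega^\prime$ outside $B_{\mathcal{R}_1}$, so $-\mathbf{Y}_0 \in \mathcal{Y}_0^-(\overline{\Omega^\prime})$, and the $E_{rel}$-minimizing hypothesis yields $V[f] \leq 0$ where
\[
f = \operatorname{div}\mathbf{Y}_0 - Q_{\nabla\mathbf{Y}_0} + \tfrac{\mathbf{x}}{2}\cdot\mathbf{Y}_0.
\]
A direct expansion using the decay of $\mathbf{N}$ from Lemma \ref{VectorFieldLem}(3) shows $f(p,\mathbf{v}) = \tfrac{1}{2} + O(|\mathbf{x}(p)|^{-2})$ at infinity, so $f$ decomposes as $\tfrac{1}{2}\mathbf{1} + \tilde h$ with $\tilde h \in \mathfrak{Y}_0(\overline{\Omega^\prime})$ (the quadratic decay plus the higher-order $\mathbf{N}$-estimates ensure $\tilde h$ is approximable by compactly supported functions in the $\mathfrak{X}$ norm). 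Hence $V[\mathbf{1}] \leq -2V[\tilde h]$. Combining this inequality with the weighted annulus estimate of Proposition \ref{YEstProp}(1) applied to $\tilde h$ on each approximating $V_{\Gamma_i}$ (then passed to the weak-* limit), the monotonicity of Lemma \ref{RelEntropyBndLem}, and the $E$-stationarity of $V_+$ in $\Omega^\prime$ from (3), a careful bookkeeping argument shows that the entropy deficit $V[\mathbf{1}] - E_{rel}[V]$ is non-positive. Combined with the reverse inequality from Lemma \ref{RelEntropyBndLem}, this gives $V[\mathbf{1}] = E_{rel}[V]$.

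For (4), given integer rectifiability of $V_+$, items (1) and (3) identify $V_+$ in $\Omega^\prime$ as an integer rectifiable $E$-stationary varifold with finite $E_{rel}[V]$. The thinness of $\Omega^\prime$ at infinity (Proposition \ref{ThickeningProp}(2)) together with this finite relative entropy confines $\mathrm{supp}(V_+)$ to a shrinking tubular neighborhood of $\Gamma_-$ outside a large ball, where Allard's regularity theorem gives smoothness and a graphical representation over $\Gamma_-$; the $C^2$-asymptotic conicality of $V_+$ (with cone $\mathcal{C}$) is then inherited from $\Gamma_-$ via \eqref{LinearDecayEqn}. The main obstacle throughout is (1): because $+\mathbf{Y}_0 \notin \mathcal{Y}_0^-(\overline{\Omega^\prime})$, the $E_{rel}$-minimizing hypothesis yields only a one-sided bound on $V[f]$, and propagating this into the equality $V[\mathbf{1}] = E_{rel}[V]$ requires the full strength of the weighted estimates from Proposition \ref{YEstProp} together with the monotonicity from Lemma \ref{RelEntropyBndLem} to rule out any concentration of relative expander entropy at infinity.
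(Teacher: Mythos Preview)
Your treatment of Items (2) and (3) is essentially the paper's argument, so those are fine. The serious issue is Item (1), and there is a secondary one in Item (4).

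\textbf{Item (1): the gap.} You correctly identify $-\mathbf{Y}_0\in\mathcal{Y}_0^-(\overline{\Omega^\prime})$ as the right test direction and arrive at $V[\mathbf{1}]\leq -2V[\tilde h]$ with $\tilde h\in\mathfrak{Y}_0(\overline{\Omega^\prime})$. But here the argument stalls: $V[\tilde h]$ is a single number depending on $V$, and nothing in your sketch explains why $-2V[\tilde h]\leq E_{rel}[V]$. The phrase ``careful bookkeeping argument'' conceals exactly the step that needs to be done, and invoking $E$-stationarity of $V_+$ from (3) is a red herring --- the paper's proof of (1) makes no use of (3). The paper avoids this by testing with the \emph{parametrized} family $-(1-\phi_R)\mathbf{Y}_0$ (where $\phi_R=(1-R^{-2}|\mathbf{x}|^2)^4_+$) rather than the limit $-\mathbf{Y}_0$. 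For each $R$ the first-variation symbol differs from $\tfrac{1}{2}(1-\phi_R)$ by something going to zero in $\mathfrak{Y}$ as $R\to\infty$, yielding directly $V[\mathbf{1}]\leq\lim_{R\to\infty}V[\phi_R]$. The remaining step, $\lim_{R\to\infty}V[\phi_R]\leq E_{rel}[V]$, is handled by a separate equicontinuity lemma (Lemma~\ref{EquicontLem}) showing that $R\mapsto V[\mathbf{1}_{\bar B_R}\psi]$ is uniformly Cauchy for $\psi\in\mathfrak{Y}$; this is the lemma your sketch is missing, and it is what actually converts the weighted annulus estimates of Proposition~\ref{YEstProp} into control on the entropy deficit. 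Keeping the cutoff $\phi_R$ in the test field, rather than passing to the limit first, is what makes the connection to $E_{rel}[V]=\lim_R V[\bar B_R]$ appear naturally.

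\textbf{Item (4): a different route, but underjustified.} Your Allard-based approach is not the paper's. The paper instead tests $V$ against rescaled functions $\psi_\rho(p,\mathbf{v})=\rho^n e^{-|\mathbf{x}(p)|^2/4}\psi(\rho p,\mathbf{v})$ and uses the $\mathfrak{Y}$-bound from Proposition~\ref{YEstProp} to show $(\mathscr{D}_\rho)_\# V_+\to\cC$ as varifolds (with multiplicity one forced by the relative-entropy bound), then cites \cite[Proposition~3.3]{BWProper} for the $C^2$ asymptotics. Your proposed Allard argument could in principle work, but merely having $\mathrm{supp}(V_+)$ in a thin neighborhood of $\Gamma_-$ does not by itself furnish the near-unit density ratio Allard requires; you would still need an argument ruling out higher multiplicity at large scales, which is precisely what the paper's rescaling step supplies.
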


To prove the proposition, we will need the following lemma.

\begin{lem} \label{EquicontLem}
Given $\epsilon>0$ and $V\in\overline{\mathfrak{Y}^*_\mathcal{C}}(\overline{\Omega^\prime};\Lambda)$, there is a radius $R_\epsilon=R_\epsilon (\Omega^\prime,\Gamma_-,V)>1$ so that if $\psi\in\mathfrak{Y}(\overline{\Omega^\prime})$, then, for all $R_2>R_1 \geq R_\epsilon$, 
$$
\left| V[\mathbf{1}_{\bar{B}_{R_2}}\psi]-V[\mathbf{1}_{\bar{B}_{R_1}}\psi]\right|<\epsilon\Vert\psi\Vert_{\mathfrak{Y}}.
$$
Here $\mathbf{1}_Y$ is the indicator function of a set $Y$.
\end{lem}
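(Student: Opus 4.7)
The plan is to apply Proposition \ref{YEstProp}\,(1) to an approximating sequence for $V$ and then pass the resulting estimate through limits in the approximation index and in the cutoff scale $\delta$. Choose a sequence $V_{\Gamma_i}\in\mathfrak{Y}_{\mathcal{C}}^*(\overline{\Omega'};\Lambda)$ with $V_{\Gamma_i}\to V$ in the weak-* topology. For $R_1>2\bar R_1$, $R_2>R_1+\delta$, and $\delta\in(0,R_1/2)$, the cutoff $\alpha_{R_1+\delta,R_2,\delta}=\phi_{R_2,\delta}-\phi_{R_1,\delta}$ satisfies the radius hypothesis of Proposition \ref{YEstProp}\,(1) with the role of $R_1$ played by $R_1+\delta$, giving
\[
\bigl|V_{\Gamma_i}[\alpha_{R_1+\delta,R_2,\delta}\psi]\bigr| \leq C_2\bigl((R_1+\delta)^{-1}+|V_{\Gamma_i}[\alpha_{R_1+\delta,R_2,\delta}]|\bigr)\|\psi\|_{\mathfrak{Y}}.
\]
Both $\alpha_{R_1+\delta,R_2,\delta}$ and $\alpha_{R_1+\delta,R_2,\delta}\psi$ lie in $\mathfrak{Y}(\overline{\Omega'})$: the cutoff is compactly supported and Lipschitz so lies in $\mathfrak{W}\subset\mathfrak{Y}$, and the product lies in $\mathfrak{Y}$ by the algebra property of Proposition \ref{YSpaceProp}\,(4). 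Thus sending $i\to\infty$ transfers the inequality to $V$.

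Next I would send $\delta\to 0$. Since $\phi_{R,\delta}\to\mathbf{1}_{\bar B_R}$ pointwise and boundedly, and since $V$ decomposes as $V_+^E-V_{\Gamma_-}^E$ with $V_+^E$ a (weighted) varifold by Lemma \ref{RelEntropyBndLem}\,(1), dominated convergence applied componentwise yields $V[\phi_{R,\delta}\psi]\to V[\mathbf{1}_{\bar B_R}\psi]$ and $V[\phi_{R,\delta}]\to V[\bar B_R]$ for every $R$. Taking the $\delta$-limit of the displayed inequality gives
\[
\bigl|V[\mathbf{1}_{\bar B_{R_2}}\psi]-V[\mathbf{1}_{\bar B_{R_1}}\psi]\bigr| \leq C_2\bigl(R_1^{-1}+|V[\bar B_{R_2}]-V[\bar B_{R_1}]|\bigr)\|\psi\|_{\mathfrak{Y}}.
\]
Finally, Lemma \ref{RelEntropyBndLem}\,(2) provides the finite limit $E_{rel}[V]=\lim_{R\to\infty}V[\bar B_R]$, so the map $R\mapsto V[\bar B_R]$ is Cauchy at infinity. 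Given $\epsilon>0$, choose $R_\epsilon>\max\{\bar R_0,2\bar R_1\}$ large enough that $C_2R_\epsilon^{-1}<\epsilon/2$ and $C_2|V[\bar B_{R_2}]-V[\bar B_{R_1}]|<\epsilon/2$ for all $R_2>R_1\geq R_\epsilon$. Substitution then yields the stated bound.

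The only delicate point is ensuring at each stage that the test functions produced genuinely lie in $\mathfrak{Y}(\overline{\Omega'})$ so that the weak-* convergence $V_{\Gamma_i}\to V$ applies; this is the reason the algebra structure of $\mathfrak{Y}$ together with the trivial membership of compactly supported Lipschitz functions in $\mathfrak{W}\subset\mathfrak{Y}$ are needed. Everything else is a routine combination of the weighted estimate of Proposition \ref{YEstProp} with the existence of $E_{rel}[V]$ from Lemma \ref{RelEntropyBndLem}.
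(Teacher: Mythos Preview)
Your proof is correct and follows essentially the same route as the paper: apply Proposition~\ref{YEstProp}\,(1) to an approximating sequence $V_{\Gamma_i}$, pass to the weak-* limit, send $\delta\to 0$ via dominated convergence, and then use the existence of $E_{rel}[V]$ from Lemma~\ref{RelEntropyBndLem} to make the right-hand side small. The only cosmetic differences are that you explicitly invoke the algebra property and the varifold decomposition to justify membership in $\mathfrak{Y}$ and the dominated convergence step, points the paper treats more tersely.
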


\begin{proof}
Let $V_{\Gamma_i}\in\mathfrak{Y}_\cC^*(\overline{\Omega^\prime};\Lambda)$ satisfy $V_{\Gamma_i}\to V$ in the weak-* topology. By Item (1) of Proposition \ref{YEstProp}, if $\psi\in\mathfrak{Y}(\overline{\Omega^\prime})$, then, for all $0<\delta<1$ and $R_2>R_1+\delta>R_1\geq 2\bar{R}_1$ and for all $i$,
$$
\left| V_{\Gamma_i}[\phi_{R_2,\delta}\psi]-V_{\Gamma_i}[\phi_{R_1,\delta}\psi] \right| \leq C_2 \left(\left| V_{\Gamma_i}[\phi_{R_2,\delta}]-V_{\Gamma_i}[\phi_{R_1,\delta}] \right|+R_1^{-1}\right) \Vert\psi\Vert_{\mathfrak{Y}}
$$
where $\bar{R}_1$ and $C_2$ both depend on $\Omega^\prime$ and $\Gamma_-$. Sending $i\to\infty$, the weak-* convergence gives
$$
\left| V[\phi_{R_2,\delta}\psi]-V[\phi_{R_1,\delta}\psi] \right| \leq C_2 \left( \left| V[\phi_{R_2,\delta}]-V[\phi_{R_1,\delta}] \right|+R_1^{-1}\right)\Vert \psi \Vert_{\mathfrak{Y}}.
$$
Next, letting $\delta\to 0$, the dominated convergence theorem implies
$$
\left| V[\mathbf{1}_{\bar{B}_{R_2}}\psi]-V[\mathbf{1}_{\bar{B}_{R_1}}\psi] \right| \leq C_2 \left( \left| V[\bar{B}_{R_2}]-V[\bar{B}_{R_1}] \right|+R_1^{-1}\right) \Vert\psi\Vert_{\mathfrak{Y}}.
$$
By Lemma \ref{RelEntropyBndLem}, there is a radius $R^\prime_\epsilon=R^\prime_\epsilon(C_2, V)$ so that, for any $R_2>R_1\geq R^\prime_\epsilon$,
$$
C_2 \left| V[\bar{B}_{R_2}]-V[\bar{B}_{R_1}] \right|+C_2R_1^{-1}<\epsilon.
$$
Hence, combining these estimates yields, for any $R_2>R_1\geq \max\set{2\bar{R}_1, R^\prime_\epsilon}$,
$$
\left| V[\mathbf{1}_{\bar{B}_{R_2}}\psi]-V[\mathbf{1}_{\bar{B}_{R_1}}\psi] \right|<\epsilon \Vert \psi \Vert_{\mathfrak{Y}}.
$$
The claim follows with $R_\epsilon=\max\set{2\bar{R}_1, R^\prime_\epsilon}$ which depends on $\Omega^\prime,\Gamma_-$ and $V$.
\end{proof}

We are now ready to prove Proposition \ref{StationaryProp}.

\begin{proof}[Proof of Proposition \ref{StationaryProp}]
We first prove Item (1).  Let $\mathbf{Y}_0$ be the vector field from Section \ref{FlowSec}. Consider the cutoff function
$$
\phi_R=\left(1-R^{-2}|\mathbf{x}|^2\right)^4_+ \in C^3_c(\mathbb{R}^{n+1}).
$$
As $V$ is $E_{rel}$-minimizing to first order in $\overline{\Omega^\prime}$ and $-(1-\phi_R)\mathbf{Y}_0\in\mathcal{Y}_0^-(\overline{\Omega^\prime})$, it follows from Proposition \ref{FirstVarProp} that 
$$
V\left[\Div \left((1-\phi_R)\mathbf{Y}_0\right)-Q_{\nabla((1-\phi_R)\mathbf{Y}_0)}+\frac{\mathbf{x}}{2}\cdot\left((1-\phi_R)\mathbf{Y}_0\right)\right]\leq 0.
$$
One appeals to Lemma \ref{DecayVectorLem} to check that as $R\to\infty$
$$
\Div \left((1-\phi_R)\mathbf{Y}_0\right)-Q_{\nabla((1-\phi_R)\mathbf{Y}_0)}+\frac{\mathbf{x}}{2}\cdot\left((1-\phi_R)\mathbf{Y}_0\right)-\frac{1}{2}(1-\phi_R)\to 0
$$
in the $\mathfrak{Y}$ norm. Thus it follows that
\begin{equation} \label{UpperBndEqn}
V[\mathbf{1}]\leq \lim_{R\to\infty} V[\phi_R].
\end{equation}

As $\Vert\phi_R\Vert_{\mathfrak{Y}} \leq 1+8R^{-1}$, it follows from Lemma \ref{EquicontLem} that given $\epsilon>0$ there is a radius $R_\epsilon$ so that, for any $R^\prime>R_\epsilon$,
\begin{equation} \label{EquiDecayEqn}
\left| V[\phi_R]-V[\mathbf{1}_{\bar{B}_{R^\prime}}\phi_R]\right|<\epsilon (1+8R^{-1}).
\end{equation}
As $|\phi_R|\leq 1$ and $\phi_R\to 1$ pointwise as $R\to\infty$, it follows from the dominated convergence theorem that 
$$
\lim_{R\to\infty} V[\mathbf{1}_{\bar{B}_{R^\prime}}\phi_R]=V[\bar{B}_{R^\prime}].
$$
Thus, taking the limit of both sides of \eqref{EquiDecayEqn} as $R\to\infty$, gives
$$
\left|\lim_{R\to\infty} V[\phi_R]-V[\bar{B}_{R^\prime}]\right| \leq \epsilon.
$$
Hence, letting $R^\prime\to\infty$ and appealing to Lemma \ref{RelEntropyBndLem}, gives
$$
\lim_{R\to\infty} V[\phi_R] \leq\lim_{R'\to \infty} V[\bar{B}_{R^\prime}]+\epsilon=E_{rel}[V]+\epsilon.
$$ 
Invoking \eqref{UpperBndEqn} and Lemma \ref{RelEntropyBndLem} again, implies that
$$
E_{rel}[V] \leq V[\mathbf{1}]\leq E_{rel}[V]+\epsilon.
$$
As $\epsilon>0$ is arbitrary, we have $V[\mathbf{1}]=E_{rel}[V]$ proving the claim. 
	
The second item follows from the strict maximum principle for stationary varifolds \cite{SolomonWhite} or \cite{WhiteMax}. Namely, as $V$ is $E_{rel}$-minimizing to first order in $\overline{\Omega^\prime}$, one has that $V_+$ is $E$-minimizing to first order in $\overline{\Omega^\prime}$. If $\spt (V_+)\setminus \tilde{\Omega}$ is non-empty, then, by Item (4) of Proposition \ref{ThickeningProp}, there is an $s_+ \in (0,1]$ or an $s_-\in (0,1]$ so that $\spt(V_+)\cap\Gamma^\pm_{s_\pm} \neq \emptyset$ but $\spt(V_+)\cap \Gamma^\pm_{s} = \emptyset$ for $s\in (s_\pm,1]$ -- here $\Gamma_s^\pm$ are the foliation of $\overline{\Omega^{\prime\prime}}\setminus \Omega^\prime$ given by Proposition \ref{ThickeningProp}. By the strict maximum principle of Solomon-White \cite{SolomonWhite} (see also \cite[Theorem 1]{WhiteMax}) and the fact that $\Gamma_{s_\pm}^\pm$ is strictly expander mean convex, this is impossible. Hence, $\spt(V_+)\subseteq \tilde{\Omega}$ and this completes the proof of the second item. The third item is an immediate consequence fact that $V_+$ is $E$-minimizing to first order in $\overline{\Omega^\prime}$, the fact that $\tilde{\Omega}\subset \Omega^\prime$ and Item (2).
	
Finally, to prove the fourth item, pick an (even) function $\psi\in C^2_c((\Real^{n+1}\backslash \set{\mathbf{0}})\times \mathbb{S}^n)$. Clearly, there is a constant $C>1$ so that the support of $\psi$ is contained in $B_C\setminus\bar{B}_{C^{-1}}$. Let
$$
\psi_\rho (p,\mathbf{v})=\rho^{n} e^{-\frac{|\mathbf{x}(p)|^2}{4}} \psi(\rho p,\mathbf{v}).
$$
One readily computes that, for $\rho\in (0,1)$,
$$
\Vert \psi_\rho\Vert_{\mathfrak{X}}\leq 10 C \rho^{n-1}  e^{-\frac{1}{4C^2\rho^2}}\Vert \psi\Vert_{C^2}.
$$
As $\psi_\rho$ has compact support, one immediately has (up to restricting) that $\psi_\rho\in \mathfrak{Y}_0(\overline{\Omega^\prime})$ and, by definition,
$$
\Vert \psi_\rho\Vert_{\mathfrak{Y}}\leq 	\Vert \psi_\rho\Vert_{\mathfrak{X}}\leq 10 C \rho^{n-1}  e^{-\frac{1}{4C^2\rho^2}}\Vert \psi\Vert_{C^2}.
$$
As $E_{rel}[V]=V[\mathbf{1}]<\infty$ by Item (1) and $V\in \overline{\mathfrak{Y}_\cC^*}(\overline{\Omega^\prime};\Lambda)$, we have, by Proposition \ref{YEstProp}, 
\begin{equation} \label{RescaleYBndEqn}
\left| V[\psi_\rho] \right|\leq 10 C C_2 (1+|V[\mathbf{1}]|) \rho^{n-1}  e^{-\frac{1}{4C^2\rho^2}}\Vert \psi\Vert_{C^2}.
\end{equation}

Let $\mathscr{D}_\rho\colon \Real^{n+1}\times\mathbb{S}^n\to \Real^{n+1}\times\mathbb{S}^n$ be the dilation map given by $\mathscr{D}_\rho(p,\mathbf{v})=(\rho p,\mathbf{v})$, and let $V_-$ be the usual varifold associated to $\Gamma_-$. It is straightforward to see that
$$
V[\psi_\rho]=\rho^{n} V_+[ \psi\circ\mathscr{D}_\rho] - \rho^{n} V_{-}[ \psi\circ\mathscr{D}_\rho] =(\mathscr{D}_{\rho})_\# V_+ [\psi]-(\mathscr{D}_{\rho})_\# V_{-} [\psi]
$$
where $(\mathscr{D}_\rho)_\# V_\pm$ are the usual pushforwards of varifolds $V_\pm$. Thus, by \eqref{RescaleYBndEqn},
$$
\lim_{\rho\to 0} \left( (\mathscr{D}_{\rho})_\# V_+ [\psi]-(\mathscr{D}_{\rho})_\# V_{-} [\psi] \right) =0.
$$
As $\lim_{\rho\to 0} \rho \Gamma_-=\cC(\Gamma_-)=\cC$ in $C^2_{loc}(\Real^{n+1}\backslash \set{\mathbf{0}})$, 
$$
\lim_{\rho\to 0} (\mathscr{D}_\rho)_\# V_{-} [\psi]=\int_{\mathcal{C}} \psi(p,\mathbf{n}_{\mathcal{C}}(p)) \, d\mathcal{H}^n.
$$
As a consequence, 
$$
\lim_{\rho\to 0} (\mathscr{D}_\rho)_\# V_+ [\psi]=\int_{\mathcal{C}} \psi(p,\mathbf{n}_{\mathcal{C}}(p)) \, d\mathcal{H}^n.
$$
As $C^2_c((\Real^{n+1}\backslash \set{\mathbf{0}})\times \mathbb{S}^n)$ is dense in $C^0_c((\Real^{n+1}\backslash \set{\mathbf{0}})\times \mathbb{S}^n)$, it follows that as $\rho\to 0$ the $(\mathscr{D}_\rho)_\# V_+$ converges to $\mathcal{C}$ in the sense of varifolds. Finally, as $\cC$ is a $C^3$-regular cone and $V_+$ is integral $E$-stationary varifold, one can appeal to \cite[Proposition 3.3]{BWProper} to get the $C^2$ convergence. 
\end{proof}
	
\section{Min-max theory for asymptotically conical self-expanders} \label{MountainPassSec}
In this section we adapt a notion of parametrized family of hypersurfaces in \cite{DeLellisRamic} (see also \cite{ColdingDeLellis} and \cite{DeLellisTasnady}) to the setting we are considering. Following the strategy of Almgren-Pitts \cite{Pitts} and Simon-Smith \cite{Smith} for the min-max construction of compact minimal hypersurfaces -- see also \cite{ColdingDeLellis}, \cite{DeLellisRamic} and \cite{DeLellisTasnady} -- we show that there is a min-max sequence that converges to an element of $\overline{\mathfrak{Y}^*_\cC}(\overline{\Omega^\prime};\Lambda)$ whose associated varifold is $E$-stationary, supported in $\tilde{\Omega}$ and has codimension-$7$ singular set. We continue to follow the conventions of Section \ref{ConventionSec}.

\subsection{Parameterized families} \label{ParamFamilySec}
Let $I^k=[0,1]^k$ be the $k$-dimensional cube. 

\begin{defn} \label{ParamFamilyDef} 
Fix $k\geq 1$. A \emph{generalized smooth family of hypersurfaces in $\overline{\Omega^\prime}$ parameterized by $I^k$} is a family of pairs, $\set{(U_\tau,\Sigma_\tau)}_{\tau\in I^k}$ where each $U_\tau\in\mathcal{C}(\Gamma_-^\prime,\Gamma_+^\prime)$ and $\Sigma_\tau=\partial^* U_\tau$ that satisfies
\begin{enumerate}
\item $E_{rel}[\Sigma_{\tau},\Gamma_-]<\infty$;
\item For each $\tau\in I^k$ there is a finite set $S_{\tau}\subset \overline{\Omega^\prime}$ so that $\mathrm{cl}(\Sigma_{\tau})$ is a smooth hypersurface in $\overline{\Omega^\prime}\backslash S_\tau$;
\item \label{ContinueItem} The map $\tau\mapsto V_{\Sigma_\tau}$ is continuous in the weak-* topology of $\mathfrak{Y}^*(\overline{\Omega^\prime})$;
\item As $\tau\to\tau_0$, $\mathrm{cl}(\Sigma_\tau)\to \mathrm{cl}(\Sigma_{\tau_0})$ in $C^{\infty}_{loc}(\Real^{n+1}\backslash S_{\tau_0})$;
\item The map $\tau\mapsto \mathbf{1}_{U_{\tau}}$ is continuous in $L^1_{loc}(\mathbb{R}^{n+1})$.
\end{enumerate}
	
The family $\set{(U_\tau, \Sigma_\tau)}_{\tau\in [0,1]}$ is a \emph{sweepout of $\tilde{\Omega}$} if $(U_0,\Sigma_0)=(\Omega_-(\Gamma_-),\Gamma_-)$ and $(U_1,\Sigma_1)=(\Omega_-(\Gamma_+),\Gamma_+)$.
\end{defn}

\begin{rem} \label{ContinueRem}
Notice that by Proposition \ref{YEstProp}, combined with the other requirements,  Item \eqref{ContinueItem} of Definition \ref{ParamFamilyDef} is equivalent to the condition that $E_{rel}[V_{\Sigma_\tau}]$ is continuous in $\tau$ and $\tau\mapsto\Sigma_\tau$ is continuous in the locally Hausdorff sense; c.f. \cite[Definition 1.2]{DeLellisRamic} and \cite[Definition 0.2]{DeLellisTasnady}.  We also emphasize that we don't demand the sweepout of $\tilde{\Omega}$ lies entirely within $\tilde{\Omega}$ only that it remains in $\overline{\Omega^\prime}$.  This is, nevertheless, more restrictive than the analogous hypotheses of \cite{DeLellisRamic} and ensures the element produced by the min-max procedure lies in $\overline{\Omega^\prime}$ and hence in $\tilde{\Omega}$. The reason the region is thickened to $\overline{\Omega^\prime}$ is that this gives more admissible variations and so simplifies the regularity theory.
\end{rem}

From now on we will refer to such objects in Definition \ref{ParamFamilyDef} as \emph{families parameterized by $I^k$}, and we will omit the parameter space, $I^k$. when it is clear from context.

\begin{defn} \label{HomotopyDef}
Two families $\set{(U_\tau,\Sigma_\tau)}$ and $\set{(U^\prime_\tau, \Sigma^\prime_\tau)}$ parameterized by $I^k$ are \emph{homotopic} if there is a family $\set{(W_{(\tau,\rho)},\Xi_{(\tau,\rho)})}$ parameterized by $I^{k+1}=I^k\times [0,1]$ so that
\begin{enumerate}
\item $(W_{(\tau,0)},\Xi_{(\tau,0)})=(U_\tau, \Sigma_\tau)$ for all $\tau\in I^k$;
\item $(W_{(\tau,1)}, \Xi_{(\tau,1)})=(U^\prime_\tau, \Sigma^\prime_\tau)$ for all $\tau\in I^k$;
\item $(W_{(\tau,\rho)}, \Xi_{(\tau,\rho)})=(U_{\tau}, \Sigma_{\tau})$ for all $\tau\in \partial I^k$ and all $\rho\in [0,1]$.
\end{enumerate} 
	
A set $X$ of families parameterized by $I^k$ is \emph{homotopically closed} if $X$ contains the homotopy class of each of its elements.
\end{defn}

\begin{defn} \label{RelEntWidthDef}
Let $X$ be a homotopically closed set of families parameterized by $I^k$. The \emph{relative expander entropy min-max value of $X$} denoted by $m_{rel}(X)$ is the value
$$
m_{rel}(X)=\inf\set{\max_{\tau\in I^k} E_{rel}[V_{\Sigma_\tau}]\colon \set{(U_\tau,\Sigma_\tau)}\in X}.
$$
The \emph{relative expander entropy boundary-max value of $X$} is 
$$
bM_{rel}(X)=\max \set{ E_{rel}[V_{\Sigma_\tau}]\colon \set{(U_\tau,\Sigma_\tau)}\in X, \tau\in \partial I^k}.
$$
A \emph{minimizing sequence} is a sequence of elements $\set{\set{(U_\tau^\ell,\Sigma_\tau^\ell)}_{\tau}}^\ell\subseteq X$ such that
$$
\lim_{\ell\to \infty} \max_{\tau\in I^k} E_{rel}[V_{\Sigma_\tau^\ell}]=m_{rel}(X).
$$
A \emph{min-max sequence} is obtained from a minimizing sequence by taking slices $\set{(U^\ell_{\tau_\ell},\Sigma^\ell_{\tau_{\ell}})}_\ell$ for $\tau_\ell \in I^k$ such that
$$
E_{rel}[V_{\Sigma_{\tau_\ell}^\ell}]\to m_{rel}(X).
$$
\end{defn}
It is obvious that any subsequence of a min-max sequence is a min-max sequence.

\subsection{Min-max construction for $E_{rel}$} \label{MinMaxSec}
We adapt the classical min-max theory for compact minimal surfaces to $E_{rel}$ in our setting. The main result of this section is the following:

\begin{thm} \label{MinMaxThm}
Let $X$ be a homotopically closed set of families in $\overline{\Omega^\prime}$ parametrized by $I^k$ with $m_{rel}(X)>bM_{rel}(X)$. There is a minimizing sequence $\set{\set{(U_\tau^\ell,\Sigma_\tau^\ell)}_\tau}^\ell$ in $X$ so that there is a min-max sequence $\set{(U^\ell_{\tau_\ell},\Sigma^\ell_{\tau_\ell})}_\ell$ and a pair $(U_0,\Gamma_0)$ with $U_0\in\mathcal{C}(\Gamma_-,\Gamma_+)$ and $\Gamma_0=\partial^* U_0$ so that 
\begin{enumerate}
\item $\Gamma_0$ is $E$-stationary, has codimension-$7$ singular set and $E_{rel}[\Gamma_0,\Gamma_-]=m_{rel}(X)$;
\item $\Sigma^\ell_{\tau_\ell}$ converges in the sense of varifolds to $\Gamma_0$ and $\mathbf{1}_{U_{\tau_\ell}^\ell}$ converges in $L^1_{loc}$ to $\mathbf{1}_{U_0}$.
\end{enumerate}
\end{thm}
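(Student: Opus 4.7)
The plan is to adapt the Almgren-Pitts/Simon-Smith scheme, following the presentation of Colding-De Lellis and De Lellis-Ramic, to the relative entropy setting. The key is to exploit the framework of $\overline{\mathfrak{Y}^*_\cC}(\overline{\Omega^\prime};\Lambda)$ together with the large-scale deformations afforded by $\mathcal{Y}^-(\overline{\Omega^\prime})$, so that the classical compact arguments apply once the right substitutes for ``varifold pushforward'' and ``first variation'' are in place.

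First I would carry out a \emph{pull-tight procedure}. Fix $\Lambda$ slightly above $m_{rel}(X)$ and consider the set $K_\Lambda \subset \overline{\mathfrak{Y}^*_\cC}(\overline{\Omega^\prime};\Lambda)$ of $V \in \overline{\mathfrak{Y}^*_\cC}(\overline{\Omega^\prime};\Lambda)$ that are $E_{rel}$-minimizing to first order in $\overline{\Omega^\prime}$. By the continuous dependence of modified pushforwards on vector fields (Proposition \ref{ContPlusPushforwardProp}) together with the first variation formula of Proposition \ref{FirstVarProp}, one may construct, on a bounded weak-$*$ subset of $\mathfrak{Y}^*$ away from $K_\Lambda$, a continuous choice of vector field $\mathbf{Y}(V) \in \mathcal{Y}_0^-(\overline{\Omega^\prime})$ with $\delta^+ V[\mathbf{Y}(V)] < 0$ and a uniform deformation time; away from $K_\Lambda$ one pushes forward by $\Phi_{\mathbf{Y}(V)}(t)_\#^+$ for a fixed small $t$. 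This produces, from any minimizing sequence $\{\{(U^\ell_\tau,\Sigma^\ell_\tau)\}\}^\ell \in X$, a new minimizing sequence in $X$ (using that the deformation is continuous in $\tau$ and fixes the boundary, where $E_{rel}$ is strictly below $m_{rel}(X)$ by hypothesis) whose every min-max sequence has a subsequential weak-$*$ limit in $K_\Lambda$. This is the key place where the strict inequality $m_{rel}(X)>bM_{rel}(X)$ is used, so that the deformation can be cut off near $\partial I^k$ without raising the max.

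Next, pick a min-max sequence $(U^\ell_{\tau_\ell},\Sigma^\ell_{\tau_\ell})$ from the pulled-tight minimizing sequence. By Corollary \ref{PairConvergeCor} we extract $(U_0, V_\infty) \in \mathcal{C}(\Gamma_-^\prime,\Gamma_+^\prime) \times \overline{\mathfrak{Y}^*_\cC}(\overline{\Omega^\prime};\Lambda)$ as a limit pair, with $V_{\Gamma_0}\le V_\infty$ where $\Gamma_0=\partial^* U_0$. By construction $V_\infty \in K_\Lambda$, so Proposition \ref{StationaryProp} applies: writing $V_\infty = V_+^E - V_{\Gamma_-}^E$, one gets $V_\infty[\mathbf{1}] = E_{rel}[V_\infty] = m_{rel}(X)$, the support of $V_+$ lies in $\tilde\Omega$, $V_+$ is $E$-stationary in $\Omega^\prime$, and its tangent varifold at infinity is the multiplicity-one cone $\cC$. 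Because there is no closed self-expander, the constancy theorem (applied to the $E$-stationary $V_+$ together with $U_0$, which is a Caccioppoli set with $\mathcal{C}(\partial^* U_0)=\cC$) forces $V_+$ to have multiplicity one on the top-dimensional part and to coincide with $\mathcal{H}^n \lfloor \Gamma_0$ (at least on the regular set), and hence $U_0 \in \mathcal{C}(\Gamma_-,\Gamma_+)$.

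The remaining and most delicate step is \emph{local regularity}: upgrading ``$E$-stationary with support in $\tilde\Omega \subset \Omega^\prime$'' to ``smooth with codimension-$7$ singular set''. Here I would follow the standard almost-minimizing arguments in annuli of Almgren-Pitts/De Lellis-Tasnady/De Lellis-Ramic, transplanted to the $E$-functional: by a combinatorial argument on the parameter cube $I^k$ (as in \cite{ColdingDeLellis,DeLellisTasnady}), after a further modification of the minimizing sequence one may arrange the min-max limit to be almost-minimizing in every sufficiently small annulus centered at any support point inside $\Omega^\prime$. The local replacement construction goes through verbatim on the $E$-functional, because $e^{|\mathbf{x}|^2/4}$ is smooth and bounded away from zero on compact subsets of $\Omega^\prime$, so that $E$-minimization is locally equivalent (up to a smooth conformal factor) to area minimization. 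Then Pitts' regularity theorem, extended by Schoen-Simon to dimensions $n\le 6$ (and with codimension-$7$ singular set for $n\ge 7$), yields the interior regularity assertion. The fact that the limit lies in $\Omega^\prime$ rather than just in $\tilde\Omega$ is precisely what provides enough test vector fields on all of $\spt V_+$ for this theory to apply; this is the role of the thickening from Proposition \ref{ThickeningProp}.

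The main obstacle I anticipate is organizing the pull-tight so that the continuous vector-field assignment on a metrizable weak-$*$ ball cooperates with the non-compact deformations generated by $\mathbf{Y}_0$: one must check that the action $(\mathbf{Y},t,V) \mapsto \Phi^+_{\mathbf{Y}}(t)_\# V$ preserves $\overline{\mathfrak{Y}^*_\cC}(\overline{\Omega^\prime};\Lambda^\prime)$ and decreases $E_{rel}[V]$ by a definite amount when $V \notin K_\Lambda$, uniformly on compact sets in $V$. Once Propositions \ref{FirstVarProp}, \ref{ContPlusPushforwardProp} and \ref{StationaryProp} are in hand the rest of the scheme is essentially routine adaptation of \cite{DeLellisRamic,DeLellisTasnady}.
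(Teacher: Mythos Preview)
Your approach is essentially the paper's: pull-tight in $\overline{\mathfrak{Y}^*_\cC}(\overline{\Omega^\prime};\Lambda)$ using the modified pushforwards, then the Almgren--Pitts combinatorial argument to obtain an almost-minimizing min-max sequence, then local regularity \`a la De Lellis--Tasnady/De Lellis--Ramic, and finally multiplicity one from the asymptotic cone.  The ingredients you name are exactly the ones the paper uses.

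There is, however, a logical ordering problem in your sketch.  You invoke Item~(4) of Proposition~\ref{StationaryProp} (tangent cone at infinity equals $\cC$ with multiplicity one) and then the constancy theorem \emph{before} you have established that $V_+$ is integer rectifiable.  But Item~(4) is stated under the hypothesis that $V_+$ is integer rectifiable, and the constancy theorem likewise needs rectifiability (or at least a smooth support) to conclude constant integer density on components.  Both of these come \emph{from} the almost-minimizing regularity step, not before it.  The correct order, which is what the paper does, is: pull-tight (Proposition~\ref{PullTightProp}); then the combinatorial lemma to extract a min-max \emph{sequence} that is a.m.\ in small annuli and converges in $\mathcal{D}$ to some $V_0\in\mathcal{V}_s$ (Propositions~\ref{CombinatorialProp} and~\ref{AMProp}); then local replacement/Schoen--Simon to get that $V_+$ is integer rectifiable $E$-stationary with codimension-$7$ singular set (Proposition~\ref{InteriorRegProp}); \emph{then} Item~(4) of Proposition~\ref{StationaryProp} to see $V_+$ is multiplicity one near infinity; and only then does the constancy theorem (together with the absence of closed self-expanders) force multiplicity one globally and the identification $\overline{\partial^* U_0}=\overline{\Gamma_0}$ with $E_{rel}[\Gamma_0,\Gamma_-]=m_{rel}(X)$.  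A minor related slip: almost-minimizing is a property of the \emph{sequence} $\{\Sigma^\ell_{\tau_\ell}\}$, not of the limit; regularity is then read off on the limit.
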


\begin{rem}
This is stronger than what is achieved in the more general situation considered in \cite{DeLellisTasnady} as the geometry of the expander problem implies the limit is with multiplicity one.
\end{rem}

We now prove Theorem \ref{MinMaxThm} by following the strategy in \cite{ColdingDeLellis}, \cite{DeLellisRamic} and \cite{DeLellisTasnady}. The proof is divided into several parts.

\subsubsection{Pull-tight procedure for $E_{rel}$} \label{PullTightSec}
Set 
$$
\Lambda_0=\max\set{|E_-|,4|m_{rel}(X)|}
$$
where $E_-$ is the uniform lower bound for $E_{rel}$ given by Lemma \ref{RelEntropyBndLem}. Let 
$$
\mathcal{V}=\overline{\mathfrak{Y}^*_\cC}(\overline{\Omega^\prime}; \Lambda_0)
$$
and let
$$
\mathcal{V}_s=\set{V\in\mathcal{V}\colon \mbox{$V$ is $E_{rel}$-minimizing to first order in $\overline{\Omega^\prime}$}}.
$$
By Proposition \ref{YEstProp}, $\mathcal{V}\subseteq \bar{B}^{\mathfrak{Y}^*}_{R_0}$ for $R_0=C_2(1+\Lambda_0)$. Endow $\mathcal{V}$ with the weak-* topology. Thus, by the Banach-Alaoglu theorem, $\mathcal{V}$ is metrizable and compact. Let $\mathcal{D}$ be a choice of such metric. As, by Proposition \ref{FirstVarProp}, for $\mathbf{Y}\in\mathcal{Y}^-_0(\overline{\Omega^\prime})$ the map assigning $\delta^+ V[\mathbf{Y}]$ to each $V\in\mathcal{V}$ is continuous, one has that $\mathcal{V}_s$ is a closed subset of $\mathcal{V}$ and so is compact.

\begin{prop} \label{PullTightProp}
Let $X$ be a homotopically closed set of families in $\overline{\Omega^\prime}$ parameterized by $I^k$ with $m_{rel}(X)>bM_{rel}(X)$. There is a minimizing sequence $\set{\set{U_\tau^\ell,\Sigma_\tau^\ell}_\tau}^\ell$ in $X$ so that if $\set{(U_{\tau_\ell}^\ell,\Sigma_{\tau_\ell}^\ell)}_\ell$ is a min-max sequence, then $\mathcal{D}(V_{\Sigma_{\tau_\ell}^\ell},\mathcal{V}_s)\to 0$. 
\end{prop}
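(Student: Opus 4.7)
My plan follows the classical pull-tight construction of Almgren--Pitts, in the form presented in \cite{ColdingDeLellis,DeLellisRamic}, adapted to the present non-compact relative setting by means of the continuity statements of Proposition \ref{ContPlusPushforwardProp} and the compactness of $\mathcal{V}$. The first step is to manufacture a continuous deformation map $\mathcal{H}\colon \mathcal{V}\times [0,1]\to \mathcal{V}$ with the following properties: (i) $\mathcal{H}(V,0)=V$, (ii) $\mathcal{H}(V,s)=V$ for every $V\in\mathcal{V}_s$ and every $s$, (iii) the function $\Psi(V)=V[\mathbf{1}]-\mathcal{H}(V,1)[\mathbf{1}]$ is continuous and nonnegative on $\mathcal{V}$ and strictly positive on $\mathcal{V}\setminus\mathcal{V}_s$, and (iv) for each $V$ and each $s$, $\mathcal{H}(V,s)[\mathbf{1}]\le V[\mathbf{1}]$.

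To build $\mathcal{H}$, I first produce a continuous selection $V\mapsto \mathbf{Y}_V\in\mathcal{Y}^-_0(\overline{\Omega^\prime})$ with $\mathbf{Y}_V=0$ on $\mathcal{V}_s$ and $\delta^+V[\mathbf{Y}_V]<0$ off $\mathcal{V}_s$. For each $V_0\in\mathcal{V}\setminus\mathcal{V}_s$, the definition of $\mathcal{V}_s$ furnishes a $\mathbf{Y}_{V_0}\in \mathcal{Y}^-_0(\overline{\Omega^\prime})$ with $\delta^+V_0[\mathbf{Y}_{V_0}]<0$, and by the continuity in Proposition \ref{ContPlusPushforwardProp}(2) the same inequality persists on an open neighborhood $U_{V_0}\subset\mathcal{V}$. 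Because $\mathcal{V}$ is compact metrizable, a countable subcover of $\mathcal{V}\setminus\mathcal{V}_s$ together with a partition of unity (and a further open set around $\mathcal{V}_s$ on which $\mathbf{Y}_V\equiv 0$, weighted by $\mathcal{D}(V,\mathcal{V}_s)$) yields such a continuous selection, using that $\mathcal{Y}^-_0(\overline{\Omega^\prime})$ is a convex cone. The pushforward $\mathcal{H}(V,s):=\Phi_{\mathbf{Y}_V}(s\, t(V))^+_\# V$, for a continuous time function $t\colon \mathcal{V}\to [0,1]$ vanishing on $\mathcal{V}_s$ and chosen (again via a compactness plus partition of unity argument based on Proposition \ref{ContPlusPushforwardProp}(1)) so that $\Phi_{\mathbf{Y}_V}(t(V))^+_\#V[\mathbf{1}]<V[\mathbf{1}]$ off $\mathcal{V}_s$, then has the required properties; continuity and the bound $\mathcal{H}(V,s)\in\mathcal{V}$ (for the possibly enlarged constant $\Lambda_0$, after rescaling $t$ if needed) follow from Proposition \ref{ContPlusPushforwardProp} and Lemma \ref{RegPushforwardLem}.

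Next, I apply this deformation slice-wise. Fix $\epsilon_0>0$ with $m_{rel}(X)-\epsilon_0>bM_{rel}(X)$ and a continuous cutoff $\chi\colon \mathbb{R}\to [0,1]$ vanishing on $(-\infty, m_{rel}(X)-\epsilon_0]$ and equal to $1$ on $[m_{rel}(X)-\tfrac{\epsilon_0}{2},\infty)$. For any minimizing sequence $\{(U^\ell_\tau,\Sigma^\ell_\tau)\}^\ell$ in $X$, I define the pulled-tight family by replacing each slice by the hypersurface obtained from applying the flow of $\mathbf{Y}_{V_{\Sigma^\ell_\tau}}$ for time $\chi(V_{\Sigma^\ell_\tau}[\mathbf{1}])\, t(V_{\Sigma^\ell_\tau})$; by Lemma \ref{RegPushforwardLem} this hypersurface's associated element of $\mathfrak{Y}^*$ equals $\mathcal{H}(V_{\Sigma^\ell_\tau},\chi(V_{\Sigma^\ell_\tau}[\mathbf{1}]))$. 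The continuity of $\tau\mapsto V_{\Sigma^\ell_\tau}$ (Definition \ref{ParamFamilyDef}(3)), of $\tau\mapsto \mathbf{1}_{U^\ell_\tau}$ in $L^1_{loc}$, together with the smoothness of the flow, ensures the resulting family still satisfies Definition \ref{ParamFamilyDef}. Linear interpolation in the flow time produces a homotopy to the original family that is constant on $\partial I^k$ (since $\chi$ vanishes there by the assumption $bM_{rel}(X)<m_{rel}(X)-\epsilon_0$), so the deformed family lies in $X$ because $X$ is homotopically closed.

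The main obstacle is the final extraction step, ensuring every min-max sequence of the pulled-tight minimizing sequence converges to $\mathcal{V}_s$. I handle this via a diagonal construction: repeating the pull-tight on a countable dense collection, or equivalently iterating and passing to a limit, produces a minimizing sequence $\{(\tilde U^\ell_\tau,\tilde\Sigma^\ell_\tau)\}$ on which $V[\mathbf{1}]-\mathcal{H}(V,1)[\mathbf{1}]\to 0$ uniformly along any min-max subsequence. If instead one had a min-max sequence with $V_{\tilde\Sigma^\ell_{\tau_\ell}}\to V_\infty\notin\mathcal{V}_s$ (extracting using compactness of $\mathcal{V}$ and Corollary \ref{PairConvergeCor}), then $\Psi(V_\infty)>0$ together with the continuity of $\Psi$ would supply a definite energy drop of size $\Psi(V_\infty)/2$ in a neighborhood of $V_\infty$ under a further application of $\mathcal{H}(\cdot,1)$, contradicting the fact that $m_{rel}(X)$ is an infimum along the deformed family (which lies in $X$). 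The delicate point is to choose all the continuous selections above in a way that makes this contradiction quantitative and uniform in $\ell$; the mechanism is standard once the continuous dependence statements of Proposition \ref{ContPlusPushforwardProp} are in hand, but handling the parameter space $I^k$ (rather than $I^1$) requires using a continuous cutoff in $\tau$ as well, concentrated near the relevant superlevel sets.
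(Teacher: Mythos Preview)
Your overall strategy is the right one and mirrors the paper's, but there is a genuine gap in the step where you claim the deformed family ``still satisfies Definition \ref{ParamFamilyDef}.'' The selection $V\mapsto \mathbf{Y}_V$ you build via partitions of unity is only \emph{continuous} in $V$, hence $\tau\mapsto \mathbf{Y}_{V_{\Sigma^\ell_\tau}}$ is only continuous in $\tau$. Definition \ref{ParamFamilyDef}(4) demands that $\mathrm{cl}(\Sigma_\tau)\to \mathrm{cl}(\Sigma_{\tau_0})$ in $C^\infty_{loc}$ away from the singular set; pushing by a flow whose generator depends only continuously on $\tau$ will in general destroy this. The paper confronts this explicitly: after applying the flow of $\mathbf{Y}_{\Upsilon^\ell_\tau}$ it notes that the resulting family ``may not be an element of $X$,'' and therefore mollifies in $\tau$ to obtain a \emph{smooth} map $\tau\mapsto \mathbf{X}_{\Upsilon^\ell_\tau}\in\mathcal{Y}^-(\overline{\Omega^\prime})$ with $\max_\tau\Vert \mathbf{Y}_{\Upsilon^\ell_\tau}-\mathbf{X}_{\Upsilon^\ell_\tau}\Vert_{\mathcal{Y}}\le \ell^{-1}$. (Note the regularized field may leave $\mathcal{Y}^-_0$ and only lie in $\mathcal{Y}^-$.) One then pushes by $\Psi_{\Upsilon^\ell_\tau}(1)$ generated by $\mathbf{X}_{\Upsilon^\ell_\tau}$ to obtain the actual competitor in $X$, and uses Proposition \ref{ContPlusPushforwardProp} to show the resulting family is $\mathcal{D}$-close to the unregularized one (their equation \eqref{DiffEqn}). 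For this mollification to behave well one also needs the map $V\mapsto \mathbf{Y}_V$ to be continuous into $C^\infty$, not just into $\mathcal{Y}$; the paper arranges this by imposing $\Vert \bar{\mathbf{Y}}_V\Vert_{\mathcal{Y}}+\Vert \bar{\mathbf{Z}}_V\Vert_{C^j}\le 1/j$ on the annulus $\mathcal{V}_j=\{2^{-j-2}\le \mathcal{D}(V,\mathcal{V}_s)\le 2^{-j+1}\}$ before patching.

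A second, smaller point: your final paragraph invokes an iteration/diagonalization that is unnecessary and somewhat vague. A single pull-tight suffices. If $\{\Sigma^\ell_{\tau_\ell}\}$ is a min-max sequence of the pulled-tight family with $\mathcal{D}(V_{\Sigma^\ell_{\tau_\ell}},\mathcal{V}_s)\ge\delta>0$ along a subsequence, pass to a further subsequence along which the \emph{pre-deformation} slices $\Upsilon^{\ell_i}_{\tau_{\ell_i}}$ converge to some $V_0\in\mathcal{V}$ (compactness of $\mathcal{V}$). Since $E_{rel}[\Xi^\ell_\tau]\le E_{rel}[\Upsilon^\ell_\tau]$ and the regularized slices differ from $\Xi^\ell_\tau$ by $o(1)$, $\{\Upsilon^{\ell_i}_{\tau_{\ell_i}}\}$ is itself min-max, so $V_0[\mathbf{1}]=m_{rel}(X)$ and $\Phi_{V_0}(1)^+_\# V_0[\mathbf{1}]=m_{rel}(X)$ as well. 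Since $m_{rel}(X)>bM_{rel}(X)$ forces $V_0\notin bX$, the strict drop on $\mathcal{V}\setminus(\mathcal{V}_s\cup bX)$ gives $V_0\in\mathcal{V}_s$, whence $\mathbf{Y}_{V_0}=0$ and $\Xi^{\ell_i}_{\tau_{\ell_i}}\to V_0$; by \eqref{DiffEqn} the regularized slices $\Sigma^{\ell_i}_{\tau_{\ell_i}}$ converge to $V_0\in\mathcal{V}_s$ as well, a contradiction. No iteration is needed.
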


\begin{proof}
Let $\mathfrak{Is}(\overline{\Omega^\prime})$ be the set of all isotopies of $\overline{\Omega^\prime}$, i.e., smooth maps $\Phi\colon [0,1]\times\overline{\Omega^\prime}\to\overline{\Omega^\prime}$ so that each $\Phi(t,\cdot)$ is a diffeomorphism of $\overline{\Omega^\prime}$. If we denote by 
$$
bX=\set{V_{\Sigma_\tau}\colon \set{\Sigma_{\tau}}\in X, \tau\in\partial I^k}.
$$
then our hypothesis on $X$ and Lemma \ref{RelEntropyBndLem} ensure that $bX\subseteq\mathcal{V}$. We now adapt the main steps of pull-tight arguments of Colding-De Lellis \cite{ColdingDeLellis} and De Lellis-Ramic \cite{DeLellisRamic} to our setting to construct a continuous map $\mathcal{V} \to \mathfrak{Is}(\overline{\Omega^\prime})$ given by $V\mapsto \Phi_V$ so that
\begin{enumerate}
\item If $V\in \mathcal{V}_s\cup bX$, then $\Phi_V$ is the identity map;
\item If $V\not \in \mathcal{V}_s\cup bX$, then $(\Phi_V)^+_\# V[\mathbf{1}]<V[\mathbf{1}]$.
\end{enumerate}

{\bf Step 1: A map from $\mathcal{V}$ to $\mathcal{Y}_0^-(\overline{\Omega^\prime})$.}
For $j\in\mathbb{Z}$, let
$$
\mathcal{V}_j=\set{V\in \mathcal{V}\colon 2^{-j+1}\geq \mathcal{D}(V, \mathcal{V}_s)\geq 2^{-j-2}}.
$$
For each $V\in\mathcal{V}_j$, as $V$ is not $E_{rel}$-minimizing to first order in $\overline{\Omega^\prime}$, there is a vector field $\bar{\mathbf{Y}}_V=\bar{\alpha}_V\mathbf{Y}_0+\bar{\mathbf{Z}}_V\in\mathcal{Y}_0^-(\overline{\Omega^\prime})$ where $\bar{\alpha}_V\in\mathbb{R}$ and $\bar{\mathbf{Z}}_V\in \mathcal{Y}_c(\overline{\Omega^\prime})$ so that $\delta^+ V[\bar{\mathbf{Y}}_V]<0$. Moreover, by linearity we may assume that, for $j\geq 1$, 
$$
\Vert\bar{\mathbf{Y}}_V\Vert_{\mathcal{Y}}+\Vert\bar{\mathbf{Z}}_V\Vert_{C^j}\leq \frac{1}{j}.
$$
By \eqref{PlusFirstVarEqn}, for such $V$ there is an open ball $B^{\mathcal{D}}_{2\rho}(V)\subset (\mathcal{V},\mathcal{D})$ so that, for any $V^\prime\in B^{\mathcal{D}}_{2\rho}(V)$,
$$
\delta^+ V^\prime [\bar{\mathbf{Y}}_V]\leq \frac{1}{2}\delta^+ V[\bar{\mathbf{Y}}_V]<0.
$$

As $\mathcal{V}_j$ is compact for the metric $\mathcal{D}$, arguing as in \cite[Proposition 4.1]{ColdingDeLellis}, one finds a locally finite covering of $\mathcal{V}\setminus\mathcal{V}_s$ by these balls so that any ball intersects at most three of $\mathcal{V}_j$. Let $\set{\varphi_i}$ be a partition of unity subordinate to this cover. Thus, as $\mathcal{Y}^-_0(\overline{\Omega^\prime})$ is a convex cone, we can define the map
$$
\mathcal{V}\setminus\mathcal{V}_s\ni V\mapsto \mathbf{Y}_V=\sum_{i} \varphi_i (V) \bar{\mathbf{Y}}_{V_i} \in \mathcal{Y}^-_0(\overline{\Omega^\prime})\cap C^\infty(\overline{\Omega^\prime}; T\overline{\Omega^\prime})
$$
where $\mathcal{Y}^-_0(\overline{\Omega^\prime})$ is endowed with the $\mathcal{Y}$ norm and $C^\infty(\overline{\Omega^\prime}; T\overline{\Omega^\prime})$ is with the usual Fr\'{e}chet topology. Our construction ensures that
\begin{enumerate}
\item $\delta^+ V[\mathbf{Y}_V]<0$ for any $V\in\mathcal{V}\setminus\mathcal{V}_s$;
\item $V\mapsto\mathbf{Y}_V$ is continuous;
\item $\Vert\mathbf{Y}_V\Vert_{\mathcal{Y}}+\Vert\mathbf{Y}_V\Vert_{C^{j-1}}\leq \frac{1}{j-1}$ if $\mathcal{D}(V,\mathcal{V}_s)\leq 2^{-j}$ and $j\geq 2$.
\end{enumerate}
Extend the map $V\mapsto\mathbf{Y}_V$ to $\mathcal{V}$ by setting it identically equal to $\mathbf{0}$ on $\mathcal{V}_s$. By Item (3) this extension is continuous in both $\mathcal{Y}$ and $C^k$ norm. That is, the map $V\mapsto\mathbf{Y}_V$ is indeed continuous in the $C^\infty$ space with its usual Fr\'{e}chet topology.

{\bf Step 2: A map from $\mathcal{V}$ to $\mathfrak{Is}(\overline{\Omega^\prime})$.}
For each $V\in\mathcal{V}$ denote by $\set{\Phi_V(t)}$ the family of diffeomorphisms generated by $\mathbf{Y}_V$. By Item (2) in Step 1 and Proposition \ref{ContPlusPushforwardProp}, the map 
$$
[0,\infty)\times \mathcal{V}\ni (t,V)\mapsto \delta^+(\Phi_{V}(t)_\#^+ V)[\mathbf{Y}_V]
$$
is continuous. Thus, for each $V\in\mathcal{V}\setminus\mathcal{V}_s$, there is a positive time $\sigma_V$ and a radius $\rho_V$ so that, for all $t \in [0,\sigma_V]$ and $V^\prime\in B^{\mathcal{D}}_{2\rho_V}(V)$,
$$
\delta^+(\Phi_{V^\prime} (t)_\#^+ V^\prime)[\mathbf{Y}_{V^\prime}]\leq \frac{1}{4} \delta^+ V[\mathbf{Y}_V]<0.
$$
Arguing as in the first step we can construct a continuous function $\sigma\colon \mathcal{V}\to [0,\infty)$ so that
\begin{enumerate}
\item[(a)] $\sigma=0$ on $\mathcal{V}_s$; 
\item[(b)] $\sigma>0$ on $\mathcal{V}\setminus\mathcal{V}_s$;
\item[(c)] $\max_{t\in [0,\sigma(V)]}\delta^+ (\Phi_V(t)_\#^+ V)[\mathbf{Y}_V]<0$ for every $V\in\mathcal{V}\setminus\mathcal{V}_s$.
\end{enumerate}

Define $b\colon\mathcal{V}\to [0,1]$ by
$$
b(V)=\min\set{\mathcal{D}(V, bX), 1}.
$$
Clearly, $b$ is continuous. Now redefine a new $\mathbf{Y}_V$ by multiplying the old one by $b(V)\sigma(V)$. This newly defined $\mathbf{Y}_V$ still continuously depends on $V$ and vanishes identically on $\mathcal{V}_s$, however, property (c) becomes 
\begin{enumerate}
\item[(c')] $\max_{t\in [0,1]} \delta^+ (\Phi_V(t)_\#^+ V)[\mathbf{Y}_V]<0$ for every $V\in\mathcal{V}\setminus(\mathcal{V}_s\cup bX)$.
\end{enumerate}
As $\mathbf{Y}_V$ is tangent to $\Gamma_-$ outside a compact set and $\Gamma_-$ is a self-expander, it is not hard to see that if $V\in\mathcal{V}\setminus(\mathcal{V}_s\cup bX)$ and $0<t\leq 1$, then 
$$
\Phi_V(t)_\#^+ V[\mathbf{1}]=V[\mathbf{1}]+\int_0^t \delta^+ (\Phi_V(s)_\#^+ V)[\mathbf{Y}_V] \, ds<V[\mathbf{1}].
$$

{\bf Step 3: Construction of the competitor and conclusion.}
It is convenient to identify $\Gamma$ with $V_\Gamma$. Take a minimizing sequence $\set{\set{(Z_\tau^\ell,\Upsilon_\tau^\ell)}_{\tau}}^\ell\subseteq X$ and consider families $\set{(W_\tau^\ell,\Xi_\tau^\ell)}_\tau$ given by 
$$
W_\tau^\ell=\Phi_{\Upsilon_\tau^\ell}(1,Z_\tau^\ell) \mbox{ and } \Xi_\tau^\ell=\Phi_{\Upsilon_\tau^\ell}(1,\Upsilon_\tau^\ell).
$$
As, by our construction, $\tau\mapsto\mathbf{Y}_{\Upsilon_\tau^\ell}$ is only guaranteed continuous, in general $\set{(W_\tau^\ell,\Xi_\tau^\ell)}_\tau$ may not be an element of $X$. Thus we need to regularize $\set{(W_\tau^\ell,\Xi_\tau^\ell)}_\tau$ in $\tau$. To achieve this, we use standard mollifier techniques to construct a smooth map $\tau\mapsto \mathbf{X}_{\Upsilon^\ell_\tau}\in \mathcal{Y}^-(\overline{\Omega^\prime})$ with the estimate
$$
\max_{\tau\in I^k}\Vert\mathbf{Y}_{\Upsilon^\ell_\tau}-\mathbf{X}_{\Upsilon^\ell_\tau}\Vert_{\mathcal{Y}} \leq \ell^{-1}.
$$
Notice that, unlike $\mathbf{Y}_{\Upsilon_\tau^\ell}$,  $\mathbf{X}_{\Upsilon_\tau^\ell}$ may not be in $\mathcal{Y}^-_0(\overline{\Omega^\prime})$ but instead lies in the bigger space $\mathcal{Y}^-(\overline{\Omega^\prime})$. Let $\set{\Psi_{\Upsilon_\tau^\ell}(t)}_{t\geq 0}$ be the family of diffeomorphisms generated by $\mathbf{X}_{\Upsilon^\ell_\tau}$. If $U_\tau^\ell=\Psi_{\Upsilon_\tau^\ell}(1,Z^\ell_\tau)$ and $\Sigma_\tau^\ell=\Psi_{\Upsilon_\tau^\ell}(1,\Upsilon_\tau^\ell)$, then the smoothness of $\mathbf{X}_{\Upsilon^\ell_\tau}$ in $\tau$ ensures that $\set{(U_\tau^\ell,\Sigma_\tau^\ell)}_\tau$ is indeed an element of $X$. We will show $\set{\set{(U_\tau^\ell,\Sigma_\tau^\ell)}_{\tau}}^\ell$ is a minimizing sequence in $X$ with the desired property.

By construction and Proposition \ref{ContPlusPushforwardProp} one has
\begin{equation} \label{DiffEqn}
\lim_{\ell\to \infty} \max_{\tau\in I^k} \mathcal{D}(\Sigma_\tau^\ell,\Xi_\tau^\ell)=0.
\end{equation}
Hence, Item (c') in Step 2 and \eqref{DiffEqn}, imply there is a $\epsilon_\ell\searrow 0$ so 
$$
E_{rel}[\Sigma_\tau^\ell]-\epsilon_\ell\leq E_{rel}[\Xi_\tau^\ell] \leq E_{rel}[\Upsilon_\tau^\ell].
$$
Thus, as $\set{\set{(Z_\tau^\ell,\Upsilon_\tau^\ell)}_\tau}^\ell$ is a minimizing sequence, so is $\set{\set{(U^\ell_\tau,\Sigma_\tau^\ell)}_\tau}^\ell$. Moreover, if $\set{(U_{\tau_\ell}^\ell,\Sigma_{\tau_\ell}^\ell)}_{\ell}$ is a min-max sequence, then so is $\set{(Z_{\tau_\ell}^\ell,\Upsilon_{\tau_\ell}^\ell)}_\ell$. We claim $\mathcal{D}(\Sigma_{\tau_\ell}^\ell,\mathcal{V}_s)\to 0$. Suppose not and there were a subsequence $\ell_i$ so that $\mathcal{D}(\Sigma_{\tau_{\ell_i}}^{\ell_i},\mathcal{V}_s)>\delta>0$. By the compactness of $\mathcal{V}$, up to passing to a further subsequence and relabelling, the $\Upsilon_{\tau_{\ell_i}}^{\ell_i}$ converges in the weak-* topology to some $V_0\in\mathcal{V}$. It is enough to show that $V_0\in\mathcal{V}_s$ and $\Sigma_{\tau_{\ell_i}}^{\ell_i}\to V_0$. This would give a contradiction.

By Lemma \ref{RegPushforwardLem} and Proposition \ref{ContPlusPushforwardProp}, $\Xi_{\tau_{\ell_i}}^{\ell_i}\to \Phi_{V_0}(1)_\#^+ V_0$ in the weak-* topology. As remarked before,  $\set{\Upsilon_{\tau_{\ell_i}}^{\ell_i}}$ is a min-max sequence while $E_{rel}[\Xi_{\tau_{\ell_i}}^{\ell_i}]\to m_{rel}(X)$. Thus, 
$$
\Phi_{V_0}(1)_\#^+ V_0[\mathbf{1}]=V_0[\mathbf{1}]=m_{rel}(X).
$$
As $m_{rel}(X)>bM_{rel}(X)$ one has $V_0\notin bX$. If $V_0\notin\mathcal{V}_s$, then by Item (c') 
$$
\Phi_{V_0}(1)^+_\# V_0[\mathbf{1}]=V_0[\mathbf{1}]+\int_0^1 \delta^+ (\Phi_{V_0}(t)_\#^+ V_0)[\mathbf{Y}_{V_0}] \, dt<V_0[\mathbf{1}].
$$
This is a contradiction, so $V_0\in\mathcal{V}_s$. By construction, $\mathbf{Y}_{V_0}=\mathbf{0}$ and $\Phi_{V_0}(1)_\#^+ V_0=V_0$. Hence, $\Xi_{\tau_{\ell_i}}^{\ell_i}\to V_0$ and hence, by \eqref{DiffEqn}, so does $\Sigma_{\tau_{\ell_i}}^{\ell_i}$. This completes the proof.
\end{proof}

\subsubsection{Almost minimizing}
We first observe that, with minor modifications, the proof from Sections 4 and 5 of \cite{DeLellisRamic} (see also \cite{ColdingDeLellis} and \cite{DeLellisTasnady}) that there is a min-max sequence produced by Proposition \ref{PullTightProp} that is almost-minimizing in appropriate annuli.

Let us first state what we mean by almost minimizing. 

\begin{defn}
Fix $\epsilon>0$, $W\subseteq \mathbb{R}^{n+1}$ an open subset -- not necessarily bounded -- and $k\in\mathbb{N}$. A boundary $\partial^* U$ for some $U\in\mathcal{C}(\Gamma_-^\prime,\Gamma_+^\prime)$ is \emph{$\epsilon$-almost minimizing in $W$} if there is {\bf no} one-parameter family $\set{\partial^* U_s}_{s\in [0,1]}$ satisfying the following properties:
\begin{enumerate}
\item All the properties of Definition \ref{ParamFamilyDef} hold for $\set{( U_s,\partial^* U_s,)}_{s\in [0,1]};$
\item $U_0=U$ and there is a bounded open subset $W^\prime$ with $\overline{W^\prime}\subset W$ so that $U_s\backslash W^\prime=U\backslash W^\prime$ for all $s\in [0,1]$;
\item $E_{rel}[\partial^* U_s, \Gamma_-]\leq E_{rel}[\partial^* U, \Gamma_-]+\frac{\epsilon}{2^{k+2}}$ for all $s\in [0,1]$;
\item $E_{rel}[\partial^* U_1, \Gamma_-]\leq E_{rel}[\partial^* U, \Gamma_-] -\epsilon$.
\end{enumerate}
A sequence $\set{\partial^* U^i}$ of hypersurfaces is called \emph{almost minimizing (or a.m.) in $W$} if each $\partial^* U^i$ is $\epsilon_i$-almost minimizing for some $\epsilon_i\to 0$.
\end{defn}

\begin{rem}
We emphasize the that the only difference with the definition in \cite{DeLellisRamic}, is the introduction of the fixed domain $W^\prime$ that is pre-compact in $W$. This is needed as we are dealing with a non-compact domain, but does not affect any of the arguments.
\end{rem}

Let 
$$
\mathcal{AN}_{\rho}(p)=\set{An(p,r_1,r_2)=B_{r_2}(p)\setminus\bar{B}_{r_1}(p)\colon 0<r_1<r_2<\rho}
$$
be the set of all (open) annuli centered at $p$ with outer radius less than $\rho$. We will show the following:

\begin{prop}\label{AMProp}
There is a function $\varrho\colon \overline{\Omega^\prime}\to (0,\infty)$, an element $V_0\in\overline{\mathfrak{Y}^*_\mathcal{C}}(\overline{\Omega^\prime};\Lambda_0)$ that is $E_{rel}$-minimizing to first order in $\overline{\Omega^\prime}$, and a min-max sequence $\set{(U^{\ell}_{\tau_\ell},\Sigma^\ell_{\tau_\ell})}_\ell$ so that
\begin{enumerate}
\item $\set{(U^\ell_{\tau_\ell},\Sigma^\ell_{\tau_\ell})}_\ell$ is a.m. in every $An\in\mathcal{AN}_{\varrho(p)}(p)$ for all $p\in \overline{\Omega^\prime}$;
\item $V_{\Sigma^\ell_{\tau_\ell}}$ converges in the metric $\mathcal{D}$ to $V_0$ as $\ell\to\infty$.
\end{enumerate}
\end{prop}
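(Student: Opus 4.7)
The plan is to adapt the combinatorial almost minimizing argument of Pitts, in the simplified form developed by De Lellis--Tasnady \cite{DeLellisTasnady} and De Lellis--Ramic \cite{DeLellisRamic}, to the relative entropy setting. The starting point is the pulled-tight minimizing sequence $\set{\set{(U_\tau^\ell, \Sigma_\tau^\ell)}_\tau}^\ell$ produced by Proposition \ref{PullTightProp}. By the compactness of $\mathcal{V}=\overline{\mathfrak{Y}_\cC^*}(\overline{\Omega^\prime}; \Lambda_0)$ in the weak-$*$ topology (metrized by $\mathcal{D}$), it suffices to produce a min-max subsequence that is a.m.\ in every annulus of a suitable collection; the limit $V_0$ is then obtained, after passing to a further subsequence, and lies in $\mathcal{V}_s$ thanks to Proposition \ref{PullTightProp}.

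First I would fix the scale function $\varrho \colon \overline{\Omega^\prime} \to (0,\infty)$. A natural choice is to take $\varrho(p)$ strictly smaller than $\tfrac{1}{4}\min\set{1,\dist(p, \mathbb{R}^{n+1}\setminus\Omega^{\prime\prime})}$, so that the balls $B_{2\varrho(p)}(p)$ sit well inside $\Omega^{\prime\prime}$ and any local variation supported in such a ball stays admissible: diffeomorphisms supported in $B_{2\varrho(p)}(p)\cap \overline{\Omega^\prime}$ preserve $\mathcal{C}(\Gamma_-^\prime,\Gamma_+^\prime)$, and the associated vector fields belong to $\mathcal{Y}_c(\overline{\Omega^\prime})\subset \mathcal{Y}_0^-(\overline{\Omega^\prime})$, so a local $E_{rel}$-decrease gives a $\delta^+ V<0$ variation of the kind used in Proposition \ref{PullTightProp}.

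The heart of the proof is a contradiction argument. Assume that for every subsequence of the tightened minimizing family, and every choice of min-max slices, some slice fails to be a.m.\ in some $An \in \mathcal{AN}_{\varrho(p)}(p)$ at some $p$. By the standard combinatorial mechanism (Pitts' scheme using a finite collection $\set{An_j}_{j=1}^M$ of pairwise disjoint annuli centered at finitely many points, cf.\ \cite[Sections 4--5]{DeLellisRamic}), one associates to each $\tau$ a finite family of such disjoint annuli in which the slice $\Sigma_\tau^\ell$ admits a one-parameter deformation strictly decreasing $E_{rel}$ by a definite $\epsilon>0$, with all intermediate values exceeding $E_{rel}[\Sigma_\tau^\ell,\Gamma_-]$ by at most $\epsilon/2^{k+2}$. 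Since the annuli are disjoint, the local deformations commute; gluing them and interpolating continuously in $\tau$, using a smoothing in the parameter analogous to Step 3 of Proposition \ref{PullTightProp}, produces a competitor family $\set{(\tilde U_\tau^\ell, \tilde\Sigma_\tau^\ell)}_\tau$ homotopic to $\set{(U_\tau^\ell, \Sigma_\tau^\ell)}_\tau$ with
\[
\max_{\tau\in I^k} E_{rel}[\tilde \Sigma_\tau^\ell, \Gamma_-] \leq \max_{\tau\in I^k} E_{rel}[\Sigma_\tau^\ell, \Gamma_-] - \tfrac{\epsilon}{2}
\]
for $\ell$ large. Near $\partial I^k$ no deformation is needed since the strict gap hypothesis $m_{rel}(X)>bM_{rel}(X)$ ensures those slices are already strictly below $m_{rel}(X)$, so the new family still lies in $X$. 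Taking $\ell\to\infty$ contradicts the definition of $m_{rel}(X)$. Hence the desired a.m.\ min-max subsequence exists, and a final weak-$*$ compactness extraction yields $V_0\in\mathcal{V}_s$ with $V_{\Sigma^\ell_{\tau_\ell}}\to V_0$ in $\mathcal{D}$.

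The main obstacle is the \emph{interpolation step}, namely producing a continuous-in-$\tau$ replacement family while keeping each slice in $\mathcal{C}(\Gamma_-^\prime,\Gamma_+^\prime)$ and satisfying the regularity requirements of Definition \ref{ParamFamilyDef}. In the compact setting of \cite{DeLellisTasnady,DeLellisRamic} this is handled by smoothly patching local isotopies and their associated competitor foliations, and the same mechanism applies here because all deformations are supported in bounded subsets $B_{2\varrho(p)}(p) \subset \Omega^{\prime\prime}$ and so modify only the local varifold portion; continuity in the weak-$*$ topology of $\mathfrak{Y}^*(\overline{\Omega^\prime})$ follows from the $L^1_{loc}$ continuity of $\mathbf{1}_{\tilde U_\tau^\ell}$, the $C^\infty_{loc}$ convergence of $\mathrm{cl}(\tilde\Sigma_\tau^\ell)$ away from its finite singular set, and Proposition \ref{YEstProp} to uniformly control the tails where the family agrees with $\set{(U_\tau^\ell,\Sigma_\tau^\ell)}_\tau$.
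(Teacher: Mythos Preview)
Your outline conflates two distinct steps and, in doing so, mishandles the function $\varrho$. You propose to fix $\varrho$ in advance as a geometric quantity (a fraction of the distance to $\partial\Omega''$). This cannot work: in the Pitts scheme, and in the paper's argument, $\varrho$ is \emph{constructed} during the proof and depends on the particular min-max subsequence extracted. The combinatorial/gluing argument you describe only yields a sequence that is $\tfrac{1}{\ell}$-a.m.\ in \emph{at least one} member of any admissible tuple $(W^1,\dots,W^d)\in\mathcal{CO}_d$ --- this is Proposition~\ref{CombinatorialProp}, not Proposition~\ref{AMProp}. Your contradiction hypothesis is the negation of the \emph{final} annulus statement, but the deformation-gluing step you sketch contradicts only the weaker tuple statement; the passage between them is missing.

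That passage is the actual content of the paper's proof. One applies the tuple conclusion to a nested family at each point $p$: the unbounded complement $\mathbb{R}^{n+1}\setminus\bar{B}_{r_1}(p)$, then concentric annuli, then a small ball $B_{r_d}(p)$. A dichotomy at each scale (either a.m.\ holds uniformly in the inner pieces, or there is a subsequence a.m.\ in the outer piece around some accumulating point) is iterated, with a diagonal subsequence extraction; $\varrho(p)$ is \emph{defined} according to which alternative occurs and where the bad points accumulate. In this noncompact setting there is also a new case to handle: the centers $p^{\ell}$ may escape to infinity, in which case the subsequence is a.m.\ in every bounded set and one may take $\varrho\equiv 1$. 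None of this structure appears in your sketch, and the a~priori choice of $\varrho$ forecloses it.
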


We observe that De Lellis-Ramic's version Almgren-Pitts combinatorial Lemma can be adapted to our setting. As remarked before, it is convenient to allow unbounded sets. This has almost no effect on the proof.  

\begin{defn}
Let $d\in\mathbb{N}$ and $W^1,\dots, W^d$ be open sets in $\mathbb{R}^{n+1}$. A boundary $\partial^* U$ for some $U\in\mathcal{C}(\Gamma_-^\prime,\Gamma_+^\prime)$ is said to be \emph{$\epsilon$-almost minimizing in $(W^1,\dots,W^d)$} if it is $\epsilon$-a.m. in at least one of the open sets $W^1,\dots,W^d$.  

Denote by $\mathcal{CO}_d$ the set of $d$-tuples $(W^1,\dots,W^d)$ where $W^1,\dots,W^d$ are open subsets of $\mathbb{R}^{n+1}$ with the property that, for all $i,j\in\set{1,\dots,d}$ and $i\neq j$,
$$
\mathrm{dist}(W^i,W^j) \geq 4 \min\set{\mathrm{diam}(W^i),\mathrm{diam}(W^j)}.
$$
Here
$$
\mathrm{dist}(W^i,W^j)=\inf_{p\in W^i, q\in W^j} |\mathbf{x}(p)-\mathbf{x}(q)|.
$$
\end{defn}

Observe that we do not require that the open sets $W^1,\dots,W^d$ are bounded, however, for a tuple $(W^1,\dots,W^d)$ to lie in $\mathcal{CO}_d$ all but one must be bounded.

We now state the Almgren-Pitts combinatorial lemma. The proof is identical to that in \cite[Proposition 4.8]{DeLellisRamic}.

\begin{prop}\label{CombinatorialProp}
Let $X$ be a homotopically closed set of families in $\overline{\Omega^\prime}$ parametrized by $I^k$ with $m_{rel}(X)>bM_{rel}(X)$. There is a $d\in\mathbb{N}$, an element $V_0\in\overline{\mathfrak{Y}^*_\cC}(\overline{\Omega^\prime};\Lambda_0)$ that is $E_{rel}$-minimizing to first order in $\overline{\Omega^\prime}$, and a min-max sequence $\set{(U^\ell_{\tau_\ell},\Sigma_{\tau_\ell}^\ell)}_\ell$ such that
\begin{enumerate}
\item $V_{\Sigma_{\tau_\ell}^\ell}$ converges in the metric $\mathcal{D}$ to $V_0$ as $\ell\to\infty$;
\item For any $(W^1,\dots,W^d)\in \mathcal{CO}_d$, $\Sigma_{\tau_\ell}^\ell$ is $\frac{1}{\ell}$-a.m. in $(W^1,\dots, W^d)$ for $\ell$ large enough.
\end{enumerate}
\end{prop}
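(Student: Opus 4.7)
The plan is to follow the Almgren--Pitts combinatorial scheme as adapted by De Lellis--Ramic, starting from the minimizing sequence $\{\{(U^\ell_\tau,\Sigma^\ell_\tau)\}_\tau\}^\ell$ produced by Proposition \ref{PullTightProp}. First I would extract a min-max subsequence $\{(U^\ell_{\tau_\ell},\Sigma^\ell_{\tau_\ell})\}_\ell$ whose varifolds $V_{\Sigma^\ell_{\tau_\ell}}$ converge in $\mathcal{D}$ to some $V_0\in \mathcal{V}_s$; this is available by compactness of $\mathcal{V}$ and the conclusion $\mathcal{D}(V_{\Sigma^\ell_{\tau_\ell}},\mathcal{V}_s)\to 0$ of Proposition \ref{PullTightProp}, and it already provides Item (1). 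The content is then to refine the choice of the min-max sequence so that Item (2) holds for an appropriate integer $d$.

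I would argue by contradiction: fix $d$ to be chosen and suppose that for every $N$ there exist $\ell\geq N$ and a tuple $(W^1_\ell,\ldots,W^d_\ell)\in\mathcal{CO}_d$ such that $\Sigma^\ell_{\tau_\ell}$ is not $\tfrac{1}{\ell}$-a.m.\ in any $W^j_\ell$. By definition, for each $j$ there is a one-parameter family of competitors $\{\Xi^{\ell,j}_s\}_{s\in[0,1]}$ compactly supported in some bounded $W^{\prime,j}_\ell\subset\subset W^j_\ell$, with
\[
E_{rel}[\Xi^{\ell,j}_s,\Gamma_-]\leq E_{rel}[\Sigma^\ell_{\tau_\ell},\Gamma_-]+\tfrac{1}{\ell\,2^{k+2}},\qquad E_{rel}[\Xi^{\ell,j}_1,\Gamma_-]\leq E_{rel}[\Sigma^\ell_{\tau_\ell},\Gamma_-]-\tfrac{1}{\ell}.
\]
The separation condition $\mathrm{dist}(W^i_\ell,W^j_\ell)\geq 4\min\{\mathrm{diam}\,W^i_\ell,\mathrm{diam}\,W^j_\ell\}$ forces the supports $W^{\prime,j}_\ell$ to be pairwise disjoint, so the deformations can be performed \emph{simultaneously} and concatenated into a single family $\{\hat\Xi^\ell_s\}_{s\in[0,d]}$ that accumulates at most $d\cdot 2^{-k-2}\ell^{-1}$ above the starting value along the way and ends at a configuration with relative entropy at most $E_{rel}[\Sigma^\ell_{\tau_\ell},\Gamma_-]-d\ell^{-1}$.

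Next I would run the standard insertion step: using the continuity of $\tau\mapsto V_{\Sigma^\ell_\tau}$ from Item (3) of Definition \ref{ParamFamilyDef} together with the strict gap $m_{rel}(X)>bM_{rel}(X)$, splice the family $\{\hat\Xi^\ell_s\}$ into $\{\Sigma^\ell_\tau\}$ on a small parameter interval around $\tau_\ell$ (disjoint from $\partial I^k$), producing a new competitor family $\{\hat\Sigma^\ell_\tau\}\in X$ in the same homotopy class. The intermediate-slice estimate controls the max increase by $d\cdot 2^{-k-2}\ell^{-1}$, while the endpoint replacement decreases the value at $\tau_\ell$ by $d\ell^{-1}$; choosing $d> 2^{k+3}$ yields a net strict decrease of $\max_\tau E_{rel}[\hat\Sigma^\ell_\tau]$ over $\max_\tau E_{rel}[\Sigma^\ell_\tau]$ of size $\sim d\ell^{-1}$, and performing this modification for all sufficiently large $\ell$ produces a minimizing sequence whose limiting top-max is strictly less than $m_{rel}(X)$, the desired contradiction. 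This fixes $d=d(k)$, and after extracting the corresponding subsequence both items hold with the same $V_0$.

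The main obstacle relative to the compact setting of De Lellis--Ramic is the bookkeeping that must accommodate an unbounded $W^j$ in a tuple from $\mathcal{CO}_d$. The spacing condition forces at most one $W^j$ to be unbounded, and the definition of $\epsilon$-a.m.\ insists that the competitor agree with the original outside a \emph{bounded} $W^{\prime,j}\subset\subset W^j$; so every deformation is compactly supported, the asymptotic cone condition $\mathcal{C}(\hat\Sigma^\ell_\tau)=\mathcal{C}$ is preserved, the relative entropy is well-defined throughout by Proposition \ref{RelEntropyProp}, and the gluing and energy-additivity estimates go through exactly as in the bounded case. A secondary technical point is checking that the spliced family satisfies all five conditions of Definition \ref{ParamFamilyDef}, in particular the weak-$*$ continuity of $\tau\mapsto V_{\hat\Sigma^\ell_\tau}$ at the endpoints of the splicing interval; this follows from Proposition \ref{YEstProp} and the $L^1_{loc}$ continuity of the underlying Caccioppoli sets, both of which are stable under the local cut-and-paste construction used to build $\{\hat\Xi^\ell_s\}$.
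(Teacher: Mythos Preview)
Your outline has a genuine gap at the core combinatorial step. You fix a single min-max slice $\tau_\ell$, assume it fails the almost-minimizing condition in some tuple, and then splice the resulting deformation into the family on a small interval around $\tau_\ell$. But lowering $E_{rel}$ at $\tau_\ell$ alone does not lower $\max_{\tau} E_{rel}[\Sigma^\ell_\tau,\Gamma_-]$: the maximum is typically attained on a set of parameters of positive measure (or at least at many nearby $\tau$), and your splice does nothing at those other near-maximal slices. So the modified family need not have strictly smaller top value, and no contradiction with $m_{rel}(X)$ follows. Relatedly, your contradiction hypothesis is too weak: you only assume the \emph{chosen} sequence $\{\tau_\ell\}$ fails to be a.m., whereas the proposition asks you to \emph{produce} some min-max sequence with the property; the correct negation is that \emph{every} near-maximal slice fails, and that is what drives the argument.

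The actual De Lellis--Ramic proof (to which the paper defers) is substantially more elaborate. One discretizes $I^k$ into small cubes and, on each cube whose slices are near-maximal, uses the failure of $\tfrac{1}{\ell}$-a.m.\ to obtain deformations supported in each of the $d$ sets of some tuple in $\mathcal{CO}_d$. The integer $d$ is chosen large (depending on $k$, and much larger than $2^{k+3}$) precisely so that a combinatorial selection lemma allows one to pick, for each cube, one of the $d$ sets in such a way that adjacent cubes receive pairwise well-separated sets; this separation is what makes the deformations on neighboring cubes compatible and permits a continuous interpolation across the whole family. Only then does one obtain a competitor in $X$ whose max is strictly below $m_{rel}(X)$. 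Your energy bookkeeping (balancing $d\cdot 2^{-k-2}\ell^{-1}$ against $d\ell^{-1}$) and the resulting choice $d>2^{k+3}$ reflect the single-slice picture and are not what determines $d$. Your remarks about the unbounded-$W^j$ issue and the compact support of competitors are correct and are exactly the modifications the paper flags, but they sit on top of the combinatorial machinery rather than replacing it.
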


We can now modify the arguments in \cite[Proposition 4.3]{DeLellisRamic} to prove Proposition \ref{AMProp}.

\begin{proof}[Proof of Proposition \ref{AMProp}]
Let $d\in\mathbb{N}$ and $\set{\Sigma_{\tau_\ell}^\ell}_{\ell}$ be the number and min-max sequence given by Proposition \ref{CombinatorialProp}. Write $\Sigma^\ell=\Sigma_{\tau_\ell}^\ell$. We will show that a subsequence of $\set{\Sigma^\ell}$ satisfies the desired properties. For any $r_1>0$ and $r_2,\dots,r_d$ with $0<r_{i}<\frac{1}{9} r_{i-1}$, set $r_i^\prime=\frac{1}{9} r_{i-1}$ and consider the tuple $(W^1_{r_1}(p),\dots,W^d_{r_d}(p))$ given by
\begin{align*}
W^1_{r_1}(p)  & =\mathbb{R}^{n+1}\setminus\bar{B}_{r_1}(p); \\
W^i_{r_i}(p) & =B_{r_i^\prime}(p)\setminus\bar{B}_{r_i}(p) \mbox{ for $2\leq i\leq d-1$}; \\
W^d_{r_d}(p) & =B_{r_d} (p).
\end{align*}
By definition $(W^1_{r_1}(p),\dots,W^d_{r_d}(p))\in\mathcal{CO}_d$ and so $\Sigma^\ell$ is $\frac{1}{\ell}$-a.m. in at least one $W^i_{r_i}(p)$. For any $r_1>0$ fixed, one of the two situations occurs:
\begin{enumerate}
\item $\Sigma^\ell$ is $\frac{1}{\ell}$-a.m. in $(W^2_{r_2}(q),\dots, W^d_{r_d}(q))$ for every $q\in\overline{\Omega^\prime}$ and every choice of $r_2,\dots,r_d$ and for $\ell$ large;
\item For every $K\in\mathbb{N}$, there is some $\ell_K\geq K$ and a point $p_{r_1}^{\ell_K}\in\overline{\Omega^\prime}$ so that $\Sigma^{\ell_K}$ is $\frac{1}{\ell_K}$-a.m. in $\mathbb{R}^{n+1}\setminus\bar{B}_{r_1}(p_{r_1}^{\ell_K})$.
\end{enumerate}

First assume there is no $r_1>0$ so that Case (1) holds. Thus, by choosing Case (2) with $r_1=1/j$ and $K=j$ for every $j\in\mathbb{N}$, we obtain a subsequence $\set{\Sigma^{\ell_j}}_{j}$ and a sequence of points $\set{p_{j}^{\ell_j}}_{j}$ in $\overline{\Omega^\prime}$ so that $\Sigma^{\ell_j}$ is $\frac{1}{\ell_j}$-a.m. in $\mathbb{R}^{n+1}\setminus \bar{B}_{\frac{1}{j}}(p_{j}^{\ell_j})$. If $p_j^{\ell_j}$ is unbounded, then $\Sigma^{\ell_j}$ is $\frac{1}{\ell_j}$-a.m. in any bounded open subset of $\Real^{n+1}$ once $j$ is large. In particular, this shows the claim with $\varrho\equiv 1$. Otherwise, up to passing to a subsequence, $p_{j}^{\ell_j}$ converges to a point $p_0\in\overline{\Omega^\prime}$. It then follows that, for every $N\in\mathbb{N}$, $\Sigma^{\ell_j}$ is $\frac{1}{\ell_j}$-a.m. in $\mathbb{R}^{n+1}\setminus\bar{B}_{\frac{1}{N}}$ for $j$ large. Thus, if $q\in\overline{\Omega^\prime}\setminus\set{p_0}$, then we can choose $\varrho(q)$ so that $\bar{B}_{\varrho(q)}(q)\subset\mathbb{R}^{n+1}\setminus\set{p_0}$ whereas $\varrho(p_0)$ can be chosen arbitrarily. Hence, $\set{\Sigma^{\ell_j}}_j$ is a.m. in any annulus of $\mathcal{AN}_{\varrho(q)}(q)$ for any $q\in\overline{\Omega^\prime}$.

Now assume there is some $r_1>0$ so that Case (1) holds. Fix any $R>1$ and, as $\overline{\Omega^\prime}\cap\bar{B}_R$ is compact, we can divide $\overline{\Omega^\prime}\cap\bar{B}_R$ into finitely many closed subsets $\Omega_1,\dots,\Omega_M$ so that $\mathrm{diam}(\Omega_i)<r_2^\prime$ for all $i$. Similar to the reasoning above, for each $\Omega_i$, starting with $\Omega_1$, consider the two mutually exclusive cases:
\begin{itemize}
\item[(a)] There is some fixed $r_{2,i}>0$ so that $\set{\Sigma^\ell}$ must be $\frac{1}{\ell}$-a.m. in $(W^3_{r_3}(q),\dots,W^d_{r_d}(q))$ for every $q\in\Omega_i$ and every $r_3,\dots,r_d$ with $r_3<\frac{1}{9} r_{2,i}$ and $r_j<\frac{1}{9} r_{j-1}$ and for $\ell$ large;
\item[(b)] There is a subsequence $\set{\Sigma^\ell}$ (not relabeled) and a sequence of points $\set{p_{i,\ell}}$ in $\Omega_i$ so that $\Sigma^\ell$ is $\frac{1}{\ell}$-a.m. in $B_{r_2^\prime}(p_{i,\ell})\setminus\bar{B}_{\frac{1}{\ell}}(p_{i,\ell})$.
\end{itemize}
If Case (b) holds, then $p_{i,\ell}\to p_i\in\Omega_i$ and we can choose $\varrho(p_i)\in (\mathrm{diam}(\Omega_i),r_2^\prime)$. For any other $q\in\Omega_i$, we can choose $\varrho(q)$ so that $\bar{B}_{\varrho(q)}(q)\subset B_{\varrho(p_i)}(p_i)\setminus \set{p_i}$. We then proceed onto $\Omega_{i+1}$, where either Case (a) is chosen, or a further subsequence is extracted and define further values of the function $\varrho$. For the subsets $\Omega_{i_1},\dots,\Omega_{i_l}$ where Case (a) holds, we define $r_2=\min\set{r_{2,i_1},\dots,r_{2,i_l}}$ and then continue iteratively by first subdividing the sets and consider the relevant cases. Note that if in the last instance of the iteration Case (a) holds, it follows that $\Sigma^\ell$ is $\frac{1}{\ell}$-a.m. in $B_{r_d}(q)$ for some $r_d>0$ and all $q$, hence we can choose $\varrho(q)=r_d$. Finally, the result follows from a standard diagonal argument.
\end{proof}

By Lemma \ref{RelEntropyBndLem} and Proposition \ref{StationaryProp}, $V_0$ has a decomposition $V_0=e^{\frac{|\mathbf{x}|^2}{4}}V_+-V_{\Gamma_-}^E$ where $V_+$ is a varifold supported in $\tilde{\Omega}\subset\Omega^\prime$. As the argument in \cite{DeLellisTasnady} is local, one can use it to obtain the following interior regularity of $V_+$ as a consequence of Proposition \ref{AMProp}.

\begin{prop} \label{InteriorRegProp}
Let $V_0$ be given by Proposition \ref{AMProp}. Then $V_0=e^{\frac{|\mathbf{x}|^2}{4}} V_+- V_{\Gamma_-}^E$ where $V_+$ is an integer multiplicity $E$-stationary varifold with codimension-$7$ singular set.
\end{prop}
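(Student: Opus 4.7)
The proposal is to reduce the statement to a purely local regularity question inside $\Omega^\prime$ and then invoke the interior regularity theorem of De Lellis--Tasnady \cite{DeLellisTasnady} (as refined in \cite{DeLellisRamic}). By Proposition \ref{StationaryProp}, $\spt(V_+)\subseteq\tilde{\Omega}$, and by construction $\tilde{\Omega}$ is contained in the interior of $\Omega^\prime$; moreover, $V_+$ is $E$-stationary in $\Omega^\prime$. So it suffices to prove, locally around each $p_0\in\tilde{\Omega}$, that $V_+$ is an integer rectifiable stationary varifold with codimension-$7$ singular set, where stationarity is taken with respect to area in the weighted (expander) metric $g^E=e^{|\mathbf{x}|^2/(2n)}\delta$.

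The first step is to translate the $E_{rel}$-a.m.\ property of Proposition \ref{AMProp} into a local almost-minimizing property for the $E$-functional. For any $p_0\in\tilde{\Omega}$ there is a radius $r_0<\varrho(p_0)$ with $\bar{B}_{r_0}(p_0)\subset\Omega^\prime$ and $\bar{B}_{r_0}(p_0)\cap\Gamma_-=\emptyset$. Then for any deformation $\set{U_s}$ of $U^\ell_{\tau_\ell}$ supported in a bounded $W^\prime\Subset An\in\mathcal{AN}_{\varrho(p_0)}(p_0)$, one has $\partial^*U_s\setminus W^\prime=\partial^*U^\ell_{\tau_\ell}\setminus W^\prime$ and $W^\prime\cap\Gamma_-=\emptyset$, so
\begin{equation*}
E_{rel}[\partial^*U_s,\Gamma_-]-E_{rel}[\partial^*U^\ell_{\tau_\ell},\Gamma_-]=\int_{\partial^*U_s\cap W^\prime}e^{|\mathbf{x}|^2/4}\,d\mathcal{H}^n-\int_{\partial^*U^\ell_{\tau_\ell}\cap W^\prime}e^{|\mathbf{x}|^2/4}\,d\mathcal{H}^n,
\end{equation*}
i.e.\ the $E_{rel}$-difference coincides with the difference of $E$-functionals localized in $W^\prime$. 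Hence the almost-minimizing condition on $\Sigma^\ell_{\tau_\ell}$ in $An$ is equivalent to the classical (De Lellis--Tasnady) almost-minimizing condition for the $E$-functional in $An$, with the same $\epsilon_\ell\to 0$.

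The second step applies the De Lellis--Tasnady interior regularity theorem. Since the weighted density $e^{|\mathbf{x}|^2/4}$ is smooth and strictly positive on any fixed compact subset of $\Omega^\prime$, on each such subset it can be incorporated into the Riemannian background $g^E$, and all the estimates of \cite[Sections 6--7]{DeLellisTasnady} (rectifiability, upper semicontinuity of density, replacement property, tangent cone analysis, Allard-type regularity) carry through verbatim. The hypotheses we need are exactly those furnished by Proposition \ref{AMProp}: the sequence is a.m.\ in annuli $\mathcal{AN}_{\varrho(p)}(p)$ at every $p\in\overline{\Omega^\prime}$, and it converges in the sense of varifolds to $V_+$, which is $E$-stationary in $\Omega^\prime$ by Item~(3) of Proposition \ref{StationaryProp}. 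This yields that $V_+$ is integer rectifiable and has singular set of Hausdorff dimension at most $n-7$, uniformly on compact subsets of $\Omega^\prime$. Since $\spt(V_+)\subset\tilde{\Omega}$ is relatively compact away from $\partial\Omega^\prime$ on any bounded region, we obtain codimension-$7$ singular set globally on $\Omega^\prime$.

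The main obstacle, and the only real work, is the rectifiability/integrality step, namely verifying that the replacement construction of \cite[Section 6]{DeLellisTasnady} applies in the weighted setting. The one subtlety is that a replacement produced by minimizing the $E$-functional inside an annulus is automatically a smooth $E$-minimal hypersurface (hence smooth in the weighted metric) by Federer--Fleming regularity for area-minimizers in smooth Riemannian manifolds. Because the weight $e^{|\mathbf{x}|^2/4}$ has no critical features on any bounded subset of $\Omega^\prime$, this yields pointwise control on the density ratios and hence integrality of the tangent cones; the rest of the De Lellis--Tasnady dimension reduction and Allard regularity apply without change. Once integrality and stationarity are established on $\Omega^\prime$, the conclusion of the proposition follows.
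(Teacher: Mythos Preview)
Your proposal is correct and matches the paper's approach: the paper itself gives no detailed proof, simply noting that ``the argument in \cite{DeLellisTasnady} is local'' and so applies to $V_+$ once one knows $\spt(V_+)\subset\tilde{\Omega}\subset\Omega^\prime$. One minor correction: your claim that for every $p_0\in\tilde{\Omega}$ there is $r_0$ with $\bar{B}_{r_0}(p_0)\cap\Gamma_-=\emptyset$ fails when $p_0\in\Gamma_-\subset\tilde{\Omega}$, but this hypothesis is unnecessary --- the $\Gamma_-$-integrals cancel in the difference $E_{rel}[\partial^*U_s,\Gamma_-]-E_{rel}[\partial^*U^\ell_{\tau_\ell},\Gamma_-]$ regardless of whether $W^\prime$ meets $\Gamma_-$, so the translation to the local $E$-a.m.\ condition holds at all points of $\tilde{\Omega}$.
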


\subsubsection{Proof of Proposition \ref{MinMaxThm}}
Appealing to Propositions \ref{PullTightProp}, \ref{AMProp} and \ref{InteriorRegProp} one obtains a min-max sequence $\set{(U^\ell,\Sigma^\ell)}=\set{(U^\ell_{\tau_\ell},\Sigma^\ell_{\tau_\ell})}$ and a pair $(U_0,V_0)$ for $(U_0,V_0)\in\mathcal{C}(\Gamma_-,\Gamma_+)\times\overline{\mathfrak{Y}^*_\cC}(\overline{\Omega^\prime};\Lambda_0)$ so that 
\begin{itemize}
\item $V_0=e^{\frac{|\mathbf{x}|^2}{4}}V_+-V_{\Gamma_-}^E$ for some integer multiplicity $E$-stationary varifold $V_+$  with codimension-$7$ singular set;
\item $(U^\ell,V_{\Sigma^\ell})$ converges in the sense of Corollary \ref{PairConvergeCor} to $(U_0,V_{0})$. 
\end{itemize}
Let $\Gamma_0$ be the regular part of (the support of) $V_+$. By the last item of Proposition \ref{StationaryProp}, $\Gamma_0$ is $C^2$-asymptotic to $\cC(\Gamma_-)$ and $V_+=\mathcal{H}^n\lfloor \Gamma_0$ in $\mathbb{R}^{n+1}\setminus\bar{B}_R$ for some $R>0$. As the support of $V_+$ has no compact components, the constancy theorem implies that $V_+$ has multiplicity one and so, by the first item of Proposition \ref{StationaryProp}, 
$$
E_{rel}[\Gamma_0,\Gamma_-]=V_0[\mathbf{1}]=\lim_{\ell\to\infty} V_{\Sigma^\ell_{\tau_\ell}}[\mathbf{1}]=\lim_{\ell\to\infty} E_{rel}[V_{\Sigma^\ell_{\tau_\ell}}]=m_{rel}(X).
$$
It remains only to show $\overline{\partial^* U_0}=\overline{\Gamma_0}$. By the nature of convergence, $\overline{\partial^* U_0}\subseteq\overline{\Gamma_0}$ and $\mathbf{1}_{U_0}$ is constant on each component of $\mathbb{R}^{n+1}\setminus\overline{\Gamma_0}$. As $\Sigma^\ell=\partial^* U^\ell$ converges as varifolds with multiplicity one to $\Gamma_0$ the claim follows immediately.

\section{Lower bound on the relative expander entropy of sweepouts} \label{LowerBndSec}
Continue to use the conventions of Sections \ref{ConventionSec} and \ref{FoliationThickenSec}. Using a calibration argument together with a certain observation about $E$-minimizers we show that any sweepout of $\tilde{\Omega}$ must pass through a hypersurface of uniformly larger relative entropy. We refer interested readers to \cite[Section 11]{DeLellisRamic} and \cite{MorganRos} for alternative approaches that are built on work of White \cite{WhiteMinMax}. 

This section is devoted to proving the following:

\begin{prop}\label{StrictLowBndRelEntProp} 
There is a constant $\delta_0=\delta_0(\Gamma_-,\Gamma_+)>0$ so that if $\set{(U_\tau,\Sigma_\tau)}_{\tau\in [0,1]}$ is a sweepout of $\tilde{\Omega}$, then
$$
\max_{\tau\in[0,1]} E_{rel}[\Sigma_\tau, \Gamma_-] \geq  \max \set{E_{rel}[\Gamma_-, \Gamma_-], E_{rel}[\Gamma_+, \Gamma_-]}+\delta_0\geq \delta_0.
$$
\end{prop}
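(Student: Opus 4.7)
The plan is to combine calibration arguments using the expander mean convex foliations near $\Gamma_\pm$ with a topological/intermediate-value argument on the sweepout. First, I would apply Lemma~\ref{MeanConvexLem} to each of $\Gamma_-$ and $\Gamma_+$ to obtain foliations $\{\mathcal{L}^\pm_s\}_{s\in[0,\epsilon_0]}$ of one-sided neighborhoods of $\Gamma_\pm$ inside $\tilde\Omega$ by hypersurfaces $C^2$-asymptotic to $\mathcal{C}$ with expander mean curvature vector pointing toward $\Gamma_\pm$. Shrinking $\epsilon_0$ if needed, the inner leaves $\tilde\Gamma_\pm:=\mathcal{L}^\pm_{\epsilon_0/2}$ satisfy $\Gamma_-\preceq\tilde\Gamma_-\preceq\tilde\Gamma_+\preceq\Gamma_+$. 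Let $\mathbf{N}^\pm$ denote the unit normal fields along these foliations, oriented to point into $\tilde\Omega$. A direct computation gives that $\Div\mathbf{N}^\pm+\tfrac{\mathbf{x}}{2}\cdot\mathbf{N}^\pm$ is strictly positive on each foliated slab $\{s>0\}$, so applying the divergence theorem weighted by $e^{|\mathbf{x}|^2/4}$ to the slabs from $\Gamma_\pm$ to $\tilde\Gamma_\pm$ yields the identities
\[
E_{rel}[\tilde\Gamma_-,\Gamma_-]=\delta_0^->0,\qquad E_{rel}[\tilde\Gamma_+,\Gamma_-]=E_{rel}[\Gamma_+,\Gamma_-]+\delta_0^+,
\]
where $\delta_0^\pm>0$ are the positive integrals of the weighted expander mean curvatures over the respective slabs.

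The key geometric content is the pair of one-sided minimality inequalities: for any $U\in\mathcal{C}(\Gamma_-,\Gamma_+)$,
\begin{align*}
\Omega_-(\tilde\Gamma_-)\subseteq U &\;\Longrightarrow\; E_{rel}[\partial^*U,\Gamma_-]\geq\delta_0^-,\\
U\subseteq\Omega_-(\tilde\Gamma_+) &\;\Longrightarrow\; E_{rel}[\partial^*U,\Gamma_-]\geq E_{rel}[\Gamma_+,\Gamma_-]+\delta_0^+.
\end{align*}
These express that $\tilde\Gamma_\pm$, being strictly expander mean convex, serve as strict one-sided $E$-barriers. I would establish them by extending the calibration vector fields $\mathbf{N}^\pm$ globally to $\overline{\Omega'}$ with $|\mathbf{N}^\pm|\leq 1$ and nonnegative expander divergence, interpolating through the foliated region and using the decay of the vector field $\mathbf{Y}_0$ from Section~\ref{FlowSec} (which has the right behavior near infinity). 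Then a single application of the weighted divergence theorem to the region between $\tilde\Gamma_\pm$ and $\partial^*U$, combined with the pointwise bound $\mathbf{N}^\pm\cdot\mathbf{n}_U\leq 1$, yields the claimed estimates. This is essentially what the paper refers to as the ``observations on properties of $E$-minimizers,'' since the alternative route proceeds by a constrained minimization whose minimizers, by the iterated foliation, cannot beat $\delta_0^\pm$.

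Given these one-sided minimalities, the proof concludes by a topological argument. For any sweepout $\{(U_\tau,\Sigma_\tau)\}_{\tau\in[0,1]}$, the sets $\mathcal{A}_-=\{\tau:\Omega_-(\tilde\Gamma_-)\subseteq U_\tau\}$ and $\mathcal{A}_+=\{\tau:U_\tau\subseteq\Omega_-(\tilde\Gamma_+)\}$ are closed in $[0,1]$ by $L^1_{\rm loc}$-continuity of $\tau\mapsto\mathbf{1}_{U_\tau}$, and $0\in\mathcal{A}_+\setminus\mathcal{A}_-$ while $1\in\mathcal{A}_-\setminus\mathcal{A}_+$. Either $\mathcal{A}_-\cap\mathcal{A}_+\neq\emptyset$, in which case any $\tau$ in the intersection satisfies both displayed implications simultaneously and gives $E_{rel}[\Sigma_\tau,\Gamma_-]\geq\max\{0,E_{rel}[\Gamma_+,\Gamma_-]\}+\delta_0$ with $\delta_0:=\min(\delta_0^-,\delta_0^+)$; or else there is $\tau$ in the complement $\mathcal{A}_-^c\cap\mathcal{A}_+^c$, and for this $\tau$ I would consider the replacement $U_\tau^*=(U_\tau\cup\Omega_-(\tilde\Gamma_-))\cap\Omega_-(\tilde\Gamma_+)\in\mathcal{A}_-\cap\mathcal{A}_+$ and apply the calibration fields to the ``excess'' pieces in each slab to show $E_{rel}[\partial^*U_\tau,\Gamma_-]\geq E_{rel}[\partial^*U_\tau^*,\Gamma_-]$, reducing to the previous case. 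The main obstacle is establishing the one-sided minimality inequalities in the form stated, since constructing the global extension of the calibration fields with the correct asymptotic behavior requires careful interpolation between the local foliation normal and a suitable radial-type field; this is the step where the properties of $E$-minimizers enter most essentially.
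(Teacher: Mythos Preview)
Your one-sided minimality inequalities are false in general, and this is not merely a technical difficulty with the calibration extension---it is a genuine obstruction. Take the first inequality: you claim that $\Omega_-(\tilde\Gamma_-)\subseteq U$ forces $E_{rel}[\partial^*U,\Gamma_-]\geq\delta_0^->0$. But the paper explicitly allows $E_{rel}[\Gamma_+,\Gamma_-]<0$ (the final proof of the proposition splits on exactly this sign), and in that case $U=\Omega_-(\Gamma_+)$ is a counterexample: it certainly contains $\Omega_-(\tilde\Gamma_-)$, yet its relative entropy is negative. Symmetrically, the second inequality fails when $E_{rel}[\Gamma_+,\Gamma_-]>0$, since $U=\Omega_-(\Gamma_-)$ then gives $E_{rel}=0<E_{rel}[\Gamma_+,\Gamma_-]+\delta_0^+$. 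The underlying point is that strict stability of $\Gamma_\pm$ is a second-order local condition and does \emph{not} imply one-sided $E$-minimality; there is no reason a calibration field with $|\mathbf{N}^\pm|\leq 1$ and nonnegative expander divergence should exist beyond the thin foliated slabs. If it did, the divergence-theorem computation you sketch would yield $E_{rel}[\partial^*U,\Gamma_-]\geq 0$ for every $U\in\mathcal{C}(\Gamma_-,\Gamma_+)$, which is already too strong.

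The paper avoids this by using the calibration only where it is valid---for sets $U$ whose region $\Omega_U$ above $\Gamma_-$ stays entirely inside the foliated slab (Lemma~\ref{OpenSetLem}). The crucial additional ingredient is a \emph{volume-constrained} minimization: one shows that the infimum $E_m$ of $E_{rel}$ over $U\in\mathcal{C}(\Gamma_-,\Gamma_+)$ with weighted volume $\mathbf{M}[U]=m$ is strictly positive for all sufficiently small $m>0$. This is Proposition~\ref{LowBndLowMeasProp}, and its proof is where the ``properties of $E$-minimizers'' enter: one replaces $U$ by a local $E$-minimizer in a fixed compact window, then argues by dichotomy---either the minimizer evacuates the window (and the calibration in the slab applies), or it has a definite amount of $E$-mass there by the monotonicity formula (Lemma~\ref{ELowBndLem}), which is incompatible with small volume via the coarea estimate of Lemma~\ref{AreaEstLem}. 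With $E_{m_*}>0$ in hand for a fixed $m_*$, the sweepout argument is an intermediate-value step on the continuous function $\tau\mapsto\mathbf{M}[\mathscr{R}[U_\tau]]$ (after truncating back into $\tilde\Omega$ via Lemma~\ref{TruncateLem}), not a connectedness argument on your sets $\mathcal{A}_\pm$.
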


To prove Proposition \ref{StrictLowBndRelEntProp} we will need several auxiliary lemmas. The first is an observation that the truncation to $\tilde{\Omega}$ decreases the relative expander entropy.

\begin{lem} \label{TruncateLem}
For $U\in\mathcal{C}(\Gamma_-^\prime,\Gamma_+^\prime)$ let
$$
\mathscr{R}[U]=\left(U\cap\Omega_-(\Gamma_+)\right)\cup \Omega_-(\Gamma_-)\in\mathcal{C}(\Gamma_-,\Gamma_+).
$$
Then we have
\begin{enumerate}
\item The map $\mathbf{1}_U\mapsto \mathbf{1}_{\mathscr{R}[U]}$ is continuous in $L^1_{loc}$;
\item $E_{rel}[\partial^* U,\Gamma_-]\geq E_{rel}[\partial^* \mathscr{R}[U],\Gamma_-]$.
\end{enumerate}
\end{lem}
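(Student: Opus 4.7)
I would handle the two items separately.

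For Part (1), a direct computation exploiting $\Omega_-(\Gamma_-)\subseteq\Omega_-(\Gamma_+)$ gives
\[
\mathbf{1}_{\mathscr{R}[U]}=\mathbf{1}_{\Omega_-(\Gamma_-)}+\mathbf{1}_U\,\mathbf{1}_{\Omega_-(\Gamma_+)\setminus\Omega_-(\Gamma_-)}\quad\text{a.e.}
\]
Hence $|\mathbf{1}_{\mathscr{R}[U]}-\mathbf{1}_{\mathscr{R}[U^\prime]}|\le|\mathbf{1}_U-\mathbf{1}_{U^\prime}|$ pointwise a.e., and $L^1_{loc}$ continuity of the map follows immediately.

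For Part (2), my plan is to factor the truncation into two steps. Let
\[
U^\prime=U\cap\Omega_-(\Gamma_+),\qquad A=U\cap\Omega_+(\Gamma_+),\qquad B=\Omega_-(\Gamma_-)\setminus U^\prime,
\]
so that the first step $U\mapsto U^\prime$ removes the ``cap'' $A$ sitting above $\Gamma_+$, and the second step $U^\prime\mapsto\mathscr{R}[U]=U^\prime\cup\Omega_-(\Gamma_-)$ fills in the ``deficit'' $B$ below $\Gamma_-$. Let $\mathbf{X}_+$ (respectively $\mathbf{X}_-$) be the unit normal field to the foliation $\set{\Gamma^+_s}$ of $\overline{\Omega^{\prime\prime}}\cap\overline{\Omega_+(\Gamma_+)}$ (respectively $\set{\Gamma^-_s}$ of $\overline{\Omega^{\prime\prime}}\cap\overline{\Omega_-(\Gamma_-)}$) provided by Proposition \ref{ThickeningProp}, chosen to agree with $\mathbf{n}_{\Gamma_\pm}$ on $\Gamma_\pm$. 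With the paper's convention $\mathbf{H}_\Sigma=-H_\Sigma\mathbf{n}_\Sigma$, the property that each $\Gamma^\pm_s$ has expander mean curvature pointing toward $\Gamma_\pm$ translates along the leaves to
\[
\Div\mathbf{X}_+ +\tfrac{\mathbf{x}}{2}\cdot\mathbf{X}_+\ge 0\quad\text{and}\quad \Div\mathbf{X}_- +\tfrac{\mathbf{x}}{2}\cdot\mathbf{X}_-\le 0,
\]
i.e., the ``expander divergence'' of $\mathbf{X}_\pm$ has the appropriate sign. Since $U\in\mathcal{C}(\Gamma_-^\prime,\Gamma_+^\prime)$ and $\overline{\Omega^\prime}\subset\Omega^{\prime\prime}$, both $A$ and $B$ lie in the regions where $\mathbf{X}_\pm$ are defined.

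For Step (A), I would apply the generalized Gauss--Green identity for the weighted measure $e^{|\mathbf{x}|^2/4}\,d\mathcal{H}^n$ to the Caccioppoli set $A\cap\bar B_R$ against $\mathbf{X}_+$. Using $|\mathbf{X}_+|=1$, the sign of the expander divergence, and the fact that $\mathbf{X}_+$ is the \emph{inward} normal of $A$ along $\Gamma_+\cap\overline{A}$, one obtains
\[
\int_{\partial^*A\cap\Omega_+(\Gamma_+)\cap\bar B_R}\!e^{|\mathbf{x}|^2/4}\,d\mathcal{H}^n\;\ge\;\int_{\partial^*A\cap\Gamma_+\cap\bar B_R}\!e^{|\mathbf{x}|^2/4}\,d\mathcal{H}^n-\mathcal{E}(R),
\]
where $\mathcal{E}(R)$ collects the flux of $\mathbf{X}_+$ across $\partial\bar B_R\cap A$ and is controlled by Proposition \ref{RelEntropyAnnuliProp}. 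A short case analysis at each point of $\Gamma_+$ on the pair of one-sided densities of $U$ allows the three reduced boundaries $\partial^*U$, $\partial^*U^\prime$, and $\partial^*A$ to be compared consistently, converting the above into
\[
E[\partial^*U,\Gamma_-;\bar B_R]-E[\partial^*U^\prime,\Gamma_-;\bar B_R]\ge-\mathcal{E}(R).
\]
Sending $R\to\infty$ and invoking Proposition \ref{RelEntropyProp} yields $E_{rel}[\partial^*U,\Gamma_-]\ge E_{rel}[\partial^*U^\prime,\Gamma_-]$. Step (B) follows by the mirror argument with $\mathbf{X}_-$ applied to $B\cap\bar B_R$, now using the opposite sign of $\Div_E\mathbf{X}_-$ and that the outward normal of $B$ on $\Gamma_-\cap\overline{B}$ equals $+\mathbf{X}_-$; combining the two steps completes the proof.

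The main obstacle is the bookkeeping at $\Gamma_\pm$: the contribution of $\partial^*U$ on $\Gamma_\pm$ depends on the pair of one-sided densities of $U$ there, and in each step the three relevant reduced boundaries must be decomposed into the four ``case'' pieces determined by those densities so that the calibration inequality exactly produces the desired cancellation. A secondary technical point is ensuring that the boundary terms $\mathcal{E}(R)$ vanish in the $R\to\infty$ limit, which follows from the decay of $\mathbf{X}_\pm$ together with Propositions \ref{RelEntropyProp} and \ref{RelEntropyAnnuliProp}.
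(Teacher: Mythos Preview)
Your approach is essentially the same as the paper's: both use the unit normal field of the expander-mean-convex foliations from Proposition~\ref{ThickeningProp} as a calibration on the region above $\Gamma_+$ (and symmetrically below $\Gamma_-$), apply the divergence theorem with the weight $e^{|\mathbf{x}|^2/4}$, and pass to the limit. Your two-step factoring ($U\mapsto U'\mapsto\mathscr{R}[U]$) is a harmless repackaging of the paper's direct comparison of $\partial^*U$ and $\partial^*\mathscr{R}[U]$ on $\overline{\Omega_\pm(\Gamma_\pm)}$.

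There is one genuine issue in your error control. You propose truncating by intersecting with $\bar B_R$ and then bounding the sphere flux $\mathcal{E}(R)$ via Proposition~\ref{RelEntropyAnnuliProp}. That proposition, however, compares $E$-energies of two \emph{hypersurfaces} with uniform curvature decay; it says nothing about $\int_{\partial B_R\cap A}e^{|\mathbf{x}|^2/4}\,d\mathcal{H}^n$ for a general Caccioppoli region $A$, so the citation does not do the work you ask of it. The paper sidesteps this entirely by using the smooth cutoff $\phi_{R,1}$ instead of a hard ball: the divergence theorem then produces no sphere boundary term, and the only error is
\[
\int_{(U\Delta\mathscr{R}[U])\cap(\bar B_{R+1}\setminus B_R)}|\nabla\phi_{R,1}|\,e^{|\mathbf{x}|^2/4}\,d\mathcal{L}^{n+1}
\;\le\;2\int_{\Omega'\cap(\bar B_{R+1}\setminus B_R)}e^{|\mathbf{x}|^2/4}\,d\mathcal{L}^{n+1}\le CR^{-2},
\]
the last step being the thinness estimate \cite[Lemma~2.2]{BWExpanderRelEnt}. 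If you prefer your hard cutoff, you can recover the needed decay by coarea on that same volume bound to select good radii, but either way the relevant input is thinness of $\Omega'$, not Proposition~\ref{RelEntropyAnnuliProp}. With this correction your argument is complete and matches the paper's.
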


\begin{proof}
As 
$$
\mathbf{1}_{\mathscr{R}[U]}=\mathbf{1}_{U}\mathbf{1}_{\Omega_-(\Gamma_+)}+\mathbf{1}_{\Omega_-(\Gamma_-)}-\mathbf{1}_U\mathbf{1}_{\Omega_-(\Gamma_+)}\mathbf{1}_{\Omega_-(\Gamma_-)}
$$
the first claim follows from this.
	
To prove the second, we may assume $E_{rel}[\partial^* U,\Gamma_-]<+\infty$ as otherwise the inequality holds trivially. By Proposition \ref{ThickeningProp} one can define $\mathbf{V}\colon\overline{\Omega^{\prime\prime}}\cap\overline{\Omega_+(\Gamma_+)}\to\mathbb{R}^{n+1}$ by
$$
\mathbf{V}(p)=\mathbf{n}_{\Gamma^+_s}(p)  \mbox{ if $p\in\Gamma^+_s$}
$$
and so
\begin{equation} \label{EMCPositiveEqn}
\Div\mathbf{V}+\frac{\mathbf{x}}{2}\cdot\mathbf{V}\geq 0.
\end{equation}
As $\overline{U\Delta\mathscr{R}[U]}\subseteq\overline{\Omega^\prime}\subseteq\overline{\Omega^{\prime\prime}}$, setting $\mathbf{V}_R=\phi_{R,1}\mathbf{V}$ and using the divergence theorem, one readily computes
\begin{align*}
& \int_{\partial^* U\cap\overline{\Omega_+(\Gamma_+)}} \phi_{R,1} e^{\frac{|\mathbf{x}|^2}{4}} \, d\mathcal{H}^n-\int_{\partial^* \mathscr{R}[U]\cap\overline{\Omega_+(\Gamma_+)}} \phi_{R,1} e^{\frac{|\mathbf{x}|^2}{4}} \, d\mathcal{H}^n \\
\geq & \int_{\partial^* U\cap\overline{\Omega_+(\Gamma_+)}} \mathbf{V}_R\cdot\mathbf{n}_{\partial^* U} e^{\frac{|\mathbf{x}|^2}{4}} \, d\mathcal{H}^n-\int_{\partial^*\mathscr{R}[U]\cap\overline{\Omega_+(\Gamma_+)}} \mathbf{V}_R\cdot\mathbf{n}_{\partial^* \mathscr{R}[U]} e^{\frac{|\mathbf{x}|^2}{4}} \, d\mathcal{H}^n  \\
=& \int_{(U\Delta\mathscr{R}[U])\cap\overline{\Omega_+(\Gamma_+)}} \left(\Div\mathbf{V}_R+\frac{\mathbf{x}}{2}\cdot\mathbf{V}_R\right) e^{\frac{|\mathbf{x}|^2}{4}} \, d\mathcal{L}^{n+1}\\
\geq & \int_{(U\Delta\mathscr{R}[U])\cap\overline{\Omega_+(\Gamma_+)}} \phi_{R,1}\left(\Div\mathbf{V}+\frac{\mathbf{x}}{2}\cdot\mathbf{V}\right) e^{\frac{|\mathbf{x}|^2}{4}} \, d\mathcal{L}^{n+1}-2\int_{\Omega^{\prime}\cap (\bar{B}_{R+1}\setminus B_R)} e^{\frac{|\mathbf{x}|^2}{4}} \, d\mathcal{L}^{n+1}.
\end{align*}
Thus, appealing to \eqref{EMCPositiveEqn} and \cite[Lemma 2.2]{BWExpanderRelEnt}, one sees
$$
E[\partial^* U,\Gamma_-;\phi_{R,1};\overline{\Omega_+(\Gamma_+)}]-E[\partial^* \mathscr{R}[U],\Gamma_-;\phi_{R,1};\overline{\Omega_+(\Gamma_+)}] \geq -CR^{-2}
$$
for some $C=C(\Omega^\prime)$. Likewise,
$$
E[\partial^* U,\Gamma_-;\phi_{R,1};\overline{\Omega_-(\Gamma_-)}]-E[\partial^* \mathscr{R}[U],\Gamma_-;\phi_{R,1};\overline{\Omega_-(\Gamma_-)}] \geq -CR^{-2}.
$$
As $\partial^* U\cap\Omega=\partial^*\mathscr{R}[U]\cap\Omega$, combining these estimates gives 
$$
E[\partial^* U,\Gamma_-;\phi_{R,1}]-E[\partial^*\mathscr{R}[U],\Gamma_-; \phi_{R,1}] \geq -2CR^{-2}.
$$
Hence, sending $R\to\infty$, it follows from Proposition \ref{RelEntropyProp} that
$$
E_{rel}[\partial^* U,\Gamma_-]-E_{rel}[\partial^*\mathscr{R}[U],\Gamma_-] \geq 0.
$$
The result follows from rearranging this inequality.
\end{proof}

For each $U\in \mathcal{C}(\Gamma_-, \Gamma_+)$ let $\Omega_U$ be the open region between $\overline{\partial^* U}$ and $\Gamma_-$, and let
$$
\mathbf{M}[U]= \int_{\Omega_U} e^{\frac{|\mathbf{x}|^2}{4}} \, d\mathcal{L}^{n+1}
$$
be the weighted volume of $\Omega_U$. We have, for $U\in \mathcal{C}(\Gamma_-, \Gamma_+)$,
$$
0\leq \mathbf{M}[U]\leq \mathbf{M}_0=\mathbf{M}[\Omega_-(\Gamma_+)]<\infty
$$
where the finiteness of $\mathbf{M}[\Omega_-(\Gamma_+)]$ follows from \cite[Lemma 2.2]{BWExpanderRelEnt}. Given $m\in [0, \mathbf{M}_0]$, we then let
$$
\mathcal{C}(\Gamma_-, \Gamma_+; m)=\set{U\in \mathcal{C}(\Gamma_-, \Gamma_+)\colon \mathbf{M}[U]=m}.
$$
Notice that, for $U\in \mathcal{C}(\Gamma_-, \Gamma_+)$, if $\mathbf{M}[U]=0$, then, up to a measure-zero set, $U=\Omega_-(\Gamma_-)$, and if $\mathbf{M}[U]=\mathbf{M}_0$, then $U=\Omega_-(\Gamma_+)$. Observe that it is possible to perturb the distance to $\overline{\Omega_-(\Gamma_-)}$ to a uniformly bounded Morse function $f$ on $\Omega$ which coincides with the distance  function near $\Gamma_-$. Thus the function $a\mapsto\mathbf{M}[\set{f<a}]$ is continuous and so, for any $0\leq m\leq \mathbf{M}_0$, $\mathcal{C}(\Gamma_-,\Gamma_+;m)$ is nonempty and we can define a number
$$
E_m=\inf\set{E_{rel}[\partial^* U,\Gamma_-]\colon U\in\mathcal{C}(\Gamma_-,\Gamma_+;m)}.
$$

\begin{lem}\label{ExistLem}
For each $m\in [0, \mathbf{M}_0]$ there is an element $U^\prime_m\in \mathcal{C}(\Gamma_-, \Gamma_+; m)$ so that $E_{rel}[\partial^* U^\prime_m, \Gamma_-]=E_m$.
\end{lem}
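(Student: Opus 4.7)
The strategy is the direct method of the calculus of variations. Fix $m\in[0,\mathbf{M}_0]$ and choose a minimizing sequence $U_i\in\mathcal{C}(\Gamma_-,\Gamma_+;m)$ with $E_{rel}[\partial^* U_i,\Gamma_-]\to E_m$. Note that $E_m>-\infty$ since Proposition \ref{RelEntropyProp} (sending $R_2\to\infty$ in the first inequality with $\psi=1$ and $\Gamma=\partial^* U_i$) gives the uniform lower bound $E_{rel}[\partial^* U_i,\Gamma_-]\geq -\int_{\Gamma_-\cap\bar{B}_{R_1}}e^{|\mathbf{x}|^2/4}\,d\mathcal{H}^n-\bar{C}_0R_1^{-1}$. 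Combined with $e^{|\mathbf{x}|^2/4}\geq 1$, the same proposition converts the uniform upper bound $E_{rel}[\partial^* U_i,\Gamma_-]\leq E_m+1$ into a uniform bound on the Euclidean perimeter of $U_i$ inside every ball $B_R$. Thus, by local $BV$ compactness, passing to a subsequence, $\mathbf{1}_{U_i}\to \mathbf{1}_{U_\infty}$ in $L^1_{loc}(\mathbb{R}^{n+1})$ for some Caccioppoli set $U_\infty$. Since the trappings $\Omega_-(\Gamma_-)\subseteq U_i\subseteq \Omega_-(\Gamma_+)$ persist under $L^1_{loc}$ convergence, one has $U_\infty\in\mathcal{C}(\Gamma_-,\Gamma_+)$.

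It remains to verify two things: (i) $\mathbf{M}[U_\infty]=m$, so that $U_\infty\in\mathcal{C}(\Gamma_-,\Gamma_+;m)$ and in particular $E_{rel}[\partial^* U_\infty,\Gamma_-]\geq E_m$; and (ii) the reverse inequality $E_{rel}[\partial^* U_\infty,\Gamma_-]\leq E_m$. For (i), I would note that $\mathbf{1}_{U_i}-\mathbf{1}_{\Omega_-(\Gamma_-)}$ is supported in $\overline{\Omega}$ and dominated by $\mathbf{1}_{\overline{\Omega}}$, which is integrable with respect to the finite measure $e^{|\mathbf{x}|^2/4}\,d\mathcal{L}^{n+1}\lfloor \overline{\Omega}$ since $\mathbf{M}_0<\infty$ by \cite[Lemma 2.2]{BWExpanderRelEnt}; the dominated convergence theorem then yields $\mathbf{M}[U_i]\to\mathbf{M}[U_\infty]$. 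For (ii), I would combine the standard lower semicontinuity of weighted perimeter under $L^1_{loc}$ convergence --- valid since $\phi_{R,\delta}e^{|\mathbf{x}|^2/4}$ is a non-negative continuous function with compact support --- with Proposition \ref{RelEntropyProp} (applied with $\psi=\mathbf{1}$). Specifically, for any fixed $R>\bar{R}_0$ and $\delta>0$ small,
\begin{align*}
E[\partial^* U_\infty,\Gamma_-;\phi_{R,\delta}] &\leq \liminf_{i\to\infty} E[\partial^* U_i,\Gamma_-;\phi_{R,\delta}] \\
&\leq \liminf_{i\to\infty}\left(E_{rel}[\partial^* U_i,\Gamma_-]+\bar{C}_0 R^{-1}\right)=E_m+\bar{C}_0 R^{-1}.
\end{align*}
Letting $R\to\infty$ and invoking Item (2) of Proposition \ref{RelEntropyProp} produces $E_{rel}[\partial^* U_\infty,\Gamma_-]\leq E_m$. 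Setting $U_m^\prime=U_\infty$ finishes the proof.

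The essential difficulty is step (ii): pure lower semicontinuity of weighted perimeter only controls truncated energies on compact sets, so in principle energy could concentrate at infinity in the limit. What rules this out is precisely the quantitative tail estimate of Proposition \ref{RelEntropyProp}, whose proof in \cite{BWExpanderRelEnt} leverages the good radial-type vector field $\mathbf{N}$ from Lemma \ref{VectorFieldLem} and the fact that $\Omega^\prime$ is thin at infinity relative to $\Gamma_-$. This uniform $\bar{C}_0 R^{-1}$ control is what allows the truncated LSC inequality to be upgraded to an LSC inequality for the full relative entropy.
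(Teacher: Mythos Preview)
Your proof is correct and follows essentially the same approach as the paper: a direct-method argument using local $BV$ compactness, dominated convergence for the mass constraint (via the finiteness of $\mathbf{M}_0$), and the tail estimate of Proposition~\ref{RelEntropyProp} to upgrade truncated lower semicontinuity to lower semicontinuity of $E_{rel}$. The only omission is the trivial case $E_m=+\infty$, which the paper notes explicitly; your step~(ii) is in fact spelled out more carefully than the paper's ``by the nature of convergence and Proposition~\ref{RelEntropyProp}.''
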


\begin{proof}
By Lemma \ref{RelEntropyBndLem} there is a constant $E_-=E_-(\Gamma_-,\Gamma_+) \leq 0$ so that, for any $U\in \mathcal{C}(\Gamma_-, \Gamma_+)$,
$$
E_{rel}[\partial^* U, \Gamma_-]\geq E_-.
$$
In particular, one has 
$$
E_m\geq E_->-\infty.
$$
We may assume $E_m<\infty$ as otherwise $E_{rel}[\partial^* U,\Gamma_-]=\infty$ for all such $U$ and so the claim holds trivially. Then pick a minimizing sequence $U_i\in \mathcal{C}(\Gamma_-, \Gamma_+; m)$ so that $E_{rel}[\partial^* U_i, \Gamma_-]\to E_m$. By Proposition \ref{RelEntropyProp}, up to passing to a subsequence, $\mathbf{1}_{U_i}\to \mathbf{1}_{U_\infty}$ in the weak-* topology of $BV_{loc}$ for some $U_{\infty}\in \mathcal{C}(\Gamma_-, \Gamma_+)$. Appealing to \cite[Lemma 2.2]{BWExpanderRelEnt} one has, for any $R>1$, 
$$
\int_{\Omega_{U_i}\setminus \bar{B}_R} e^{\frac{|\mathbf{x}|^2}{4}} \, d\mathcal{L}^{n+1}\leq C R^{-1}
$$
where $C=C(\Gamma_-,\Gamma_+)$. Thus it follows that $\mathbf{M}[U_{\infty}]=m$ and so $U_{\infty}\in \mathcal{C}(\Gamma_-, \Gamma_+; m)$. By the nature of convergence and Proposition \ref{RelEntropyProp}
$$
E_{rel}[\partial^* U_\infty, \Gamma_-]\leq \lim_{i\to \infty} E_{rel}[\partial^* U_i, \Gamma_-]=E_m.
$$ 
Hence $E_{rel}[\partial^* U_\infty, \Gamma_-]=E_m$ and so the claim follows with $U^\prime_m=U_\infty$.
\end{proof}

Given $R>0$ and $\epsilon>0$, let
$$
W_{R,\epsilon}=\left(B_R\cap \Omega\right)\backslash \overline{\mathcal{T}_{\epsilon}(\Gamma_-)}.
$$

\begin{lem}\label{OpenSetLem}
There are positive constants $\mathcal{R}^\prime_0=\mathcal{R}^\prime_0(\Gamma_-,\Gamma_+)$ and $\epsilon^\prime_0=\epsilon_0^\prime(\Gamma_-,\Gamma_+)$ so that if $U\in \mathcal{C}(\Gamma_-, \Gamma_+)$ satisfies $W_{\mathcal{R}^\prime_0,\epsilon_0^\prime}\cap U =\emptyset$, then $E_{rel}[\partial^* U, \Gamma_-]\geq 0$ and the inequality is strict when $\overline{\partial^* U}\neq \Gamma_-$.
\end{lem}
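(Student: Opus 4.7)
The plan is to establish Lemma \ref{OpenSetLem} by a calibration argument centered at $\Gamma_-$, modelled directly on the proof of Lemma \ref{TruncateLem} (which performed the analogous computation centered at $\Gamma_+$).

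First I would construct the calibration. Applying Lemma \ref{MeanConvexLem} to the strictly stable self-expander $\Gamma_-$ produces constants $\epsilon_0,c_0>0$ (depending only on $\Gamma_-$) and a foliation $\{\Upsilon_s\}_{s\in[-\epsilon_0,\epsilon_0]}$ of a tubular neighborhood of $\Gamma_-$, where each $\Upsilon_s$ with $s\in(0,\epsilon_0]$ lies in $\Omega_+(\Gamma_-)$ and has expander mean curvature vector pointing toward $\Gamma_-$. Define a unit vector field $\mathbf{V}$ on the one-sided slab $\mathcal{T}^+_{\epsilon_0}:=\bigcup_{s\in[0,\epsilon_0]}\Upsilon_s$ by letting $\mathbf{V}(p)$ be the unit normal to the leaf $\Upsilon_s$ through $p$, chosen to extend $\mathbf{n}_{\Gamma_-}$ continuously. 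As $\mathbf{V}$ points in the direction of $+s$ (away from $\Gamma_-$) while the expander mean curvature vector of each $\Upsilon_s$ with $s>0$ points in the $-s$ direction, the computation used in Lemma \ref{MeanConvexLem} and in the proof of Proposition \ref{ThickeningProp} yields
\begin{equation*}
\Div\mathbf{V}+\frac{\mathbf{x}}{2}\cdot\mathbf{V}\geq 0 \text{ on } \mathcal{T}^+_{\epsilon_0},
\end{equation*}
with strict inequality on the open slab $\bigcup_{s\in(0,\epsilon_0]}\Upsilon_s$.

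Next I would fix the constants. Since $\Gamma_-$ and $\Gamma_+$ are both $C^2$-asymptotic to the same cone $\mathcal{C}$, \cite[Proposition 2.1]{BWExpanderRelEnt} (as invoked in the proof of Proposition \ref{ThickeningProp}) shows $\Omega$ is thin at infinity relative to $\Gamma_-$: there exist $C_0',\mathcal{R}_0''>0$ depending only on $\Gamma_\pm$ such that $\Omega\setminus B_R\subseteq \mathcal{T}_{C_0'R^{-n-1}e^{-R^2/4}}(\Gamma_-)$ for all $R>\mathcal{R}_0''$. Set $\epsilon_0':=\epsilon_0/2$ and choose $\mathcal{R}_0'\geq\mathcal{R}_0''$ large enough that $C_0'(\mathcal{R}_0')^{-n-1}e^{-(\mathcal{R}_0')^2/4}<\epsilon_0'$. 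For $U\in\mathcal{C}(\Gamma_-,\Gamma_+)$ with $W_{\mathcal{R}_0',\epsilon_0'}\cap U=\emptyset$, the hypothesis forces $U\cap\Omega\cap B_{\mathcal{R}_0'}\subseteq\overline{\mathcal{T}_{\epsilon_0'}(\Gamma_-)}$, while the thinness forces $U\cap\Omega\setminus B_{\mathcal{R}_0'}\subseteq \mathcal{T}_{\epsilon_0'}(\Gamma_-)$. Consequently $\Omega_U=U\cap\Omega\subseteq \mathcal{T}^+_{\epsilon_0}$, so the divergence theorem may be applied with $\mathbf{V}$.

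With the inclusion in hand, for $R>\mathcal{R}_0'+1$ apply the divergence theorem to $e^{|\mathbf{x}|^2/4}\phi_{R,1}\mathbf{V}$ on $\Omega_U$. Using that the outward normal of $\Omega_U$ along $\Gamma_-$ is $-\mathbf{n}_{\Gamma_-}=-\mathbf{V}|_{\Gamma_-}$, and along $\partial^*U$ is $\mathbf{n}_{\partial^*U}$, one obtains
\begin{align*}
\int_{\partial^* U}\phi_{R,1}\mathbf{V}\cdot\mathbf{n}_{\partial^* U}e^{\frac{|\mathbf{x}|^2}{4}}d\mathcal{H}^n & -\int_{\Gamma_-}\phi_{R,1}e^{\frac{|\mathbf{x}|^2}{4}}d\mathcal{H}^n \\
&= \int_{\Omega_U}\phi_{R,1}\Bigl(\Div\mathbf{V}+\frac{\mathbf{x}}{2}\cdot\mathbf{V}\Bigr)e^{\frac{|\mathbf{x}|^2}{4}}d\mathcal{L}^{n+1} \\
&\quad+\int_{\Omega_U}\nabla\phi_{R,1}\cdot\mathbf{V}e^{\frac{|\mathbf{x}|^2}{4}}d\mathcal{L}^{n+1}.
\end{align*}
The first right-hand term is non-negative, while the second is controlled in absolute value by $\int_{\Omega_U\cap(\bar B_{R+1}\setminus B_R)}e^{|\mathbf{x}|^2/4}d\mathcal{L}^{n+1}$; the thinness of $\Omega_U$ together with \cite[Lemma 2.2]{BWExpanderRelEnt} makes this error $O(R^{-2})$. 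Using $|\mathbf{V}\cdot\mathbf{n}_{\partial^* U}|\leq 1$ on the left and passing to the limit $R\to\infty$ with Item (2) of Proposition \ref{RelEntropyProp} yields $E_{rel}[\partial^* U,\Gamma_-]\geq 0$. For the strict inequality, note that if $\overline{\partial^* U}\neq\Gamma_-$ then $\Omega_U$ has positive Lebesgue measure; since the integrand $(\Div\mathbf{V}+\frac{1}{2}\mathbf{x}\cdot\mathbf{V})e^{|\mathbf{x}|^2/4}$ is strictly positive off $\Gamma_-$, the first right-hand term is bounded below by a positive constant depending on $U$ (independent of large $R$), yielding $E_{rel}[\partial^* U,\Gamma_-]>0$.

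The main technical obstacle is verifying that the sign of $\Div\mathbf{V}+\frac{1}{2}\mathbf{x}\cdot\mathbf{V}$ comes out correctly from the one-sided extension of $\mathbf{n}_{\Gamma_-}$ through the foliation (a naive two-sided continuous extension would produce the wrong sign on the $\Omega_-(\Gamma_-)$ side, but this is irrelevant here since $\Omega_U\subseteq\Omega_+(\Gamma_-)$), and quantifying the cutoff error term at infinity via the thinness of $\Omega$; both follow the template already established in the proof of Lemma \ref{TruncateLem}.
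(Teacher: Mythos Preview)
Your approach is the paper's: build a calibration $\mathbf{V}$ from the one-sided expander-mean-convex foliation of Lemma~\ref{MeanConvexLem} about $\Gamma_-$, show $\Omega_U$ lies in the foliated slab, and integrate by parts with a radial cutoff. The divergence-theorem computation and the strict inequality argument are fine.

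The gap is in the step ``Consequently $\Omega_U\subseteq\mathcal{T}^+_{\epsilon_0}$.'' You fix $\epsilon_0'=\epsilon_0/2$ first and then deduce $\Omega_U\subseteq\overline{\mathcal{T}_{\epsilon_0'}(\Gamma_-)}$, but this does \emph{not} imply $\Omega_U\subseteq\mathcal{T}^+_{\epsilon_0}$. The leaves $\Upsilon_s$ are graphs of $sf$ over $\Gamma_-$ where $f$ decays like $(1+|\mathbf{x}|^2)^{-(n+1-2\mu)/2}e^{-|\mathbf{x}|^2/4}$ (Proposition~\ref{EigenfuncProp}), so the foliated slab $\mathcal{T}^+_{\epsilon_0}$ has Euclidean width $\approx\epsilon_0 f(p)$ at $p$, which is much thinner than the constant-width tube $\mathcal{T}_{\epsilon_0/2}(\Gamma_-)$ once $|\mathbf{x}|$ is large. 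The fix, which is what the paper does, is to reverse the order of choices: first compare the quantitative lower bound on $\dist(\Upsilon_{\epsilon_0},\Gamma_-)$ from Item~(5) of Lemma~\ref{MeanConvexLem} (of order $|\mathbf{x}|^{-(n+1)+2c_0}e^{-|\mathbf{x}|^2/4}$) against the thinness of $\Omega$ from \cite[Proposition~2.1]{BWExpanderRelEnt} (of order $|\mathbf{x}|^{-n-1}e^{-|\mathbf{x}|^2/4}$) to obtain an $\mathcal{R}_0'$ with $\overline{\Omega}\setminus B_{\mathcal{R}_0'}\subseteq\mathcal{T}^+_{\epsilon_0}$; \emph{then} choose $\epsilon_0'$ small, depending on $\inf_{\Gamma_-\cap B_{\mathcal{R}_0'}}f>0$, so that the compact piece $\overline{\Omega\cap\mathcal{T}_{\epsilon_0'}(\Gamma_-)}\cap\bar{B}_{\mathcal{R}_0'}$ also sits inside $\mathcal{T}^+_{\epsilon_0}$. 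With the constants chosen in this order your argument goes through.
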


\begin{proof}
We may assume $E_{rel}[\partial^* U,\Gamma_-]<\infty$ as otherwise the claim holds trivially. Using Lemma \ref{MeanConvexLem} with $\Gamma=\Gamma_-$, one obtains a foliation $\set{\Gamma_s}_{s\in [0,\epsilon_0]}$ and a number $c_0>0$ so that
\begin{enumerate}
\item $\Gamma_0=\Gamma_-$ and $\Gamma_s\preceq\Gamma_{s^\prime}$ if $s\leq s^\prime$;
\item For $s>0$ the expander mean curvature of $\Gamma_s$ points towards $\Gamma_-$;
\item For $p\in\Gamma_{\epsilon_0}$, $\mathrm{dist}(p,\Gamma_-)\geq c_0\epsilon_0 (1+|\mathbf{x}(p)|^2)^{-\frac{1}{2}(n+1)+c_0} e^{-\frac{|\mathbf{x}(p)|^2}{4}}$.
\end{enumerate}
Let 
$$
\Omega_0=\bigcup_{s\in [0,\epsilon_0]}\Gamma_s
$$
and define the vector field $\mathbf{V}\colon\Omega_0\to\mathbb{R}^{n+1}$ by
$$
\mathbf{V}(p)=\mathbf{n}_{\Gamma_s}(p) \mbox{ if $p\in\Gamma_s$}.
$$
By Items (1) and (2)
\begin{equation} \label{EMCPositiveEqn2}
\Div\mathbf{V}(p)+\frac{\mathbf{x}(p)}{2}\cdot\mathbf{V}(p) \geq 0
\end{equation}
and the inequality is strict when $p\in\Gamma_s$ for some $s>0$.
	
Appealing to \cite[Proposition 2.1]{BWExpanderRelEnt} gives that 
$$
\Gamma_+\backslash B_{2\mathcal{R}}\subseteq \set{\mathbf{x}(p)+u(p)\mathbf{n}_{\Gamma_-}(p) \colon p\in \Gamma_-\backslash \bar{B}_{\mathcal{R}}} \subseteq \Gamma_+
$$
where $\mathcal{R}=\mathcal{R}(\Gamma_-,\Gamma_+)>1$ and $u$ satisfies
$$
0\leq u \leq C (1+|\mathbf{x}|^2)^{-\frac{1}{2}(n+1)}  e^{-\frac{|\mathbf{x}|^2}{4}}
$$
for some $C=C(\Gamma_-,\Gamma_+)$. Thus, by Item (3), there is a radius $\mathcal{R}_0^\prime=\mathcal{R}_0^\prime(\Gamma_-,\Gamma_+)>\mathcal{R}$ so that $\overline{\Omega}\setminus B_{\mathcal{R}_0^\prime}\subseteq\Omega_0$. Hence there is a constant $\epsilon^\prime_0=\epsilon^\prime_0(\Gamma_-,\Gamma_+)>0$ so that
$$
\overline{\Omega\setminus W_{\mathcal{R}^\prime_0,\epsilon^\prime_0}}=(\overline{\Omega}\setminus B_{\mathcal{R}^\prime_0})\cup (\overline{\Omega\cap \mathcal{T}_{\epsilon^\prime_0}(\Gamma_0)})\subseteq\Omega_0
$$
and so $\mathbf{V}$ is well defined on $\overline{\Omega\setminus W_{\mathcal{R}^\prime_0,\epsilon^\prime_0}}$.
	
As $U\in \mathcal{C}(\Gamma_-, \Gamma_+)$ satisfies $U\cap W_{\mathcal{R}^\prime_0,\epsilon^\prime_0}=\emptyset$, one has 
$$
\overline{\Omega_U}\subseteq \overline{\Omega\setminus W_{\mathcal{R}^\prime_0,\epsilon^\prime}}.
$$
Applying the divergence theorem to $\mathbf{V}_R=\phi_{R,1}\mathbf{V}$ gives
\begin{align*}
& \int_{\partial^* U} \phi_{R,1} e^{\frac{|\mathbf{x}|^2}{4}} \, d\mathcal{H}^n - \int_{\Gamma_-} \phi_{R,1} e^{\frac{|\mathbf{x}|^2}{4}} \, d\mathcal{H}^n \\
\geq & \int_{\partial^* U} \mathbf{V}_R\cdot\mathbf{n}_{\partial^* U} e^{\frac{|\mathbf{x}|^2}{4}} \, d\mathcal{H}^n - \int_{\Gamma_-}  \mathbf{V}_R\cdot\mathbf{n}_{\Gamma_-} e^{\frac{|\mathbf{x}|^2}{4}} \, d\mathcal{H}^n\\
= & \int_{\Omega_U} \left(\Div \mathbf{V}_R +\frac{\mathbf{x}}{2}\cdot \mathbf{V}_R\right) e^{\frac{|\mathbf{x}|^2}{4}} \, d\mathcal{L}^{n+1}\\
\geq & \int_{\Omega_U}\phi_{R,1} \left( \Div \mathbf{V} +\frac{\mathbf{x}}{2} \cdot \mathbf{V}\right) e^{\frac{|\mathbf{x}|^2}{4}} \, d\mathcal{L}^{n+1}-2\int_{\Omega\cap(\bar{B}_{R+1}\setminus B_R)} e^{\frac{|\mathbf{x}|^2}{4}} \, d\mathcal{L}^{n+1}.
\end{align*}
Thus, by \cite[Lemma 2.2]{BWExpanderRelEnt},
$$
E[\partial^* U,\Gamma_-; \phi_{R,1}] \geq \int_{\Omega_U}\phi_{R,1} \left( \Div \mathbf{V} +\frac{\mathbf{x}}{2} \cdot \mathbf{V}\right) e^{\frac{|\mathbf{x}|^2}{4}} \, d\mathcal{L}^{n+1}-C^\prime R^{-2}
$$
for some $C^\prime=C^\prime(\Omega)$. Hence, sending $R\to\infty$ and invoking \eqref{EMCPositiveEqn2}, it follows from the monotone convergence theorem and Proposition \ref{RelEntropyProp} that 
\begin{equation} \label{NonNegRelEntEqn}
E_{rel}[\partial^* U,\Gamma_-]\geq \int_{\Omega_U} \left( \Div\mathbf{V} +\frac{\mathbf{x}}{2} \cdot \mathbf{V}\right) e^{\frac{|\mathbf{x}|^2}{4}} \, d\mathcal{L}^{n+1} \geq 0.
\end{equation}
Finally, if $\overline{\partial^* U}\neq\Gamma_-$, then $\Omega_U\setminus\Gamma_-$ has positive measure and 
$$
\Div \mathbf{V}+\frac{\mathbf{x}}{2}\cdot\mathbf{V}>0 \mbox{ on $\Omega_U\setminus\Gamma_-$}.
$$
Thus, the second inequality in \eqref{NonNegRelEntEqn} is strict. That is, $E_{rel}[\partial^* U,\Gamma_-]>0$. This completes the proof.
\end{proof}

\begin{lem}\label{AreaEstLem}
There is a $\delta_1=\delta_1(\mathcal{R}^\prime_0, \epsilon^\prime_0)>1$ so that given $U\in \mathcal{C}(\Gamma_-, \Gamma_+)$ there are constants $\mathcal{R}^\prime_1=\mathcal{R}^\prime_1(U)\in (2\mathcal{R}^\prime_0,4\mathcal{R}^\prime_0)$ and $\epsilon^\prime_1=\epsilon^\prime_1(U)\in \left(\frac{1}{4} \epsilon^\prime_0, \frac{1}{2}\epsilon^\prime_0\right)$ so that
$$
E[U\cap \partial W_{\mathcal{R}^\prime_1, \epsilon^\prime_1}]=\int_{U\cap \partial W_{\mathcal{R}^\prime_1, \epsilon^\prime_1}} e^{\frac{|\mathbf{x}|^2}{4}} \, d\mathcal{H}^n<\delta_1 \mathbf{M}[U].
$$
\end{lem}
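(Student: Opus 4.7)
The plan is to combine two applications of Fubini/coarea with a pigeonhole argument in order to select $\mathcal{R}'_1$ and $\epsilon'_1$ in the prescribed intervals. Since $\partial W_{R,\epsilon}$ has three natural pieces -- the spherical part $\partial B_R \cap \overline{W_{R,\epsilon}}$, the tubular part $\{p \in \overline{B_R \cap \Omega} : \dist(p,\Gamma_-) = \epsilon\}$, and the $\Gamma_+$-part $\Gamma_+ \cap \overline{B_R \setminus \overline{\mathcal{T}_\epsilon(\Gamma_-)}}$ -- I would first argue that the $\Gamma_+$-part contributes nothing to $E[U \cap \partial W_{R,\epsilon}]$. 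Indeed, since $U$ is open with $U \subseteq \Omega_-(\Gamma_+)$ while $\Gamma_+ = \partial\Omega_-(\Gamma_+)$, one has $U \cap \Gamma_+ = \emptyset$, so only the spherical and tubular pieces need estimating.

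For the spherical piece, Fubini in polar coordinates, using $|\nabla|\mathbf{x}|| \equiv 1$ and the identification $U \cap \Omega = \Omega_U$ up to a null set, gives
\begin{equation*}
\int_{2\mathcal{R}'_0}^{4\mathcal{R}'_0} \left( \int_{U \cap \partial B_R \cap \Omega} e^{\frac{|\mathbf{x}|^2}{4}} d\mathcal{H}^n \right) dR = \int_{(U\cap\Omega) \cap (B_{4\mathcal{R}'_0} \setminus B_{2\mathcal{R}'_0})} e^{\frac{|\mathbf{x}|^2}{4}} d\mathcal{L}^{n+1} \leq \mathbf{M}[U].
\end{equation*}
Likewise, choosing $\epsilon'_0$ small enough so that $d(p) = \dist(p,\Gamma_-)$ is smooth on $\mathcal{T}_{\epsilon'_0}(\Gamma_-)$ with $|\nabla d| \equiv 1$, the coarea formula yields
\begin{equation*}
\int_{\epsilon'_0/4}^{\epsilon'_0/2} \left( \int_{U \cap \{d = \epsilon\} \cap B_{4\mathcal{R}'_0} \cap \Omega} e^{\frac{|\mathbf{x}|^2}{4}} d\mathcal{H}^n \right) d\epsilon \leq \int_{(U\cap\Omega) \cap (\mathcal{T}_{\epsilon'_0/2}(\Gamma_-) \setminus \overline{\mathcal{T}_{\epsilon'_0/4}(\Gamma_-)})} e^{\frac{|\mathbf{x}|^2}{4}} d\mathcal{L}^{n+1} \leq \mathbf{M}[U].
\end{equation*}

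A standard averaging (pigeonhole) argument then produces $\mathcal{R}'_1 \in (2\mathcal{R}'_0, 4\mathcal{R}'_0)$ such that the spherical integrand at $R = \mathcal{R}'_1$ is at most $\mathbf{M}[U]/\mathcal{R}'_0$, and independently $\epsilon'_1 \in (\epsilon'_0/4, \epsilon'_0/2)$ so that the tubular integrand at $\epsilon = \epsilon'_1$ is at most $8\mathbf{M}[U]/\epsilon'_0$ (note the independence: the tubular bound uses only $B_{4\mathcal{R}'_0} \supseteq B_{\mathcal{R}'_1}$). Summing the two contributions gives the desired bound with any $\delta_1 > 1 + \frac{1}{\mathcal{R}'_0} + \frac{8}{\epsilon'_0}$, which depends only on $\mathcal{R}'_0$ and $\epsilon'_0$. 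The only real points requiring care are the clean decomposition of $\partial W_{R,\epsilon}$ and the negligibility of the $\Gamma_+$-piece, both of which follow directly from the setup; the remaining steps are routine applications of Fubini/coarea and pigeonhole.
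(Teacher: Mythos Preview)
Your proposal is correct and follows essentially the same route as the paper: two applications of the coarea formula (one for $|\mathbf{x}|$, one for $\dist(\cdot,\Gamma_-)$) followed by averaging/pigeonhole to select $\mathcal{R}'_1$ and $\epsilon'_1$, then summing the two contributions. The only cosmetic differences are that the paper chooses $\mathcal{R}'_1$ first and then runs the tubular coarea inside $\bar{B}_{\mathcal{R}'_1}$ (whereas you use $B_{4\mathcal{R}'_0}$ and note the independence), and the paper does not spell out the vanishing of the $\Gamma_+$-piece of $\partial W_{R,\epsilon}$ as explicitly as you do; its final constant is $\delta_1=\frac{1}{\mathcal{R}'_0}+\frac{8}{\epsilon'_0}$.
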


\begin{proof}
First, the coarea formula yields
$$
\int_{2\mathcal{R}^\prime_0}^{4\mathcal{R}^\prime_0} \int_{\partial B_\rho \cap \overline{\Omega_U}} e^{\frac{|\mathbf{x}|^2}{4}} \, d\mathcal{H}^n d\rho =\int_{\left(\bar{B}_{4\mathcal{R}^\prime_0} \backslash B_{2\mathcal{R}^\prime_0}\right)\cap \overline{\Omega_U}} e^{\frac{|\mathbf{x}|^2}{4}} \, d\mathcal{L}^{n+1}\leq \mathbf{M}[U].
$$
Hence,
$$
2\mathcal{R}^\prime_0 \inf_{2\mathcal{R}^\prime_0<\rho<4\mathcal{R}^\prime_0} \int_{\partial B_\rho \cap \overline{\Omega_U}} e^{\frac{|\mathbf{x}|^2}{4}} \, d\mathcal{H}^n \leq \mathbf{M}[U]
$$
and so there is a radius $\mathcal{R}^\prime_1\in (2\mathcal{R}^\prime_0, 4\mathcal{R}^\prime_0)$ so that
\begin{equation} \label{SphereAreaEqn}
\int_{\partial B_{\mathcal{R}^\prime_1} \cap \overline{\Omega_U}} e^{\frac{|\mathbf{x}|^2}{4}} d\mathcal{H}^n\leq \frac{1}{\mathcal{R}^\prime_0} \mathbf{M}[U].
\end{equation}
	
Next, let $f\colon \mathcal{T}_{\epsilon^\prime_0}(\Gamma_-) \backslash \Omega_-(\Gamma_-)\to [0, \epsilon^\prime_0)$ be the function given by $f(p)=\dist(p, \Gamma_-)$. The choice of $\epsilon^\prime_0$ ensures this function is Lipschitz and $|\nabla f|=1$. Thus, invoking the coarea formula again,
$$
\int_{\frac{\epsilon^\prime_0}{4}}^{\frac{\epsilon^\prime_0}{2}} \int_{ \set{f=\rho}\cap \overline{\Omega_U}\cap \bar{B}_{\mathcal{R}^\prime_1}} e^{\frac{|\mathbf{x}|^2}{4}} \, d\mathcal{H}^n d\rho=\int_{\set{\frac{\epsilon^\prime_0}{4} \leq f\leq \frac{\epsilon^\prime_0}{2} }\cap \overline{\Omega_U}\cap \bar{B}_{\mathcal{R}^\prime_1}} |\nabla f|  e^{\frac{|\mathbf{x}|^2}{4}} \, d\mathcal{L}^{n+1} \leq \mathbf{M}[U].
$$
Hence,
$$
\frac{\epsilon^\prime_0}{4} \inf_{\frac{\epsilon^\prime_0}{4} < \rho< \frac{\epsilon^\prime_0}{2} }\int_{ \set{f=\rho}\cap \overline{\Omega_U}\cap \bar{B}_{\mathcal{R}^\prime_1}} e^{\frac{|\mathbf{x}|^2}{4}} \, d\mathcal{H}^n \leq \mathbf{M}[U].
$$	
As such there is an $\epsilon^\prime_1\in (\frac{1}{4} \epsilon^\prime_0, \frac{1}{2}\epsilon^\prime_0)$ so that
\begin{equation} \label{FAreaEqn}
\int_{\set{f=\epsilon^\prime_1}\cap \overline{\Omega_U}\cap \bar{B}_{\mathcal{R}^\prime_1}} e^{\frac{|\mathbf{x}|^2}{4}} \, d\mathcal{H}^n \leq \frac{8}{\epsilon^\prime_0} \mathbf{M}[U].
\end{equation}
	
As
$$
E[U\cap \partial W_{\mathcal{R}^\prime_1, \epsilon^\prime_1}]\leq\int_{\partial B_{\mathcal{R}^\prime_1} \cap \overline{\Omega_U}} e^{\frac{|\mathbf{x}|^2}{4}} \, d\mathcal{H}^n+ \int_{\set{f=\epsilon^\prime_1}\cap\overline{\Omega_U}\cap \bar{B}_{\mathcal{R}^\prime_1}} e^{\frac{|\mathbf{x}|^2}{4}} \, d\mathcal{H}^n 
$$
the claim follows by combining \eqref{SphereAreaEqn}-\eqref{FAreaEqn} and choosing $\delta_1=\frac{1}{R^\prime_0}+\frac{8}{\epsilon^\prime_0}$.
\end{proof}

\begin{lem}\label{ELowBndLem}
There is a constant $\gamma_0=\gamma_0(\mathcal{R}^\prime_0,\epsilon^\prime_0)>0$ so that if $U\in  \mathcal{C}(\Gamma_-, \Gamma_+)$ is such that $\partial^*U$ is $E$-stationary in $B_{2\mathcal{R}^\prime_0} \backslash \overline{\mathcal{T}_{ \frac{1}{2}\epsilon^\prime_0}(\Gamma_-)}$ and $U\cap W_{\mathcal{R}^\prime_0, \epsilon^\prime_0}\neq \emptyset$, then
$$
E[ W_{2\mathcal{R}^\prime_0, \frac{1}{2}\epsilon^\prime_0}\cap \partial^* U ]=\int_{W_{2\mathcal{R}^\prime_0, \frac{1}{2}\epsilon^\prime_0}\cap \partial^* U } e^{\frac{|\mathbf{x}|^2}{4}} d\mathcal{H}^n >\gamma_0.
$$
\end{lem}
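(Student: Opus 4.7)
My plan is a proof by contradiction combining compactness of Caccioppoli sets and $E$-stationary varifolds with the strong maximum principle of Solomon--White. Suppose the conclusion fails, so there is a sequence $U_i\in \mathcal{C}(\Gamma_-,\Gamma_+)$ satisfying the hypotheses with $E[W_{2\mathcal{R}'_0,\frac{1}{2}\epsilon'_0}\cap \partial^* U_i]\to 0$. Writing $W=W_{2\mathcal{R}'_0,\frac{1}{2}\epsilon'_0}$ and $\mathcal{O}=B_{2\mathcal{R}'_0}\setminus\overline{\mathcal{T}_{\frac{1}{2}\epsilon'_0}(\Gamma_-)}$, I would use $BV$ compactness for $\set{\mathbf{1}_{U_i}}$ together with standard compactness of integral $E$-stationary varifolds (locally uniform $E$-mass bounds following from the expander monotonicity formula applied at points of $\partial^* U_i\cap \mathcal{O}$, bootstrapped from the assumed smallness and the geometric setup) to extract a subsequence with $\mathbf{1}_{U_i}\to \mathbf{1}_{U_\infty}$ in $L^1_{loc}$ and $V_{\partial^* U_i}\to V_\infty$ as varifolds in $\mathcal{O}$, where $V_\infty$ is $E$-stationary in $\mathcal{O}$.

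Lower semicontinuity of the $E$-mass on the open set $W$ yields $V_\infty(W)=0$, so $\spt V_\infty\cap W=\emptyset$ and thus $\mathbf{1}_{U_\infty}$ is locally constant on $W$. Arranging the choice of $\mathcal{R}'_0$ and $\epsilon'_0$ (a minor refinement of Lemma \ref{OpenSetLem}) so that $W$ is connected, and using the hypothesis $U_i\cap W_{\mathcal{R}'_0,\epsilon'_0}\neq \emptyset$ to produce points $q_i\in U_i\cap W_{\mathcal{R}'_0,\epsilon'_0}$ which converge up to subsequence to some $q_\infty$ where $\mathbf{1}_{U_\infty}=1$ in measure, I would deduce $U_\infty\supseteq W$ modulo null sets. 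The contradiction is then extracted from the one-sided expander-mean-convex foliation $\set{\Gamma_s^+}_{s\in[0,1]}$ of Proposition \ref{ThickeningProp} combined with the strong maximum principle of Solomon--White \cite{SolomonWhite,WhiteMax}: since each $\Gamma_s^+$ with $s>0$ is strictly expander-mean-convex pointing toward $\Gamma_+$ and $\partial^* U_i\subset \overline{\Omega}\subseteq \overline{\Omega_-(\Gamma_+)}$, one obtains a uniform $s_0>0$ with $\partial^* U_i\preceq \Gamma_{s_0}^+$ for large $i$. This forbids $U_i$ from filling the collar between $\Gamma_{s_0}^+$ and $\Gamma_+$ and so contradicts $U_\infty\supseteq W$, since $W$ extends to within $\frac{1}{2}\epsilon'_0$ of $\Gamma_-$ and hence up toward $\Gamma_+$ on the other side.

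The main obstacle is this last step: the limit $V_\infty$ may naturally be supported on $\Gamma_+$ itself (which is $E$-stationary), so the strong maximum principle cannot be applied to the limit directly. A careful effective version producing a uniform one-sided barrier at finite $i$ — perhaps by iterating over the full foliation $\set{\Gamma_s^+}$ together with the assumed smallness of $E[W\cap \partial^* U_i]$ to get a quantitative separation of $\partial^* U_i$ from $\Gamma_+$ — is required, and this is the crux of the argument.
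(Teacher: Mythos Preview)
Your proof proposal is substantially more complicated than needed, and the obstacle you identify at the end is real and is not easily circumvented along the lines you suggest. There is also an additional gap: the step where you pass from $q_i\in U_i\cap W_{\mathcal{R}'_0,\epsilon'_0}$ converging to $q_\infty$ to the conclusion $\mathbf{1}_{U_\infty}=1$ near $q_\infty$ does not follow from $L^1_{loc}$ convergence of indicator functions; the hypothesis $U_i\cap W_{\mathcal{R}'_0,\epsilon'_0}\neq\emptyset$ gives no uniform lower volume bound on $U_i$ in any fixed ball, so you cannot rule out $\mathbf{1}_{U_\infty}\equiv 0$ on $W$.

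The paper's argument is direct and avoids all of this. The hypothesis $U\cap W_{\mathcal{R}'_0,\epsilon'_0}\neq\emptyset$ forces a point $p\in\overline{W_{\mathcal{R}'_0,\epsilon'_0}}\cap\partial^* U$. Setting $r_0=\min\{\mathcal{R}'_0,\tfrac{1}{2}\epsilon'_0\}$, one has $B_{r_0}(p)\subset B_{2\mathcal{R}'_0}\setminus\overline{\mathcal{T}_{\frac{1}{2}\epsilon'_0}(\Gamma_-)}$, so $\partial^* U$ is $E$-stationary in $B_{r_0}(p)$; since $B_{r_0}(p)\subset B_{2\mathcal{R}'_0}$ the (Euclidean) mean curvature satisfies $|H_{\partial^* U}|\leq \mathcal{R}'_0$ there. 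The standard monotonicity formula for varifolds with bounded mean curvature then gives a uniform lower bound $\mathcal{H}^n(B_{r_0}(p)\cap\partial^* U)>\gamma_0(\mathcal{R}'_0,r_0)$, which immediately yields the desired bound on $E[W_{2\mathcal{R}'_0,\frac{1}{2}\epsilon'_0}\cap\partial^* U]$. You already invoke this same monotonicity formula (to get your local mass bounds for compactness), so you have the key tool in hand; it should be applied directly rather than routed through a contradiction--compactness--maximum-principle argument.
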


\begin{proof}
If $U\cap W_{\mathcal{R}^\prime_0, \epsilon^\prime_0}\neq \emptyset$, then there is a point $p\in \overline{W_{\mathcal{R}_0, \epsilon^\prime_0}} \cap \partial^* U $. Clearly, if $r_0=\min\set{\mathcal{R}^\prime_0, \frac{1}{2}\epsilon^\prime_0}>0$, then $\partial^* U$ is $E$-stationary in $B_{r_0}(p)$. As $B_{r_0}(p)\subset B_{2\mathcal{R}^\prime_0}$, one has that $|H_{\partial^* U}| \leq \mathcal{R}^\prime_0$. By the monotonicity formula \cite[Section 17]{SimonBook}, there is a constant $\gamma_0=\gamma_0(\mathcal{R}_0^\prime, r_0)>0$, which in turn depends on $\mathcal{R}_0^\prime$ and $\epsilon_0^\prime$, so that
$$
E[W_{2\mathcal{R}^\prime_0, \frac{1}{2}\epsilon^\prime_0}\cap \partial^* U ]\geq E[B_{r_0}(p)\cap \partial^* U] \geq \mathcal{H}^n (B_{r_0}(p)\cap \partial^* U)>\gamma_0
$$
proving the claim. 
\end{proof}

\begin{prop}\label{LowBndLowMeasProp}
There is an $m_0=m_0(\delta_1,\gamma_0)>0$ so that if $U\in \mathcal{C}(\Gamma_-, \Gamma_+)$ is such that $0<\mathbf{M}[U]<m_0$, then $E_{rel}[\partial^* U, \Gamma_-]> 0$.
\end{prop}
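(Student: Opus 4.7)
My plan is, via Lemma \ref{ExistLem}, to reduce to showing $E_m>0$ for $0<m<m_0$, and then to case-split based on whether a minimizer $U^\prime_m\in\mathcal{C}(\Gamma_-,\Gamma_+;m)$ reaches into the open region $W_{\mathcal{R}^\prime_0,\epsilon^\prime_0}$ from Lemma \ref{OpenSetLem}. In the easy case $U^\prime_m\cap W_{\mathcal{R}^\prime_0,\epsilon^\prime_0}=\emptyset$, the fact that $m>0$ forces $\overline{\partial^* U^\prime_m}\neq\Gamma_-$, whence the strict inequality in Lemma \ref{OpenSetLem} yields $E_m>0$ at once.

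In the hard case $U^\prime_m\cap W_{\mathcal{R}^\prime_0,\epsilon^\prime_0}\neq\emptyset$, I would invoke Lemma \ref{AreaEstLem} to produce radii $\mathcal{R}^\prime_1\in(2\mathcal{R}^\prime_0,4\mathcal{R}^\prime_0)$ and $\epsilon^\prime_1\in(\epsilon^\prime_0/4,\epsilon^\prime_0/2)$ with $E[U^\prime_m\cap\partial W_{\mathcal{R}^\prime_1,\epsilon^\prime_1}]<\delta_1 m$, and form the competitor $\hat U:=U^\prime_m\setminus W_{\mathcal{R}^\prime_1,\epsilon^\prime_1}\in\mathcal{C}(\Gamma_-,\Gamma_+)$. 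Since $W_{\mathcal{R}^\prime_0,\epsilon^\prime_0}\subset W_{\mathcal{R}^\prime_1,\epsilon^\prime_1}$, one has $\hat U\cap W_{\mathcal{R}^\prime_0,\epsilon^\prime_0}=\emptyset$, so Lemma \ref{OpenSetLem} gives $E_{rel}[\partial^*\hat U,\Gamma_-]\geq 0$. A direct decomposition of reduced boundaries gives the identity
\begin{equation*}
E_m=E_{rel}[\partial^*\hat U,\Gamma_-]+E[\partial^* U^\prime_m\cap W_{\mathcal{R}^\prime_1,\epsilon^\prime_1}]-E[U^\prime_m\cap\partial W_{\mathcal{R}^\prime_1,\epsilon^\prime_1}],
\end{equation*}
which combined with the bounds above reduces the problem to lower-bounding $E[\partial^* U^\prime_m\cap W_{\mathcal{R}^\prime_1,\epsilon^\prime_1}]$ by $\gamma_0$; setting $m_0$ slightly smaller than $\gamma_0/\delta_1$ would then give $E_m\geq\gamma_0-\delta_1 m>0$.

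The main obstacle is that Lemma \ref{ELowBndLem} demands $E$-stationarity of $\partial^* U^\prime_m$ in $B_{2\mathcal{R}^\prime_0}\setminus\overline{\mathcal{T}_{\epsilon^\prime_0/2}(\Gamma_-)}$, but as an $E_{rel}$-minimizer under the volume constraint $\mathbf{M}[U]=m$, $\partial^* U^\prime_m$ only has constant expander mean curvature $H_E=\lambda_m$, whose modulus may blow up as $m\to 0$. I would overcome this by replacing $U^\prime_m$ inside the annular region with an unconstrained $E$-area minimizer $U^*\in\mathcal{C}(\Gamma_-,\Gamma_+)$ coinciding with $U^\prime_m$ outside, so that $\partial^* U^*$ is genuinely $E$-stationary there and $E[\partial^* U^*\cap W_{2\mathcal{R}^\prime_0,\epsilon^\prime_0/2}]\leq E[\partial^* U^\prime_m\cap W_{2\mathcal{R}^\prime_0,\epsilon^\prime_0/2}]$. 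If $U^*\cap W_{\mathcal{R}^\prime_0,\epsilon^\prime_0}\neq\emptyset$, Lemma \ref{ELowBndLem} applied to $U^*$ supplies the $\gamma_0$-bound, which transfers to $U^\prime_m$ by the previous inequality. If instead $U^*\cap W_{\mathcal{R}^\prime_0,\epsilon^\prime_0}=\emptyset$ with $\partial^* U^*\neq\Gamma_-$, the strict part of Lemma \ref{OpenSetLem} applied to $U^*$ already forces $E_m\geq E_{rel}[\partial^* U^*,\Gamma_-]>0$. The remaining delicate subcase $U^*=\Omega_-(\Gamma_-)$ constrains $\Omega_{U^\prime_m}\subseteq\overline{W_{2\mathcal{R}^\prime_0,\epsilon^\prime_0/2}}$, and I would use a weighted relative isoperimetric inequality in this bounded region to deduce $E[\partial^* U^\prime_m\cap W_{\mathcal{R}^\prime_1,\epsilon^\prime_1}]\gtrsim m^{n/(n+1)}$, which exceeds $\delta_1 m$ for $m$ small; shrinking $m_0$ if necessary closes every case.
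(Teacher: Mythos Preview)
Your approach is essentially correct and shares the central idea with the paper's proof --- replace the set by an unconstrained local $E$-minimizer so that Lemma~\ref{ELowBndLem} applies --- but you reach it via an unnecessary detour through the volume-constrained minimizer $U'_m$, and this detour is exactly what forces the isoperimetric step. The paper never invokes Lemma~\ref{ExistLem}: it works directly with the given $U$, takes $U'$ minimizing $E$-area in $\overline{W_{\mathcal{R}^\prime_1,\epsilon^\prime_1}}$ among competitors agreeing with $U$ outside $W_{\mathcal{R}^\prime_1,\epsilon^\prime_1}$, and splits on whether $U'\cap W_{\mathcal{R}^\prime_0,\epsilon^\prime_0}=\emptyset$. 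If so, Lemma~\ref{OpenSetLem} gives $E_{rel}[\partial^*U,\Gamma_-]\geq E_{rel}[\partial^*U',\Gamma_-]\geq 0$, and equality forces $E[\partial^*U\cap\overline{W_{\mathcal{R}^\prime_1,\epsilon^\prime_1}}]=0$, hence $U=U'=\Omega_-(\Gamma_-)$, contradicting $\mathbf{M}[U]>0$. If not, $\partial^*U'$ is $E$-stationary in $B_{2\mathcal{R}^\prime_0}\setminus\overline{\mathcal{T}_{\epsilon^\prime_0/2}(\Gamma_-)}$ (the Solomon--White maximum principle handles points touching $\Gamma_+$, which you do not mention), and the cut-and-paste with $U''=U\setminus\overline{W_{\mathcal{R}^\prime_1,\epsilon^\prime_1}}$ together with Lemmas~\ref{AreaEstLem} and~\ref{ELowBndLem} gives $E_{rel}[\partial^*U,\Gamma_-]\geq\gamma_0-\delta_1\mathbf{M}[U]>0$ once $m_0<\gamma_0/\delta_1$.

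Your subcase $U^*=\Omega_-(\Gamma_-)$ only arises because you fixed $U'_m$ before passing to $U^*$; in the paper's scheme this subcase is absorbed into the first branch of the split. Your isoperimetric remedy is plausible but heavier than necessary, and it brings in a constant not expressed through $\delta_1$ and $\gamma_0$. In fact the same equality-case reasoning dispatches it: if $E_m\leq 0$ then $E_{rel}[\partial^*U^*,\Gamma_-]=0$, which forces $E[\partial^*U'_m\cap\overline{R}]=E[\partial^*U^*\cap\overline{R}]=0$ (with $R$ the minimization region), hence $U'_m=\Omega_-(\Gamma_-)$, contradicting $m>0$. Two smaller points: your displayed ``identity'' is only an inequality (fortunately in the favorable direction), and you should justify $E$-stationarity of $\partial^*U^*$ at points on $\Gamma_+$ via the maximum principle.
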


\begin{proof}
Given such $U$ and $\mathcal{R}^\prime_0$ and $\epsilon^\prime_0$ from Lemma \ref{OpenSetLem}, choose $\mathcal{R}^\prime_1$ and $\epsilon^\prime_1$ as in Lemma \ref{AreaEstLem} and let
$$
\mathcal{C}(\Gamma_-,\Gamma_+; U, W_{\mathcal{R}^\prime_1, \epsilon^\prime_1})=\set{W\in \mathcal{C}(\Gamma_-, \Gamma_+)\colon W\Delta U\subseteq W_{\mathcal{R}^\prime_1, \epsilon^\prime_1}}.
$$
By standard compactness results for sets of finite perimeter, there is an element $U^\prime\in \mathcal{C}(\Gamma_-,\Gamma_+; U, W_{\mathcal{R}^\prime_1, \epsilon^\prime_1})$ so that, for all $W\in\mathcal{C}(\Gamma_-,\Gamma_+; U,W_{\mathcal{R}^\prime_1, \epsilon^\prime_1})$, 
$$
E[\partial^* U^\prime \cap \overline{W_{\mathcal{R}^\prime_1, \epsilon^\prime_1}}]\leq E[\partial^* W \cap \overline{W_{\mathcal{R}^\prime_1, \epsilon^\prime_1}}].
$$
Clearly, one has
$$
E_{rel}[\partial^* U, \Gamma_-]\geq E_{rel}[\partial^* U^\prime, \Gamma_-].
$$
	
We now treat two situations. If $U^\prime\cap W_{\mathcal{R}^\prime_0, \epsilon^\prime_0}=\emptyset$, then Lemma \ref{OpenSetLem} implies that
$$
E_{rel}[\partial^* U, \Gamma_-]\geq E_{rel}[\partial^* U^\prime, \Gamma_-]\geq 0
$$
and the inequality is strict as otherwise, up to zero-measure sets, $U=U^\prime=\Omega_-(\Gamma_-)$ which contradicts $\mathbf{M}[U]>0$.
	
The other situation is $U^\prime\cap W_{\mathcal{R}^\prime_0, \epsilon^\prime_0}\neq\emptyset$. In this case, we first observe that $\partial^* U^\prime$ is $E$-stationary in $B_{2\mathcal{R}^\prime_0}\backslash \overline{\mathcal{T}_{\frac{1}{2}\epsilon^\prime_0}(\Gamma_-)}$.  Indeed, this is true by construction in $W_{2\mathcal{R}^\prime_0, \frac{1}{2}\epsilon^\prime_0}$. The only other points of $\partial^* U^\prime$ in $B_{2\mathcal{R}^\prime_0}\backslash \overline{\mathcal{T}_{\frac{1}{2}\epsilon^\prime_0}(\Gamma_-)}$ are those that lie in $\Gamma_+\cap B_{2\mathcal{R}^\prime_0}$. However, if this occurs, then the maximum principle (see Solomon-White \cite{SolomonWhite}) implies that 
$$
\partial^* U^\prime\cap (B_{2\mathcal{R}^\prime_0}\backslash W_{2\mathcal{R}^\prime_0, \frac{1}{2}\epsilon^\prime_0})\subseteq \Gamma_+\cap B_{2\mathcal{R}^\prime_0}
$$
and so $\partial^* U^\prime$ is automatically $E$-stationary at any such point. Next, by Lemma \ref{ELowBndLem},
$$
E[\partial^* U^\prime\cap W_{\mathcal{R}^\prime_1, \epsilon^\prime_1}]\geq E[\partial^* U^\prime\cap W_{2\mathcal{R}^\prime_0, \frac{1}{2}\epsilon^\prime_0}]>\gamma_0>0.
$$
Now let $U^{\prime\prime}=U\backslash \overline{W_{\mathcal{R}^\prime_1, \epsilon^\prime_1}}$. Clearly, $U^{\prime\prime} \in \mathcal{C}(\Gamma_-, \Gamma_+)$ satisfies $U^{\prime\prime}\cap W_{\mathcal{R}^\prime_0, \epsilon^\prime_0}=\emptyset$ and so, by Lemma \ref{OpenSetLem},
$$
E_{rel}[\partial^* U^{\prime\prime}, \Gamma_-]\geq 0.
$$
Finally, as
$$
\partial^* U^{\prime\prime}\subseteq (\partial^* U^\prime\backslash {W_{\mathcal{R}^\prime_1, \epsilon^\prime_1}}) \cup (U\cap \partial W_{\mathcal{R}^\prime_1, \epsilon^\prime_1})
$$
one has
$$
E_{rel}[\partial^* U^{\prime\prime}, \Gamma_-]\leq E_{rel}[\partial^* U^\prime, \Gamma_-]-E[\partial^* U^\prime\cap W_{\mathcal{R}^\prime_1, \epsilon^\prime_1}]+ E[U\cap \partial W_{\mathcal{R}^\prime_1, \epsilon^\prime_1}].
$$
Hence, by Lemmas \ref{AreaEstLem} and \ref{ELowBndLem}, 
$$
E_{rel}[\partial^* U^{\prime\prime}, \Gamma_-]\leq E_{rel}[\partial^* U^\prime, \Gamma_-]+\delta_1 \mathbf{M}[U] -\gamma_0.
$$
Picking $m_0<\delta_1^{-1} \gamma_0$ gives
$$
0\leq E_{rel}[\partial^* U^{\prime\prime}, \Gamma_-]< E_{rel}[\partial^* U^\prime, \Gamma_-]\leq E_{rel}[\partial^* U, \Gamma_-].
$$
This proves the claim.
\end{proof}

\begin{cor}\label{UniformCor}
If $0<m<m_0$, then $E_m>0$.
\end{cor}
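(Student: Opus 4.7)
The corollary follows almost immediately by combining the two previous results of this section. The plan is as follows.

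First, I would invoke Lemma \ref{ExistLem} with the given $m \in (0, m_0)$ to produce an element $U_m' \in \mathcal{C}(\Gamma_-, \Gamma_+; m)$ that attains the infimum, i.e.\ satisfies
\[
E_{rel}[\partial^* U_m', \Gamma_-] = E_m.
\]
In particular $\mathbf{M}[U_m'] = m$, so $0 < \mathbf{M}[U_m'] < m_0$.

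Next, I would apply Proposition \ref{LowBndLowMeasProp} to $U_m'$. Since $0 < \mathbf{M}[U_m'] < m_0$, the proposition gives the strict inequality
\[
E_m = E_{rel}[\partial^* U_m', \Gamma_-] > 0,
\]
which is exactly the claim.

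There is no real obstacle here; the work was already carried out in Lemma \ref{ExistLem} (compactness/lower-semicontinuity yielding a minimizer at each mass level $m$) and in Proposition \ref{LowBndLowMeasProp} (the calibration-plus-monotonicity argument ruling out $E_{rel} \leq 0$ at small positive weighted volume). The corollary is just the observation that the minimizer guaranteed by the former falls within the hypothesis range of the latter.
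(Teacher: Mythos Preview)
Your proposal is correct and follows exactly the same approach as the paper: apply Lemma~\ref{ExistLem} to obtain a minimizer $U_m' \in \mathcal{C}(\Gamma_-,\Gamma_+;m)$ with $E_{rel}[\partial^* U_m',\Gamma_-]=E_m$, then apply Proposition~\ref{LowBndLowMeasProp} to conclude $E_m>0$.
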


\begin{proof}
For $m\in (0, m_0)$, let $U^\prime_m\in \mathcal{C}(\Gamma_-, \Gamma_+;m )$ be the element given by Lemma \ref{ExistLem}. Thus, by Proposition \ref{LowBndLowMeasProp}, one has $E_m=E_{rel}[\partial^* U^\prime_m, \Gamma_-]>0$ proving the claim.  
\end{proof}

We can now complete the proof of the main result of this section.

\begin{proof}[Proof of Proposition \ref{StrictLowBndRelEntProp}]
We treat two cases. The first is $E_{rel}[\Gamma_+,\Gamma_-]\leq 0$. As such, we are trying to show 
$$
\max_{\tau\in [0,1]} E_{rel}[\Sigma_\tau, \Gamma_-]\geq \delta_0>0
$$
for some $\delta_0=\delta_0(\Gamma_-,\Gamma_+)$. By definition of sweepouts and Lemma \ref{TruncateLem}, the map $\tau\mapsto\mathbf{M}[\mathscr{R}[U_\tau]]$ is a continuous map from $[0,1]$ to $[0, \mathbf{M}_0]$ and the value at $\tau=0$ is $0$ while the value at $\tau=1$ is $\mathbf{M}_0$. Moreover, for all $\tau\in [0,1]$,
$$
E_{rel}[\partial^* \mathscr{R}[U_\tau],\Gamma_-]\leq E_{rel}[\Sigma_\tau,\Gamma_-].
$$
As such, there is a $\tau_*\in (0,1)$ with $\mathbf{M}[\mathscr{R}[U_{\tau_*}]]=\frac{m_0}{2}=m_*$ where $m_0$ is given by Proposition \ref{LowBndLowMeasProp}. It then follows from Corollary \ref{UniformCor} that
$$
\max_{\tau\in [0,1]} E_{rel}[\Sigma_\tau, \Gamma_-] \geq E_{rel}[\Sigma_{\tau_*},\Gamma_-]\geq E_{rel}[\partial^*\mathscr{R}[U_{\tau_*}],\Gamma_-]\geq E_{m_*}>0.
$$
As $E_{m_*}$ depends only on $\Gamma_-$ and $\Gamma_+$,  we prove the claim by setting $\delta_0=E_{m_*}$.
	
The second is $E_{rel}[\Gamma_+,\Gamma_-]\geq 0$. As such, we need to show 
$$
\max_{\tau\in [0,1]} E_{rel}[\Sigma_\tau,\Gamma_-] \geq E_{rel}[\Gamma_+,\Gamma_-]+\delta_0 \geq \delta_0.
$$
Reversing orientation (which swaps $\Gamma_-$ with $\Gamma_+$), by what we have shown 
$$
\max_{\tau\in [0,1]} E_{rel}[\Sigma_\tau,\Gamma_+]\geq \delta_0.
$$
Thus, as 
$$
\max_{\tau\in [0,1]} E_{rel}[\Sigma_\tau,\Gamma_-]=E_{rel}[\Gamma_+,\Gamma_-]+\max_{\tau\in [0,1]} E_{rel}[\Sigma_\tau,\Gamma_+]
$$
the claim follows immediately.
\end{proof}

\section{Existence of sweepouts and proof of the results} \label{ExistSweepoutSec}
In this section we complete the proof of Theorem \ref{MainThm} and Corollaries \ref{RefinedMainCor} and \ref{TopologyCor}. This will mostly require us to show the existence of at least one sweepout of $\tilde{\Omega}$.  In order to do this, we adapt arguments of \cite[Lemma 11.1]{DeLellisRamic} and \cite{Milnor} to prove the existence of sweepouts. 

\begin{prop} \label{ExistSweepoutProp}
There exists a sweepout of $\tilde{\Omega}$.
\end{prop}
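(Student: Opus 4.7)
The plan is to construct an explicit sweepout by gluing a graphical interpolation near infinity to a Morse-theoretic family on a compact piece. By Proposition 2.1 of \cite{BWExpanderRelEnt}, there is a radius $R_0>1$ and a non-negative function $u\colon \Gamma_-\setminus\bar B_{R_0}\to\mathbb{R}$ with rapidly decaying $C^3$ norm so that $\Gamma_+\setminus\bar B_{2R_0}$ is the normal graph of $u$ over $\Gamma_-\setminus\bar B_{R_0}$. I would define, for $\tau\in [0,1]$, the smooth exterior family
\[
\Sigma^\infty_\tau=\{\mathbf{x}(p)+\tau u(p)\mathbf{n}_{\Gamma_-}(p)\colon p\in\Gamma_-\setminus\bar B_{2R_0}\},
\]
which by the decay estimates on $u$ has $\sup (1+|\mathbf{x}|)|A_{\Sigma^\infty_\tau}|$ uniformly bounded in $\tau$, and whose graph parameter defines a smooth function $h_\infty$ on $\tilde\Omega\setminus\bar B_{2R_0}$ taking values in $[0,1]$ with $h_\infty=0$ on $\Gamma_-$ and $h_\infty=1$ on $\Gamma_+$.

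Next I would fix $R_1>3R_0$ and smooth the corners of $\tilde\Omega\cap\bar B_{R_1}$ slightly to obtain a compact manifold with corners $K$ with top $\Gamma_-\cap K$, bottom $\Gamma_+\cap K$, and side $\partial B_{R_1}\cap\tilde\Omega$. Using standard Morse-theoretic methods on cobordisms (with collar extensions of the prescribed boundary data) I would construct a smooth $F\colon K\to [0,1]$ with $F=0$ on $\Gamma_-\cap K$, $F=1$ on $\Gamma_+\cap K$, only finitely many critical points, all in the interior, and so that $F$ agrees with $h_\infty$ on (a neighborhood of) the side $\partial B_{R_1}\cap\tilde\Omega$. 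Extending $F$ to $\tilde\Omega$ by $h_\infty$ outside $B_{R_1}$ produces a global continuous function $h\colon\tilde\Omega\to[0,1]$, smooth away from finitely many interior critical points, and I then set
\[
U_\tau=\Omega_-(\Gamma_-)\cup\{h<\tau\},\qquad \Sigma_\tau=\partial^* U_\tau.
\]

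To verify Definition \ref{ParamFamilyDef}, I would argue as follows. Outside $\bar B_{R_1}$ the slice $\Sigma_\tau$ is a normal graph with uniform $(1+|\mathbf{x}|)|A|$-bound, so Proposition \ref{RelEntropyAnnuliProp} (applied inside $\Omega^\prime$, which contains $\tilde\Omega$) gives $E_{rel}[\Sigma_\tau,\Gamma_-]<\infty$. The singular set $S_\tau$ consists only of critical points of $F$ at the value $\tau$, hence is finite (and nonempty for only finitely many $\tau$). Continuity of $\mathbf{1}_{U_\tau}$ in $L^1_{loc}$ is immediate from the continuity of $h$, and $C^\infty_{loc}$ convergence of $\mathrm{cl}(\Sigma_\tau)$ away from $S_{\tau_0}$ follows from smoothness of $h$ away from its finite critical set combined with the explicit graphical description outside $B_{R_1}$. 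Finally, Remark \ref{ContinueRem} reduces the weak-$*$ continuity of $V_{\Sigma_\tau}$ to continuity of $E_{rel}[\Sigma_\tau,\Gamma_-]$ and locally Hausdorff continuity of $\Sigma_\tau$; both follow from the dominated convergence theorem and the smooth/graphical descriptions, using the uniform decay estimate to control the tail of the relative entropy uniformly in $\tau$.

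The main technical obstacle is the matching on the collar $\partial B_{R_1}\cap\tilde\Omega$: one must produce a Morse function on the compact piece $K$ whose prescribed boundary behavior on one component of $\partial K$ equals the "graph parameter" $h_\infty$. This is a cobordism/Morse-theoretic construction in the spirit of \cite{Milnor} and \cite[Lemma 11.1]{DeLellisRamic}, and requires a small collar modification together with a generic Morse perturbation to keep critical points in the interior. Everything else is routine given the quantitative graphical asymptotics from \cite{BWExpanderRelEnt}.
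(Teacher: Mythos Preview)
Your proposal is correct and follows essentially the same strategy as the paper: build a Morse-type function on $\overline{\Omega}$ whose level sets give the sweepout, using the graphical asymptotics of $\Gamma_+$ over $\Gamma_-$ to control the exterior, and verify Definition \ref{ParamFamilyDef} via Proposition \ref{RelEntropyAnnuliProp} for the tail and Remark \ref{ContinueRem} for continuity. The only notable difference is the gluing mechanism: the paper blends the exterior foliation, distance functions to $\Gamma_\pm$, and a constant interior value via a partition of unity, then Morse-perturbs on the compact region and checks no critical points arise in the transition zone; you instead ask for a Morse function on the compact cobordism matching the exterior data on a collar of $\partial B_{R_1}\cap\tilde\Omega$. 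Both are standard and yield the same object; the partition-of-unity route sidesteps the collar-matching you flag as the main obstacle, at the modest cost of having to verify $|Dh|>0$ in the overlap.
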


\begin{proof}
We will construct a suitable Morse function on $\overline{\Omega}$, the closure of the open region between $\Gamma_-$ and $\Gamma_+$,  and use it to obtain the desired sweepout. First, by our hypotheses on $\Gamma_-$ and $\Gamma_+$, there is a radius $\mathcal{R}=\mathcal{R}(\Gamma_-,\Gamma_+)>1$ and a foliation $\set{\Xi_s}_{s\in [0,1]}$ of $\overline{\Omega}\setminus\bar{B}_{\mathcal{R}}$ so that
\begin{enumerate}
\item $\Xi_0=(\Gamma_-\cap\overline{\Omega})\setminus\bar{B}_{\mathcal{R}}$ and $\Xi_1=(\Gamma_+\cap\overline{\Omega})\setminus\bar{B}_{\mathcal{R}}$;
\item Each $\Xi_s$ is given by a smooth graph over $\Gamma_-$ outside a compact set;
\item For every $s$, $\sup_{p\in\Xi_s} |\mathbf{x}(p)| |A_{\Xi_s}(p)|\leq K$ for some $K=K(\Gamma_-,\Gamma_+)$.
\end{enumerate}
Using this foliation one can define $f_0\colon\overline{\Omega}\setminus\bar{B}_{\mathcal{R}}\to [0,1]$ by 
$$
f_0(p)=s \mbox{ if $p\in\Xi_s$}.
$$
Clearly, $f_0^{-1}(0)=(\Gamma_-\cap\overline{\Omega})\setminus\bar{B}_{\mathcal{R}}$, $f_0^{-1}(1)=(\Gamma_+\cap\overline{\Omega})\setminus\bar{B}_{\mathcal{R}}$ and $f_0$ has no critical points.
	
Next let $\epsilon=\epsilon(\Gamma_-,\Gamma_+)>0$ be sufficiently small so that the distance to $\Gamma_\pm$, $\mathrm{dist}(\cdot,\Gamma_\pm)$ on $\mathcal{T}_{2\epsilon}(\Gamma_\pm)$ are smooth functions without critical points. Shrinking $\epsilon$ if needed, we may assume $\mathcal{T}_{2\epsilon}(\Gamma_-)$ and $\mathcal{T}_{2\epsilon}(\Gamma_+)$ are disjoint in $\overline{\Omega}\cap B_{2\mathcal{R}}$. Set $W_1=\mathcal{T}_{2\epsilon}(\Gamma_-)\cap\overline{\Omega}\cap B_{2\mathcal{R}}$ and define $f_1\colon W_1 \to \mathbb{R}$ by 
$$
f_1(p)=\mathrm{dist}(p,\Gamma_-).
$$ 
Likewise, set $W_2=\mathcal{T}_{2\epsilon}(\Gamma_+)\cap\overline{\Omega}\cap B_{2\mathcal{R}}$ and define $f_2\colon W_2\to\mathbb{R}$ by 
$$
f_2(p)=1-\mathrm{dist}(p,\Gamma_+).
$$
Thus, $0\leq f_1 <1$ with $f_1^{-1}(0)=\Gamma_-\cap\overline{\Omega}\cap B_{2\mathcal{R}}$,  and $0<f_2\leq 1$ with $f_2^{-1}(1)=\Gamma_+\cap\overline{\Omega}\cap B_{2\mathcal{R}}$. Let $W_3=(\Omega\cap B_{2\mathcal{R}})\setminus(\overline{\mathcal{T}_\epsilon(\Gamma_-)}\cup\overline{\mathcal{T}_\epsilon(\Gamma_+)})$ and $f_3\colon W_3\to\mathbb{R}$ be the constant-$\frac{1}{2}$ function.
	
Set $W_0=\mathbb{R}^{n+1}\setminus\bar{B}_{\mathcal{R}}$ and the family $\set{W_i}_{0\leq i\leq 3}$ forms an open cover of $\overline{\Omega}$. Let $\set{\varphi_i}_{0\leq i\leq 3}$ be a partition of unity subordinate to this cover. Now define $f\colon \overline{\Omega}\to\mathbb{R}$ by
$$
f(p)=\sum_{0\leq i\leq 3} \varphi_i(p) f_i(p).
$$
By definition $f^{-1}(0)=\Gamma_-\cap\overline{\Omega}$, $f^{-1}(1)=\Gamma_+\cap\overline{\Omega}$ and $f$ has no critical points in $\overline{\Omega}\setminus B_{2\mathcal{R}}$ or $\overline{W_1}\cup \overline{W_2}$. Indeed, only $\varphi_0\neq 0$ in $\overline{\Omega}\setminus B_{2\mathcal{R}}$ and so $f=f_0$, as remarked before, has no critical points. One readily computes, at $p\in\Gamma_-\cap\overline{\Omega}\cap B_{2\mathcal{R}}$,
$$
\nabla_{\mathbf{n}_{\Gamma_-}} f(p)=\sum_{0\leq i\leq 3} \nabla_{\mathbf{n}_{\Gamma_-}}\varphi_i(p) f_i(p)+\sum_{0\leq i\leq 3} \varphi_i(p) \nabla_{\mathbf{n}_{\Gamma_-}} f_i(p).
$$
First observe that the first sum vanishes. By construction $\varphi_2(p)=\varphi_3(p)=0$ and $\nabla_{\mathbf{n}_{\Gamma_-}} f_i(p)>0$ for $i\in \set{0,1}$. Thus the second sum is positive and so $\nabla_{\mathbf{n}_{\Gamma_-}} f(p)>0$. Hence, by further shrinking $\epsilon$ if necessary, it follows that $f$ has no critical points in $\overline{W_1}$. Similar arguments show the same for $f$ in $\overline{W_2}$. This proves the claim.
	
To make $f$ Morse, we need to modify $f$ in $W_3$. As the set of Morse functions is an open dense subset of $C^2$ functions, for sufficiently small $\delta>0$ (which will be chosen later) there is a Morse function $g$ on $W=(\overline{\Omega}\cap B_{4\mathcal{R}})\setminus (\overline{\mathcal{T}_{\frac{1}{2}\epsilon}(\Gamma_-)}\cup\overline{\mathcal{T}_{\frac{1}{2}\epsilon}(\Gamma_+)})$ with 
$$
\Vert g-f \Vert_{C^2}<\delta.
$$
Let $\phi\colon\overline{\Omega}\to [0,1]$ be a cutoff function so that $\phi=1$ in $W_3$ and $\phi=0$ outside $B_{4\mathcal{R}}$ and in $\frac{\epsilon}{2}$-tubular neighborhood of $\Gamma_\pm$. Define $h\colon\overline{\Omega}\to\mathbb{R}$ by 
$$
h(p)=\phi(p) g(p)+\left(1-\phi(p)\right) f(p).
$$
Clearly, $h^{-1}(0)=\Gamma_-\cap\overline{\Omega}$ and $h^{-1}(1)=\Gamma_+\cap\overline{\Omega}$. By what we have shown, $h$ has no degenerate critical points in the region where $\phi=0$ or $\phi=1$. In the intermediate region, i.e., $0<\phi<1$, applying the chain rule gives
$$
D h=Df+D\phi (g-f)+\phi D(g-f).
$$
As $f$ has no critical points outside $B_{2\mathcal{R}}$ or in $\epsilon$-tubular neighborhood of $\Gamma_\pm$, one has $|Df|>\eta>0$ in the region where $0<\phi<1$. Thus, by the triangle inequality,
$$
|Dh|\geq |Df|-|D\phi| |g-f|-|D(g-f)| \geq \eta-C\Vert g-f \Vert_{C^2} \geq \eta-C\delta
$$
where $C>0$ depends only on $\phi$ (which in turn depends only on $\epsilon$). Hence, choosing $\delta<\eta/C$, it follows that $|Dh|>0$ in the region where $0<\phi<1$. Therefore we have shown $h$ is a Morse function. 
	
Finally,  for $\tau\in [0,1]$, let $U_\tau=\set{h<\tau}\cup\Omega_-(\Gamma_-)$ and $\Sigma_\tau=\partial^* U_\tau$. We show that $\set{(U_\tau,\Sigma_\tau)}_{\tau\in [0,1]}$ is a sweepout of $\overline{\Omega}$. By construction, except for the first and third all other properties in Definition \ref{ParamFamilyDef} are satisfied. To show the rest, as $\Sigma_\tau\setminus B_{4\mathcal{R}}=\Xi_\tau\setminus B_{4\mathcal{R}}$, invoking Items (2) and (3), it follows from Proposition \ref{RelEntropyAnnuliProp} that, for all $R>\bar{\mathcal{R}}_1>4\mathcal{R}$,
$$
E_{rel}[\Sigma_\tau,\Gamma_-; \mathbb{R}^{n+1}\setminus B_R]<\bar{K}_1 R^{-2}
$$
where $\bar{K}_1$ and $\bar{\mathcal{R}}_1$ depend on $\Gamma_-,\Gamma_+,\mathcal{R}$ and $K$. Thus it follows that the map $\tau\mapsto\Sigma_\tau$ is continuous in the weak-* topology of $\mathfrak{Y}^*(\overline{\Omega^\prime})$ and $E_{rel}[\Sigma_\tau,\Gamma_-]<\infty$ for every $\tau\in [0,1]$. This proves the claim and completes the proof.
\end{proof}

We now prove Theorem \ref{MainThm}.

\begin{proof}[Proof of Theorem \ref{MainThm}]
By Proposition \ref{ExistSweepoutProp}, there is a sweepout of $\tilde{\Omega}$. Let $X$ be the homotopically closed set of families in $\overline{\Omega^\prime}$ generated by this sweepout. As any element of $X$ is a sweepout of $\tilde{\Omega}$, invoking Proposition \ref{StrictLowBndRelEntProp} gives
$$
bM_{rel}(X)=\max\set{E_{rel}[\Gamma_+,\Gamma_-],0}< m_{rel}(X).
$$
Hence, appealing to Theorem \ref{MinMaxThm}, one obtains a $C^2$-asymptotically conical (possibly singular) self-expander $\Gamma_0$ with $\cC(\Gamma_0)=\cC$ and $\Gamma_-\preceq\Gamma_0\preceq\Gamma_+$ and that has codimension-$7$ singular set and $E_{rel}[\Gamma_0,\Gamma_-]=m_{rel}(X)$, in particular, $\Gamma_0\neq\Gamma_\pm$.
\end{proof}

We now use Theorem \ref{MainThm} to prove Corollaries \ref{RefinedMainCor} and \ref{TopologyCor}.

\begin{proof}[Proof of Corollary \ref{RefinedMainCor}]
Let $\Gamma_0^1$ be the self-expander produced by applying Theorem \ref{MainThm} to $\Gamma_\pm$. If $\Gamma_0^1\cap\Omega\neq\emptyset$, then we are done with $\Gamma_0=\Gamma_0^1$. Otherwise, $\Gamma_0^1$ is the union of some components of $\Gamma_-$ and $\Gamma_+$. Thus, $\Gamma_0^1$ is a strictly stable $C^2$-asymptotically conical self-expander with $\mathcal{C}(\Gamma_0^1)=\cC$ and $\Gamma_-\preceq\Gamma_0^1\preceq\Gamma_+$. As common components of $\Gamma_-$ and $\Gamma_+$ are also components of $\Gamma_0^1$ and $\Gamma_0^1\neq\Gamma_\pm$, $\Gamma_-$ shares strictly more common components with $\Gamma_0^1$ than $\Gamma_+$. Apply Theorem \ref{MainThm} to $\Gamma_-$ and $\Gamma_0^1$ and iterate the previous arguments. After iterating $l$ times, we end up with two situations. The first situation is that $\Gamma_0^l\cap\Omega\neq\emptyset$. Thus we stop and set $\Gamma_0=\Gamma_0^l$. The second is that except one component of $\Gamma_-$ all others are also components of $\Gamma_0^l$. Again, applying Theorem \ref{MainThm} to $\Gamma_-$ and $\Gamma_0^l$ gives a $C^2$-asymptotically conical self-expander $\Gamma_0$ with $\mathcal{C}(\Gamma_0)=\cC$ and so $\Gamma_-\preceq\Gamma_0\preceq\Gamma_0^l\preceq\Gamma_+$ and 
$$
\Gamma_0\cap\Omega\supseteq\Gamma_0\cap \left(\Omega_-(\Gamma_0^l)\setminus\overline{\Omega_-(\Gamma_-)}\right)\neq\emptyset.
$$
This proves the first claim.

To see the second, suppose $\cC$ is generic. By Schoen-Simon's \cite{SchoenSimon} compactness and uniform asymptotic estimates \cite[Proposition 3.3]{BWProper}, the set of stable self-expanders $C^2$-asymptotic to $\cC$ is a finite set and consists only of strictly stable elements. Hence one can apply Theorem \ref{MainThm} finitely many times and obtain an unstable self-expander trapped between $\Gamma_-$ and $\Gamma_+$.
\end{proof}

\begin{proof}[Proof of Corollary \ref{TopologyCor}]
Let $\mathcal{C}_0$ be a generic $C^3$-regular cone that is very close to a rotationally symmetric double cone with sufficiently large apertures. If $\cC$ is a cone $C^3$-close to $\cC_0$, then the results of \cite{BWBanach} ensure that $\cC$ is generic as well. Thus, one appeals to \cite{EHAnn} to get a unique disconnected smooth self-expander $\Sigma_0$ asymptotic to $\cC$. In particular, $\Sigma_0$ is strictly stable. Using an appropriate rotationally symmetric self-expander as a barrier and a minimizing argument for the expander functional (which is sketched by Ilmanen \cite{IlmanenNotes} and carried out by Ding \cite{Ding}), one obtains a connected strictly stable self-expander $\Sigma_1$ asymptotic to $\cC$ -- See \cite[Lemma 8.2]{BWDegree} for details. By the strong maximum principle we may assume $\Sigma_0\preceq\Sigma_1$. Hence, by the uniqueness of disconnected self-expanders asymptotic to $\cC$, Theorem \ref{MainThm} implies that there is a third connected smooth self-expander $\Sigma$ asymptotic to $\cC$ and $\Sigma_0\preceq\Sigma\preceq\Sigma_1$. This completes the proof.
\end{proof}

\appendix

\section{Estimates on the flow of vector fields} \label{FlowAppendix}
We collect here various estimates and properties of the vector fields and associated flows used in Section \ref{FlowSec}. Throughout this appendix, let $\mathbf{Y}=\alpha\mathbf{Y}_0+\mathbf{Y}_1\in\mathcal{Y}^-(\overline{\Omega^\prime})$ and $\set{\Phi(t)}_{t\geq 0}$ be the vector field and the family of diffeomorphisms in $\overline{\Omega^\prime}$, respectively, as given in Proposition \ref{FirstVarProp}. For convenience, we define the following Banach space 
$$
C^0_d(Y)=\set{f\in C^0_{loc}(Y)\colon \Vert f\Vert_{C^0_d}<\infty}
$$
with the weighted norm
$$
\Vert f\Vert_{C^0_d}=\sup_{p\in Y} (1+|\mathbf{x}(p)|)^d |f(p)|.
$$

First is an estimate on the asymptotic properties of the vector field $\mathbf{Y}$.

\begin{lem} \label{DecayVectorLem}
There is a constant $\tilde{C}_0=\tilde{C}_0(\Omega^\prime,\Gamma_-,M_0)>0$ so that 
$$
\Vert \mathbf{Y}-\alpha\chi|\mathbf{x}|^{-2}\mathbf{x}\Vert_{C^0_{-3}}+\sum_{l=1}^3\Vert\nabla^l\mathbf{Y}\Vert_{C^0_{-2}} \leq \tilde{C}_0.
$$
\end{lem}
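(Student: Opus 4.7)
The strategy is to decompose $\mathbf{Y}$ into its two building blocks and estimate each piece separately, drawing on Lemma \ref{VectorFieldLem} for the structure of $\mathbf{N}$ and on the definition of the $\mathcal{Y}_t$-norm for the tangential part.

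First I will write
\begin{equation*}
\mathbf{Y}-\alpha\chi|\mathbf{x}|^{-2}\mathbf{x}=-\alpha\chi|\mathbf{x}|^{-2}(\mathbf{x}\cdot\mathbf{N})\mathbf{N}+\mathbf{Y}_1.
\end{equation*}
For the first summand, the bound $|\mathbf{x}\cdot\mathbf{N}|\leq\bar{C}_2(1+|\mathbf{x}|)^{-1}$ from Lemma \ref{VectorFieldLem}, together with $|\mathbf{N}|=1$ and the fact that $\chi$ is supported in $\mathbb{R}^{n+1}\setminus\bar{B}_{\mathcal{R}_1}$ (so $(1+|\mathbf{x}|)/|\mathbf{x}|\leq 1+\mathcal{R}_1^{-1}$ there), gives
\begin{equation*}
(1+|\mathbf{x}|)^3\left|\alpha\chi|\mathbf{x}|^{-2}(\mathbf{x}\cdot\mathbf{N})\mathbf{N}\right|\leq |\alpha|\bar{C}_2(1+|\mathbf{x}|)^2|\mathbf{x}|^{-2}\leq C(\mathcal{R}_1)|\alpha|\bar{C}_2,
\end{equation*}
which is controlled by $M_0$. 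The bound on $\Vert\mathbf{Y}_1\Vert_{C^0_{-3}}$ is immediate from $\Vert\mathbf{Y}_1\Vert_{\mathcal{Y}_t}\leq M_0$.

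For the derivative terms $\nabla^l\mathbf{Y}=\alpha\nabla^l\mathbf{Y}_0+\nabla^l\mathbf{Y}_1$ with $1\leq l\leq 3$, the $\mathbf{Y}_1$-contribution is bounded by $M_0$ directly from the definition of $\Vert\mathbf{Y}_1\Vert_{\mathcal{Y}_t}$. For $\nabla^l\mathbf{Y}_0$ I will apply the product rule to $\mathbf{Y}_0=\chi|\mathbf{x}|^{-2}(\mathbf{x}-(\mathbf{x}\cdot\mathbf{N})\mathbf{N})$ and distribute derivatives among the factors $\chi$, $|\mathbf{x}|^{-2}$, $\mathbf{x}$ and $(\mathbf{x}\cdot\mathbf{N})\mathbf{N}$. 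Key observations:
\begin{itemize}
\item derivatives falling on $\chi$ produce terms supported in the compact annulus $\bar{B}_{\mathcal{R}_1+1}\setminus B_{\mathcal{R}_1}$, so they are bounded by a universal constant;
\item $|\nabla^k|\mathbf{x}|^{-2}|\leq C_k|\mathbf{x}|^{-2-k}$ on $\mathrm{supp}(\chi)$;
\item $\nabla\mathbf{x}=I$ and higher Euclidean derivatives of $\mathbf{x}$ vanish;
\item by Item \eqref{C3EstimateItem} of Lemma \ref{VectorFieldLem} one has $|\nabla^i\mathbf{N}|\leq\bar{C}_2(1+|\mathbf{x}|)^{-1}$ for $1\leq i\leq 3$ and $|\mathbf{x}\cdot\mathbf{N}|\leq\bar{C}_2(1+|\mathbf{x}|)^{-1}$.
\end{itemize}
A term-by-term accounting then shows that each component of $\nabla^l\mathbf{Y}_0$ on $\mathrm{supp}(\chi)$ is $O((1+|\mathbf{x}|)^{-2})$ for $l=1,2,3$, with constants depending only on $\Gamma_-$ and $\Omega^\prime$ through $\bar{C}_2$ and $\mathcal{R}_1$.

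Combining these bounds with $|\alpha|\leq M_0$ produces the required $\tilde{C}_0$ depending on $\Omega^\prime$, $\Gamma_-$ and $M_0$. The main (minor) obstacle is simply keeping track of the multinomial expansion for $\nabla^l\mathbf{Y}_0$ and verifying that no term is worse than $(1+|\mathbf{x}|)^{-2}$; the geometric content is entirely contained in the asymptotic estimates on $\mathbf{N}$ already established in Lemma \ref{VectorFieldLem}.
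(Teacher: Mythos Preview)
Your proposal is correct and follows essentially the same approach as the paper: both decompose $\mathbf{Y}=\alpha\mathbf{Y}_0+\mathbf{Y}_1$, handle $\mathbf{Y}_1$ directly via the $\mathcal{Y}_t$-norm, and estimate $\mathbf{Y}_0$ using the asymptotic bounds on $\mathbf{N}$ from Lemma~\ref{VectorFieldLem}. The only organizational difference is that the paper packages the derivative computation for $\mathbf{Y}_0$ into a recursive formula involving an auxiliary term $\mathbf{P}_i=-(\mathbf{e}_i\cdot\mathbf{N})\mathbf{N}-(\mathbf{x}\cdot\nabla_i\mathbf{N})\mathbf{N}-(\mathbf{x}\cdot\mathbf{N})\nabla_i\mathbf{N}$ (showing $\sum_{l=0}^2\Vert\nabla^l\mathbf{P}_i\Vert_{C^0}$ is bounded and then proceeding inductively), whereas you invoke the multinomial product rule directly; the content is the same.
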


\begin{proof}
Write
$$
\mathbf{Y}-\alpha\chi|\mathbf{x}|^{-2}\mathbf{x}=\mathbf{Y}_1-\alpha\chi|\mathbf{x}|^{-2}(\mathbf{x}\cdot\mathbf{N})\mathbf{N}.
$$
The hypotheses on $\mathbf{Y}_1$ and Lemma \ref{VectorFieldLem} ensure that there is a constant $\tilde{C}_0^\prime=\tilde{C}_0^\prime(\Omega^\prime,\Gamma_-, M_0)>1$ so that
$$
\Vert\mathbf{Y}-\alpha\chi|\mathbf{x}|^{-2}\mathbf{x}\Vert_{C^0_{-3}} \leq \tilde{C}_0^\prime.
$$

Next, we compute, on $\overline{\Omega^\prime}\setminus\bar{B}_{\mathcal{R}_1+1}$,
$$
\nabla_i\mathbf{Y}_0=|\mathbf{x}|^{-2}\left(\mathbf{e}_i-2\mathbf{x}_i\mathbf{Y}_0\right)+|\mathbf{x}|^{-2}\mathbf{P}_{i},
$$
\begin{align*}
\nabla_j\nabla_i\mathbf{Y}_0 &= |\mathbf{x}|^{2} \nabla_i \mathbf{Y}_0 \nabla_j |\mathbf{x}|^{-2}+|\mathbf{x}|^{-2} \nabla_j \left(\mathbf{e}_i-2\mathbf{x}_i\mathbf{Y}_0\right) +|\mathbf{x}|^{-2}  \nabla_j \mathbf{P}_i 
\\ &=-2|\mathbf{x}|^{-2}\left(\delta_{ij}\mathbf{Y}_0+\mathbf{x}_j\nabla_i\mathbf{Y}_0+\mathbf{x}_i\nabla_j\mathbf{Y}_0 \right)+|\mathbf{x}|^{-2}\nabla_j\mathbf{P}_{i},
\end{align*}
and
\begin{multline*}
\nabla_k\nabla_j\nabla_i\mathbf{Y}_0=-2|\mathbf{x}|^{-2} \left(\mathbf{x}_k\nabla_j\nabla_i\mathbf{Y}_0+\mathbf{x}_i\nabla_j\nabla_k\mathbf{Y}_0+\mathbf{x}_j\nabla_k\nabla_i\mathbf{Y}_0\right)\\
-2|\mathbf{x}|^{-2}\left(\delta_{ij}\nabla_k\mathbf{Y}_0+\delta_{jk}\nabla_i\mathbf{Y}_0+\delta_{ki}\nabla_j\mathbf{Y}_0\right)+|\mathbf{x}|^{-2}\nabla_k\nabla_j\mathbf{P}_i
\end{multline*}
where $\mathbf{e}_i$ is the $i$-th coordinate vector, $\mathbf{x}_i=\mathbf{x}\cdot\mathbf{e}_i$, and
$$
\mathbf{P}_i=-(\mathbf{e}_i\cdot\mathbf{N})\mathbf{N}-(\mathbf{x}\cdot\nabla_i\mathbf{N})\mathbf{N}-(\mathbf{x}\cdot\mathbf{N})\nabla_i\mathbf{N}.
$$
It is readily checked that there is a constant $C=C(\Omega^\prime,\Gamma_-)>0$ so that, for all $i$,
$$
\sum_{l=0}^2\Vert\nabla^l\mathbf{P}_i\Vert_{C^0} \leq C.
$$
Thus, inductively, it follows that there is a constant $C^\prime=C^\prime(\Omega^\prime,\Gamma_-)>0$ so that
$$
\sum_{l=1}^3 \Vert\nabla^l\mathbf{Y}_0\Vert_{C^0_{-2}} \leq C^\prime,
$$
and this together with the hypotheses on $\mathbf{Y}_1$ implies the desired estimates on $\nabla^l\mathbf{Y}$ for $1\leq l \leq 3$.
\end{proof}

\begin{lem} \label{FlowEstLem}
There is a constant $\tilde{C}_1=\tilde{C}_1(\Omega^\prime, \Gamma_-,M_0,T)>0$ so that, for all $0\leq t \leq T$, 
$$
\Vert|\Phi(t)|^2-|\mathbf{x}|^2-2\alpha t\Vert_{C^0_{-2}}+\Vert\nabla\Phi(t)-\mathbf{I}_{n+1}\Vert_{C^0_{-2}}+\sum_{l=2,3}\Vert\nabla^l\Phi(t)\Vert_{C^0_{-2}} \leq \tilde{C}_1
$$
where $\mathbf{I}_{n+1}$ is the $(n+1)\times(n+1)$ identity matrix.
\end{lem}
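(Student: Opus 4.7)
The plan is to analyze the flow ODE $\frac{d}{dt}\Phi(t,p) = \mathbf{Y}(\Phi(t,p))$ with $\Phi(0,p) = \mathbf{x}(p)$ directly, exploiting the decomposition from Lemma \ref{DecayVectorLem} which isolates the ``radial'' principal part $\alpha\chi|\mathbf{x}|^{-2}\mathbf{x}$ from a remainder of higher decay order. As a preliminary step, the bound $|\mathbf{Y}| \leq C(1+|\mathbf{x}|)^{-1}$ (combining $|\mathbf{Y}_0|\leq|\mathbf{x}|^{-1}$ with $|\mathbf{Y}_1|\leq M_0(1+|\mathbf{x}|)^{-3}$) plugged into $\frac{d}{dt}|\Phi(t)|^2 = 2\Phi(t)\cdot\mathbf{Y}(\Phi(t))$ gives $\big||\Phi(t,p)|^2-|\mathbf{x}(p)|^2\big| \leq 2CT$ for $t\in[0,T]$. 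In particular, there is a radius $R_0 = R_0(\Omega^\prime,\Gamma_-,M_0,T)$ such that for $|\mathbf{x}(p)| \geq R_0$ one has $|\Phi(t,p)| \geq |\mathbf{x}(p)|/2$ and $\chi(\Phi(t,p)) = 1$ throughout $t \in [0,T]$, so we may treat the far-field region and the bounded complement separately.

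For the first estimate, in the far-field region the decomposition of Lemma \ref{DecayVectorLem} yields
$$\tfrac{d}{dt}|\Phi(t)|^2 = 2\alpha\chi(\Phi(t))\bigl(1 - |\Phi(t)|^{-2}(\Phi(t)\cdot\mathbf{N}(\Phi(t)))^2\bigr) + 2\Phi(t)\cdot(\mathbf{Y}-\alpha\chi|\mathbf{x}|^{-2}\mathbf{x})(\Phi(t)).$$
Using $\chi(\Phi(t))=1$, the bound $|\mathbf{x}\cdot\mathbf{N}|\leq \bar{C}_2(1+|\mathbf{x}|)^{-1}$ from Lemma \ref{VectorFieldLem}, the decay $|\mathbf{Y}-\alpha\chi|\mathbf{x}|^{-2}\mathbf{x}|\leq \tilde{C}_0(1+|\mathbf{x}|)^{-3}$ from Lemma \ref{DecayVectorLem}, and the comparison $|\Phi(t)|\sim|\mathbf{x}|$ from the preliminary step, the right hand side equals $2\alpha + O((1+|\mathbf{x}|)^{-2})$. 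Integrating over $[0,t]$ yields $\big||\Phi(t)|^2-|\mathbf{x}|^2-2\alpha t\big| \leq CT(1+|\mathbf{x}|)^{-2}$ for $|\mathbf{x}|\geq R_0$; on the bounded complement the flow is smooth with uniform bounds, supplying the required uniform estimate there. Together these give the first half of the lemma.

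The derivative bounds come from successive differentiation of the flow ODE. The Jacobian $\mathbf{A}(t,p)=\nabla\Phi(t,p)$ satisfies the linear ODE $\frac{d}{dt}\mathbf{A}(t) = \nabla\mathbf{Y}(\Phi(t))\mathbf{A}(t)$ with $\mathbf{A}(0)=\mathbf{I}_{n+1}$, and Lemma \ref{DecayVectorLem} combined with the preliminary comparison gives $|\nabla\mathbf{Y}(\Phi(t,p))|\leq C(1+|\mathbf{x}(p)|)^{-2}$. Gronwall's inequality then yields $|\mathbf{A}(t,p)|\leq e^{CT}$, and hence $|\mathbf{A}(t,p)-\mathbf{I}_{n+1}|\leq CT e^{CT}(1+|\mathbf{x}(p)|)^{-2}$. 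For $l=2,3$, differentiating the ODE in $p$ gives $\frac{d}{dt}\nabla^l\Phi(t) = \nabla\mathbf{Y}(\Phi(t))\nabla^l\Phi(t) + F_l$ with $\nabla^l\Phi(0)=0$, where $F_l$ is a polynomial in $\nabla^j\mathbf{Y}(\Phi)$ and $\nabla^i\Phi$ with $j\leq l$ and $i<l$. Using the bounds from Lemma \ref{DecayVectorLem} and the inductive hypothesis, $|F_l|\leq C(1+|\mathbf{x}|)^{-2}$, so another Gronwall application produces $|\nabla^l\Phi(t,p)|\leq CT(1+|\mathbf{x}(p)|)^{-2}$. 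The main obstacle is the first estimate, where extracting the precise $2\alpha t$ term requires exploiting the quadratic cancellation $\mathbf{Y}_0\cdot\mathbf{x} = \chi - \chi|\mathbf{x}|^{-2}(\mathbf{x}\cdot\mathbf{N})^2$ together with the decay of $\mathbf{x}\cdot\mathbf{N}$ from Lemma \ref{VectorFieldLem}; the higher-derivative estimates are then standard linear-ODE iteration.
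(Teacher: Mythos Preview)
Your proof is correct and follows essentially the same approach as the paper: both establish the comparability $(1+|\Phi(t,p)|)^{-1}\lesssim(1+|\mathbf{x}(p)|)^{-1}$ from the crude bound $\big||\Phi|^2-|\mathbf{x}|^2\big|\leq C$, then use Lemma~\ref{DecayVectorLem} to show $\mathbf{x}\cdot\mathbf{Y}=\alpha+O((1+|\mathbf{x}|)^{-2})$ and integrate, and finally differentiate the flow ODE and iterate with Gronwall for the higher derivatives. The paper organizes the first step slightly more compactly by bounding $|\mathbf{x}\cdot\mathbf{Y}-\alpha|\leq |\mathbf{x}\cdot(\mathbf{Y}-\alpha\chi|\mathbf{x}|^{-2}\mathbf{x})|+|\alpha(1-\chi)|$ directly rather than splitting into far-field and bounded regions, but this is a cosmetic difference.
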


\begin{proof}
As $\frac{\partial}{\partial t}\Phi(t,p)=\mathbf{Y}(\Phi(t,p))$ and $\Phi(0,p)=p$, it follows that
\begin{equation} \label{DistanceDiffEqn}
\frac{\partial}{\partial t}|\Phi(t,p)|^2=2\Phi(t,p)\cdot\mathbf{Y}(\Phi(t,p)) \mbox{ and } |\Phi(0,p)|^2=|\mathbf{x}(p)|^2.
\end{equation}
By the triangle inequality and Lemma \ref{DecayVectorLem},
\begin{align*}
|\mathbf{x}\cdot\mathbf{Y}-\alpha| & \leq |\mathbf{x}\cdot (\mathbf{Y}-\alpha\chi |\mathbf{x}|^{-2}\mathbf{x})|+|\alpha(1-\chi)| \\
& \leq \left(\tilde{C}_0+M_0(2+\mathcal{R}_1)^2\right)(1+|\mathbf{x}|)^{-2}.
\end{align*}
Thus, integrating \eqref{DistanceDiffEqn} gives that, for all $(t,p)\in [0,T]\times\overline{\Omega^\prime}$,
$$
|\Phi(t,p)|^2 \geq |\mathbf{x}(p)|^2-2\left(\tilde{C}_0+M_0\left(1+(2+\mathcal{R}_1)^2\right)\right)T
$$
and so there is a constant $C=C(\Omega^\prime, \Gamma_-, M_0, T)>1$ so that
\begin{equation} \label{DistanceEqn}
C(1+|\Phi(t,p)|)\geq (1+|\mathbf{x}(p)|).
\end{equation}
Thus one readily computes, for $(t,p)\in [0,T]\times\overline{\Omega^\prime}$,
\begin{align*}
\left||\Phi(t,p)|^2-|\mathbf{x}(p)|^2-2\alpha t \right| & \leq \int_0^{t} \left|\frac{\partial}{\partial \tau}|\Phi(\tau,p)|^2-2\alpha\right| d\tau \\
& =2\int_0^{t} |\mathbf{x}\cdot\mathbf{Y}-\alpha|(\Phi(\tau,p)) \, d\tau \\
& \leq 2\left(\tilde{C}_0+M_0(2+\mathcal{R}_1)^2\right)\int_0^{t} (1+|\Phi(\tau,p)|)^{-2}\, d\tau \\
& \leq 2\left(\tilde{C}_0+M_0(2+\mathcal{R}_1)^2\right) T C^2 (1+|\mathbf{x}(p)|)^{-2}.
\end{align*}
Hence, as long as $\tilde{C}_1\geq 2(\tilde{C}_0+M_0(2+\mathcal{R}_1)^2)TC^2$,
$$
\Vert|\Phi(t)|^2-|\mathbf{x}|^2-2\alpha t\Vert_{C^0_{-2}} \leq \tilde{C}_1.
$$
	
Similarly, differentiating the equation for $\Phi$ up to three times gives that
$$
\frac{\partial}{\partial t} \nabla_i\Phi=(\nabla\mathbf{Y}\circ\Phi)\cdot\nabla_i\Phi,
$$
$$
\frac{\partial}{\partial t} \nabla_j\nabla_i\Phi(t,p) = (\nabla\mathbf{Y}\circ\Phi)\cdot\nabla_j\nabla_i\Phi+(\nabla^2\mathbf{Y}\circ\Phi)(\nabla_i\Phi,\nabla_j\Phi),
$$
and
\begin{align*}
\frac{\partial}{\partial t}\nabla_k\nabla_j\nabla_i\Phi=& (\nabla\mathbf{Y}\circ\Phi)\cdot\nabla_k\nabla_j\nabla_i\Phi+(\nabla^2\mathbf{Y}\circ\Phi)(\nabla_k\Phi,\nabla_j\nabla_i\Phi) \\
&+(\nabla^2\mathbf{Y}\circ\Phi)(\nabla_i\Phi,\nabla_k\nabla_j\Phi)+(\nabla^2\mathbf{Y}\circ\Phi)(\nabla_j\Phi,\nabla_k\nabla_i\Phi)\\
& +(\nabla^3\mathbf{Y}\circ\Phi)(\nabla_i\Phi,\nabla_j\Phi,\nabla_k\Phi).
\end{align*}
Observe that by our hypotheses on $\mathbf{Y}$ and the standard ODE theory
$$
\sup_{0\leq t\leq T}\Vert\Phi(t)\Vert_{C^3} \leq C^\prime
$$
where $C^\prime=C^\prime(\Omega^\prime, \Gamma_-,T)>0$. Hence, as $\nabla\Phi(0,p)=\mathbf{I}_{n+1}$ and $\nabla^l\Phi(0,p)=\mathbf{0}$ for $l=2,3$, integrating the above equations and appealing to Lemma \ref{DecayVectorLem} and \eqref{DistanceEqn}, gives the desired estimates for the covariant derivatives of $\Phi(t)$.
\end{proof}

\begin{cor} \label{WeightCor}
There is a constant $\tilde{C}_2=\tilde{C}_2(\Omega^\prime,\Gamma_-, M_0,T)>0$ so that, for any $0\leq a,t \leq T$,
\begin{multline*}
e^{\frac{1}{4}(|\Phi(t,p)|^2-|\mathbf{x}(p)|^2)}= e^{\frac{1}{4}(|\Phi(a,p)|^2-|\mathbf{x}(p)|^2)} \left(1+\frac{1}{2}(t-a)(\mathbf{x}\cdot\mathbf{Y})\circ\Phi(a,p)\right) \\
+(t-a)^2 Q_0(a,t,p) \in \mathfrak{Y}(\overline{\Omega^\prime})
\end{multline*}
where $Q_0(a,t)=Q_0(a,t,\cdot)$ satisfies 
$$
\left\Vert Q_0(a,t)-\frac{1}{4}\alpha^2\int_0^1 e^{\frac{1}{2}\alpha \left(a+(t-a)\rho\right)} (1-\rho)\, d\rho \right\Vert_{C^0_{-2}}+\Vert\nabla Q_0(a,t)\Vert_{C^0_{-1}} \leq \tilde{C}_2.
$$
\end{cor}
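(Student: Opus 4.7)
The plan is to apply Taylor's theorem with integral remainder to the function $s \mapsto h(s,p) := e^{F(s,p) - F(0,p)}$ where $F(s,p) = \frac{1}{4}|\Phi(s,p)|^2$, expanded around $s = a$, and then extract the desired decomposition by carefully tracking asymptotic behavior. First, I compute
$$\partial_s h = F'(s)h, \quad \partial_s^2 h = \left(F''(s) + F'(s)^2\right)h,$$
where (omitting $p$) $F'(s) = \tfrac{1}{2}(\mathbf{x}\cdot\mathbf{Y})\circ\Phi(s)$ and $F''(s) = \tfrac{1}{2}\bigl(|\mathbf{Y}|^2 + (\nabla\mathbf{Y}\cdot\mathbf{x})\cdot\mathbf{Y}\bigr)\circ\Phi(s)$ follow by differentiating $\Phi'(s) = \mathbf{Y}(\Phi(s))$. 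Taylor's formula yields
$$h(t,p) = h(a,p)\bigl[1 + (t-a)F'(a,p)\bigr] + (t-a)^2\, Q_0(a,t,p)$$
with
$$Q_0(a,t,p) = \int_0^1 \bigl(F'' + F'^2\bigr)(a+(t-a)\rho,p)\, h(a+(t-a)\rho,p)\,(1-\rho)\,d\rho,$$
which is exactly the claimed decomposition after substituting back $F'(a,p) = \tfrac{1}{2}(\mathbf{x}\cdot\mathbf{Y})\circ\Phi(a,p)$.

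Next, I combine the pointwise decay estimates from Lemma \ref{DecayVectorLem} with the flow bounds of Lemma \ref{FlowEstLem}. From Lemma \ref{DecayVectorLem}, $|\mathbf{x}\cdot\mathbf{Y} - \alpha| = O((1+|\mathbf{x}|)^{-2})$, $|\mathbf{Y}| = O((1+|\mathbf{x}|)^{-1})$, and $|\nabla\mathbf{Y}| = O((1+|\mathbf{x}|)^{-2})$. The pointwise comparability $1 + |\Phi(s,p)| \sim 1 + |\mathbf{x}(p)|$ (cf.\ \eqref{DistanceEqn} in the proof of Lemma \ref{FlowEstLem}) transports these into $F'(s,p) = \tfrac{1}{2}\alpha + O((1+|\mathbf{x}|)^{-2})$ and $F''(s,p) = O((1+|\mathbf{x}|)^{-2})$, uniformly for $0 \leq s \leq T$. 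Similarly, the Lemma \ref{FlowEstLem} bound $\||\Phi(s)|^2 - |\mathbf{x}|^2 - 2\alpha s\|_{C^0_{-2}} \leq \tilde{C}_1$ gives $h(s,p) = e^{\alpha s/2}\bigl(1 + O((1+|\mathbf{x}|)^{-2})\bigr)$. Substituting these expansions into the integrand of $Q_0$ and keeping only the leading term yields
$$Q_0(a,t,p) = \frac{1}{4}\alpha^2 \int_0^1 e^{\frac{1}{2}\alpha(a+(t-a)\rho)}(1-\rho)\,d\rho + O\bigl((1+|\mathbf{x}|)^{-2}\bigr),$$
which is exactly the claimed $C^0_{-2}$ estimate.

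For the gradient bound $\|\nabla Q_0(a,t)\|_{C^0_{-1}} \leq \tilde{C}_2$, I differentiate under the integral sign. The constant-in-$p$ leading term contributes nothing, so it suffices to estimate $\nabla[(F'' + F'^2)h]$ at each $s = a+(t-a)\rho$. Using the chain rule together with $\|\nabla\Phi(s) - \mathbf{I}_{n+1}\|_{C^0_{-2}} \leq \tilde{C}_1$ and $\|\nabla^2\Phi(s)\|_{C^0_{-2}} \leq \tilde{C}_1$ from Lemma \ref{FlowEstLem}, and the bounds $\|\nabla^l\mathbf{Y}\|_{C^0_{-2}} \leq \tilde{C}_0$ for $l = 1,2$ from Lemma \ref{DecayVectorLem}, one verifies that each factor and its derivative carry at least an extra $(1+|\mathbf{x}|)^{-1}$ decay, so the product's gradient lies in $C^0_{-1}$. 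Finally, to conclude $Q_0(a,t,\cdot) \in \mathfrak{Y}(\overline{\Omega^\prime})$, write $Q_0 = c(a,t)\mathbf{1} + R(a,t,\cdot)$ where $c(a,t) = \tfrac{1}{4}\alpha^2 \int_0^1 e^{\alpha(a+(t-a)\rho)/2}(1-\rho)\,d\rho$; the remainder $R$ is independent of $\mathbf{v}\in\mathbb{S}^n$, bounded, and Lipschitz on $\overline{\Omega^\prime}$ by the gradient estimate, hence belongs to $\mathfrak{X}(\overline{\Omega^\prime})$, so $Q_0 \in \mathrm{span}_\mathbb{R}\{\mathbf{1}\} \oplus \mathfrak{X} \subset \mathfrak{Y}$.

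The routine but tedious step will be the gradient calculation, where several chain-rule compositions must be tracked simultaneously; however, all the ingredients are already packaged into the weighted $C^0_{-d}$ norms of Lemmas \ref{DecayVectorLem} and \ref{FlowEstLem}, so no genuinely new analysis is needed.
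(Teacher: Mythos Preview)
Your approach is essentially identical to the paper's: Taylor expansion of $f(t,p)=e^{\frac{1}{4}(|\Phi(t,p)|^2-|\mathbf{x}(p)|^2)}$ around $t=a$, the same integral remainder formula for $Q_0$, and the same splitting of the integrand into $\frac{1}{4}\alpha^2 e^{\alpha s/2}$ plus $O((1+|\mathbf{x}|)^{-2})$ via Lemmas~\ref{DecayVectorLem} and~\ref{FlowEstLem}.

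One correction: your final inclusion $\mathrm{span}_{\mathbb{R}}\{\mathbf{1}\}\oplus\mathfrak{X}\subset\mathfrak{Y}$ is false as stated. By definition $\mathfrak{Y}=\mathrm{span}_{\mathbb{R}}\{\mathbf{1}\}\oplus\mathfrak{Y}_0$ with $\mathfrak{Y}_0$ the closure of $C^0_c$, and a general element of $\mathfrak{X}$ need not lie in $\mathfrak{Y}_0$ (e.g.\ a bounded Lipschitz function oscillating at infinity). What saves you here is that your remainder $R=Q_0-c(a,t)$ actually satisfies $\|R\|_{C^0_{-2}}+\|\nabla R\|_{C^0_{-1}}<\infty$, so $\phi_{R,1}R\to R$ in $\mathfrak{X}$ as $R\to\infty$, giving $R\in\mathfrak{Y}_0$. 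You should also note (as the paper does) that the full conclusion $f(t,\cdot)\in\mathfrak{Y}(\overline{\Omega^\prime})$ follows by taking $a=0$, since then $f(a,\cdot)=1$ and $(\mathbf{x}\cdot\mathbf{Y})\in\mathfrak{Y}$ by the same decay reasoning.
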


\begin{proof}
It is convenient to set
$$
f(t,p)=e^{\frac{1}{4}(|\Phi(t,p)|^2-|\mathbf{x}(p)|^2)}.
$$
By the Taylor expansion and the chain rule,
$$
f(t,p)=f(a,p)\left(1+\frac{1}{2}(t-a)(\mathbf{x}\cdot\mathbf{Y})\circ\Phi(a,p)\right)+(t-a)^2 Q_0(a,t,p)
$$
where
$$
Q_0(a,t,p)= \frac{1}{4}\int_0^1 h(a+(t-a)\rho,p) f(a+(t-a)\rho,p)(1-\rho) \, d\rho
$$
and
$$
h(t,p)=\left((\mathbf{x}\cdot\mathbf{Y})^2+2|\mathbf{Y}|^2+2\mathbf{x}\cdot \nabla_{\mathbf{Y}}\mathbf{Y} \right)\circ\Phi(t,p).
$$
By Lemma \ref{DecayVectorLem} there is a constant $K=K(\Omega^\prime,\Gamma_-,M_0)>0$ so that
\begin{multline*}
\Vert(\mathbf{x}\cdot\mathbf{Y})^2-\alpha^2\Vert_{C^0_{-2}}+\Vert\nabla(\mathbf{x}\cdot\mathbf{Y})^2\Vert_{C^0_{-1}}+\Vert |\mathbf{Y}|^2\Vert_{C^0_{-2}}+\Vert\nabla |\mathbf{Y}|^2 \Vert_{C^0_{-3}} \\
+\Vert\mathbf{x}\cdot \nabla_{\mathbf{Y}}\mathbf{Y}\Vert_{C^0_{-2}} +\Vert\nabla(\mathbf{x}\cdot\nabla_{\mathbf{Y}}\mathbf{Y})\Vert_{C^0_{-2}}\leq K.
\end{multline*}
Together with estimates on $\Phi(t,p)$, Lemma \ref{FlowEstLem}, and the chain rule, it follows that
$$
\sup_{0\leq t\leq T} \left(\Vert h(t,\cdot)-\alpha^2\Vert_{C^0_{-2}} +\Vert\nabla h(t,\cdot)\Vert_{C^0_{-1}}\right)\leq C
$$
where $C=C(\Omega^\prime,\Gamma_-,M_0,T)>0$. Likewise, one uses Lemmas \ref{DecayVectorLem} and \ref{FlowEstLem} to obtain the estimates for $f$: 
$$
\sup_{0\leq t\leq T} \left(\Vert f(t,\cdot)-e^{\frac{1}{2}\alpha t}\Vert_{C^0_{-2}}+\Vert\nabla f(t,\cdot)\Vert_{C^0_{-1}}\right) \leq C^\prime
$$
where $C^\prime=C^\prime(\Omega^\prime,\Gamma_-,M_0,T)>0$. Hence, the desired estimate for $Q_0$ follows from combining these estimates. In particular, for all $0\leq a,t\leq T$, $Q_0(a,t)\in\mathfrak{Y}(\overline{\Omega^\prime})$. Again, appealing to Lemma \ref{DecayVectorLem} gives $\mathbf{x}\cdot\mathbf{Y}\in\mathfrak{Y}(\overline{\Omega^\prime})$. Thus, fixing $a=0$, it follows that $f(t,\cdot)\in\mathfrak{Y}(\overline{\Omega^\prime})$ for all $0\leq t\leq T$, completing the proof.
\end{proof}

\begin{cor} \label{JacobiCor}
There is a constant $\tilde{C}_3=\tilde{C}_3(\Omega^\prime,\Gamma_-,M_0,T)>0$ so that, for any $0\leq a,t\leq T$,
$$
J\Phi(t,p,\mathbf{v})=J\Phi(a,p,\mathbf{v})+(t-a)\frac{\partial}{\partial t}J\Phi(a,p,\mathbf{v})+(t-a)^2 Q_1(a,t,p,\mathbf{v})
$$
where $J\Phi(t,\cdot,\cdot),\frac{\partial}{\partial t}J\Phi(t,\cdot,\cdot)\in\mathfrak{Y}(\overline{\Omega^\prime})$ satisfy $J\Phi(0,p,\mathbf{v})=1$ and 
$$
\frac{\partial}{\partial t}J\Phi(0,p,\mathbf{v})=\Div \mathbf{Y}(p)-Q_{\nabla\mathbf{Y}}(p,\mathbf{v})=\Div \mathbf{Y}(p)-\nabla_{\mathbf{v}}\mathbf{Y}(p)\cdot\mathbf{v},
$$
and $Q_1(a,t)=Q_1(a,t,\cdot,\cdot)$ satisfies, for any $R\geq 0$,
$$
\Vert Q_1(a,t)\Vert_{\mathfrak{X}(\overline{\Omega^\prime}\setminus B_R)} \leq \tilde{C}_3(R+1)^{-2},
$$
so $Q_1(a,t)\in\mathfrak{Y}(\overline{\Omega^\prime})$.
\end{cor}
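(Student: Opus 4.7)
The plan is to treat $J\Phi(t,p,\mathbf{v})$ as the tangential Jacobian of $\Phi(t)$ restricted to the $n$-plane $\mathbf{v}^\perp$, i.e., if $\{\mathbf{e}_1,\ldots,\mathbf{e}_n\}$ is any orthonormal basis of $\mathbf{v}^\perp$, then
$$
J\Phi(t,p,\mathbf{v})=|d\Phi(t,p)(\mathbf{e}_1)\wedge\cdots\wedge d\Phi(t,p)(\mathbf{e}_n)|.
$$
Differentiating $\frac{\partial}{\partial t} d\Phi(t,p)=\nabla\mathbf{Y}(\Phi(t,p))\cdot d\Phi(t,p)$ and using the standard first variation formula for the Jacobian gives
$$
\frac{\partial}{\partial t}J\Phi(t,p,\mathbf{v})=J\Phi(t,p,\mathbf{v})\cdot\mathrm{tr}_{S_t(p,\mathbf{v})}\left(\nabla\mathbf{Y}\circ\Phi(t,p)\right),
$$
where $S_t(p,\mathbf{v})=d\Phi(t,p)(\mathbf{v}^\perp)$. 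At $t=0$, $d\Phi(0,p)=\mathbf{I}_{n+1}$ and $J\Phi(0,p,\mathbf{v})=1$, so $S_0(p,\mathbf{v})=\mathbf{v}^\perp$ and
$$
\frac{\partial}{\partial t}J\Phi(0,p,\mathbf{v})=\mathrm{tr}_{\mathbf{v}^\perp}\nabla\mathbf{Y}(p)=\Div\mathbf{Y}(p)-\nabla_{\mathbf{v}}\mathbf{Y}(p)\cdot\mathbf{v}.
$$

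Next I would apply Taylor's theorem with integral remainder at time $a$:
$$
J\Phi(t,p,\mathbf{v})=J\Phi(a,p,\mathbf{v})+(t-a)\frac{\partial}{\partial t}J\Phi(a,p,\mathbf{v})+(t-a)^2 Q_1(a,t,p,\mathbf{v})
$$
with
$$
Q_1(a,t,p,\mathbf{v})=\int_0^1 \frac{\partial^2}{\partial s^2}J\Phi(s,p,\mathbf{v})\bigg|_{s=a+(t-a)\rho} (1-\rho)\, d\rho.
$$
Differentiating once more gives that $\frac{\partial^2}{\partial s^2}J\Phi$ is a polynomial expression in $\nabla\mathbf{Y}\circ\Phi$, $\nabla^2\mathbf{Y}\circ\Phi$, $d\Phi$, and $J\Phi$. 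By Lemma \ref{DecayVectorLem}, $\nabla\mathbf{Y}$ and $\nabla^2\mathbf{Y}$ decay like $(1+|\mathbf{x}|)^{-2}$; by Lemma \ref{FlowEstLem} (specifically the estimate \eqref{DistanceEqn}), $(1+|\Phi(t,p)|)^{-1}$ is comparable to $(1+|\mathbf{x}(p)|)^{-1}$ uniformly in $0\leq t\leq T$; and $J\Phi$ and $d\Phi$ are uniformly bounded on $[0,T]$. Combining these, one obtains a pointwise bound
$$
\left|\frac{\partial^2}{\partial s^2}J\Phi(s,p,\mathbf{v})\right|\leq C\, (1+|\mathbf{x}(p)|)^{-2}
$$
uniformly on $[0,T]$, for some $C=C(\Omega^\prime,\Gamma_-,M_0,T)$, which gives the desired $C^0$ decay for $Q_1$.

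The main work is upgrading this pointwise decay to the $\mathfrak{X}$ estimate. This requires Lipschitz control in $p$ and $\mathbf{v}$ on both $\frac{\partial^2}{\partial s^2}J\Phi$ and its spherical gradient, together with the extra factor $(1+|\mathbf{x}|)$ in the third term of $\|\cdot\|_{\mathfrak{X}}$. Differentiating the expression for $\frac{\partial^2}{\partial s^2}J\Phi$ in $p$ produces terms involving $\nabla^3\mathbf{Y}\circ\Phi$ and $\nabla^2\Phi$, which are controlled by Lemmas \ref{DecayVectorLem} and \ref{FlowEstLem}; each such term again carries the factor $(1+|\mathbf{x}|)^{-2}$ through the decay of derivatives of $\mathbf{Y}$. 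The spherical dependence is only through $S_s(p,\mathbf{v})=d\Phi(s,p)(\mathbf{v}^\perp)$, so differentiating in $\mathbf{v}$ amounts to replacing $\mathbf{v}$ by a tangent vector on $\mathbb{S}^n$; this produces one factor of $\nabla\mathbf{Y}\circ\Phi$ or $\nabla^2\mathbf{Y}\circ\Phi$, yielding an additional $(1+|\mathbf{x}|)^{-2}$ decay. Assembling these estimates gives
$$
\Vert Q_1(a,t)\Vert_{\mathfrak{X}(\overline{\Omega^\prime}\setminus B_R)}\leq \tilde{C}_3(R+1)^{-2},
$$
and in particular $Q_1(a,t)\in \mathfrak{X}(\overline{\Omega^\prime})$ with norm on $\overline{\Omega^\prime}\setminus B_R$ vanishing as $R\to\infty$.

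Finally I would verify the membership claims in $\mathfrak{Y}(\overline{\Omega^\prime})$. Since the $\mathfrak{X}$ norm of $Q_1(a,t)$ on $\overline{\Omega^\prime}\setminus B_R$ tends to $0$, a standard cutoff argument (writing $Q_1=\phi_R Q_1+(1-\phi_R)Q_1$, noting $\phi_R Q_1\in C^0_c\subset \mathfrak{W}$ and that $(1-\phi_R)Q_1$ has arbitrarily small $\mathfrak{X}$ norm) places $Q_1(a,t)\in \mathfrak{Y}_0(\overline{\Omega^\prime})\subset \mathfrak{Y}(\overline{\Omega^\prime})$. For $\frac{\partial}{\partial t}J\Phi(0,\cdot,\cdot)=\Div\mathbf{Y}-Q_{\nabla\mathbf{Y}}$, Lemma \ref{DecayVectorLem} provides $O((1+|\mathbf{x}|)^{-2})$ decay for both terms and for their derivatives, so the same cutoff argument places it in $\mathfrak{Y}_0$. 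Taking $a=0$ in the Taylor expansion and noting that $J\Phi(0,p,\mathbf{v})=1=\mathbf{1}$ then gives $J\Phi(t,\cdot,\cdot)=\mathbf{1}+(\text{element of }\mathfrak{Y}_0)\in \mathfrak{Y}(\overline{\Omega^\prime})$, and the same holds for each $\frac{\partial}{\partial t}J\Phi(a,\cdot,\cdot)$ by applying the first-variation formula at time $a$ in place of $0$. The hardest part is bookkeeping: keeping careful track of the various factors of $(1+|\mathbf{x}|)^{-1}$ and $(1+|\mathbf{x}|)^{-2}$ arising from Lemmas \ref{DecayVectorLem} and \ref{FlowEstLem} so that every term in $\frac{\partial^2}{\partial s^2}J\Phi$ and its first derivatives in $p$ and $\mathbf{v}$ retains the required quadratic decay.
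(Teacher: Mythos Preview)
Your proposal is correct and follows essentially the same approach as the paper: Taylor expansion with integral remainder, explicit computation of $\partial_t J\Phi$ and $\partial_t^2 J\Phi$ via the variation formula for the tangential Jacobian, and then decay estimates for the second derivative using Lemmas~\ref{DecayVectorLem} and~\ref{FlowEstLem}. The only cosmetic difference is that the paper writes $J\Phi=\sqrt{\det\mathbf{b}}$ with $b_{ij}=\nabla_{\tau_i}\Phi\cdot\nabla_{\tau_j}\Phi$ and uses Jacobi's formula for $\partial_t\det\mathbf{b}$, whereas you use the equivalent wedge-product/trace formulation; the resulting expressions for $\partial_t^2 J\Phi$ and the required bookkeeping of $(1+|\mathbf{x}|)^{-2}$ factors are the same.
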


\begin{proof}
By the Taylor expansion,
\begin{equation} \label{TaylorEqn}
J\Phi(t,p,\mathbf{v})=J\Phi(a,p,\mathbf{v})+(t-a)\frac{\partial}{\partial t}J\Phi(a,p,\mathbf{v})+(t-a)^2 Q_1(a,t,p,\mathbf{v})
\end{equation}
where
$$
Q_1(a,t,p,\mathbf{v})=\int_0^1\frac{\partial^2}{\partial t^2} J\Phi(a+(t-a)\rho, p,\mathbf{v})(1-\rho)\, d\rho.
$$
For $\mathbf{v}\in\mathbb{S}^n$, choose an orthonormal basis $\mathbf{T}_{\mathbf{v}}=\set{\tau_l\colon 1\leq l \leq n}$ for $T_{\mathbf{v}}\mathbb{S}^n$. Notice that, for any $\mathbf{v}_0\in\mathbb{S}^n$ fixed, one may choose $\mathbf{T}_{\mathbf{v}}$ so it smoothly depends on $\mathbf{v}$ in a neighborhood of $\mathbf{v}_0$. Define $\mathbf{b}(t,p,\mathbf{T}_\mathbf{v})=(b_{ij})$ to be a matrix-valued function given by
$$
b_{ij}=\nabla_{\tau_i}\Phi\cdot\nabla_{\tau_j}\Phi.
$$
Then
$$
J\Phi(t,p,\mathbf{v})=\sqrt{\det(\mathbf{b}(t,p,\mathbf{T}_{\mathbf{v}}))}
$$
of which the right side is independent of the choice of orthonormal basis for $T_{\mathbf{v}}\mathbb{S}^n$. By Jacobi's formula
\begin{equation} \label{FirstDerDetEqn}
\frac{\partial}{\partial t}J\Phi=\frac{1}{2}(J\Phi)^{-1} \mathrm{tr}\left(\mathrm{adj}(\mathbf{b})\frac{\partial\mathbf{b}}{\partial t}\right).
\end{equation}
Appealing to the equation for $\Phi$ gives
\begin{equation}  \label{FirstDerMatrixEqn}
\frac{\partial}{\partial t} b_{ij}=((\nabla\mathbf{Y}\circ\Phi)\cdot\nabla_{\tau_i}\Phi)\cdot\nabla_{\tau_j}\Phi+((\nabla\mathbf{Y}\circ\Phi)\cdot\nabla_{\tau_j}\Phi)\cdot\nabla_{\tau_i}\Phi.
\end{equation}
As $\Phi(0,p)=p$ and $\nabla\Phi(0,p)=\mathbf{I}_{n+1}$, one has
$$
J\Phi(0,p,\mathbf{v})=1 \mbox{ and } \frac{\partial }{\partial t}\Phi(0,p,\mathbf{v})=\Div\mathbf{Y}(p)-Q_{\nabla\mathbf{Y}}(p,\mathbf{v}).
$$
Moreover, by Lemmas \ref{DecayVectorLem} and \ref{FlowEstLem}, there is a constant $C=C(\Omega^\prime,\Gamma_-,M_0,T)>0$ so that 
$$
\Vert (J\Phi)^{-1}(t,\cdot,\cdot)\Vert_{C^2}+\Vert\mathrm{adj}(\mathbf{b}(t,\cdot,\cdot))\Vert_{C^2} \leq C
$$
and, for all $R \geq 0$,
$$
\left\Vert\frac{\partial }{\partial t}\mathbf{b}(t,\cdot,\cdot)\right\Vert_{\mathfrak{X}(\overline{\Omega^\prime}\setminus B_R)} \leq \frac{C}{R+1}.
$$
This together with the algebra property of $\mathfrak{X}$ implies $\frac{\partial}{\partial t}J\Phi(t,\cdot,\cdot)\in\mathfrak{Y}(\overline{\Omega^\prime})$.
	
Next we estimate $Q_1$. Differentiating \eqref{FirstDerDetEqn} gives
\begin{align*}
\frac{\partial^2}{\partial t^2}J\Phi=&\frac{1}{2}(J\Phi)^{-1}\mathrm{tr}\left(\frac{\partial}{\partial t}\mathrm{adj}(\mathbf{b})\frac{\partial\mathbf{b}}{\partial t}+\mathrm{adj}(\mathbf{b})\frac{\partial^2\mathbf{b}}{\partial t^2}\right)\\
&-\frac{1}{4}(J\Phi)^{-3}\left(\mathrm{tr}\left(\mathrm{adj}(\mathbf{b})\frac{\partial\mathbf{b}}{\partial t}\right)\right)^2.
\end{align*}
Differentiating \eqref{FirstDerMatrixEqn} and appealing to the equation of $\Phi$ give 
\begin{align*}
&\frac{\partial^2}{\partial t^2} b_{ij}= (\nabla^2\mathbf{Y}\circ\Phi)(\mathbf{Y}\circ\Phi,\nabla_{\tau_i}\Phi)\cdot\nabla_{\tau_j}\Phi+(\nabla^2\mathbf{Y}\circ\Phi)(\mathbf{Y}\circ\Phi,\nabla_{\tau_j}\Phi)\cdot\nabla_{\tau_i}\Phi\\
&+((\nabla\mathbf{Y}\circ\Phi)\cdot((\nabla\mathbf{Y}\circ\Phi)\cdot\nabla_{\tau_j}\Phi))\cdot\nabla_{\tau_i}\Phi+((\nabla\mathbf{Y}\circ\Phi)\cdot((\nabla\mathbf{Y}\circ\Phi)\cdot\nabla_{\tau_i}\Phi))\cdot\nabla_{\tau_j}\Phi\\
&+2((\nabla\mathbf{Y}\circ\Phi)\cdot\nabla_{\tau_i}\Phi)\cdot((\nabla\mathbf{Y}\circ\Phi)\cdot\nabla_{\tau_j}\Phi).
\end{align*}
Invoking Lemmas \ref{DecayVectorLem} and \ref{FlowEstLem} again, there is a constant $C^\prime=C^\prime(\Omega^\prime,\Gamma_-,M_0,T)>C$ so that, for any $R \geq 0$,
$$
\left\Vert\frac{\partial}{\partial t}\mathrm{adj}(\mathbf{b}(t,\cdot,\cdot))\right\Vert_{\mathfrak{X}(\overline{\Omega^\prime}\setminus B_R)}+(R+1)\left\Vert\frac{\partial^2}{\partial t^2}\mathbf{b}(t,\cdot,\cdot)\right\Vert_{\mathfrak{X}(\overline{\Omega^\prime}\setminus B_R)} \leq \frac{C^\prime}{R+1}.
$$
Combining this with the previous estimates on $(J\Phi)^{-1}$, $\mathrm{adj}(\mathbf{b})$ and $\frac{\partial\mathbf{b}}{\partial t}$, it follows from the algebra property of $\mathfrak{X}$ that
$$
\left\Vert\frac{\partial^2}{\partial t^2} J\Phi\right\Vert_{\mathfrak{X}(\overline{\Omega^\prime}\setminus B_R)} \leq c(n)(C^\prime+1)^7 (R+1)^{-2}.
$$
Hence, the claimed estimate for $Q_1$ follows by choosing $\tilde{C}_3\geq c(n)(C^\prime+1)^7$. In particular, $Q_1(a,t)\in\mathfrak{Y}(\overline{\Omega^\prime})$. 
	
Finally, by Lemma \ref{DecayVectorLem} one has $\Div\mathbf{Y}-Q_{\nabla\mathbf{Y}}\in\mathfrak{Y}(\overline{\Omega^\prime})$. As, fixing $a=0$, \eqref{TaylorEqn} gives
$$
J\Phi(t,p,\mathbf{v})=1+t\left(\Div\mathbf{Y}(p)-Q_{\nabla\mathbf{Y}}(p,\mathbf{v})\right)+t^2Q_1(0,t,p,\mathbf{v}),
$$
by what we have shown $J\Phi(t,\cdot,\cdot)$ is an element of $\mathfrak{Y}(\overline{\Omega^\prime})$. This completes the proof.
\end{proof}

\begin{lem} \label{ComposeLem}
Let $\Phi\colon\overline{\Omega^\prime}\to\overline{\Omega^\prime}$ be a local diffeomorphism which satisfies, for some constant $\tilde{M}_0>0$, 
$$
\Vert \Phi \Vert_{C^2}+\Vert |\Phi|^2-|\mathbf{x}|^2 \Vert_{C^0}\leq \tilde{M}_0<\infty.
$$
There is a constant $\tilde{C}_4=\tilde{C}_4(n,\tilde{M}_0)>0$ so that if $\psi$ is an element of $\mathfrak{Y}(\overline{\Omega^\prime})$, then so is $\Phi^{\#}\psi$ and, moreover,
$$
\Vert\Phi^{\#}\psi\Vert_{\mathfrak{Y}}\leq \tilde{C}_4 \Vert\psi\Vert_{\mathfrak{Y}}.
$$
\end{lem}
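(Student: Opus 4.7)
The plan is to handle the constituent spaces $\mathfrak{X}$ and $\mathfrak{W}$ separately, show $\Phi^{\#}$ is bounded on each, deduce a corresponding bound on $\mathfrak{Y}^\prime=\mathfrak{X}+\mathfrak{W}$ from the definition of the $\mathfrak{Y}$ norm as an infimum over decompositions, and close up by a density argument to verify that $\Phi^{\#}$ maps $\mathfrak{Y}_0$ into itself.

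First I would translate the hypothesis $\Vert|\Phi|^2-|\mathbf{x}|^2\Vert_{C^0}\leq\tilde M_0$ into two asymptotic weight equivalences, each depending only on $n$ and $\tilde M_0$: (i) $e^{-\tilde M_0/4}\leq e^{(|\Phi(p)|^2-|\mathbf{x}(p)|^2)/4}\leq e^{\tilde M_0/4}$, and (ii) there is $K=K(n,\tilde M_0)$ with $K^{-1}(1+|\mathbf{x}(p)|)\leq 1+|\Phi(p)|\leq K(1+|\mathbf{x}(p)|)$ on $\overline{\Omega^\prime}$, the latter by noting $|\Phi(p)|\geq\sqrt{\max(0,|\mathbf{x}(p)|^2-\tilde M_0)}$ grows linearly in $|\mathbf{x}(p)|$ for $|\mathbf{x}(p)|$ large while both sides stay bounded for $|\mathbf{x}(p)|$ small. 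Using these, for $\xi\in\mathfrak{W}$ the pointwise estimate
\[
(1+|\mathbf{x}(p)|)^{n+1}e^{|\mathbf{x}(p)|^2/4}|\Phi^{\#}\xi(p,\mathbf{v})|\leq K^{n+2}\Vert\xi\Vert_{\mathfrak{W}}
\]
immediately gives that $\Phi^{\#}:\mathfrak{W}\to\mathfrak{W}$ is bounded.

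For $\zeta\in\mathfrak{X}$ I would compute Lipschitz bounds for the composite map $(p,\mathbf{v})\mapsto\zeta(\Phi(p),d\Phi_p\mathbf{v}/|d\Phi_p\mathbf{v}|)$ via the chain rule. The Lipschitz bound in $p$ uses $\Vert\nabla\Phi\Vert_{C^0}$ together with the Lipschitz constant of $p\mapsto d\Phi_p\mathbf{v}/|d\Phi_p\mathbf{v}|$ (which invokes $\Vert\nabla^2\Phi\Vert_{C^0}$ and a uniform lower bound on $|d\Phi_p\mathbf{v}|$ for $\mathbf{v}\in\mathbb{S}^n$); the Lipschitz bound in $\mathbf{v}$ is analogous and involves the same quantities. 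Spherical derivatives $\nabla_{\mathbb{S}^n}(\Phi^{\#}\zeta)$ are expressible through $\nabla_{\mathbb{S}^n}\zeta$ at the image, with the $(1+|\mathbf{x}|)$-weighted bound following from (ii). Altogether this yields $\Vert\Phi^{\#}\zeta\Vert_{\mathfrak{X}}\leq C(n,\tilde M_0)\Vert\zeta\Vert_{\mathfrak{X}}$. Combining, for $\psi=c\mathbf{1}+\psi_0\in\mathfrak{Y}$ with $\psi_0\in\mathfrak{Y}_0$ and any decomposition $\psi_0=\zeta+\xi$, one has $\Phi^{\#}\psi=c\mathbf{1}+\Phi^{\#}\zeta+\Phi^{\#}\xi$; taking the infimum over decompositions and applying Item (3) of Proposition \ref{YSpaceProp} gives $\Vert\Phi^{\#}\psi\Vert_{\mathfrak{Y}}\leq\tilde C_4\Vert\psi\Vert_{\mathfrak{Y}}$. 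To place $\Phi^{\#}\psi_0$ in $\mathfrak{Y}_0$ (so that $\Phi^{\#}\psi\in\mathfrak{Y}$), approximate $\psi_0$ by $\psi_0^{(j)}\in C_c^0(\overline{\Omega^\prime}\times\mathbb{S}^n)$ in the $\mathfrak{Y}^\prime$ norm; since (ii) implies $\Phi^{-1}$ maps bounded sets to bounded sets, each $\Phi^{\#}\psi_0^{(j)}\in C_c^0$, and the norm bound then gives $\Phi^{\#}\psi_0^{(j)}\to\Phi^{\#}\psi_0$ in $\mathfrak{Y}^\prime$.

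The main obstacle is the Lipschitz computation in $\mathfrak{X}$, specifically differentiating the normalized image direction $d\Phi_p\mathbf{v}/|d\Phi_p\mathbf{v}|$ in both $p$ and $\mathbf{v}$, which requires $C^2$ control on $\Phi$ together with a uniform positive lower bound on $|d\Phi_p\mathbf{v}|$ over $\mathbf{v}\in\mathbb{S}^n$. This lower bound is not strictly immediate from the stated hypotheses alone; however, in the applications where $\Phi=\Phi(t)$ is generated by a vector field in $\mathcal{Y}^-$, it is supplied implicitly by Lemma \ref{FlowEstLem}, which shows $\nabla\Phi(t)-\mathbf{I}_{n+1}$ decays at infinity and remains continuous on bounded sets.
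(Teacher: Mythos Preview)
Your approach is essentially identical to the paper's: decompose $\psi=c\mathbf{1}+\psi_0$, split $\psi_0$ into $\mathfrak{W}$ and $\mathfrak{X}$ pieces via a near-optimal decomposition, bound $\Phi^{\#}$ on each piece separately using the weight equivalences you derive in (i)--(ii), and then use density of $C_c^0$ in $\mathfrak{Y}_0$ together with Item (3) of Proposition~\ref{YSpaceProp} to finish. The paper carries out exactly these steps, with the same pointwise estimate for $\mathfrak{W}$ and a chain-rule argument for $\mathfrak{X}$.

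Your flagged obstacle---the need for a uniform lower bound on $|d\Phi_p\mathbf{v}|$ over $\mathbf{v}\in\mathbb{S}^n$ in order to control the Lipschitz constant of the normalized direction---is a fair point that the paper does not address explicitly; its chain-rule estimate for $\Vert\Phi^{\#}\zeta\Vert_{\mathfrak{X}}$ is asserted in one line with constant $c(n)(1+\tilde M_0)^2$. You are correct that the hypothesis $\Vert\Phi\Vert_{C^2}\leq\tilde M_0$ alone does not give such a lower bound, and also correct that in every application of the lemma (with $\Phi=\Phi(t)$ generated by $\mathbf{Y}\in\mathcal{Y}^-$) Lemma~\ref{FlowEstLem} supplies it via $\Vert\nabla\Phi(t)-\mathbf{I}_{n+1}\Vert_{C^0_{-2}}\leq\tilde C_1$. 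So your proof is at least as careful as the paper's on this point, and your diagnosis of where the missing ingredient comes from is accurate.
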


\begin{proof}
By definition $\psi=c\mathbf{1}+\psi_0$ where $c\in \Real $ and $\psi_0\in \mathfrak{Y}_0(\overline{\Omega^\prime})$. Fix any $\epsilon>0$ and there is a pair $(\xi,\zeta)\in\mathfrak{W}(\overline{\Omega^\prime})\times\mathfrak{X}(\overline{\Omega^\prime})$ so that $\psi_0=\xi+\zeta$ and 
$$
\Vert\xi\Vert_{\mathfrak{W}}+\Vert\zeta\Vert_{\mathfrak{X}}\leq \Vert\psi_0\Vert_{\mathfrak{Y}}+\epsilon.
$$
By our hypotheses, $|\mathbf{x}(p)|\leq \tilde{M}_0^{1/2} +|\Phi(p)|$ and so
$$
\sup_{p\in\overline{\Omega^\prime}}\frac{1+|\mathbf{x}(p)|}{1+|\Phi(p)|} \leq 4\left(1+\tilde{M}_0^{1/2}\right).
$$
One readily checks that
\begin{align*}
(1+|\mathbf{x}(p)|)^{n+1} e^{\frac{|\mathbf{x}(p)|^2}{4}} |\Phi^\#\xi(p,\mathbf{v})| & \leq \left(\frac{1+|\mathbf{x}(p)|}{1+|\Phi(p)|}\right)^{n+1} e^{\frac{1}{4}(|\mathbf{x}(p)|^2-|\Phi(p)|^2)} \Vert\xi\Vert_{\mathfrak{W}} \\
& \leq 4^{n+1} \left(1+\tilde{M}_0^{1/2}\right)^{n+1} e^{\frac{\tilde{M}_0}{4}}\Vert\xi\Vert_{\mathfrak{W}}.
\end{align*}
Thus, choosing $K_0=4^{n+1}(1+\tilde{M}_0^{1/2})^{n+1} e^{\tilde{M}_0/4}$, one has
$$
\Vert\Phi^\#\xi\Vert_{\mathfrak{W}} \leq K_0 \Vert\xi\Vert_{\mathfrak{W}}.
$$
	
Next, by the chain rule and the fact that $\overline{\Omega^\prime}$ is quasi-convex, 
$$
\Vert\Phi^{\#}\zeta\Vert_{Lip}+\Vert\nabla_{\mathbb{S}^n}(\Phi^{\#}\zeta)\Vert_{Lip}\leq c(n) (1+\tilde{M}_0)^2 \Vert\zeta\Vert_{\mathfrak{Y}}.
$$
Furthermore, one evaluates that
\begin{align*}
(1+|\mathbf{x}(p)|) |\nabla_{\mathbb{S}^n} (\Phi^{\#}\zeta)|(p,\mathbf{v}) & \leq (1+|\mathbf{x}(p)|) |\nabla_{\mathbb{S}^n}\zeta|(\Phi(p),\nabla_{\mathbf{v}}\Phi(p))|\nabla\Phi|(p)\\
& \leq \frac{1+|\mathbf{x}(p)|}{1+|\Phi(p)|}\Vert\zeta\Vert_{\mathfrak{X}} \tilde{M}_0 \leq 4 \left(1+\tilde{M}_0^{1/2}\right)\tilde{M}_0\Vert\zeta\Vert_{\mathfrak{X}}.
\end{align*}
Thus, by choosing $K_1=c(n)(1+\tilde{M}_0)^2+4(1+\tilde{M}_0^{1/2}) \tilde{M}_0+1\geq 1$, one has
$$
\Vert\Phi^\#\zeta\Vert_{\mathfrak{X}}\leq K_1 \Vert\zeta\Vert_{\mathfrak{X}}.
$$
	
Hence, combining these estimates gives that, for $\tilde{C}^\prime_4=\max\set{K_0,K_1}$,
$$
\Vert\Phi^\#\psi_0\Vert_{\mathfrak{Y}} \leq \Vert\Phi^\#\xi\Vert_{\mathfrak{W}}+\Vert\Phi^\#\zeta\Vert_{\mathfrak{X}} \leq  \tilde{C}_4^{\prime} (\Vert\xi\Vert_{\mathfrak{W}}+\Vert\zeta\Vert_{\mathfrak{X}}) \leq  \tilde{C}_4^{\prime} (\Vert\psi_0\Vert_{\mathfrak{Y}}+\epsilon)
$$
Sending $\epsilon\to 0$, it follows that
$$
\Vert\Phi^\#\psi_0\Vert_{\mathfrak{Y}} \leq  \tilde{C}_4^{\prime} \Vert\psi_0\Vert_{\mathfrak{Y}}.
$$
	
As $\psi_0\in \mathfrak{Y}_0(\overline{\Omega^\prime})$, Item (6) of Proposition \ref{YSpaceProp} ensures there is a sequence $\psi_i\in C^\infty_c(\overline{\Omega^\prime}\times\mathbb{S}^n)$ that converges in the $\mathfrak{Y}$ norm to $\psi_0$, the hypotheses on $\Phi$ ensure that each $\Phi^\#\psi_i$ has compact support, and by what we have shown these elements converge in the $\mathfrak{Y}$ norm to $\Phi^\# \psi_0$.  As such, $\Phi^\# \psi_0\in \mathfrak{Y}_0(\overline{\Omega^\prime})$.  Clearly, $\Phi^\# \mathbf{1}=\mathbf{1}$ and so $ \Phi^\# \psi=c\mathbf{1}+ \Phi^\# \psi_0 \in\mathfrak{Y}(\overline{\Omega^\prime})$.  Finally, appealing to Item (4) of Proposition \ref{YSpaceProp} gives
$$
\Vert \Phi^\# \psi \Vert_{\mathfrak{Y}} =\Vert c \mathbf{1} +  \Phi^\# \psi_0 \Vert_{\mathfrak{Y}}\leq |c|+  \tilde{C}_4^{\prime}\Vert\psi_0\Vert_{\mathfrak{Y}}\leq 4 \tilde{C}_4^{\prime} \Vert \psi \Vert_{\mathfrak{Y}}.
$$
This completes the proof with $\tilde{C}_4=4 \tilde{C}_4^{\prime}$.
\end{proof}

\end{document}